\def\@settitle{\begin{center}%
  \baselineskip14\p@\relax
    \normalfont\LARGE

  \@title
  \end{center}%
}
\newcolumntype{P}[1]{>{\centering\arraybackslash}p{#1}}
\newcolumntype{M}[1]{>{\centering\arraybackslash}m{#1}}
\let\oldmarginpar\marginpar
\renewcommand\marginpar[1]{\-\oldmarginpar[\raggedleft\footnotesize #1]%
	{\raggedright\footnotesize #1}}
\theoremstyle{plain}
\newtheorem{thm}{Theorem}[section]
\newtheorem{lemma}[thm]{Lemma}
\newtheorem*{theorem*}{Theorem}
\newtheorem*{corollary*}{Corollary}
\newtheorem{example}[thm]{Example}
\newtheorem{prop}[thm]{Proposition}
\newtheorem{cor}[thm]{Corollary}
\theoremstyle{definition}
\newtheorem{definition}[thm]{Definition}
\newtheorem{remark}[thm]{Remark}
\numberwithin{equation}{section}
\newcommand{\m}{\mathfrak{m}}
\renewcommand{\S}{\mathbb{S}}
\renewcommand{\L}{\mathbb{L}}
\newcommand{\I}{\underline{\mathbb{I}}}
\newcommand{\N}{\mathbb{N}}
\newcommand{\Z}{\mathbb{Z}}
\newcommand{\R}{\mathbb{R}}
\newcommand{\C}{\mathbb{C}}
\newcommand{\SA}{\mathcal{A}}
\newcommand{\SO}{\mathcal{O}}
\newcommand{\fM}{\mathfrak{M}}
\newcommand{\SF}{\mathscr{F}}
\newcommand{\SG}{\mathscr{G}}
\newcommand{\SX}{\mathcal{X}}
\newcommand{\SL}{\mathscr{L}}
\newcommand{\SR}{\mathscr{R}}
\newcommand{\La}{\Lambda}
\newcommand{\la}{\lambda}
\renewcommand{\a}{\alpha}
\newcommand{\dd}{\partial}
\newcommand{\sse}{\subset}
\newcommand{\lr}{\longrightarrow}
\newcommand{\Br}{\operatorname{Br}}
\newcommand{\ssup}{\operatorname{\mu\mbox{supp}}}
\newcommand{\msh}{\operatorname{\mu\mbox{Sh}}}
\newcommand{\GL}{\operatorname{GL}}
\newcommand{\PGL}{\operatorname{PGL}}
\newcommand{\Sh}{\operatorname{Sh}}
\newcommand{\wt}{\widetilde}
\newcommand{\st}{\text{st}}
\newcounter{daggerfootnote}
\newcommand{\bC}{\mathbb{C}}
\newcommand{\bG}{\mathbb{G}}
\newcommand{\bL}{\mathbb{L}}
\newcommand{\bS}{\mathbb{S}}
\newcommand{\bR}{\mathbb{R}}
\newcommand{\T}{\mathrm{T}}
\newcommand{\cA}{\mathcal{A}}
\newcommand{\cM}{\mathcal{M}}
\newcommand{\cX}{\mathcal{X}}
\newcommand{\w}{\mathtt{w}}
\newcommand{\n}{\mathfrak{n}}
\renewcommand{\l}{\mathfrak{l}}
\renewcommand{\i}{\mathfrak{i}}
\renewcommand{\c}{\mathfrak{c}}
\newcommand{\ww}{\mathfrak{w}}
\newcommand{\fB}{\mathfrak{B}}
\newcommand{\fS}{\mathfrak{S}}
\newcommand{\M}{\mathcal{M}}
\newcommand{\img}{\mathop{\mathrm{img}}}
\newcommand{\Loc}{\mathrm{Loc}}
\newcommand{\rank}{\mathrm{rk}}
\newcommand{\codim}{\mathop{\mathrm{codim}}}
\newcommand{\FM}{\mathfrak{M}}
\newcommand{\Span}{\mathrm{Span}}
\newcommand{\Dem}{\mathrm{Dem}}
\newcommand{\coker}{\mathop{\mathrm{coker}}}
\newcommand{\id}{\mathrm{id}}
\def \vertbar [#1](#2,#3,#4){
    \draw [#1] (#2,#3) -- (#2,#4);
    \draw [fill=white] (#2,#3) circle [radius=0.1];
    \draw [fill=black] (#2,#4) circle [radius=0.1];
}
\providecommand{\leftsquigarrow}{%
  \mathrel{\mathpalette\reflect@squig\relax}%
}
\newcommand{\reflect@squig}[2]{%
  \reflectbox{$\m@th#1\rightsquigarrow$}%
}
\newcommand{\inprod}[2]{\left\langle#1,#2\right\rangle}
\def\Ddots{\mathinner{\mkern1mu\raise\p@
\vbox{\kern7\p@\hbox{.}}\mkern2mu
\raise4\p@\hbox{.}\mkern2mu\raise7\p@\hbox{.}\mkern1mu}}
\def \horline [#1](#2,#3,#4){
    \draw [#1] (#2,#4) -- (#3,#4);
    \draw [fill=white] (#2,#4) circle [radius=0.1];
    \draw [fill=black] (#3,#4) circle [radius=0.1];
}
\def \crossing (#1,#2)(#3,#4){
\draw (#1,#2) -- (#3,#4);
\draw (#1,#4) -- (#3,#2);
}
\newcommand{\DT}{\mathrm{DT}}
\DeclareFontFamily{U}{mathb}{}
\DeclareFontShape{U}{mathb}{m}{n}{
  <-5.5> mathb5
  <5.5-6.5> mathb6
  <6.5-7.5> mathb7
  <7.5-8.5> mathb8
  <8.5-9.5> mathb9
  <9.5-11.5> mathb10
  <11.5-> mathbb12
}{}
\tikzset{tangent/.style={decoration={markings,mark=at position #1 with {
      \coordinate (tangent point-\pgfkeysvalueof{/pgf/decoration/mark info/sequence number}) at (0pt,0pt);
      \coordinate (tangent unit vector-\pgfkeysvalueof{/pgf/decoration/mark info/sequence number}) at (1,0pt);
      \coordinate (tangent orthogonal unit vector-\pgfkeysvalueof{/pgf/decoration/mark info/sequence number}) at (0pt,1);
      }},postaction=decorate},
    use tangent/.style={
        shift=(tangent point-#1),
        x=(tangent unit vector-#1),
        y=(tangent orthogonal unit vector-#1)
    },
    use tangent/.default=1
    }
\begin{document}

	\title{Microlocal Theory of Legendrian Links and Cluster Algebras}
	
	\subjclass[2010]{Primary: 53D10. Secondary: 53D15, 57R17.}
	
	\author{Roger Casals}
	\address{University of California Davis, Dept. of Mathematics, USA}
	\email{casals@math.ucdavis.edu}
	
	\author{Daping Weng}
	\address{University of California Davis, Dept. of Mathematics, USA}
	\email{dweng@ucdavis.edu}
	
\maketitle
\vspace{-1.2cm}
\begin{abstract} We show the existence of quasi-cluster $\mathcal{A}$-structures and cluster Poisson structures on moduli stacks of sheaves with singular support in the alternating strand diagram of grid plabic graphs by studying the microlocal parallel transport of sheaf quantizations of Lagrangian fillings of Legendrian links. The construction is in terms of contact and symplectic topology, showing that there exists an initial seed associated to a canonical relative Lagrangian skeleton. In particular, mutable cluster $\mathcal{A}$-variables are intrinsically characterized via the symplectic topology of Lagrangian fillings in terms of dually $\mathbb{L}$-compressible cycles. New ingredients are introduced throughout this work, including the initial weave associated to a grid plabic graph, cluster mutation along non-square faces of a plabic graph, possibly including lollipops, the concept of sugar-free hull, and the notion of microlocal merodromy. Finally, we prove the existence of the cluster DT-transformation for shuffle graphs, constructing a contact geometric realization and an explicit reddening sequence, and establish cluster duality for the cluster ensembles.
\end{abstract}




\section{Introduction}\label{sec:intro}

The object of this article will be to show the existence of intrinsically symplectic quasi-cluster $\mathrm{K}_2$-structures and quasi-cluster Poisson structures on moduli stacks of sheaves with singular support in the alternating strand diagram of a complete grid plabic graph. The construction of such quasi-cluster structures is achieved via contact and symplectic topology, based on the recently developed machinery of Legendrian weaves, and we show that there exists a canonical initial quasi-cluster seed associated to a relative Lagrangian skeleton. This is the first manuscript proving the existence of such cluster structures for these general moduli stacks, and entirely in symplectic geometric terms, as well as introducing the first symplectic topological definition of cluster $\mathcal{A}$-variables associated to Lagrangian fillings of Legendrian links. In particular, our constructions admit natural contact and symplectic invariance and functoriality properties, and the cluster variables can be named and computed after performing Hamiltonian isotopies.

Several new ingredients are introduced for this purpose, among them are the initial weave of a grid plabic graph, cluster mutations along non-square faces, possibly with lollipops, the concept of sugar-free hulls, and the notion of microlocal merodromy. Microlocal merodromies capture microlocal parallel transport along a relative cycle and they are crucial in defining a set of initial cluster $\mathcal{A}$-variables. From a contact geometry viewpoint, embedded Lagrangian disks whose boundaries lie on embedded exact Lagrangian fillings have a central role. This allows for geometric characterizations of mutable and frozen vertices, which arise from relative homology groups of triples, and naturally explains the appearance of quasi-cluster structures.


\begin{center}
	\begin{figure}[H]
		\centering
		\includegraphics[scale=1.4]{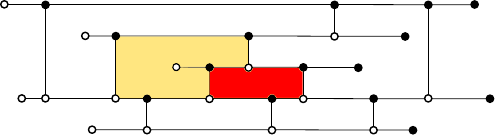}
		\caption{The quasi-cluster $\mathrm{K}_2$-structure we construct for this grid plabic graph is on the coordinate ring of the moduli of decorated sheaves on $\R^2$ with singular support in a max-tb Legendrian representative of the $m(9_6)$ knot.}
		\label{fig:ElementaryRegions_ExampleIntro}
	\end{figure}
\end{center}

\subsection{Scientific Context}\label{ssec:context} Cluster algebras, first introduced by S.~Fomin and A.~Zelevinsky \cite{FominZelevinsky_ClusterI,FominZelevinsky_ClusterII,BFZ05} in the context of Lie theory, are commutative rings endowed with a set of distinguished generators that have remarkable combinatorial structures. Cluster varieties, a geometric enrichment of cluster algebras introduced by V.~Fock and A.~Goncharov \cite{FockGoncharov_ModuliLocSys,FockGoncharovII}, are affine schemes equipped with an atlas of torus charts whose transition maps obey certain combinatorial rules. Cluster varieties come in dual pairs consisting of a cluster $\mathrm{K}_2$-variety, also known as a cluster $\mathcal{A}$-variety, and a cluster Poisson variety, also known as a cluster $\mathcal{X}$-variety. In particular, the coordinate ring of a cluster $\mathcal{A}$-variety coincides with an upper cluster algebra, see Berenstein-Fomin-Zelevinsky \cite{BFZ05}.

Since their introduction, cluster algebras and cluster varieties have appeared in many contexts, such as Teichm\"uller theory \cite{FockGoncharov_ModuliLocSys,FST08,GSV05}, birational geometry \cite{GHK15,GHKK,HK18}, the Riemann-Hilbert correspondence \cite{Allegretti,GMN_Cluster,GMN_Wallcrossing}, exact WKB analysis \cite{IN1,IN2}, and the study of positroid and Richardson varieties \cite{LamGalashin_Positroids,SSBW}, among others. The first appearance of cluster mutations in symplectic geometry occurred in the study of wall-crossing formulas, following the work of D.~Auroux, K.~Fukaya, M.~Kontsevich, P.~Seidel, Y.~Soibelman and others, e.g.~see \cite{Auroux_Wallcrossing2,Auroux_Wallcrossing,PascaleffTonkonog20} and references therein. We also thank A.~Goncharov for pointing out to us his recent work with M.~Kontsevich \cite{GoncharovKontsevich21_Noncommutative} focusing on non-commutative clusters, which also aligns well with the developments we present here. The first hint that cluster $\mathcal{X}$-structures might naturally exist in the symplectic study of Legendrian knots was provided in \cite{STWZ}, where it was computed how certain absolute monodromies around a square plabic face change under a square move in a plabic fence. See also the generalization presented in \cite{STW}. In conjunction, \cite{STWZ,STW} should imply the existence of partial $\SX$-structures for certain moduli stacks of sheaves singularly supported in the Legendrian lifts of the alternating strand diagrams of plabic fences. Nevertheless, they do not imply the existence of the full cluster $\SX$-structures, nor the full cluster $\SA$-structures and certainly not the fact that the rings of regular functions are cluster algebras. (See Subsection \ref{ssec:clarificationAX}.) These stronger statements are proven in the present manuscript.

There are two obstacles to prove the existence of a cluster $\mathcal{A}$-structure. First, many plabic faces are typically not square and may contain lollipops; thus, one needs a new construction that both associates a cluster variable to them and allows for a geometric mutation to be performed. Second, more fundamental, the regularity problem: even if all faces are square, the absolute monodromies are {\it not} global regular functions, and it is not possible to deduce the existence of a cluster structure purely from these microlocal monodromies. These obstacles are unavoidable if one is either restricted to plabic graphs or absolute cycles, both of which are limiting constraints in that approach.
\color{black}

Our new approach uses Legendrian weaves, which are more versatile than plabic graphs, and actually builds cluster $\mathcal{A}$-variables from relative cycles, which is stronger than the absolute analogue; see Subsection \ref{ssec:clarificationAX}. In particular, we overcome both obstacles above, resolving the regularity problem, and finally prove the existence of cluster $\mathcal{A}$--structures and, consequently, cluster $\mathcal{X}$-structures in entirely symplectic topological terms. Some of our previous work has been using ideas from the theory of cluster algebras for new applications to contact and symplectic geometry, see e.g.~ \cite{CasalsLagSkel,CGGS,CGGS2,CasalsZaslow,GSW}, including the discovery of infinitely many Lagrangian fillings for many Legendrian links \cite{CasalsHonghao}. This article builds in the converse direction, using contact and symplectic topology to construct (upper) cluster algebras, and symplectic topological results to deduce algebraic properties. In fact, we also know $\mathcal{A}=\mathcal{U}$ by recent work of the first author \cite{CGGLSS} which builds on the present manuscript.

Note that what can be deduced from our previous works \cite{CGGS,CasalsHonghao,CasalsZaslow,GSW,GSW2} is that certain moduli spaces that appear in contact topology are sometimes abstractly isomorphic to certain affine varieties, which themselves can independently be endowed\footnote{Explicitly, double Bott-Samelson cells for \cite{GSW}, and positroids for \cite{CasalsHonghao,STWZ}. These instances are, in any case, particular cases of the moduli stacks that we associate to grid plabic graphs.} with cluster structures, but currently there does not exist any symplectic construction or characterization of cluster $\mathcal{A}$-variables or general cluster $\mathcal{X}$-variables, nor a symplectic geometric proof of the existence of cluster structures on these moduli spaces, nor even a geometric understanding of frozen variables. In particular, none of these previous constructions is known to have any Hamiltonian or Legendrian invariance properties, which are crucial in contact and symplectic topology. In fact, in all previous constructions even the initial seeds cannot be named after a Hamiltonian isotopy (e.g. even after a Reidemeister I or II move) and no symplectic computation or interpretation of cluster $\mathcal{A}$-variables existed. The present work finally resolves this matter and, as we shall see, interesting symplectic features appear with regards to both mutable and frozen variables.


\subsection{Main Results}\label{ssec:main_geometry} Let $\La\sse (T^*_\infty\R^2,\xi_{st})$ be a Legendrian link in the ideal contact boundary of the cotangent bundle of the plane $\R^2$, and $T\sse\La$ a set of marked points. The precise details and definitions for these contact geometric objects are provided in Section \ref{sec:gridplabic}. Let $L\sse(T^*\R^2,\la_\st)$ be an embedded exact Lagrangian filling of $\La$. By definition, an embedded closed curve $\gamma\sse L$ is said to be $\bL$-compressible if there exists a properly embedded Lagrangian 2-disk $D\sse (T^*\R^2\setminus L)$ such that $\dd\overline{D}\cap L=\gamma\sse\R^4$. A collection $\{\gamma_1,\ldots,\gamma_\ell\}$ of such curves, with a choice of $\mathbb{L}$-compressing disk for each curve, is said to be an $\L$-compressing system for $L$ if the curves form a maximal linearly independent subset in $H_1(L)$. In line with this, we will use Lagrangian disk surgeries, as defined in \cite{Polterovich_Surgery,MLYau17}.

Consider also the moduli stack $\fM(\La,T)$ of decorated microlocal rank-one constructible sheaves on $\R^2$ with singular support contained in $\La$, as defined in Subsection \ref{sssec:sheaf_decorated}, following \cite{KashiwaraSchapira_Book,Sheaves1}, which is invariant under contact isotopies. Let $\bG\sse\R^2$ be a complete grid plabic graph and $\La=\La(\bG)\sse T^*_\infty\R^2$ its associated Legendrian link, as defined in Section \ref{sec:gridplabic}. See Subsection \ref{ssec:hulls} for the definition of the sugar-free hull $\S_f$ of a face $f$ in $\bG$ and Subsection \ref{ssec:completenessGP} for completeness. Note that the concept of sugar-free hulls, and whether a region is sugar-free, only depends on the behaviour at non-convex corners, see Definition \ref{def:sugarfree}.

The main result of the article, stated in Theorem \ref{thm:main}, is the existence and explicit symplectic construction of a full quasi-cluster $\mathcal{A}$-structure on $\fM(\La,T)$. In particular, the cluster $\mathcal{A}$-variables of the initial seed and all the once-mutated seeds are obtained by a new microlocal parallel transport along certain relative cycles on exact Lagrangian fillings of $\La$. This microlocal parallel transport is associated to a sheaf quantization of each exact Lagrangian filling, following \cite{Sheaves1,CasalsZaslow}, and we refer to it as a {\it microlocal merodromy}, see Section \ref{sec:cluster}. The central result of the manuscript is stated as follows.

\begin{thm}[Main Result]\label{thm:main} Let $\bG\sse\R^2$ be a complete grid plabic graph, $\La=\La(\bG)\sse (\R^3,\xi_\st)$ its associated Legendrian link, $T\sse\La$ a set of marked points, with at least one marked point per component of $\La$, and $\mathfrak{M}(\La,T)$ the stack of decorated microlocal rank-one constructible sheaves on $\R^2$ with singular support contained in $\La$.

Then, there exists a canonical embedded exact Lagrangian filling $L=L(\bG)\sse(\R^4,\omega_\st)$ of $\La$ and a canonical $\L$-compressing system $\mathfrak{S}=\{\gamma_1,\ldots,\gamma_\ell\}$ for $L$, indexed by the sugar-free hulls of $\bG$, such that for any completion of $\mathfrak{S}$ into a basis $\fB$ of $H_1(L,T)$ the following hold: 

\begin{itemize}
    \item[(i)] 
    The microlocal merodromies
    $A_{\eta_i}$, defined on $($and by using$)$ the open chart $(\C^\times)^{b_1(L,T)}\sse \fM(\La,T)$ associated to $L$, extend to global regular functions $$A_{\eta_i}:\mathfrak{M}(\La,T)\lr\C,\quad \mbox{i.e. } A_{\eta_i}\in\mathcal{O}(\fM(\La,T)),$$
    \noindent where $\fB^\vee=\{\eta_1,\dots, \eta_s\}$ is the dual basis in $H_1(L\setminus T,\Lambda\setminus T)$.\\
    
    \item[(ii)] The microlocal merodromies $\{A_{\eta_1},\ldots,A_{\eta_\ell}\}$ associated to the relative cycles that are dual to an $\bL$-compressible absolute cycle in $\mathfrak{S}$ are irreducible functions in $\mathcal{O}(\fM(\La,T))$, whereas the merodromies $\{A_{\eta_{\ell+1}},\ldots,A_{\eta_{b_1(L,T)}}\}$ are non-vanishing functions, i.e.~ units in $\mathcal{O}(\fM(\La,T))$.\\

    \item[(iii)] Let $L_k'\sse(\R^4,\omega_\st)$ be the Lagrangian filling obtained via Lagrangian disk surgery on $L$ at the $\mathbb{L}$-compressing disk for $\gamma_k\in\mathfrak{S}$, and $\eta_k'\in H_1(L_k'\setminus T,\La\setminus T)$ the image of $\eta_k$ under the surgery. Then the  merodromy $A_{\eta_k'}$ extends to a global regular function
    $$A_{\eta_k'}:\mathfrak{M}(\La,T)\lr\C,\quad \mbox{i.e. } A_{\eta_k'}\in\mathcal{O}(\fM(\La,T)),$$
    
    \noindent and satisfies the cluster $\mathcal{A}$-mutation formula $$A_{\eta_k'}A_{\eta_k}=\prod_{\eta_i\to\eta_k}A_{\eta_i}+\prod_{\eta_k\to\eta_j}A_{\eta_j}$$ with respect to the intersection quiver $Q(\fB)$ of the basis elements $\fB\subset H_1(L,T)$.
\end{itemize}

\noindent Finally, the moduli variety $\fM(\La,T)$ admits a cluster $\mathcal{A}$-structure with quiver $Q(\fB)$ in the initial seed associated to the Lagrangian filling $L$, where the mutable vertices (dually) correspond to the absolute cycles in the $\L$-compressing system $\mathfrak{S}$ for $L$. Furthermore, different choices of completion of $\fS$ into a basis $\fB$ give rise to quasi-equivalent cluster $\mathcal{A}$-structures.
\end{thm}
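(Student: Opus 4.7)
The plan is to build the proof in the order: (1) construction of the initial filling $L(\bG)$ and the $\L$-compressing system $\fS$; (2) definition of the merodromies and their global regularity; (3) mutation via Lagrangian disk surgery; (4) assembly of the quasi-cluster structure. First I would invoke the construction of the initial weave $\ww(\bG)$ associated to the grid plabic graph $\bG$, a Legendrian surface whose Lagrangian projection yields the embedded exact Lagrangian filling $L=L(\bG)\sse(\R^4,\omega_\st)$ of $\La(\bG)$. The sugar-free hulls $\{\S_{f_1},\ldots,\S_{f_\ell}\}$ of $\bG$ are then used to manufacture the cycles $\gamma_i\sse L$, each bounding an embedded Lagrangian disk $D_i\sse T^*\R^2\sm L$ whose existence depends crucially on the non-convex corner behaviour packaged by the sugar-free condition. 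Completing $\fS$ into a basis $\fB$ of $H_1(L,T)$ by adding frozen cycles around marked points and around non–sugar-free faces produces, by Poincar\'e--Lefschetz duality on the pair $(L,T)$, the dual basis $\fB^\vee\sse H_1(L\sm T,\La\sm T)$.

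For parts (i)--(ii), each $A_{\eta_i}$ is defined on the torus chart $(\C^\times)^{b_1(L,T)}\sse\fM(\La,T)$ by microlocal parallel transport of a sheaf quantization of $L$ along $\eta_i$; on that chart it is a Laurent monomial in the coordinates, so the task is to promote it to a global regular function on the stack. The plan is to cover $\fM(\La,T)$ by charts coming from other sheaf-quantizable fillings, obtained from $L$ by sequences of Lagrangian disk surgeries along elements of $\fS$, and to check chart-by-chart that each $A_{\eta_i}$ has no poles. The main obstacle, which I expect to be the technical heart of the proof, is exactly this regularity problem. I would resolve it via a \v{C}ech-style description of the sheaf quantization associated to $\ww(\bG)$: one writes $A_{\eta_i}$ as a signed sum of monomials indexed by spanning configurations supported inside the sugar-free hull $\S_{f_i}$, and verifies that under each elementary transition between adjacent charts the sum transforms by the expected polynomial cluster formula rather than by an expression with denominators. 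Irreducibility for $i\les\ell$ in (ii) then follows because the vanishing locus of $A_{\eta_i}$ is cut out by a single, geometrically irreducible condition, the degeneration of the microlocal stalks across one codimension-one wall in $\fM(\La,T)$; the remaining merodromies are monodromies around marked points and are therefore units.

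For part (iii), I would perform a Polterovich--Lagrangian disk surgery on $L$ at $D_k$ to obtain $L_k'$, and compare the two sheaf quantizations near the surgery locus; global regularity of $A_{\eta_k'}$ on $\fM(\La,T)$ is obtained by applying the argument of (i) to $L_k'$ in place of $L$. To derive the exchange relation
\[
A_{\eta_k'}A_{\eta_k}=\prod_{\eta_i\to\eta_k}A_{\eta_i}+\prod_{\eta_k\to\eta_j}A_{\eta_j},
\]
I would use that the charts associated to $L$ and $L_k'$ overlap on a dense open subset and that the transition map is governed by the standard $A_1$-exchange coming from the two possible Lagrangian thimbles of the Morse--Bott degeneration at $D_k$. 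Matching the incidence numbers in this local model with the intersection quiver $Q(\fB)$ on $H_1(L,T)$, whose arrows are oriented intersection numbers of cycles on $L$, yields the stated formula with the correct signs.

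To conclude, the seed $(\fB,Q(\fB),\{A_{\eta_1},\ldots,A_{\eta_{b_1(L,T)}}\})$ together with its mutations realizes $\mathcal{O}(\fM(\La,T))$ as a quasi-cluster $\mathcal{A}$-algebra with mutable vertices indexed by the $\L$-compressible elements of $\fS$. Quasi-equivalence under different completions of $\fS$ into a basis of $H_1(L,T)$ follows from the fact that any two such completions differ by an integer change of basis supported on frozen directions, under which $Q(\fB)$ and the tuple of merodromies transform by a quasi-cluster transformation in the sense of Fraser. The hardest and most delicate step remains, as flagged above, the global regularity of the initial merodromies, where the new notion of sugar-free hull and the weave combinatorics of $\bG$ together do the essential work.
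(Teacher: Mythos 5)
Your broad skeleton (weave $\ww(\bG)\to$ filling $L(\bG)$, sugar-free hulls $\to$ $\L$-compressing system via $\sf Y$-trees, merodromy $=$ microlocal parallel transport, disk surgery $\to$ mutation) matches the paper. However, the technical core of parts (i)–(ii) — which you yourself flag as the heart of the matter — is where your proposal diverges from the paper and, as written, contains a genuine gap.

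Your plan for regularity is to ``cover $\fM(\La,T)$ by charts coming from other sheaf-quantizable fillings, obtained from $L$ by sequences of Lagrangian disk surgeries along elements of $\fS$, and to check chart-by-chart that each $A_{\eta_i}$ has no poles.'' This is circular: to know that such charts cover $\fM(\La,T)$ up to codimension two you would already need the cluster structure you are trying to establish. Moreover it does not address which charts are available after an arbitrary sequence of surgeries (the cycles may become immersed, so there may not be a genuine chart). The paper instead proves regularity of the naive merodromies $A_f$ \emph{directly} and \emph{prior} to any cluster input, via an explicit wedge/crossing-value formula $A_f = \alpha_i\wedge\beta_{n-i}/\beta_n$; this exhibits $A_f$ as a polynomial in the decoration data rather than merely a Laurent monomial on the initial chart. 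Regularity of the corrected merodromies $A_i$ (linear combinations along the Hasse poset of sugar-free hulls) then requires two ingredients that are entirely missing from your proposal: (a) factoriality of $\mathcal{O}(\fM(\La,T))$, established by identifying it with a braid-variety coordinate ring and appealing to the brick-manifold Picard computation (this is precisely what the ``complete grid plabic graph'' hypothesis encodes); and (b) the lollipop chain reaction / relative-position scanning, which shows that vanishing of $A_f$ propagates across the sugar-free hull and thus controls divisibility of the $A_f$'s by the irreducible $A_i$'s. Without (a) the divisibility arguments (e.g.\ showing $A_f/A_g$ is a unit when $\S_f=\S_g$, and the top-down induction along the Hasse diagram) have no foothold.

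Your irreducibility claim is also too quick. You write that it follows ``because the vanishing locus of $A_{\eta_i}$ is cut out by a single, geometrically irreducible condition.'' But asserting that the vanishing locus is irreducible is essentially the conclusion, not an independent reason. The paper proves irreducibility by a Geiss–Leclerc–Schröer-style Laurent-monomial argument (Lemma in \S 4.9) which already uses the once-mutated exchange relation from part (iii); it is an algebraic argument that combines the mutation formula with factoriality and the Laurent phenomenon on $U_0$ and $U_k$, not a geometric degeneration argument. Finally, your proposal never states how to conclude that $\mathcal{O}(\fM(\La,T))$ is an upper cluster algebra: the paper does this by the codimension-two covering $U_0\cup\bigcup_k U_k$ (initial chart plus once-mutated charts only, no long sequences), Hartogs, and then Berenstein–Fomin–Zelevinsky's theorem on upper bounds of cluster algebras with full-rank exchange matrix (the full-rank check is another point you skip; it follows from $T$ having a marked point per component). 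Without that input, the concluding sentence of your proof does not actually produce the cluster $\mathcal{A}$-structure. So: the parts you are most confident about (construction of $L$, $\fS$, duality, quasi-equivalence under change of frozen completion, the $A_1$-local model for the exchange relation) agree with the paper, but the mechanism for global regularity and for the ``upper cluster algebra'' endgame — factoriality, lollipop scanning, Hasse induction, codim-$2$ covering, and BFZ — is either absent or replaced by a circular chart-covering argument, and that gap is not easily repairable within your framework.
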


The grid plabic graph $\bG$ actually provides several natural completions of the $\mathbb{L}$-compressing system $\mathfrak{S}$ to a basis $\mathfrak{B}$, as explained in Section \ref{sec:weaves}. The canonical exact Lagrangian filling $L=L(\bG)$ associated with $\bG$ is obtained as the Lagrangian projection of the Legendrian surface whose front is given by the weave $\ww(\bG)$ associated with $\bG$, which is constructed in Section \ref{sec:weaves}. The weave $\ww(\bG)$ is used crucially in the argument so as to obtain a sheaf quantization of $L(\bG)$ and prove Items (i) through (iii) as required. In addition to the existence of the cluster $\mathcal{A}$-structures on $\fM(\La,T)$, another upshot of Theorem \ref{thm:main} is that the initial and the once-mutated cluster $\mathcal{A}$-variables can be named entirely in terms of symplectic topology, in an intrinsic and geometric manner. The resulting quasi-cluster $\mathcal{A}$-structure and these $\mathcal{A}$-variables can be equally considered and computed after a Hamiltonian isotopy.

In terms of the dichotomy between geometry and algebra, Theorem \ref{thm:main} shows that the ring $\mathcal{O}(\fM(\La,T))$ behaves {\it as if} it were always possible to perform an arbitrary sequence of Lagrangian disk surgeries starting at $L(\mathbb{G})$ with the curve configuration from the $\mathbb{L}$-compressing system $\mathfrak{S}$. It is known that geometric obstructions to further surger the Lagrangian skeleton can arise as one performs a series of Lagrangian surgeries (geometric mutations), e.g.~ through the appearance of immersed curves, or algebraic intersection numbers differing from geometric ones, and yet the existence of the cluster $\mathcal{A}$-structure built in Theorem \ref{thm:main} shows that it is not possible to detect such obstructions by studying $\mathcal{O}(\fM(\La,T))$. The following table schematically relates different ingredients involved in the proof of Theorem \ref{thm:main}:

\begin{center}
\begin{tabular}{ |P{5cm}|P{6cm}|P{5cm}|  }
	\hline
	\multicolumn{3}{|c|}{ {\bf Ingredients in the symplectic construction of upper cluster algebra for $\mathcal{O}(\mathfrak{M}(\Lambda,T))$}} \\
	\hline
	{\bf Grid Plabic Graph $\mathbb{G}$} & {\bf Symplectic Topology in $T^*\mathbb{R}^2$} & {\bf Cluster Theory}\\
	\hline
	Alternating strand diagram  & Legendrian Link $\Lambda\subseteq T^*_\infty\mathbb{R}^2$  & $D^-$-stack $\mathfrak{M}(\Lambda,T)$\\
	(with marked points $T$) & (with marked points $T$) & from dg-category $\mbox{Sh}_\Lambda(\mathbb{R}^2)$\\
	\hline
	Goncharov-Kenyon conjugate surface associated to $\mathbb{G}$&  Weave for Lagrangian filling $L$ ($\Longrightarrow$ Sheaf quantization $\mathcal{F}(L))$ & Open toric chart $T_L=(\mathbb{C}^\times)^{b_1(L,T)}\subseteq \mathfrak{M}(\Lambda,T)$\\
	\hline
	 & {\bf \textcolor{blue}{$\mathbb{L}$-compressible curve}} $\gamma\subseteq L$ & $T_L$-coordinate that extends to \\
	
	Sugar-free hull of $\mathbb{G}$	& with dual relative cycle $[\eta]\in N=H_1(L\setminus T,\Lambda\setminus T)$ &  a \textcolor{purple}{{\bf global regular}} function $A_\eta:\mathfrak{M}(\Lambda,T)\longrightarrow\mathbb{C}$\\
	\hline
	Set $S$ of sugar-free hulls & Mutable sublattice $\mathbb{Z}^{|S|}\subseteq N$ & Mutable variables $\{A_\eta\}$ in $T_L$\\
	\hline
	Non sugar-free region of $\mathbb{G}$ (e.g.~a non sugar-free face) & Immersed curve $\vartheta\subseteq L$ with dual relative cycle $\phi$ in $N$ ($\vartheta$ represented by immersed $\sf Y$-tree in weave) & $T_L$-coordinate extending to {\it non-vanishing} global regular function $A_\phi:\mathfrak{M}(\Lambda,T)\longrightarrow\mathbb{C}$\\
	\hline
	Subset of non s.-f. regions chosen via Hasse diagram& Sublattice $\mathbb{Z}^{b_1(L)-|S|}\subseteq N$ complement to sublattice $\mathbb{Z}^{|S|}$ & Frozen variables $\{A_\phi\}$ in $T_L$ (quasi-cluster equivalent)\\

		(different choices allowed) & (different complements) & \\
	\hline
	Intersection form on absolute $H_1$ of conjugate surface & Intersection form on $M=H_1(L,T)$ (and thus on dual $N=M^*$) & Quiver $Q(\{A_\eta\},\{A_\phi\})$ for $T_L$ (different from naive $Q(\mathbb{G})$)\\
	\hline
	``Mutation'' at sugar-free hull
	(not necessarily a square face, result often not plabic graph but represented by weave) & {\bf \textcolor{blue}{Lagrangian surgery}} $L'=\mu_\gamma(L)$ and relative cycle $\eta'=\mu_{\gamma}(\eta)$ in $L'$. Sheaf quantization $\mathcal{F}(L')$ via {\bf \textcolor{blue}{weave mutation}} at $\sf Y$-tree for $\gamma$. & $T_{L'}$-coordinate extending to \textcolor{purple}{{\bf global regular}} function $A_{\eta'}:\mathfrak{M}(\Lambda,T)\longrightarrow\mathbb{C}$ given by {\bf \textcolor{purple}{cluster $\mathcal{A}$-mutation}} at $\eta$\\
	\hline
\end{tabular}
\end{center}

There are several items from Theorem \ref{thm:main} that can be helpful to unpack. First, by a modification of the Guillermou-Jin-Treumann map \cite{JinTreumann}, the Lagrangian filling $L$ yields an open toric chart $(\C^\times)^{b_1}\sse \fM(\La,T)$, where $b_1=\mbox{rk}(H_1(L\setminus T,\La\setminus T))=\mbox{rk}(H_1(L,T)).$ The group $H^1(L;\C^\times)=Hom(H_1(L;\Z),\mbox{GL}_1(\C))$ accounts for the $\C^\times$-local systems on $L(\bG)$, and the modification accounts for the relative piece given by the marked points $T$; see Section \ref{ssec:sheaves} for details. By construction, microlocal merodromies are {\it a priori} functions on this particular chart $(\C^\times)^{b_1}$, and they visibly depend on $L$. In fact, in many cases they are (restrictions of) rational functions with non-trivial denominators and do not extend to global regular functions. Nevertheless, Theorem \ref{thm:main} shows that, remarkably, there is a particular set of such functions, indexed by a basis completion of the $\L$-compressing system $\mathfrak{S}$, whose elements extend to regular functions from $(\C^\times)^{b_1}$ to the entire moduli $\fM(\La,T)$. \\
    
\noindent Second, the frozen cluster $\mathcal{A}$-variables in Theorem \ref{thm:main} have two geometric, markedly distinct, origins: absolute cycles in $H_1(L)$, and relative cycles with endpoints in $T$ which are themselves not dual to any absolute cycle. The appearance of the former type of frozen variables, associated to absolute cycles, is an entirely new phenomenon, starting the study of $\bL$-(in)compressible curves in Lagrangian fillings. (E.g.~we show that a Chekanov $m(5_2)$ already displays such features.) At least to date, all known instances of frozen variables of geometric origin were related to marked points, in line with the latter type of frozens. The existence of a cluster structure on $\fM(\La,T)$ with a particular quiver $Q$ has neat applications to symplectic geometry, e.g. studying the possible relative Lagrangian skeleta containing $L$ for the Weinstein relative pair $(\C^2,\La)$; see below for more.\\
    
\noindent Third, Item (iii) in Theorem \ref{thm:main} is geometrically keeping track of certain relative cycles before and after a Lagrangian surgery: the data being analyzed is the change of a specific local system along that relative cycle (which itself changes topologically). This local system is obtained by applying the microlocal functor, with the target being the Kashiwara-Schapira stack $\msh_\La$, to a sheaf quantization of $L$. In our proof of Theorem \ref{thm:main}, the sheaf quantization is obtained thanks to the construction of the weave $\ww(\bG)$, which represents a (front of the) Legendrian lift of $L$. In fact, Section \ref{sec:weaves} will provide a diagrammatic method to draw those relative cycles before and after a weave mutation, and Section \ref{sec:cluster} provides a Lie-theoretic procedure to compute with such (microlocal) local systems. Note also that the geometric mutations are associated with sugar-free hulls, which are not necessarily square faces and might include lollipops: the fact that the calculus of weaves allows for these general mutations is crucial so as to conclude that the coordinate ring of $\fM(\La,T)$ is an upper cluster algebra.\\

\noindent Finally, the symplectic geometry perspective naturally leads to a quasi-cluster $\mathcal{A}$-structure, rather than a cluster $\mathcal{A}$-structure. Indeed, the weave $\ww(\bG)$ canonically gives the $\L$-compressing system $\mathfrak{S}$, which yields a linearly independent subset of $H_1(L\setminus T,\La\setminus T)$. Nonetheless, there are cases in which this subset does not span and a choice of basis completion is precisely what introduces the quasi-cluster ambiguity. In particular cases, such as $\bG$ being a plabic fence, the $\L$-compressing system already gives basis and hence $\fM(\La(\bG),T)$ carries a natural cluster $\mathcal{A}$-structure, but for a generic grid plabic graph $\bG$ there is no a priori reason for that to be the case; the natural algebraic structure arising from symplectic geometry is only unique up to quasi-cluster equivalence.\\

Theorem \ref{thm:main} also implies a series of new computations and results in 3-dimensional contact topology. Indeed, in many interesting cases, such as those where the cluster algebra equals the upper cluster algebra \cite{Muller13,Muller14,CGGLSS}, the existence of a full cluster $\mathcal{A}$-structure on the moduli space\footnote{If not made explicitly, the set of marked points $T$ is taken to have one marked point per component of $\La$.} $\mathfrak{M}=\mathfrak{M}(\La,T)$, as proven in Theorem \ref{thm:main}, leads to:

\begin{itemize}
    \item[(1)] The computation of its deRham cohomology ring $H^*(\mathfrak{M},\C)$, including the refinement of its mixed Hodge structure. These computations are done in \cite{LamSpeyer22} for the locally acyclic cases.\\
    
    \item[(2)] The existence of a holomorphic (pre)symplectic structure for the moduli space $\mathfrak{M}$.
    This allows for many classical techniques, such as quantization, to be applied to the coordinate ring $\mathcal{O}(\fM)$, see \cite{GSV_Book}. We emphasize that the cluster $\mathcal{A}$-variables associated to a seed are exponential Darboux coordinates for the symplectic 2-form. Note also that a holomorphic symplectic structure on the augmentation variety was recently constructed in \cite{CGGS} by different means (using the Cartan 3-form and Bott-Shulman forms) and see work of P.~Boalch \cite{Boalch1,Boalch2}. Upcoming work with our collaborators will show that these holomorphic symplectic structures coincide whenever they can be compared.\\
    
    \item[(3)] In the Louise case \cite{LamSpeyer22}, it is possible to compute the eigenvalues of the Frobenius automorphism on $\ell$-adic cohomology and perform finite point counts $\#\mathfrak{M}(\mathbb{F}_q)$ over finite fields $\mathbb{F}_q$, $q=p^k$ and $p$ large enough. These ought to be compared with the contact and symplectic results in \cite{HenryRutherford15,NRSS17}.
\end{itemize}

Another byproduct of our result, thinking in terms of cluster ensembles \cite{FockGoncharovII}, is that there also exists a (full) cluster $\mathcal{X}$-structure. Let $\mathcal{M}_1(\La)$ be the undecorated stack associated to $\mathfrak{M}(\La,T)$ and $\mathcal{M}_1(\La,T)$ its enhancement with framing data at $T$. Theorem \ref{thm:main} implies the following result:

\begin{cor}\label{cor:Xstructure}
Let $\bG\sse\R^2$ be a complete grid plabic graph, $\La=\La(\bG)\sse (\R^3,\xi_\st)$ its associated Legendrian link and $T\sse\La$ marked points. Then there exists a quasi-cluster $\mathcal{X}$-structure on $\mathcal{M}_1(\La,T)$.

\noindent In fact, each completion of the $\bL$-compressing system $\fS$ to a basis $\fB$ of $H_1(L,T)$, gives a cluster $\mathcal{X}$-structure on $\cM_1(\La,T)$. The initial quiver $Q$ is defined by the intersections in $\fB$ and the initial cluster $\mathcal{X}$-variables are microlocal monodromies associated with elements of $\fB$. In addition, the mutable cluster $\mathcal{X}$-variables are those associated with curves in the $\bL$-compressing system $\fS$ and different choices of completion of $\fS$ to a basis $\fB$ give quasi-equivalent cluster $\mathcal{X}$-structures on $\cM_1(\La,T)$. 
\end{cor}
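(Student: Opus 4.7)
The plan is to deduce this corollary directly from Theorem \ref{thm:main} via the Fock-Goncharov cluster ensemble formalism \cite{FockGoncharovII}. Recall that given any cluster $\mathcal{A}$-variety with seed data $(Q,\{A_i\})$, there is a canonical \emph{ensemble $p$-map} to a cluster $\mathcal{X}$-variety whose initial $\mathcal{X}$-variables are the monomials $X_i = \prod_j A_j^{\varepsilon_{ij}}$, where $\varepsilon_{ij}$ is the exchange matrix of the quiver $Q(\fB)$. My first step would be to interpret the geometric forgetful morphism $\mathfrak{M}(\Lambda,T)\to \mathcal{M}_1(\Lambda,T)$, obtained by forgetting the decorations at $T$ but retaining the framing data, as the geometric realization of this ensemble $p$-map. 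Concretely, in the open toric chart $(\mathbb{C}^\times)^{b_1(L,T)}\subset\mathfrak{M}(\Lambda,T)$ associated to $L$, the decoration torus acts by scaling stalks at marked points, and the quotient is the corresponding toric chart inside $\mathcal{M}_1(\Lambda,T)$ whose coordinates are precisely the microlocal monodromies of the Kashiwara-Schapira local system associated to elements of $\fB$.

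The second step is to check that these microlocal monodromies $X_\eta$ are indeed the Laurent monomials in the microlocal merodromies $A_\eta$ dictated by the intersection quiver $Q(\fB)$. This is a local check in the toric chart: both sides are computed from microlocal parallel transport of the sheaf quantization $\mathcal{F}(L)$ provided by the weave $\ww(\mathbb{G})$, and the passage from relative cycles to absolute cycles via the boundary map $H_1(L,T)\to H_1(L)$ is dual to the intersection pairing that defines $Q(\fB)$. Once this identification is in place, the mutable part of the $\mathcal{X}$-structure corresponds exactly to the $\mathbb{L}$-compressing system $\fS$, and the frozen part corresponds to the completion to $\fB$, matching the dichotomy in Theorem \ref{thm:main}.

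The third step is to transfer the mutation formulas. Item (iii) of Theorem \ref{thm:main} states that Lagrangian disk surgery along a disk bounding $\gamma_k\in\fS$ implements cluster $\mathcal{A}$-mutation on $A_{\eta_k}$. Applying the $p$-map termwise and using the standard identity relating $\mathcal{A}$-mutation and $\mathcal{X}$-mutation, one obtains the cluster $\mathcal{X}$-mutation formula $X_{\eta_k}'=X_{\eta_k}^{-1}\prod_{\eta_j\to\eta_k}(1+X_{\eta_j}^{-1})^{-1}\prod_{\eta_k\to\eta_i}(1+X_{\eta_i})$ on $\mathcal{M}_1(\Lambda,T)$, recovering (and generalizing beyond square faces) the square-move calculation of \cite{STWZ}. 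Finally, the quasi-cluster ambiguity transfers from the $\mathcal{A}$-side to the $\mathcal{X}$-side because quasi-cluster equivalences act by freezing torus rescalings that become trivial after applying $p$.

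The main obstacle I anticipate is the rigorous identification of the decoration-forgetting morphism with the ensemble $p$-map at the level of stacks (not just on open toric charts). One must verify that the decoration torus acts with the correct weights relative to the $\bL$-compressible basis, so that the global regularity from Theorem \ref{thm:main}(i) combined with the non-vanishing statement in (ii) descends to well-defined $\mathcal{X}$-coordinates on $\mathcal{M}_1(\Lambda,T)$, and that globally the ambiguity inherent to choosing a completion $\fB\supset\fS$ yields only quasi-cluster equivalent $\mathcal{X}$-structures rather than genuinely distinct ones. The compatibility with Hamiltonian isotopy invariance, inherited from Theorem \ref{thm:main}, then follows because the $p$-map is defined purely in terms of the intersection quiver $Q(\fB)$, which is itself a symplectic topological invariant of $L$.
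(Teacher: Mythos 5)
Your overall strategy — realize the forgetful morphism $p:\mathfrak{M}(\Lambda,T)\to\mathcal{M}_1(\Lambda)$ as the Fock--Goncharov ensemble $p$-map and transport the cluster $\mathcal{A}$-structure from Theorem~\ref{thm:main} — is indeed the route the paper takes, and your Step~2 is exactly Proposition~\ref{prop: p map pull-back}. The issue is in how you propose to close the loop at the level of coordinate rings. You correctly flag the danger that the toric-chart identification might not extend globally, but the mechanism you sketch for resolving it — ``the global regularity from Theorem~\ref{thm:main}(i) combined with the non-vanishing statement in (ii) descends to well-defined $\mathcal{X}$-coordinates'' — is not the right tool, and as stated it would not yield the ring isomorphism $\mathcal{O}(\mathcal{M}_1(\Lambda))\cong\mathcal{O}(\mathcal{X})$. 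Knowing that individual merodromies $A_\eta$ are regular or non-vanishing says nothing directly about whether an arbitrary regular function on $\mathcal{M}_1(\Lambda)$ is a regular function on the cluster Poisson variety $\mathcal{X}$.

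The mechanism the paper actually uses is a regularity descent argument driven by surjectivity. One needs two inputs you did not invoke. First, the rectangular exchange matrix $\epsilon|_{\fS(\bG)\times\tilde{\fB}}$ is full-ranked (Corollary~\ref{cor: full-ranked}); this is what makes both the geometric forgetful map $p:\FM(\Lambda,T)\to\M_1(\Lambda)$ and the abstract cluster $p$-map $p:\mathcal{A}\to\mathcal{X}$ surjective. Second, both $\mathcal{O}(\mathcal{A})$ and $\mathcal{O}(\mathcal{X})$ are intersections of Laurent polynomial rings over their seed tori; combined with surjectivity of $p$, this gives the biconditional that a rational function $f$ on $\mathcal{X}$ is regular if and only if $p^*f$ is regular on $\mathcal{A}$ (the paper cites~\cite[Lemma~A.1]{ShenWeng_cyclic_sieving}). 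The same biconditional holds for the geometric $p$ by surjectivity. One then runs the argument through the commuting square
\[
\xymatrix{\FM(\Lambda,T) \ar@{-->}[r]^\alpha \ar[d]_p & \mathcal{A} \ar[d]^p \\ \M_1(\Lambda) \ar@{-->}[r]^\chi & \mathcal{X},}
\]
in which the top arrow is an isomorphism of coordinate rings by Theorem~\ref{thm: upper cluster algebra}, and both horizontal arrows are birational because each pair of spaces shares the toric chart coming from the initial weave $\ww$. The two biconditionals together force $\chi^*$ to be an algebra isomorphism in both directions. This neatly sidesteps your concern about ``identification at the level of stacks'': the paper never identifies the forgetful map with the ensemble map globally; it only proves the coordinate-ring isomorphism via the square, using the toric chart to get birationality and surjectivity plus Laurentness to get regularity in both directions. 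Your Step~3 about the $\mathcal{X}$-mutation formula is true but not part of the corollary's proof; that formula was already established earlier from the weave-mutation computations in Subsection~\ref{ssec:clusterXvars}.
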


\noindent Corollary \ref{cor:Xstructure} is a new result and establishes the existence of a (full) cluster $\mathcal{X}$-structure.
It is crucial to understand that there is currently no proof of Corollary \ref{cor:Xstructure} on its own. Namely, we are only able to deduce the existence of a cluster $\mathcal{X}$-structure once we have proven the existence of a full cluster $\mathcal{A}$-structure in Theorem \ref{thm:main}: two mathematical reasons are that the results used from \cite{BFZ05} are only applicable to cluster $\mathcal{A}$-structures and that the codimension-2 arguments in Section \ref{sec:cluster} require an explicit understanding of the $\mathcal{A}$-variables, including their irreducibility; see also Subsection \ref{ssec:clarificationAX}.


The two moduli $\fM(\Lambda(\bG),T)$ and $\cM_1(\Lambda(\bG), T)$ in Theorem \ref{thm:main} and Corollary \ref{cor:Xstructure} form a cluster ensemble. In Section \ref{sec:DT}, we focus on shuffle grid plabic graphs and prove that these cluster varietes always admit a Donaldson-Thomas (DT) transformation. See \cite{KontsevichSoibelman_MotivicDT,GoncharovLinhui_DT} for the necessary preliminaries on DT-transformations. In fact, we realize this cluster automorphism geometrically, as a composition of a Legendrian isotopy of $\La(\bG)$ and the strict contactomorphism $t:(x,y,z)\longmapsto(-x,y,-z)$ of $(\R^3,\ker\{dz-ydx\})$. In particular, we conclude the following result:

\begin{cor}\label{cor:DT} Let $\bG$ be a shuffle grid plabic graph. Consider the contactomorphism $t$ and the half K\'alm\'an loop Legendrian isotopy $K^{1/2}$. Then the composition $t\circ K^{1/2}$ induces the (unique) cluster Donaldson-Thomas transformation of $\cM_1(\Lambda(\bG))$.

\noindent In particular, the cluster duality conjecture holds for the cluster ensemble $(\fM(\Lambda(\bG),T),\cM_1(\Lambda(\bG), T))$. 
\end{cor}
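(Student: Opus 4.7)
The plan is to establish Corollary \ref{cor:DT} in three stages: first show that $\varphi := t\circ K^{1/2}$ induces a biregular automorphism of $\cM_1(\Lambda(\bG))$; second identify this automorphism with the unique cluster Donaldson--Thomas transformation via a tropical criterion; and third deduce the Fock--Goncharov cluster duality as a formal consequence. For the first stage, the half K\'alm\'an loop $K^{1/2}$ is by construction a Legendrian isotopy of $\Lambda(\bG)$, so by the contact-isotopy invariance of the sheaf-theoretic moduli (as set up in Section \ref{sec:gridplabic}) it induces an automorphism of $\cM_1(\Lambda(\bG))$. The map $t(x,y,z) = (-x,y,-z)$ is a strict contactomorphism of $(\R^3, \xi_\st)$, and the \emph{shuffle} condition on $\bG$ is precisely what ensures that $t(\Lambda(\bG))$ is Legendrian isotopic to $\Lambda(\bG)$, i.e.\ the front has the required reflection symmetry up to Legendrian isotopy. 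Composing yields a biregular automorphism $\varphi_*$ of $\cM_1(\Lambda(\bG))$, which is in particular a cluster automorphism of the cluster $\mathcal{X}$-structure supplied by Corollary \ref{cor:Xstructure}.

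For the second stage, I would invoke the criterion of Goncharov--Shen together with the uniqueness statement of \cite{GoncharovLinhui_DT}: a cluster automorphism is \emph{the} DT transformation of $\cM_1(\Lambda(\bG))$ if and only if its tropicalization reverses all $c$-vectors of one initial seed. I take as initial seed the one supplied by Theorem \ref{thm:main}, associated to the canonical weave-derived Lagrangian filling $L = L(\bG)$ and its $\L$-compressing system $\fS$. The task then becomes explicit: track the image $\varphi(L)$ through the weave calculus of Section \ref{sec:weaves}. Concretely, $K^{1/2}$ should be realized as a particular sequence of weave Reidemeister-III moves, each corresponding to a Lagrangian disk surgery along a curve of $\fS$ and executed in an order governed by the shuffle structure of $\bG$; this sequence is tailored to implement a maximal green sequence on the mutable part of the initial quiver. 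The subsequent reflection $t$ then matches the resulting filling back with $L$ (reversing orientations), which yields the sign reversal of $c$-vectors required by the DT criterion and allows one to read off the effect of $\varphi_*$ on the initial microlocal merodromies using Theorem \ref{thm:main}(iii).

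The third stage, deducing cluster duality, follows from the general framework of Gross--Hacking--Keel--Kontsevich: once a cluster Donaldson--Thomas transformation is produced on a cluster ensemble, the theta-function bases on the dual $\mathcal{A}$- and $\mathcal{X}$-varieties exist and are canonically dual, so the Fock--Goncharov cluster duality conjecture holds for the pair $(\fM(\Lambda(\bG),T), \cM_1(\Lambda(\bG),T))$. This step is purely algebraic once the DT transformation has been produced in the previous stage.

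The main obstacle I expect lies in the second stage, namely giving a \emph{uniform} description of $K^{1/2}$ as an explicit sequence of weave mutations whose cluster-theoretic effect matches the DT prescription for \emph{all} shuffle grid plabic graphs, rather than merely in small examples. The half K\'alm\'an loop is a genuinely three-dimensional contact phenomenon, and rephrasing it as a maximal green sequence requires a careful combinatorial analysis of how the sugar-free hulls of $\bG$ are visited as one travels through $K^{1/2}$, together with the bookkeeping necessary to coordinate this ordering with the action of $t$ on the frozen variables. Handling this uniformly across all shuffle graphs, while ensuring regularity of the intermediate cluster variables and sign-coherence of the tropical data along the entire green sequence, is where the bulk of the work will lie.
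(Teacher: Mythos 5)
Your overall architecture matches the paper's: realize the half K\'alm\'an loop $K^{1/2}$ as an explicit sequence of weave moves, observe that $t$ is a (transposition-type) cluster isomorphism, combine them to get an automorphism, identify it with the cluster DT transformation via a tropical/$c$-vector criterion, and then deduce cluster duality from \cite{GHKK}. The paper also decomposes the argument essentially this way, proving first that $K^{1/2}$ gives a \emph{reddening sequence} (Theorem \ref{thm:DT}) and then appealing to the full-rank property of the exchange matrix plus \cite{GHKK} for duality.

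There is, however, a concrete error in your second stage: you claim that $K^{1/2}$ implements a \emph{maximal green sequence}. This is false for a general shuffle graph. The paper explicitly constructs $K^{1/2}$ as a reddening sequence that is \emph{not} necessarily maximal green; indeed the reflection-move bookkeeping in the proof of Theorem \ref{thm:DT} unavoidably mutates at \emph{red} vertices --- e.g.\ when you push an auxiliary vertical edge $e'$ back to its original location after covering a lollipop (Remark \ref{rem:mutation needed} and Figure \ref{fig:left reflection move}), or when you move a $\begin{tikzpicture}[baseline=5]\vertbar[](0,0,0.5);\end{tikzpicture}$ edge to the right in step (III). The paper's introduction already flags this: ``Examples prove that it is not necessarily a maximal green sequence.'' The maximal-green version only holds in the special case of plabic fences (\cite{ShenWeng}), whose quivers have no branchings. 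If you commit to proving a maximal green sequence exists, that sub-claim will fail for, say, the shuffle graph in Example \ref{exmp:DT}. A reddening sequence is exactly enough to identify the DT transformation (Keller / Goncharov--Shen), so the fix is simply to weaken ``maximal green'' to ``reddening'' and allow red mutations in your combinatorial analysis.

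Two smaller points: (i) the weave moves implementing $K^{1/2}$ are not all Reidemeister-III moves --- they mix weave \emph{equivalences} (the reflection moves, which are Legendrian RII in the front) with weave \emph{mutations} (the square/Whitehead moves along $\sf I$-cycles, Definition \ref{def:WeaveMutation}), and only the latter produce cluster mutations; (ii) the role of $t$ is not that $t(\Lambda(\bG))$ is Legendrian isotopic to $\Lambda(\bG)$ on the nose, but rather that $K^{1/2}$ carries $\Lambda$ to a reflected position $\Lambda'$ and $t:\cM_1(\Lambda')\to\cM_1(\Lambda)$ is the transposition map that restores the original Legendrian, so only the composition is an automorphism. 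The shuffle hypothesis is used to control the edge-migration combinatorics (all vertical edges of a single pattern; one region per level with controlled branching), not to make $t$ alone an automorphism.
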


\noindent The explicit sequence of mutations realizing the DT-transformation is presented in Section \ref{sec:DT}. We show it is a reddening sequence. Examples prove that it is not necessarily a maximal green sequence.

Finally, the contact and symplectic geometric results and techniques we use and develop to prove Theorem \ref{thm:main} are invariant under Hamiltonian isotopies, not necessarily compactly supported. Given that the cluster coordinates in Theorem \ref{thm:main} and Corollary \ref{cor:Xstructure} are all intrinsically named through symplectic geometric means, they can be named, and computed, after a compactly supported Hamiltonian isotopy is applied to $L(\bG)$ or a contact isotopy is applied to $\La(\bG)$. This is a distinctive crucial feature which had been missing in our previous works \cite{CGGS,CGGS2,CasalsZaslow,GSW}, where even the initial seed could not typically be defined (nor computed) after a Legendrian isotopy.\footnote{The pull-back structures from \cite[Section 3]{STWZ} had the same issue.}


{\bf Acknowledgements}. We are grateful to C.~Fraser, H.~Gao, A.~Goncharov, E.~Gorsky, M.~Gorsky, I.~Le, W.~Li, J.~Simental-Rodriguez, L.~Shen, M.~Sherman-Bennett and E.~Zaslow for their interest, comments on the draft and useful conversations. We also thank the referee for their valuable comments. R.~Casals is supported by the NSF CAREER DMS-1942363 and a Sloan Research Fellowship of the Alfred P. Sloan Foundation.

{\bf Notation}. We denote by $[a,b]$ the discrete interval $[a,b]:=\{k\in\N: a\leq k\leq b\}$, if $a\leq b$, $a,b\in\N$. In this article, $S_n$ denotes the group of permutations of $n$ elements, $n\in\N$, and $s_i$ its $i$-th simple transposition, $i\in[1,n-1]$. We abbreviate $s_{[b,a]}:=s_bs_{b-1}\ldots s_{a+1}s_a$ and $s_{[b,a]}^{-1}:=s_as_{a+1}\ldots s_{b-1}s_b$, for $a<b$, $a,b\in\N$, and $s_{[b,a]}$ and $s_{[b,a]}^{-1}$ are empty if $b<a$. Let $w_{0,n}\in S_n$ be the longest word in the symmetric group $S_n$, we will sometimes write $w_0\in S_n$ if $n$ is clear by context. The standard word $\w_{0,n}$ for $w_{0,n}$ is defined to be the reduced expression $\w_{0,n}:=s_{[1,1]}s_{[2,1]}s_{[3,1]}\ldots s_{[n-1,1]}$.

{\bf Extended version}. The present manuscript is a condensed account of the arXiv submission 2204.13244. The arXiv submission, also available in our research websites, is an extended version and contains a series of additional examples and figures, as well as more detail in some of the proofs and motivation and context in parts of the construction. The interested reader might benefit from the more inviting extended version, as it is more comprehensive and builds the proofs in a more self-contained manner. That said, we believe experts will also appreciate this streamlined version, where only the logically necessary steps for our main results are included.

\section{Grid Plabic Graphs and Legendrian Links}\label{sec:gridplabic}

In this section we introduce the starting characters in the manuscript. On the combinatorial side, we introduce the notion of a grid plabic graph $\bG$, in Subsection \ref{ssec:GPgraph}, and that of sugar-free hulls in Subsection \ref{ssec:hulls}. On the geometric side, we introduce a front for the Legendrian link $\La(\bG)$ associated to the alternating strand diagram of a grid plabic graph $\bG$, in Subsection \ref{ssec:legendrianlink}, and set up the necessary moduli spaces from the microlocal theory of sheaves in Subsection \ref{ssec:sheaves}. Several explicit examples are provided in Subsection \ref{sssec:examplesGPgraph}.

\subsection{Grid Plabic Graphs}\label{ssec:GPgraph} The input object in our results is the following type of graphs.

\begin{definition}\label{def:GPgraph}
An embedded planar bicolored graph $\bG\sse\R^2$ is said to be a \emph{grid plabic graph} (or \emph{GP-graph} for short) if it satisfies the following conditions.
\begin{itemize}\setlength\itemsep{0.5em}
    \item[(i)] The vertices of $\bG\sse\R^2$ belong to the standard integral lattice $\Z^2\sse\R^2$, and they are colored in either black or white.
    \item[(ii)] The edges of $\bG\sse\R^2$ belong to the standard integral grid $(\Z\times\R)\cup (\R\times\Z)\sse\R^2$. Edges that are contained in $\Z\times\R$ are said to be \emph{vertical}, and edges that are contained in $\R\times\Z$ are said to be \emph{horizontal}. 
    \item[(iii)] A maximal connected union of horizontal edges is called a \emph{horizontal line}. Each horizontal line must end at a univalent white vertex on the left and a univalent black vertex on the right. These univalent vertices are called \emph{lollipops}.
    \item[(iv)] Each vertical edge must end at trivalent vertices of opposite colors, and the end points of a vertical edge must be contained in the interior of a horizontal line.
\end{itemize}
\end{definition}

\noindent In Definition \ref{def:GPgraph}, it is fine to allow for bivalent vertices. The Legendrian isotopy type of the zig-zag diagram, as introduced in Subsection \ref{ssec:legendrianlink}, does not change when inserting such vertices, nor does the Hamiltonian isotopy type of the Lagrangian filling associated to the conjugate surface.

\subsection{Column types and associated transpositions}\label{ssec:} The intersection of a GP-graph $\bG\sse\R^2$ with a subset of the form $\{(x,y)\in\R^2: l< x< r\}\sse\R^2$, for some $l,r\in\R$, $l<r$, is said to be a column of $\bG$. Any GP-graph $\bG$ is composed by the horizontal concatenation of three types of non-empty columns called \emph{elementary columns}. These three types of elementary columns are depicted in Figure \ref{fig:ElementaryRegions} and can be described as follows.

\begin{itemize}\setlength\itemsep{0.5em}
    \item[-] Type 1: a column is said to be Type 1 if it solely consists of parallel horizontal lines, i.e. it contains no vertices.
    \item[-] Type 2: a column is said to be Type 2, or a crossing, if it contains exactly two oppositely colored vertices of $\bG$ and a (unique) vertical edge between them. 
    \item[-] Type 3: a column is said to be Type 3, or a lollipop, if it contains exactly one lollipop. Note that the lollipop can be either white or black. 
\end{itemize}

\begin{center}
	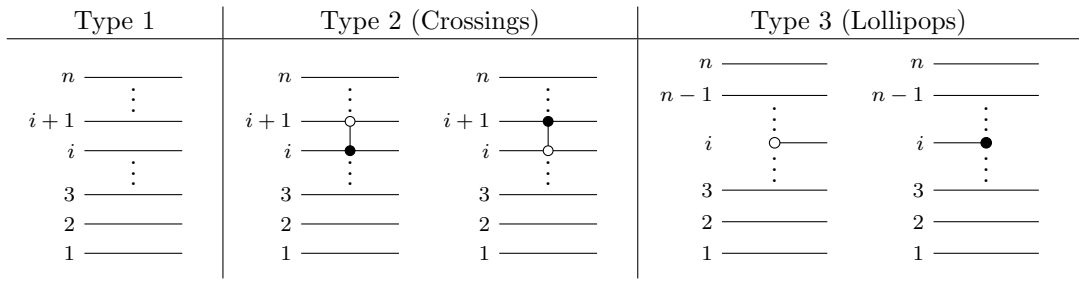
\begin{figure}[H]
		\centering
		\begin{tabular}{c|c|c}
		Type 1 & Type 2 (Crossings) & Type 3 (Lollipops) \\ \hline 
		\begin{tikzpicture}[scale=0.65]
		\foreach \i in {1,2,3} 
		{
		\draw (0,\i*0.6) node [left] {\footnotesize{$\i$}} -- (2,\i*0.6) node [right] {$\ $};
		}
		\foreach \i in {4.5,5.5,7} 
		{
		\draw (0,\i*0.6) -- (2,\i*0.6);
		}
		\node at (0,4.5*0.6) [left] {\footnotesize{$i$}};
		\node at (0,5.5*0.6) [left] {\footnotesize{$i+1$}};
		\node at (0,7*0.6) [left] {\footnotesize{$n$}};
		\node at (0,7.5*0.6) [] {$\quad$};
		\node at (1,4*0.6) [] {$\vdots$};
		\node at (1,6.5*0.6) [] {$\vdots$};
		\end{tikzpicture} & 
		\begin{tikzpicture}[scale=0.65]
		\foreach \i in {1,2,3} 
		{
		\draw (0,\i*0.6) node [left] {\footnotesize{$\i$}} -- (2,\i*0.6) node [right] {$\ $};
		}
		\foreach \i in {4.5,5.5,7} 
		{
		\draw (0,\i*0.6) -- (2,\i*0.6);
		}
		\node at (0,4.5*0.6) [left] {\footnotesize{$i$}};
		\node at (0,5.5*0.6) [left] {\footnotesize{$i+1$}};
		\node at (0,7*0.6) [left] {\footnotesize{$n$}};
		\node at (0,7.5*0.6) [] {$\quad$};
		\node at (1,4*0.6) [] {$\vdots$};
		\node at (1,6.5*0.6) [] {$\vdots$};
		\vertbar[](1,5.5*0.6,4.5*0.6);
		\end{tikzpicture}
		\begin{tikzpicture}[scale=0.65]
		\foreach \i in {1,2,3} 
		{
		\draw (0,\i*0.6) node [left] {\footnotesize{$\i$}} -- (2,\i*0.6) node [right] {$\ $};
		}
		\foreach \i in {4.5,5.5,7} 
		{
		\draw (0,\i*0.6) -- (2,\i*0.6);
		}
		\node at (0,4.5*0.6) [left] {\footnotesize{$i$}};
		\node at (0,5.5*0.6) [left] {\footnotesize{$i+1$}};
		\node at (0,7*0.6) [left] {\footnotesize{$n$}};
		\node at (0,7.5*0.6) [] {$\quad$};
		\node at (1,4*0.6) [] {$\vdots$};
		\node at (1,6.5*0.6) [] {$\vdots$};
		\vertbar[](1,4.5*0.6,5.5*0.6);
		\end{tikzpicture} &
		\begin{tikzpicture}[scale=0.7]
		\foreach \i in {1,2,3} 
		{
		\draw (0,\i*0.6) node [left] {\footnotesize{$\i$}} -- (2,\i*0.6) node [right] {$\ $};
		}
		\foreach \i in {6,7} 
		{
		\draw (0,\i*0.6) -- (2,\i*0.6);
		}
		\node at (0,4.5*0.6) [left] {\footnotesize{$i$}};
		\node at (0,6*0.6) [left] {\footnotesize{$n-1$}};
		\node at (0,7*0.6) [left] {\footnotesize{$n$}};
		\node at (0,7.5*0.6) [] {$\quad$};
		\node at (1,3.9*0.6) [] {$\vdots$};
		\node at (1,5.5*0.6) [] {$\vdots$};
		\draw (1,4.5*0.6) -- (2,4.5*0.6);
		\draw [fill=white] (1,4.5*0.6) circle [radius=0.1];
		\end{tikzpicture}
		\begin{tikzpicture}[scale=0.7]
		\foreach \i in {1,2,3} 
		{
		\draw (0,\i*0.6) node [left] {\footnotesize{$\i$}} -- (2,\i*0.6) node [right] {$\ $};
		}
		\foreach \i in {6,7} 
		{
		\draw (0,\i*0.6) -- (2,\i*0.6);
		}
		\node at (0,4.5*0.6) [left] {\footnotesize{$i$}};
		\node at (0,6*0.6) [left] {\footnotesize{$n-1$}};
		\node at (0,7*0.6) [left] {\footnotesize{$n$}};
		\node at (0,7.5*0.6) [] {$\quad$};
		\node at (1,3.9*0.6) [] {$\vdots$};
		\node at (1,5.5*0.6) [] {$\vdots$};
		\draw (1,4.5*0.6) -- (0,4.5*0.6);
		\draw [fill=black] (1,4.5*0.6) circle [radius=0.1];
		\end{tikzpicture}
		\end{tabular}
		\caption{The three types of elementary columns in a GP-graph.}
		\label{fig:ElementaryRegions}
	\end{figure}
\end{center}

\noindent We label the horizontal $\bG$-edges in Type 1 and 2 columns by consecutively increasing natural numbers from bottom to top. The horizontal lines of a Type 3 column are labeled in a similar way, but using the right side of the column in the case of a white lollipop and using the left side of the column in the case of a black lollipop. Without loss of generality, we always assume that there is a Type 1 column on each side of a column of Type 2 or 3.

Let $S_\N$ be the (infinite) group of permutations on the set $\N$. It is generated by simple transpositions $s_i=(i,i+1)$, $i\in\N$. Within $S_\N$, we define $S_{[a,b]}\cong S_{b-a+1}$ to be subgroup consisting of bijections that map $i$ back to itself for all $i\notin [a,b]$. As we scan from left to right across the elementary columns of $\bG$, we associate a copy of $S_{[a,b]}$ for some $[a,b]$ with each column of Type 1 or 2 via these rules: 
\begin{itemize}\setlength\itemsep{0.5em}
    \item[-] We start with the empty set before the leftmost white lollipop, and we associate $S_{[1,1]}$ with the Type 1 column right after the leftmost white lollipop.
    \item[-] The symmetric group $S_{[a,b]}$ does not change as we scan through a Type 1 or 2 column.
    \item[-] If the symmetric group is $S_{[a,b]}$ before a Type 3 column with a white lollipop, then the symmetric group after this Type 3 column is $S_{[a,b+1]}$.
    \item[-] If the symmetric group is $S_{[a,b]}$ before a Type 3 column with a black lollipop, then the symmetric group after this Type 3 column is $S_{[a+1,b]}$.
\end{itemize}

In summary, when passing through a white lollipop we move from a copy of $S_k$ to a copy of $S_{k+1}$ by adding a simple transposition at the end (with a larger subindex), and when passing through a black lollipop we move from a copy of $S_{k+1}$ to a copy of $S_{k}$ by dropping the first transposition (with smaller subindex).

\subsection{Sugar-free Hulls}\label{ssec:hulls} By definition, a {\it face} of a GP-graph $\bG$ is any bounded connected component of $\R^2\setminus\bG$. A face is said to contain a lollipop if its closure in $\R^2$ contains a univalent vertex of $\bG$. A {\it region} of a GP-graph $\bG$ is a union of faces whose closure in $\R^2$ is connected; in particular, a face is a region and the union of any pair of adjacent faces is a region. For instance, the yellow and red area depicted in Figure \ref{fig:ElementaryRegions_ExampleIntro} are both faces and the yellow face contains a lollipop; their union is a region (which will be the sugar-free hull of the yellow face).

The {\it boundary} $\dd R$ of a region $R$ is the topological (PL-smooth) boundary of its closure $\overline{R}\sse\R^2$. The boundary $\dd R$ of a region necessarily consists of straight line segments meeting at corners that have either $90^\circ$ or $270^\circ$ angles. By definition, a $270^\circ$ corner is said to be \emph{left-pointing} if it is of the form $\begin{tikzpicture}
\draw (0,0) -- (0,0.5) -- (0.5,0.5);
\end{tikzpicture}$ or $\begin{tikzpicture}
\draw (0,0.5) -- (0,0) -- (0.5,0);
\end{tikzpicture}$, and a $270^\circ$ corner is said to be \emph{right-pointing} if it is of the form $\begin{tikzpicture}
\draw (0,0.5) -- (0.5,0.5) -- (0.5,0);
\end{tikzpicture}$ or $\begin{tikzpicture}
\draw (0.5,0.5) -- (0.5,0) -- (0,0);
\end{tikzpicture}$. Equipped with this terminology, we introduce the following notion:

\begin{definition}\label{def:sugarfree} Given a grid plabic graph $\bG$, a region $R$ is said to be \emph{sugar-free} if all left-pointing $270^\circ$ corners along $\partial R$ are white and all right-pointing $270^\circ$ corners along $\partial R$ are black. See Figure \ref{fig:SugarFree_Definition} for a picture with the allowed (and disallowed) corners. The \emph{sugar-free hull} $\S(f)$ of a face $f$ of a plabic graph $\bG$ is defined to be the intersection of all sugar-free regions $R$ containing $f$. In particular, sugar-free hulls are sugar-free regions. 
\end{definition}

\begin{center}
	\begin{figure}[H]
		\centering
		\includegraphics[scale=0.7]{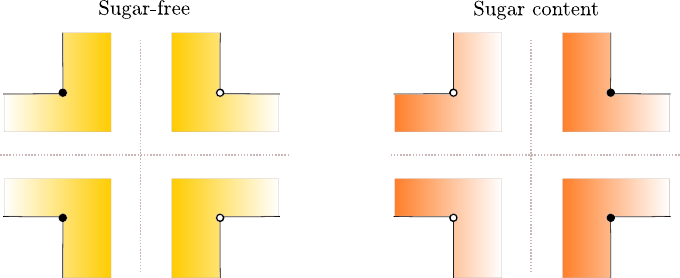}
		\caption{The four corners depicted on the left, in yellow, are allowed in a sugar-free region. The four corners depicted on the right, in orange, are not allowed in a sugar-free region, they have sugar content.}
		\label{fig:SugarFree_Definition}
	\end{figure}
\end{center}

\noindent The boundary of a sugar-free region has the following characterization, which follows immediately from the fact that all vertical bars must be of different colors at the two ends:

\begin{lemma}\label{lemma:staircase} Let $R$ be a sugar-free region in a GP-graph $\bG$. Then $\partial R$ must be decomposed as a concatenation of staircases of the following four types:
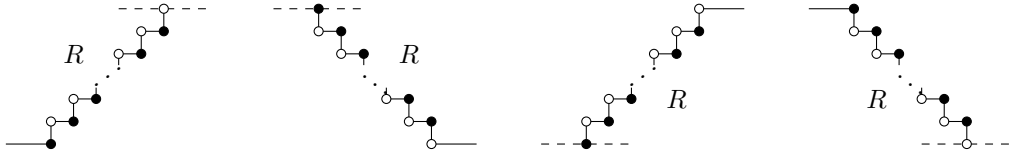
\begin{figure}[H]
\begin{tikzpicture}[scale=0.6]
\draw (-0.5,0) -- (0.5,0) -- (0.5,0.5) -- (1,0.5) -- (1,1) -- (1.5,1) -- (1.5,1.25);
\node at (1.75,1.5) [] {$\Ddots$};
\draw (2,1.75) -- (2,2) -- (2.5,2) -- (2.5,2.5) -- (3,2.5) -- (3,3);
\draw [dashed] (2,3) -- (4,3);
\foreach \i in {1,2,3,5,6}
{
    \draw [fill=black] (\i*0.5,\i*0.5-0.5) circle [radius=0.1];
}
\foreach \i in {1,2,4,5,6}
{
    \draw [fill=white] (\i*0.5,\i*0.5) circle [radius=0.1];
}
\node at (1,2) [] {$R$};
\end{tikzpicture} \quad \quad 
\begin{tikzpicture}[scale=0.6]
\draw (0.5,0) -- (-0.5,0) -- (-0.5,0.5) -- (-1,0.5) -- (-1,1) -- (-1.5,1) -- (-1.5,1.25);
\node at (-1.75,1.5) [] {$\ddots$};
\draw (-2,1.75) -- (-2,2) -- (-2.5,2) -- (-2.5,2.5) -- (-3,2.5) -- (-3,3);
\draw [dashed] (-4,3) -- (-2,3);
\foreach \i in {1,2,3,5,6}
{
    \draw [fill=white] (-\i*0.5,\i*0.5-0.5) circle [radius=0.1];
}
\foreach \i in {1,2,4,5,6}
{
    \draw [fill=black] (-\i*0.5,\i*0.5) circle [radius=0.1];
}
\node at (-1,2) [] {$R$};
\end{tikzpicture} \quad \quad
\begin{tikzpicture}[scale=0.6]
\draw (0.5,0) -- (0.5,0.5) -- (1,0.5) -- (1,1) -- (1.5,1) -- (1.5,1.25);
\node at (1.75,1.5) [] {$\Ddots$};
\draw (2,1.75) -- (2,2) -- (2.5,2) -- (2.5,2.5) -- (3,2.5) -- (3,3)--(4,3);
\draw [dashed] (-0.5,0) -- (1.5,0);
\foreach \i in {1,2,3,5,6}
{
    \draw [fill=black] (\i*0.5,\i*0.5-0.5) circle [radius=0.1];
}
\foreach \i in {1,2,4,5,6}
{
    \draw [fill=white] (\i*0.5,\i*0.5) circle [radius=0.1];
}
\node at (2.5,1) [] {$R$};
\end{tikzpicture} \quad \quad
\begin{tikzpicture}[scale=0.6]
\draw (-0.5,0) -- (-0.5,0.5) -- (-1,0.5) -- (-1,1) -- (-1.5,1) -- (-1.5,1.25);
\node at (-1.75,1.5) [] {$\ddots$};
\draw (-2,1.75) -- (-2,2) -- (-2.5,2) -- (-2.5,2.5) -- (-3,2.5) -- (-3,3)--(-4,3);
\draw [dashed] (-1.5,0) -- (0.5,0);
\foreach \i in {1,2,3,5,6}
{
    \draw [fill=white] (-\i*0.5,\i*0.5-0.5) circle [radius=0.1];
}
\foreach \i in {1,2,4,5,6}
{
    \draw [fill=black] (-\i*0.5,\i*0.5) circle [radius=0.1];
}
\node at (-2.5,1) [] {$R$};
\end{tikzpicture}
\caption{Four types of staircase building blocks for the boundary $\dd R$ of a sugar-free region $R\sse\bG$. In each instance, the letter $R$ marks the location of the region in the plane. The dashed lines indicate that $\partial R$ can continue in either of the two branches}
\label{fig:staircase}
\end{figure}
\end{lemma}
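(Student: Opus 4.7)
The plan is to exploit that $\partial R$ is a closed axis-aligned polygon in $\R^2$ and decompose it into maximal arcs on which the direction of travel stays within a single quadrant; each such monotone arc will then match one of the four staircase types in the figure.

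Since every edge of $\bG$ lies in the integer grid, $\partial R$ is a closed rectilinear polygon whose segments alternate between horizontal and vertical, with corners of interior angle either $90^\circ$ (convex) or $270^\circ$ (reflex) to $R$. At each reflex corner, the sugar-free hypothesis pins the color of the underlying $\bG$-vertex --- white if left-pointing, black if right-pointing --- and since such a corner sits at a trivalent vertex whose two boundary edges consist of one horizontal and one vertical, Definition \ref{def:GPgraph}(iv) forces the $\bG$-vertex at the other end of the vertical boundary edge to have the opposite color. In this way the sugar-free rule at reflex corners, propagated by the bipartite condition on vertical edges, dictates the colors of both corner types along $\partial R$.

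I would then partition $\partial R$ into maximal arcs whose consecutive step directions lie in a single one of the four quadrant pairs $\{E,N\}$, $\{N,W\}$, $\{W,S\}$, $\{S,E\}$; each such arc is automatically a monotone staircase in the corresponding quadrant, with reflex and convex corners alternating along it. A quadrant-by-quadrant case analysis identifies each resulting colored monotone arc with exactly one of the four staircases of the figure, since the only constraints on the shape and coloring are precisely the sugar-free rule on the reflex corners and the bipartite rule across the intervening vertical edges. These monotone arcs concatenate to cover $\partial R$, because transitions between consecutive arcs occur only at the extremal (topmost, bottommost, leftmost, rightmost) corners of the polygon, which always exist on a bounded closed rectilinear curve. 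The main delicate point is the case analysis: one must check in each of the four quadrants that the sugar-free color forced at every reflex corner is consistent with the color propagated by Definition \ref{def:GPgraph}(iv) across the adjacent vertical edge, so that the monotone arc genuinely realizes one of the four depicted staircase patterns rather than an inadmissible alternative.
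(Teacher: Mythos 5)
Your overall approach — decomposing $\partial R$ into maximal monotone arcs and using the sugar-free coloring on reflex corners together with Definition \ref{def:GPgraph}(iv) on vertical edges — matches the paper's intended argument. The paper gives a one-sentence proof whose essential content is that a vertical segment of $\partial R$ cannot have both endpoints being $270^\circ$ corners: since $R$ lies on one consistent side of the vertical, two reflex corners at its ends would have the same pointing direction, hence by the sugar-free rule the same color, contradicting Definition \ref{def:GPgraph}(iv). This single observation, combined with the sugar-free coloring, is what forces each vertical segment within a monotone arc to have exactly one $90^\circ$ and one $270^\circ$ endpoint with the colors as depicted, and it is worth stating explicitly rather than leaving it implicit inside "the bipartite rule propagates colors."

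There are two soft spots in your write-up. First, the sentence claiming that "transitions between consecutive arcs occur only at the extremal (topmost, bottommost, leftmost, rightmost) corners of the polygon" is doing no work and is not a correct justification: the maximal monotone arcs partition $\partial R$ by definition, independently of where the transition corners sit, and the asserted fact is not a general property of rectilinear polygons (and an argument that it holds specifically for sugar-free regions would essentially presuppose the staircase structure you are trying to establish, so it is circular). Second, you assert that the sugar-free rule together with the bipartite rule "dictates the colors of both corner types along $\partial R$," but this only follows for a $90^\circ$ corner whose vertical edge has a reflex corner at the other end; you have not addressed what happens for a vertical whose two ends are both $90^\circ$, and whether such a vertical can even occur in $\partial R$ is part of what the case analysis must resolve. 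Since you explicitly defer the case analysis, the proposal stops one step short of the actual content of the lemma, although the paper's own proof is equally terse on this point.
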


\begin{lemma}\label{lemma:single component} Let $\bG$ be a GP-graph, $R\sse\bG$ be a sugar-free region and $C$ a column in $\bG$ of any type. Then the intersection $R\cap C$ has at most one connected component.
\end{lemma}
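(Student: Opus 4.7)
The plan is to argue by contradiction, exploiting the staircase description of $\dd R$ provided by Lemma \ref{lemma:staircase} together with the color constraints of the sugar-free condition.

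Suppose $R \cap C$ has at least two connected components. Inside $C$, we can then choose a face $F_*$ of $\bG$ that lies outside $R$ and that separates a component $R_1$ of $R \cap C$ lying below $F_*$ from a component $R_2$ lying above. Let $e_{\text{bot}} \sse \dd R$ be the bottom horizontal edge of $F_*$ within $C$ (the top of $R_1$) and $e_{\text{top}} \sse \dd R$ the top horizontal edge (the bottom of $R_2$). By Lemma \ref{lemma:staircase}, $e_{\text{bot}}$ belongs to a staircase of Type 3 or Type 4, where $R$ sits to the south of every horizontal edge, while $e_{\text{top}}$ belongs to a staircase of Type 1 or Type 2, where $R$ sits to the north. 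In either case the staircases are strictly monotone in $y$ along their direction of traversal, and all $270^\circ$ corners are colored according to Definition \ref{def:sugarfree}.

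The core step is to examine the corners of these two staircases at the four corners of $F_*$. Because $R$ wraps around $F_*$ (passing from the region above $F_*$ to the region below $F_*$ through the ambient plane, forced by the requirement that $R$ be a region with connected closure), each of the four corners of $F_*$ is a $270^\circ$ reflex angle of $\dd R$. Definition \ref{def:sugarfree} then forces both left corners of $F_*$ to be white and both right corners to be black. This coloring is incompatible with Definition \ref{def:GPgraph}(iv): the vertical edges of $\bG$ on the left and right sides of $F_*$ would each connect two vertices of identical color, whereas a vertical edge in a GP-graph must join vertices of opposite colors, giving the desired contradiction.

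I would then verify the argument in each of the three column types: for Type 1 columns $F_*$ is a standard rectangular face with vertical sides both lying in $\bG$, so the contradiction applies directly; for Type 2 columns the unique vertical edge of $\bG$ inside $C$ plays the role of one of the vertical sides of $F_*$, with the same conclusion; for Type 3 columns the lollipop inside $C$ must be accommodated, and here Definition \ref{def:GPgraph}(iii) (forcing white lollipops at the left ends of horizontal lines and black lollipops at the right) combined with sugar-freeness again yields the required color mismatch. The main technical subtlety I expect will be handling the case in which several faces of $\bG$ lie in the gap between $R_1$ and $R_2$; for that one selects $F_*$ to be any face in the gap adjacent to either $R_1$ or $R_2$, so that its top and bottom edges in $C$ still sit on $\dd R$ and the color-matching argument above goes through verbatim.
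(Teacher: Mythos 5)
Your approach is genuinely different from the paper's: the paper deduces the lemma directly from the monotonicity of the staircase building blocks of Lemma~\ref{lemma:staircase} --- if $R\cap C$ had two components, $\partial R$ would have to contain a horizontal U-turn shape, and no concatenation of the four staircase types (each strictly monotone in the vertical direction) can produce one. You instead try to isolate a face $F_*$ in the gap and derive a contradiction from the coloring of its corners. Unfortunately there is a real gap in your argument.

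The central unjustified step is the claim that ``each of the four corners of $F_*$ is a $270^\circ$ reflex angle of $\partial R$.'' Connectedness of $R$ only forces $R$ to pass around the gap on \emph{one} side (left or right), not to enclose $F_*$ on all four sides. If $R$ returns from $R_1$ to $R_2$ by leaving $C$ only to the left, then the two right corners of $F_*$ need not lie on $\partial R$ at all, and when they do they may be $90^\circ$ convex corners of $R$ rather than $270^\circ$ reflex ones. Even for the left corners, whether a corner of $F_*$ at a trivalent vertex $v$ is a $270^\circ$ corner of $\partial R$ depends on $R$ occupying \emph{both} of the other two sectors at $v$, which does not follow merely from ``$R$ wraps around on the left.'' So the color contradiction at the vertical edges of $F_*$ cannot be invoked as stated. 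A further issue is your proposed fix for the case of several faces in the gap: choosing $F_*$ adjacent to only one of $R_1$ or $R_2$ gives you only one of $e_{\text{bot}}$, $e_{\text{top}}$ on $\partial R$, not both, so the argument would need to be reorganized. Finally, for a Type~1 column $C$ the corners of $F_*$ and the vertical $\bG$-edges you cite live outside $C$ (a Type~1 column contains no vertices), so the argument really concerns a larger region of the plane than $R\cap C$; this isn't fatal but signals the mismatch with what the lemma is actually asking. The cleaner route is the paper's: once $\partial R$ is known to decompose into monotone staircases, a U-turn inside a column is simply impossible.
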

\begin{proof} By definition, the region $R$ is connected. Thus, in order for the intersection $R\cap C$ to have more than one connected component, $R$ needs to make a (horizontal) U-turn at some point and $\partial R$ must contain a part that is of the shape ``$R \ ($'' or ``$) \ R$'', where the parentheses indicate the U-turn and the letter $R$ indicates the side of the region. However, such a shape cannot be built using the four types of staircases in Lemma \ref{lemma:staircase} and therefore $R\cap C$ can have at most one connected component.
\end{proof}

Note that if a region $R$ were not simply-connected, then there must exist a column $C$ such that $R\cap C$ has more than one connected component. Thus, Lemma \ref{lemma:single component} has the following consequence, despite the fact that there may exist non-simply connected faces in the GP-graph.

\begin{cor}\label{cor:sugar-free hull simply-connected} Sugar-free regions are simply-connected.
\end{cor}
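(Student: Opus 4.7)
The plan is to argue by contradiction and leverage Lemma \ref{lemma:single component}. Suppose that $R$ is a sugar-free region that is not simply-connected. Since $R$ is a connected, closed polygonal subset of $\R^2$ whose boundary consists of edges of the GP-graph $\bG$, failure of simple-connectedness is equivalent to the existence of a bounded connected component $H$ of $\R^2\setminus R$, i.e. a ``hole'' in $R$.

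Next, I would produce a column that violates Lemma \ref{lemma:single component}. Pick any point $p=(x_0,y_0)$ in the interior of the hole $H$, and choose real numbers $l<x_0<r$ small enough that the open vertical strip $C=\{(x,y)\in\R^2 : l<x<r\}$ avoids the $x$-coordinates of all vertices of $\bG$; such a choice is possible because the vertex set is discrete. Then $C$ is a column of $\bG$ in the sense of Subsection \ref{ssec:}, and by construction $C$ meets the hole $H$ in a non-empty open set. Since $H$ is bounded and entirely surrounded by $R$, the vertical line $\{x=x_0\}\cap C$ must meet $R$ both above and below $H$. Hence $R\cap C$ contains at least two distinct points that cannot be joined within $R\cap C$ without crossing the hole $H$, producing at least two connected components of $R\cap C$.

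This contradicts Lemma \ref{lemma:single component}, which asserts that the intersection of a sugar-free region with any column has at most one connected component. Therefore no bounded component of $\R^2\setminus R$ can exist, and $R$ is simply-connected.

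This argument is essentially the remark already made in the paragraph preceding the corollary; the only work is turning the informal ``$R$ makes a U-turn'' into the precise statement about a column through the hole. I do not anticipate any substantive obstacle, since Lemma \ref{lemma:single component} has already done the geometric work via the staircase classification of $\partial R$ from Lemma \ref{lemma:staircase}.
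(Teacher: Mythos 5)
Your argument is correct and is exactly the argument the paper gestures at in the sentence preceding the corollary; you have simply made explicit what the paper leaves as an observation, namely that a column through a hole forces $R\cap C$ to be disconnected, contradicting Lemma~\ref{lemma:single component}. The one point you elide (that the two faces of $\bG$ just above and just below the hole $H$ along the chosen vertical line must actually lie in $R$, rather than in some other bounded component of the complement) follows because any face sharing a boundary edge with $H$ and not in $R$ would be connected to $H$ through that edge and hence be part of $H$ itself; this is worth noting but does not change the fact that your route matches the paper's.
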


\subsection{Legendrian Links}\label{ssec:legendrianlink} In this subsection we introduce the Legendrian link $\La(\bG)\sse(\R^3,\xi_\st)$ associated to a GP-graph $\bG\sse\R^2$ and explain how to algorithmically draw a specific front by scanning $\bG$ left to right. Let us begin with the concise definition of $\La(\bG)$, which reads as follows:

\begin{definition}\label{def:Legendrianlink}
Let $\bG\sse\R^2$ be a GP-graph. The Legendrian link $\La(\bG)\sse(\R^3,\xi_\st)$ is the Legendrian lift of the alternating strand diagram of $\bG$, understood as a co-oriented front in $\R^2$, considered inside a Darboux ball in $(T^*_\infty\R^2,\xi_\st)$.
\end{definition}

\noindent Alternating strand diagrams were introduced in \cite[Definition 14.1]{Postnikov} for a reduced plabic graph. In general, we associate such diagrams to a GP-graph $\bG\sse\R^2$ according to the two following local models, where the hairs indicate the co-orientation:
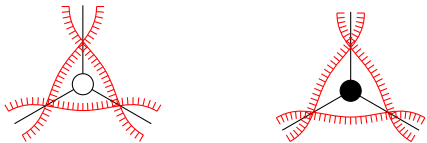
\begin{figure}[H]
    \centering
    \begin{tikzpicture}[scale=0.7]
    \foreach \i in {0,1,2}
    {
        \draw (0,0) -- (90+\i*120:1.5);
        \def\PTH{(80+\i*120:1.5) to [out=-90+\i*120,in=60+\i*120] (150+\i*120:0.5) to [out=-120+\i*120,in=30+\i*120] (-140+\i*120:1.5)}
        \def\tans{30}
        \draw[red] \PTH;
        \foreach \j in {0,...,\tans}
        {
        \path[tangent=\j/\tans] \PTH;
        \draw[red, use tangent=1] (0,0) -- (0,0.1);
        }
    }
    \draw[fill=white] (0,0) circle [radius=0.2];
    \end{tikzpicture}
    \quad \quad \quad \quad
    \begin{tikzpicture}[scale=0.7]
    \foreach \i in {0,1,2}
    {
        \draw (0,0) -- (90+\i*120:1.5);
        \def\PTH{(80+\i*120:1.5) to [out=-90+\i*120,in=60+\i*120] (150+\i*120:0.5) to [out=-120+\i*120,in=30+\i*120] (-140+\i*120:1.5)}
        \def\tans{30}
        \draw[red] \PTH;
        \foreach \j in {0,...,\tans}
        {
        \path[tangent=\j/\tans] \PTH;
        \draw[red, use tangent=1] (0,0) -- (0,-0.1);
        }
    }
    \draw[fill=black] (0,0) circle [radius=0.2];
    \end{tikzpicture}
    \caption{The local models for an alternating strand diagram associated to a GP-graph $\bG$. The small hairs indicate the co-orienting direction, which is needed to specify a Legendrian lift.}
    \label{fig:conormal orientation}
\end{figure}
\noindent The alternating strand diagram near a lollipop (or a bivalent vertex) is the same as in \cite{Postnikov}, and the co-orientation in these pieces is implied by the co-orientations above.

By definition, the Legendrian lift of a co-oriented immersed curve on the plane $\R^2$ is a Legendrian link inside the ideal contact boundary $(T_\infty^*\R^2,\xi_\st)$. The contact structure is the kernel of the restriction of the Liouville 1-form on $T^*\R^2$ to this hypersurface. In general, such Legendrian links cannot be contained in a Darboux ball, but for a GP-graph $\bG$, the Legendrian lift $\La(\bG)$ is naturally contained in a Darboux ball, as we now explain. Let us choose Cartesian coordinates $(u,v)\in\R^2$. Then, the contact structure on $T^*_\infty\bR^2_{u,v}$ can be identified as the kernel of the contact 1-form $\alpha_\st:=\cos\theta du+\sin \theta dv,$
where $\theta\in[0,2\pi)$ is the angle between a given covector $adu+bdv$ and $du$, $a,b\in\R$ and $a^2+b^2\neq0$. Note that $T^*_\infty\R^2$ is diffeomorphic to $\R^2\times S^1$ and $\theta\in S^1$ records that circle coordinate. In fact, we can consider the 1-jet space $(J^1S^1,\xi_\st)$ with its standard contact structure $\ker\{\beta_\st\}$, $\beta_\st:=dz-yd\theta$, where $y\in\mathbb{R}$ is the coordinate along the cotangent fiber and $z\in\R$ the Reeb coordinate, as $J^1S^1:=T^*S^1\times\R$. Then, there exists a strict contactomorphism $\varphi:(T^\infty\bR^2,\a_\st)\rightarrow (J^1S^1,\beta_\st)$ given by
\[
\varphi^*(\theta)=\theta, \quad \quad \varphi^*(y)=-u\sin\theta+v\cos \theta, \quad \quad \varphi^*(z)=u\cos \theta+v\sin \theta.
\]
For any open interval $I\sse S^1$, $(J^1 I,\xi_\st)$ is contactomorphic to a standard Darboux ball $(\R^3,\xi_\st)$. In consequence, if a co-oriented immersed curve $\mathfrak{f}\sse\R^2$ in $\R^2$ has a Gauss map that misses one given angle $\theta_0$, the Legendrian lift of $\mathfrak{f}\sse\R^2$ is contained in $(J^1(S^1\setminus\theta_0),\xi_\st)$, which is contactomorphic to a Darboux ball. This happens for the alternating strand diagram of a GP-graph $\bG\sse\R^2$ and thus $\La(\bG)$ naturally lives inside a Darboux ball.

Let us now construct a particular type of (wave)front for the Legendrian link $\La(\bG)$ which is useful to describe our moduli spaces in Lie-theoretic terms. For that, we consider the front $\mathfrak{f}(\bG)\sse\R^2$ obtained by dividing the GP-graph $\bG$ into elementary columns and then use the assignments as depicted in Figures \ref{fig:RulesFronts1} and \ref{fig:RulesFronts2}. Namely, to an elementary column of Type 1 with $n$ strands, we assign a front consisting of $2n$ parallel horizontal strands. For an elementary column of Type 2 with $n$ strands and a vertical bar at the $i$th position, we assign a front consisting of $2n$ parallel horizontal strands with a crossing at the $i$th position either at the top $n$ strands or the bottom $n$ strands, depending on whether the vertical bar had a white vertex at the top or at the bottom. Figure \ref{fig:RulesFronts1} depicts these three cases, with Types 1 and 2. The case of an elementary column of Type 3 involves inserting a right (resp.~left) cusp at the $i$th position (plus some additional crossings) if there is a black (resp.~white) lollipop inserted at the $i$th position. Figure \ref{fig:RulesFronts2} depicts the two possible cases for a Type 3 column.

\begin{center}
	\begin{figure}[h!]
		\centering
		\includegraphics[width=\textwidth]{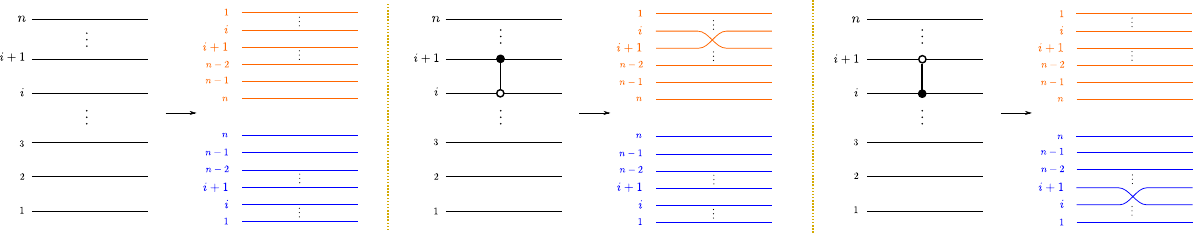}
		\caption{The rules to construct the front $\mathfrak{f}(\bG)$ from the elementary columns of a GP-graph $\bG$. In this case, Type 1 and Type 2 columns are depicted, with the GP-graph $\bG$ on the left and the front $\mathfrak{f}(\bG)$ on the right. We have colored the top $n$-strands of the front in orange and the bottom $n$-strands of the front in blue for clarification purposes.}
		\label{fig:RulesFronts1}
	\end{figure}
\end{center}

\begin{center}
	\begin{figure}[h!]
		\centering
		\includegraphics[scale=0.9]{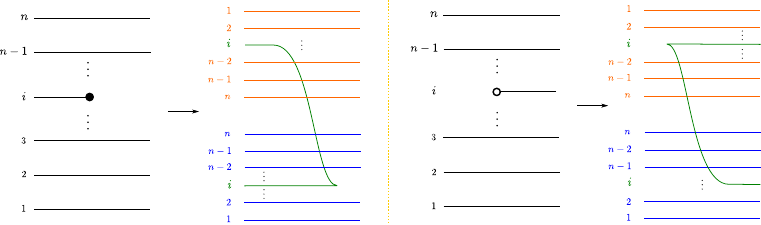}
		\caption{The rules to construct the front $\mathfrak{f}(\bG)$ from the elementary columns of a GP-graph $\bG$. In this case, the two kinds of Type 3 columns are depicted, with the GP-graph $\bG$ on the left and the front $\mathfrak{f}(\bG)$ on the right. We have colored the top $n$-strands of the front in orange, the bottom $n$-strands of the front in blue, and the newly inserted strand with a cusp in green, to help visualize the front.}
		\label{fig:RulesFronts2}
	\end{figure}
\end{center}

\noindent Note that the $2n$ strands in the front are labeled in a specific manner in Figures \ref{fig:RulesFronts1} and \ref{fig:RulesFronts2}, starting the count from the outer strand and increasing towards the middle. This choice of labeling is the appropriate one: in this way, when only left cusps have appeared, which is always the case at the beginning if we read $\bG$ left to right, the $i$th top strand (in orange) and the $i$th bottom strand (in blue) coincide. Now, the front $\mathfrak{f}(\bG)\sse\R^2$ lifts to a Legendrian link $\La(\mathfrak{f}(\bG))\sse(\R^3,\xi_\st)$. We observe that in this case, the lift can be considered directly into $\R^3$, as the front is co-oriented upwards and there are no vertical tangencies. The following proposition follows by applying the above contactomorphism $\varphi:(T^\infty\bR^2,\a_\st)\rightarrow (J^1S^1,\beta_\st)$:

\begin{prop}\label{prop:sameLegendrian}
Let $\bG\sse\R^2$ be a GP-graph. Then the two Legendrian links $\La(\bG)\sse(\R^3,\xi_\st)$ and $\La(\mathfrak{f}(\bG))\sse(\R^3,\xi_\st)$ are Legendrian isotopic.
\end{prop}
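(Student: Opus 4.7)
My plan is to verify the Legendrian isotopy column-by-column, exploiting the fact that both $\La(\bG)$ and $\La(\mathfrak{f}(\bG))$ naturally decompose as horizontal concatenations of Legendrian tangles, one per elementary column of $\bG$. The alternating strand diagram and the front $\mathfrak{f}(\bG)$ are each built as horizontal concatenations of local pieces supported over disjoint vertical slabs of $\R^2$, so it suffices to produce a Legendrian isotopy of tangles in each slab with matching boundary data at the walls, and then glue. The contactomorphism $\varphi$ from Subsection \ref{ssec:legendrianlink} places both Legendrian lifts in the same Darboux ball inside $(T^*_\infty\R^2,\xi_\st)$, so it is meaningful to compare them locally as Legendrians in $(\R^3,\xi_\st)$.

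To match boundary data, I would identify the $2n$ points where either diagram meets a vertical wall of a slab as follows. For the alternating strand diagram, the $n$ right-moving strands (one above each horizontal line of $\bG$) together with the $n$ left-moving strands (one below each horizontal line of $\bG$) give $2n$ co-oriented intersections with the wall, the upper $n$ all co-oriented upward and the lower $n$ all co-oriented downward per the convention of Figure \ref{fig:conormal orientation}. For $\mathfrak{f}(\bG)$, the top $n$ strands (orange, the right-going sheets) and the bottom $n$ strands (blue, the left-going sheets) give $2n$ intersections. The outside-in labeling convention spelled out just after Figure \ref{fig:RulesFronts2}, in which the $i$th top and $i$th bottom strands are paired across a left cusp, provides the canonical bijection between the two boundary sets.

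With boundary data matched, the per-column verification is elementary. For a Type~1 column, both tangles consist of $2n$ parallel horizontal strands, so the Legendrian isotopy is immediate. For a Type~2 column, a vertical edge of $\bG$ with a white vertex on top (resp.\ bottom) inserts a single crossing of the two adjacent strands of the alternating strand diagram in the upper (resp.\ lower) half-plane of the local picture, as forced by the outward/inward co-orientation convention of Figure \ref{fig:conormal orientation}; reading Figure \ref{fig:RulesFronts1}, the front $\mathfrak{f}$ inserts the corresponding crossing among the top $n$ (resp.\ bottom $n$) strands at exactly the same height. For a Type~3 column, a white (resp.\ black) lollipop is a point at which a right-going strand and a left-going strand of the alternating strand diagram reflect into each other, which lifts contactly to a left (resp.\ right) cusp; by Figure \ref{fig:RulesFronts2}, $\mathfrak{f}$ inserts a left (resp.\ right) cusp at the correct height together with auxiliary crossings needed to re-route the newly created or absorbed pair of strands to the outermost positions so that the outside-in labeling convention is preserved.

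The main obstacle I expect is the bookkeeping of co-orientations and sheet labels across lollipop columns, where the symmetric group $S_{[a,b]}$ tracked in the column-types subsection changes by one generator. Concretely, one must check that the auxiliary rerouting crossings dictated by Figure \ref{fig:RulesFronts2} are exactly those forced by the bijection between boundary labels on the two sides of the lollipop, i.e.\ that the Legendrian isotopy class of the local tangle in $\mathfrak{f}(\bG)$ is independent of the choice of re-routing. Once this combinatorial check is verified, gluing the per-column Legendrian isotopies along their matched walls produces the global Legendrian isotopy $\La(\bG)\simeq\La(\mathfrak{f}(\bG))$ claimed in the proposition.
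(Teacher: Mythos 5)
The paper's proof and your outline share the column-by-column philosophy at a high level, but you skip the step that carries almost all of the mathematical content, and that gap is not a matter of bookkeeping.

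The two objects you want to compare do not live in the same contact manifold until something is done. The alternating strand diagram lifts to a Legendrian in $(T^*_\infty\R^2,\xi_\st)\cong(J^1S^1,\xi_\st)$, while $\mathfrak{f}(\bG)$ is a front whose lift sits in $(J^1\R,\xi_\st)\cong(\R^3,\xi_\st)$. You cannot ``match boundary data at walls'' or ``produce a Legendrian isotopy of tangles in each slab'' until you have transported one picture into the other's ambient space, and that transport is precisely the contactomorphism $\varphi$. You acknowledge $\varphi$ in passing, but you never say what it actually does to the local pieces of the alternating strand diagram. Without that, the per-column claims are assertions, not verifications: there is no a priori reason that a trivalent vertex with its $270^\circ$ detour in the alternating strand diagram becomes a single crossing among the top $n$ strands under $\varphi$, nor that a lollipop becomes a cusp at the correct height together with the auxiliary crossings prescribed by Figure \ref{fig:RulesFronts2}. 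These are exactly the facts you flag as ``the main obstacle'' and then defer as a ``combinatorial check.''

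The paper resolves this by first isotoping $\bG$ so that horizontal edges become polar circular arcs and vertical edges become radial segments, then decomposing the resulting alternating strand diagram into three geometric building blocks (circular arcs, tangent line segments, small local rotations), and finally computing the image of each building block under $\varphi$ in $(\theta,z)$-coordinates: arcs map to horizontal segments, tangent segments map to points, and local rotations map to pieces of a cosine graph, i.e.\ cusps. Only after this analytic computation does the column-by-column identification with Figures \ref{fig:RulesFronts1} and \ref{fig:RulesFronts2} become a genuine verification. Your Type~2 and Type~3 paragraphs are stating the desired conclusions of these computations without performing them, so as written the argument is circular where it should be doing work. If you supply the local analysis of $\varphi$ on the three building blocks, your cut-and-glue framing becomes a valid (and arguably cleaner) repackaging of the paper's proof; without it, the proposal does not establish the proposition.
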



\subsection{Instances of GP-graphs \texorpdfstring{$\bG$ and their Legendrian Links $\La(\bG)$}{}}\label{sssec:examplesGPgraph} In this subsection we discuss a few examples of GP-graphs $\bG$ that lead to particularly interesting and well-studied Legendrian links.\\

\begin{center}
	\begin{figure}[h!]
		\centering
		\includegraphics[scale=0.7]{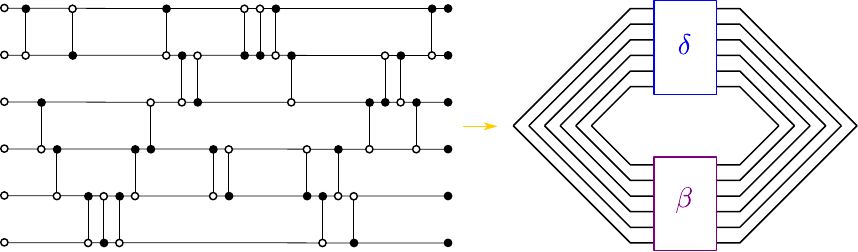}
		\caption{A front for the Legendrian link associated to the GP-graph on the left is drawn on the right, where $\beta,\delta\in\mbox{Br}^+_6$ are the positive braid words $\beta=\sigma_5\sigma_1\sigma_3\sigma_4\sigma_3\sigma_5^2\sigma_2\sigma_1\sigma_4$ and $\delta=\sigma_1\sigma_3\sigma_4\sigma_5^2\sigma_4\sigma_1\sigma_2\sigma_4\sigma_1\sigma_2\sigma_5\sigma_4\sigma_3\sigma_2\sigma_3\sigma_1$.}
		\label{fig:PlabicFence_Example2}
	\end{figure}
\end{center}

{\bf Plabic fences.} Consider a GP-graph $\bG\sse\R^2$ whose white lollipops all belong to the line $\{-1\}\times\R$, and all black lollipops belong to the line $\{1\}\times\R$. Figures \ref{fig:PlabicFence_Example2} depict instances of such GP-graphs. These GP-graphs are called plabic fences in \cite[Section 12]{FPST}, following L. Rudolph's fence terminology. It follows from Proposition \ref{prop:sameLegendrian}, and the rules from Figures \ref{fig:RulesFronts1} and \ref{fig:RulesFronts2}, that the Legendrian link $\La(\bG)$ associated to a plabic fence $\bG\sse\R^2$ is Legendrian isotopic to the (Legendrian lift of the) rainbow closure of a positive braid. In fact, given such a plabic fence $\bG\sse\R^2$ with $n$ horizontal lines, consider the positive braid word $\beta\in\mbox{Br}_n^+$ whose $k$th crossing is $\sigma_j$ if and only if the $k$th vertical edge {\it with black on bottom} of $\bG$ (starting from the left) is between the $j$th and $(j+1)$st horizontal strands. Similarly, consider the positive braid word $\delta$ whose $m$th crossing is $\sigma_{n-j}$ if and only if the $m$th vertical edge {\it with white on bottom} of $\bG$ is between the $j$th and $(j+1)$st horizontal strands. Then, Figure \ref{fig:PlabicFence_Example2} (right) depicts a front for the Legendrian link $\La(\bG)$, which is readily homotopic to the rainbow closure of the positive braid word $\beta\delta^\circ$ (or equivalently $\delta^\circ\beta$), where $\delta^\circ$ denotes the reverse positive braid of $\delta$.

\begin{center}
	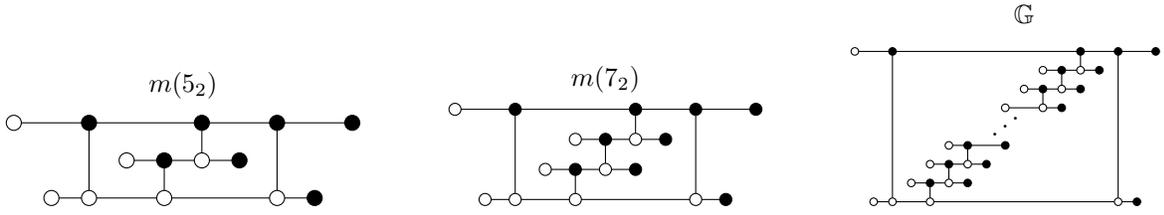
\begin{figure}[h!]
		\centering
		\begin{tikzpicture}
		\horline[](0,3.5,0);
		\horline[](-0.5,4,1);
		\horline[](1,2.5,0.5);
		\vertbar[](0.5,0,1);
		\vertbar[](1.5,0,0.5);
		\vertbar[](2,0.5,1);
		\vertbar[](3,0,1);
		\node at (1.75,1.5) [] {$m(5_2)$};
		\end{tikzpicture} \quad \quad \quad \begin{tikzpicture}[scale=0.8]
		\horline[](0,4,0);
		\horline[](-0.5,4.5,1.5);
		\horline[](1,2.5,0.5);
		\horline[](1.5,3,1);
		\vertbar[](0.5,0,1.5);
		\vertbar[](1.5,0,0.5);
		\vertbar[](2,0.5,1);
		\vertbar[](2.5,1,1.5);
		\vertbar[](3.5,0,1.5);
		\node at (2,2) [] {$m(7_2)$};
		\end{tikzpicture} \quad \quad \quad
		\begin{tikzpicture}[scale=0.5]
		\horline[](0,7,0);
		\horline[](-0.5,7.5,4);
		\horline[](1,2.5,0.5);
		\horline[](1.5,3,1);
		\horline[](2,3.5,1.5);
		\node at (3.5,2) [] {$\Ddots$};
		\horline[](3.5,5,2.5);
		\horline[](4,5.5,3);
		\horline[](4.5,6,3.5);
		\vertbar[](0.5,0,4);
		\vertbar[](1.5,0,0.5);
		\vertbar[](2,0.5,1);
		\vertbar[](2.5,1,1.5);
		\vertbar[](4.5,2.5,3);
		\vertbar[](5,3,3.5);
		\vertbar[](5.5,3.5,4);
		\vertbar[](6.5,0,4);
		\node at (4,5) [] {$\bG$};
		\end{tikzpicture}
		\caption{GP-graphs whose alternating strand diagrams have the smooth type of (mirrors of) twist knots. The GP-graph on the left (resp. middle) yields a max-tb Legendrian representative of $m(5_2)$ (resp. $m(7_2)$). The right picture illustrates the general case, where a GP-graph is built by iteratively inserting -- in a staircase manner -- the local piece inside the GP-graph in the upper left.}
		\label{fig:Example_Twist}
	\end{figure}
\end{center}

{\bf Legendrian twist knots.} Let us consider the family of GP-graphs $\bG_n$, indexed by $n\in\N$, that we have depicted in Figure \ref{fig:Example_Twist}. Each GP-graph $\bG_n$ has two long horizontal bars and it is obtained by inserting a staircase with $n$ steps between two vertical bars, themselves located at the leftmost and rightmost position. Figure \ref{fig:Example_Twist} draws $\bG_1$ (left) and $\bG_2$ (middle). By using Figures \ref{fig:RulesFronts1} and \ref{fig:RulesFronts2}, fronts for the associated Legendrian knots $\La(\bG_n)$ are readily drawn: Figure \ref{fig:Example_TwistFront} depicts fronts for $\La(\bG_1)$ and $\La(\bG_2)$. In general, we conclude that $\La(\bG_n)$ is a max-tb Legendrian representative of a twist knot, with zero rotation number. Note that Legendrian twist knots are classified in \cite{EtnyreNgVertesi13}. In particular, the Legendrian knot $\La(\bG_1)$ associated to the GP-graph depicted in the left picture of Figure \ref{fig:Example_Twist} is the unique max-tb Legendrian representative of $m(5_2)$ with a binary Maslov index. This is one half of the well-known Chekanov pair.\footnote{The other max-tb representative of $m(5_2)$ is not isotopic to $\La(\bG)$ for any GP-plabic graph $\bG$.}

\begin{center}
	\begin{figure}[h!]
		\centering
		\includegraphics[scale=0.9]{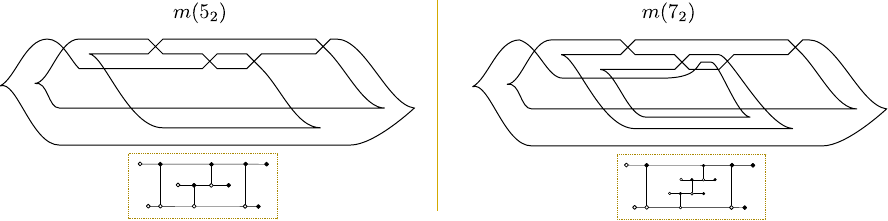}
		\caption{The two Legendrian fronts obtained from the left and middle GP-graphs of Figure \ref{fig:Example_Twist}, according to our recipe translating from GP-graphs to Legendrian front diagrams. The corresponding GP-graphs are drawn in a small yellow box below each front. The front diagram for the general case (right picture in Figure \ref{fig:Example_Twist}) is readily inferred from these two pictures: a knotted spiralling pattern is iteratively added to the center region of the front.}
		\label{fig:Example_TwistFront}
	\end{figure}
\end{center}

{\bf Shuffle graphs.} Let us introduce a class of GP-graphs which leads to interesting examples.

\begin{definition}\label{def:shufflegraph}
A GP-graph $\bG\sse\R^2$ with $n$ horizontal lines is said to be a \emph{shuffle graph} if
\begin{enumerate}
    \item $\exists M\in\N,\sigma\in S_n$ such that each horizontal line goes from $(-M\sigma(i),i)$ to $(M\sigma(i),i)$;
    \item Vertical edges are all of the same pattern, i.e. they either all have a black vertex on top or they all have a white vertex on top.
\end{enumerate}
\end{definition}

The two families above, plabic fences and the GP-graphs in Figure \ref{fig:Example_Twist} for Legendrian twist knots, are instances of shuffle graphs. Shuffle graphs $\bG\sse\R^2$ have the property that the Legendrian $\La(\bG)\sse(\R^3,\xi_\st)$ is Legendrian isotopic to the Legendrian lift of the $(-1)$-closure of a positive braid of the form $\beta\Delta$, where $\Delta\in \Br_n^+$ is the half-twist and $\beta\in\Br_n^+$ has Demazure product $\Dem(\beta)=w_0=w_{0,n}\in S_n$. (By \cite{CGGS,CasalsNg} the condition $\Dem(\beta)=w_0\in S_n$ is necessary.) E.g.~it is a simple exercise to verify that any $(-1)$-closure of a 3-stranded $\beta\Delta$, where $\beta,\Delta\in \Br_3^+$ and $\Dem(\beta)=w_0\in S_3$, arises as $\La(\bG)$ for some shuffle graph $\bG$. In view of this and \cite{CGGS,CGGS2,CasalsNg}, we refer to a positive braid $\beta\in \mbox{Br}_n^+$ as \emph{$\Delta$-complete} if it is cyclically equivalent to a positive braid of the form $\Delta \gamma$ where $\Delta$ is the half twist and $\Dem(\gamma)=w_{0,n}$.


Let us point out two properties of the Legendrian links $\La(\bG)$ that are useful. First, as we will explain, the Legendrian links $\La(\bG)$ always bound an orientable exact embedded Lagrangian filling in the symplectization of $(\R^3,\xi_\st)$, and thus in the standard symplectic Darboux 4-ball. In particular, their Thurston-Bennequin invariant is always maximal and their rotation number vanishes. Second, it follows from the discussion in Subsection \ref{ssec:legendrianlink}, especially Figures \ref{fig:RulesFronts1} and \ref{fig:RulesFronts2}, that $\La(\bG)$ admits a binary Maslov index and that the smooth type of $\La(\bG)$ is that of the $(-1)$-closure of a positive braid. The former is particularly useful for us, as this implies that complexes of sheaves with singular support in $\La(\bG)$ are quasi-isomorphic to sheaves (concentrated in degree 0) and it is the possible to parametrize the moduli of objects of the appropriate dg-category by an affine variety (or algebraic quotient thereof). Subsection \ref{ssec:sheaves} sets up the necessary ingredients on the microlocal theory of sheaves as it relates to these Legendrian links $\La(\bG)$.

\subsection{Lollipop Chain Reaction}\label{ssec:lollipop chain reaction} In this subsection we introduce an algorithmic procedure, called a \emph{lollipop chain reaction}, which aims to select faces for a sugar-free hull. The lollipop chain reaction initiates at a face $f$, and produces a collection of faces that are guaranteed to be inside the sugar-free hull $\S_f$. In many interesting cases of $\bG$, such as shuffle graphs, this procedure yields the entire sugar-free hull $\S_f$. These combinatorial tools are used in Subsection \ref{ssec:relative_position_flags}, in the proof of Proposition \ref{prop:relative position scanning}. Let us start with the definition of a single \emph{lollipop reaction}:

\begin{definition} \label{def:lollipop reaction} Let $w$ be a white lollipop in a GP-graph $\bG$, and let $h_1$ and $h_2$ be the two adjacent horizontal $\bG$-edges to the immediate left of $w$ (in between which the lollipop appears). A vertical line segment between $h_1$ and $h_2$ is said to be a \emph{wall}. By definition, the \emph{lollipop reaction} initiated from the lollipop $w$ pushes this wall to the right along $\bG$ with the following rules: the wall shrinks or expands according to the following five pictures and otherwise the wall stays between the same $\bG$-edges.
\[
{\color{black}
\begin{tikzpicture}[scale=0.7]
\draw (0,0) node [left] {$h_1$} -- (2,0);
\draw (0,2) node [left] {$h_2$}-- (2,2);
\draw (1.2,1) --(2,1);
\draw [fill=white] (1.2,1) circle [radius =0.1];
\node at (1.2,1) [left] {$w$};
\draw [<->,red] (0.5,0) -- (0.5,2);
\node at (1,-1) [] {wall starts};
\node [red] at (1,1.5) [] {$\rightsquigarrow$};
\end{tikzpicture}
}
\quad \quad \quad 
\begin{tikzpicture}[scale=0.7]
\draw (0,2) -- (2,2);
\draw (0,1) -- (2,1);
\draw (0,0) -- (2,0);
\vertbar[](1,1,2);
\draw[<->, red] (0.5,-0.5) -- (0.5,2);
\draw[<->, red] (1.5,-0.5) -- (1.5,1);
\node[red] at (1,-0.5) [] {$\rightsquigarrow$};
\node at (1,-1) [] {wall shrinks};
\end{tikzpicture}\quad \quad \quad 
\begin{tikzpicture}[scale=0.7]
\draw (0,2) -- (2,2);
\draw (0,1) -- (2,1);
\draw (0,0) -- (2,0);
\vertbar[](1,1,0);
\draw[<->, red] (0.5,0) -- (0.5,2.5);
\draw[<->, red] (1.5,1) -- (1.5,2.5);
\node[red] at (1,2.5) [] {$\rightsquigarrow$};
\node at (1,-1) [] {wall shrinks};
\end{tikzpicture}\quad \quad \quad
\begin{tikzpicture}[scale=0.7]
\draw (0,2) -- (2,2);
\draw (0,1) -- (1,1);
\draw (0,0) -- (2,0);
\draw [fill=black] (1,1) circle [radius=0.1];
\draw [<->, red] (0.5,-0.5) -- (0.5,1);
\draw [<->, red] (1.5,-0.5) -- (1.5,2);
\node[red] at (1,-0.5) [] {$\rightsquigarrow$};
\node at (1,-1) [] {wall expands};
\end{tikzpicture}\quad \quad \quad
\begin{tikzpicture}[scale=0.7]
\draw (0,2) -- (2,2);
\draw (0,1) -- (1,1);
\draw (0,0) -- (2,0);
\draw [fill=black] (1,1) circle [radius=0.1];
\draw [<->, red] (0.5,2.5) -- (0.5,1);
\draw [<->, red] (1.5,2.5) -- (1.5,0);
\node[red] at (1,2.5) [] {$\rightsquigarrow$};
\node at (1,-1) [] {wall expands};
\end{tikzpicture}
\]
A single lollipop reaction initiated from a black lollipop $b$ is defined in a symmetric fashion: start with a wall going between the two adjacent horizontal lines to the right of $b$, consider a wall between them and scan to the left. For a black lollipop, the wall shrinks or expands as it moves left according to the following five pictures and otherwise stays between the same $\bG$-edges.
\[
{\color{black}
\begin{tikzpicture}[scale=0.7]
\draw (0,0) node [left] {$h_1$} -- (2,0);
\draw (0,2) node [left] {$h_2$}-- (2,2);
\draw (0.8,1) --(0,1);
\draw [fill=black] (0.8,1) circle [radius =0.1];
\node at (0.8,1) [right] {$b$};
\draw [<->,red] (1.5,0) -- (1.5,2);
\node at (1,-1) [] {wall starts};
\node [red] at (1,1.5) [] {$\leftsquigarrow$};
\end{tikzpicture}
}
\quad \quad \quad 
\begin{tikzpicture}[scale=0.7]
\draw (0,2) -- (2,2);
\draw (0,1) -- (2,1);
\draw (0,0) -- (2,0);
\vertbar[](1,2,1);
\draw[<->, red] (0.5,-0.5) -- (0.5,1);
\draw[<->, red] (1.5,-0.5) -- (1.5,2);
\node[red] at (1,-0.5) [] {$\leftsquigarrow$};
\node at (1,-1) [] {wall shrinks};
\end{tikzpicture}\quad \quad \quad 
\begin{tikzpicture}[scale=0.7]
\draw (0,2) -- (2,2);
\draw (0,1) -- (2,1);
\draw (0,0) -- (2,0);
\vertbar[](1,0,1);
\draw[<->, red] (0.5,1) -- (0.5,2.5);
\draw[<->, red] (1.5,0) -- (1.5,2.5);
\node[red] at (1,2.5) [] {$\leftsquigarrow$};
\node at (1,-1) [] {wall shrinks};
\end{tikzpicture}\quad \quad \quad
\begin{tikzpicture}[scale=0.7]
\draw (0,2) -- (2,2);
\draw (2,1) -- (1,1);
\draw (0,0) -- (2,0);
\draw [fill=white] (1,1) circle [radius=0.1];
\draw [<->, red] (1.5,-0.5) -- (1.5,1);
\draw [<->, red] (0.5,-0.5) -- (0.5,2);
\node[red] at (1,-0.5) [] {$\leftsquigarrow$};
\node at (1,-1) [] {wall expands};
\end{tikzpicture}\quad \quad \quad
\begin{tikzpicture}[scale=0.7]
\draw (0,2) -- (2,2);
\draw (2,1) -- (1,1);
\draw (0,0) -- (2,0);
\draw [fill=white] (1,1) circle [radius=0.1];
\draw [<->, red] (1.5,2.5) -- (1.5,1);
\draw [<->, red] (0.5,2.5) -- (0.5,0);
\node[red] at (1,2.5) [] {$\leftsquigarrow$};
\node at (1,-1) [] {wall expands};
\end{tikzpicture}
\]
As the wall moves to the right (for a white lollipop) or to the left (for a black lollipop), we select all the faces that this wall scans through. By definition, a lollipop reaction \emph{completes} when the length of the wall becomes zero. The output of a lollipop reaction is selection of faces of the GP-graph which it has scanned through. If the length of the wall becomes infinite (i.e.~going to the unbounded region), then the lollipop reaction is said to be \emph{incomplete}, and it outputs nothing.
\end{definition}
\noindent In order to be effective, these lollipop reactions in general need to be iterated as follows:

\begin{definition} Let $f\sse\bG$ be a face of a GP-graph $\bG$. A \emph{lollipop chain reaction} initiated at $f$ is the recursive face selection procedure obtained as follows. First, select the face $f$. Then, for each of the newly selected faces and each inward-pointing lollipop of this face, run a single lollipop reaction and select new faces (if any).

Since the number of faces in $\bG$ is finite, this process terminates either when no new faces are selected, for which we say that the lollipop chain reaction is \emph{complete}, or when one of the single chain reactions is incomplete, for which we say that the whole lollipop chain reaction is \emph{incomplete}.
\end{definition}

\noindent Note that a single lollipop reaction selects faces that are \emph{minimally needed} to avoid sugar-content corners on the immediate right of a white lollipop or the immediate left of a black lollipop. Therefore the outcome of the lollipop chain reaction initiated from a face $f$ must be contained in the sugar-free hull $\S_f$. In other words, if the lollipop chain reaction initiated from $f$ is incomplete, then $\S_f$ does not exist. On the other hand, when sugar-free hulls $\S_f$ exist, lollipop chain reactions do produce sugar-free hulls for a large family of GP-graphs:

\begin{prop}\label{prop:lollipop chain reaction for shuffles} Let $\bG$ be a shuffle graph and $f$ a face of $\bG$ for which $\S_f$ is non-empty. Then the lollipop chain reaction initiated from $f$ is complete and $\S_f$ coincides with the outcome of this lollipop chain reaction.
\end{prop}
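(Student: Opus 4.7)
The plan is to show that the output $R$ of the completed chain reaction is itself a sugar-free region containing $f$. Combined with the minimality observation preceding the statement -- each single reaction selects only faces forced to lie in every sugar-free region containing what has already been selected, so inductively $R \sse \S_f$ -- this will give $R = \S_f$ by the defining property of $\S_f$ as the intersection of all sugar-free regions containing $f$.

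To verify that $R$ is sugar-free, I would argue by induction on the sequence of single reactions, maintaining the invariant that after each step every $270^\circ$ corner on the boundary of the current region is either of the color required by Definition \ref{def:sugarfree} or sits at a yet-unprocessed inward-pointing lollipop. A single lollipop reaction pushes a wall that shrinks at certain vertical edges and expands at others. On a shuffle graph, where all vertical edges share one orientation (say black-on-top) and horizontal lines are arranged as a single staircase governed by a permutation $\sigma \in S_n$, the resulting sequence of shrinks and expansions is monotone and traces out exactly one of the four staircase boundaries catalogued in Lemma \ref{lemma:staircase}. Hence each reaction appends a boundary segment with only allowed corners, and any newly exposed lollipop is either no longer inward-pointing or enters the queue for the next step. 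When the chain reaction terminates, no unprocessed inward-pointing lollipops remain, so $\partial R$ is a concatenation of the allowed staircases and $R$ is sugar-free.

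The main obstacle is controlling the interaction between distinct reactions: the walls of two reactions could in principle meet, or their scanned regions could overlap and leave a forbidden junction corner, and one also needs each reaction to terminate at a place where the next one cleanly begins. The shuffle hypothesis is precisely what rules this out. The monotonicity imposed by $\sigma$, together with the uniform vertical-edge orientation, linearly orders the lollipops encountered along the chain; each wall shrinks to zero at a horizontal segment that is either part of $\partial R$ already or is the starting bar of the next reaction, with no color mismatch at the junction. Establishing this ``clean handoff'' for shuffle graphs -- and deriving from it the inductive invariant above -- is the technical heart of the proof; for general GP-graphs, as the preceding example indicates, it can fail.
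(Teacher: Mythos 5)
Your reduction to showing that the chain-reaction output $R$ is itself sugar-free (and then $R=\S_f$ by the minimality observation $R\subseteq\S_f$) is the right strategy and matches the paper. But you stop precisely where the work begins: you state an induction invariant, concede that establishing the ``clean handoff'' between single reactions ``is the technical heart of the proof,'' and give no argument for it. A proposal that names the hard part and leaves it unproved has a genuine gap.

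Moreover, the intermediate assertion you do make is inaccurate in a way that bears directly on the invariant. A single rightward scan from a white lollipop does \emph{not} ``trace out exactly one of the four staircase boundaries catalogued in Lemma \ref{lemma:staircase}'': when the wall hits an interior black lollipop $b$, it expands (Definition \ref{def:lollipop reaction}), and the selected region acquires a $270^\circ$ concavity at $b$; by Definition \ref{def:sugarfree} a $270^\circ$ corner at a lollipop always has sugar content, so the output of that reaction is not sugar-free. The chain only becomes sugar-free because the subsequent leftward scan initiated from $b$ fills this concavity exactly. Verifying that is the real content: one must locate the single vertical edge $e$ bounding $\partial g$ on the right, distinguish sub-cases according to whether the top vertex of $e$ sits on or strictly above the previous bounding horizontal line, and use the ``last-in-first-out'' nesting of horizontal lines forced by $\sigma$ to show that the leftward scan terminates by filling the concavity rather than overshooting, leaving a gap, or escaping to the unbounded region (which would make the chain reaction incomplete). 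Your appeal to ``monotonicity imposed by $\sigma$'' gestures at this nesting structure but does not replace the case analysis; as stated, your invariant is never shown to be preserved across a reaction, and nothing in your argument rules out new forbidden corners appearing at ordinary vertical edges rather than at queued lollipops.
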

\begin{proof} We observe that in a shuffle graph there cannot be any black lollipop on the left side of a white lollipop, nor can there be any white lollipop on the right side of a black lollipop. Therefore, if there is a  white lollipop inside a face $f$, then the part of the boundary $\partial f$ straightly left of the white lollipop can only consists of a single vertical bar. Similarly, if there is a black lollipop inside a face $f$, then the part of the boundary $\partial f$ straightly right of the black lollipop can only consists of a single vertical bar as well. Now, if the face $f$ does not have any lollipops, then $\S_f=f$, which is also equal to the outcome of the lollipop chain reaction, as required. It remains to consider faces that do have lollipops inside. Without loss of generality let us suppose that only vertical $\bG$-edge of the type $\begin{tikzpicture}[baseline=5]
\vertbar[](0,0.5,0);
\end{tikzpicture}$ appear in $\bG$ and suppose that $f$ contains a white lollipop. Consider the left most white lollipop $w$ of $f$. Then, at the starting point, the wall for this lollipop goes between two adjacent horizontal lines $h_1$ and $h_2$, and $\partial f$ only has a single vertical bar to the left of this wall. Then the lollipop reaction starts moving the wall to the right, and one of the following two situation must occur:
\begin{itemize}
    \item {\it The wall never expands}. If this is the case, the wall must be shrinking towards the top as shown below. The result of the lollipop reaction is sugar-free.
    \[
    \begin{tikzpicture}[scale=0.7]
    \draw (0,0) -- (4,0);
    \draw (0,2) -- (6,2);
    \vertbar[](0.5,2,0);
    \draw (1.5,1) -- (2,1);
    \draw [fill=white] (1.5,1) circle [radius=0.1];
    \node at (1.5,1) [above] {$w$};
    \node at (3,1) [] {$\cdots$};
    \draw (3.5,0.5) -- (4,0.5);
    \vertbar[](3.5,0.5,0);
    \vertbar[](4,1,0.5);
    \node at (4.5,1.25) [] {$\iddots$};
    \draw (5,1.5) -- (5.5,1.5);
    \vertbar[](5,1.5,1);
    \vertbar[](5.5,2,1.5);
    \draw [red,<->] (1,0) -- (1,2);
    \draw [red,<->] (3.25,0) -- (3.25,2);
    \draw [red,<->] (3.75,0.5) -- (3.75,2);
    \draw [red,<->] (5.25,1.5) -- (5.25,2);
    \end{tikzpicture}
    \]
   \item {\it The wall expands at some point}. Note that the wall only expands when it passes through a black lollipop $b$. Let $g$ be the face containing $b$. Then the part of $\partial g$ straightly right of $b$ only consists of a single vertical edge $e$, and hence the rightward scanning must end at $e$. Note that in this case there can be a concavity below the selected faces right before the expansion of the wall.
    \[
    \begin{tikzpicture}[scale=0.8,baseline=20]
    \draw (0,0) -- (4,0);
    \draw (0,2) -- (6,2);
    \vertbar[](0.5,2,0);
    \draw (1.5,1) -- (2,1);
    \draw [fill=white] (1.5,1) circle [radius=0.1];
    \node at (1.5,1) [above] {$w$};
    \node at (3,1) [] {$\cdots$};
    \draw (3.5,0.5) -- (4,0.5);
    \vertbar[](3.5,0.5,0);
    \vertbar[](4,1,0.5);
    \node at (4.5,1.25) [] {$\iddots$};
    \draw (5,1.5) -- (5.5,1.5);
    \vertbar[](5,1.5,1);
    \draw [fill=black] (5.5,1.5) circle [radius=0.1];
    \draw [red,<->] (1,0) -- (1,2);
    \draw [red,<->] (3.25,0) -- (3.25,2);
    \draw [red,<->] (3.75,0.5) -- (3.75,2);
    \draw [red,<->] (5.25,1.5) -- (5.25,2);
    \draw (6,2.5) -- (7.5,2.5);
    \draw (5.5,-0.5) -- (7.5,-0.5);
    \vertbar[](7,2.5,-0.5);
    \draw [red,<->] (6.5,-0.5) -- (6.5,2.5);
    \node at (7,1) [right] {$e$};
    \end{tikzpicture} \quad \quad \text{or} \quad \quad 
    \begin{tikzpicture}[scale=0.8,baseline=20]
    \draw (0,0) -- (4,0);
    \draw (0,2) -- (7.5,2);
    \vertbar[](0.5,2,0);
    \draw (1.5,1) -- (2,1);
    \draw [fill=white] (1.5,1) circle [radius=0.1];
    \node at (1.5,1) [above] {$w$};
    \node at (3,1) [] {$\cdots$};
    \draw (3.5,0.5) -- (4,0.5);
    \vertbar[](3.5,0.5,0);
    \vertbar[](4,1,0.5);
    \node at (4.5,1.25) [] {$\iddots$};
    \draw (5,1.5) -- (5.5,1.5);
    \vertbar[](5,1.5,1);
    \draw [fill=black] (5.5,1.5) circle [radius=0.1];
    \draw [red,<->] (1,0) -- (1,2);
    \draw [red,<->] (3.25,0) -- (3.25,2);
    \draw [red,<->] (3.75,0.5) -- (3.75,2);
    \draw [red,<->] (5.25,1.5) -- (5.25,2);
    \draw (5.5,1) -- (7.5,1);
    \vertbar[](7,2,1);
    \draw [red,<->] (6.5,1) -- (6.5,2);
    \node at (7,1.5) [right] {$e$};
    \end{tikzpicture}
    \]
    Now we start scanning leftward from the right most black lollipop in the face $g$. Note that for a leftward scanning, the wall should be shrinking towards the bottom. Also, due to the ``last in first out'' order on the horizontal lines, the top vertex of the vertical edge $e$ cannot be below the top horizontal line of the previous scanning. If the top vertex of $e$ is above the previous horizontal line, then the bottom vertex of $e$ must be below the previous bottom horizontal line, and the leftward scanning will not stop until it goes back all the way to the beginning point of the previous scanning. On the other hand, if the top vertex of $e$ is on the previous horizontal line, then the bottom vertex of $e$ can be above the previous bottom horizontal line. But then the horizontal line at the bottom of the vertical edge $e$ must extend to the left and meets the staircase of the previous rightward scanning, and that is where the leftward scanning stops. In consequence, the lollipop reaction from the rightmost black lollipop of the face $g$ must fill in the lower concavity of the previous rightward scanning. 
\end{itemize}

\noindent Note that in the second case above, the leftward scanning can also end in two ways, but we can conclude by induction that in the end all concavities will be filled and hence the resulting union must be sugar-free, as required.
\end{proof}

\noindent There are many non-shuffle GP-graphs for which lollipop chain reactions also yield sugar-free hulls, and thus the hypothesis in Proposition \ref{prop:lollipop chain reaction for shuffles} is sufficient but not necessary.


\subsection{Legendrian invariants from the microlocal theory of sheaves}\label{ssec:sheaves} In this subsection we lay out the necessary ingredients of the microlocal theory of constructible sheaves that we shall use in our contact geometric framework. We describe the general setup in Subsection \ref{sssec:sheaf_generalsetup}, based on \cite{KashiwaraSchapira,KashiwaraSchapira_Book,Sheaves1,Sheaves2,STZ_ConstrSheaves}.\footnote{See also S.~Guillermou's notes for his lecture series at the conference ``Symplectic topology, sheaves and mirror symmetry'' at the IMJ-PRG in Paris (2016) and \cite{STWZ}.} Subsection \ref{sssec:sheaf_concrete} discusses the specific simplifications that occur for the Legendrian links $\La(\bG)$ and Subsection \ref{sssec:sheaf_decorated} introduces the necessary decorated version of the moduli stacks being discussed. Let $\mathds{k}$ be a commutative coefficient ring, for us either $\mathds{k}=\Z$ or $\mathds{k}=\C$. Consider a smooth manifold $M$, $\pi_M:T^*M\lr M$ its cotangent bundle and $T^{*}_{\infty} M\lr M$ its ideal contact boundary; we will only need $M=\R^2$ and $\R^3$.

\subsubsection{The general setup}\label{sssec:sheaf_generalsetup} The general results on the microlocal theory of constructible sheaves were pioneered by M.~Kashiwara and P.~Schapira in \cite{KashiwaraSchapira_Book} and, more recently, in collaboration with S.~Guillermou in \cite{Sheaves1}. The first category that we need is defined as follows:

\begin{definition}
The category $\I(\mathds{k}_M)$  is the full dg-subcategory of the dg-category of locally bounded complexes of sheaves of $\mathds{k}$-modules on $M$ which consist of $h$-injective complexes of injective sheaves. The homotopy category of $\I(\mathds{k}_M)$ is denoted by $[\I(\mathds{k}_M)]$.
\end{definition}

The dg-category $\I(\mathds{k}_M)$ is a strongly pretriangulated dg-category and the six functor formalism lifts to this dg-enhancement $\I(\mathds{k}_M)$, see \cite{Schnurer18}. The homotopy category $[\I(\mathds{k}_M)]$ is triangulated equivalent to the locally bounded derived category of sheaves on $M$, often denoted by $D^{lb}(\mathds{k}_M)$. For an object $F\in \I(\mathds{k}_M)$, we denote by $\ssup(F)\sse T^*M$ its singular support understood as an object in $[\I(\mathds{k}_M)]\simeq D^{lb}(\mathds{k}_M)$. The notion of singular support leads to defining the following dg-categories:

\begin{definition}
Let $S\sse T^*M$ be a subset. The category $\I_S(\mathds{k}_M)$  is the subcategory of $\I(\mathds{k}_M)$ consisting of objects $F\in\I(\mathds{k}_M)$ such that $\ssup(F)\sse S$. The category $\I_{(S)}(\mathds{k}_M)$  is the subcategory $\I(\mathds{k}_M)$ consisting of objects $F\in\I(\mathds{k}_M)$ for which there exists an open neighborhood $\Omega$ such that $\ssup(F)\cap\Omega\sse S$.

\noindent Let $\La\sse T^*_\infty M$ be a Legendrian submanifold. We denote by $\I_\La(\mathds{k}_M)$ and $\I_{(\La)}(\mathds{k}_M)$ the categories as above with the choice of subset $S$ being the Lagrangian cone of $\Lambda$ union the zero section $M\subset T^*M$.
\end{definition}

The assignment $U\longmapsto \I(\mathds{k}_U)$ to each open subset $U\sse M$ is a stack of dg-categories. Similarly, the pre-stack $\I_{(\La)}$ defined by
$$\I_{(\La)}(U):=\I_{(T^*U\cap\La)}(\mathds{k}_U),\quad U\sse M \mbox{ open subset},$$
where $U$ is an open subset, is a stack. This is an advantage of using the injective dg-enhancements instead of derived categories. A central result in symplectic topology \cite{Sheaves1} is that the stack $\I_{(\La)}$ on $M$ is a Legendrian isotopy invariant of the Legendrian $\La\sse T^*_\infty M$. There are two constructions associated to the stack $\I_{(\La)}$, as follows.

\begin{itemize}
    \item[(i)] {\bf The microlocal functor $\m_\La$}. The Kashiwara-Schapira stack $\msh(\mathds{k}_\La)$ is the stack on $\La$ associated to the pre-stack
    $$V\longmapsto \I_{(V)}(\mathds{k}_M;V),\quad V\sse \La\mbox{ open subset},$$
    where $\I_{(V)}(\mathds{k}_M;V)$ is the Drinfeld dg-quotient of $\I_{(V)}(\mathds{k}_M)$ by $\I_{T^*M\setminus(V\cup M)}(\mathds{k}_M)$. See \cite{KashiwaraSchapira_Book,Sheaves1}. The quotient functor gives a functor of stacks
    $$\m_\La:\I_{(\La)}\lr \left(\pi_M|_\La\right)_*(\msh(\mathds{k}_\La)).$$
    The use of this functor in our article is twofold: it is used in the case that $\La\sse T^*_\infty\R^2$ is a Legendrian link, and in the case where $\La\sse T^*_\infty\R^3$ is a Legendrian surface obtained as the lift of an exact Lagrangian filling of a Legendrian link.\\

    \item[(ii)] {\bf The moduli stack $\mathcal{M}_{\I_{(\La)}(M)}$}. By \cite[Theorem 3.21]{Nadler16} the global sections $\I_{(\La)}(M)$ is a dg-category equivalent to the category of (pseudo)perfect modules of a finite type category, namely of the category of wrapped constructible sheaves $\Sh^w_\La(M)$ defined in \cite[Definition 3.17]{Nadler16}. Then, the main result of \cite{ToenVaquie07} implies that there exists a locally geometric $D^-$-stack $\mathcal{M}_{\I_{(\La)}(M)}$, locally of finite presentation, which acts as the moduli stack of objects in the dg-category $\I_{(\La)}(M)$.
\end{itemize}

Finally, as explained in \cite[Section 1.7]{JinTreumann} given an embedded exact Lagrangian filling $L\sse T^*M$ of a Legendrian submanifold $\La\sse T^*_\infty M$, the microlocal functor $\m_{\overline{L}}$ applied to the Legendrian lift $\overline{L}\sse J^1(M)$ yields an equivalence of categories between (a subcategory of) ${\I_{(\overline{L})}(M)}$ and the dg-derived category of local systems on $L$. This induces an open inclusion $\iota_L:\R \Loc(L)\lr\mathcal{M}_{\I_{(\La)}(M)}$
where $\R \Loc(L)$ denotes the derived moduli space of local systems on $L$.

\subsubsection{The concrete models.}\label{sssec:sheaf_concrete} For a Legendrian link $\La\sse T^*_\infty\R^2$ with vanishing rotation number, the category of global sections of the Kashiwara-Schapira stack $\msh(k_\La)$ admits a simple object $\Xi$ by \cite[Part 10]{Guillermou19_SheafSummary}. In addition, the functor $\mu\mbox{hom}(\Xi,\cdot)$ is an explicit equivalence between $\msh(k_\La)$ and the (twisted) stack $\Loc_\La$ of (twisted) local systems on $\La$.\footnote{A choice of spin structure on $\La$, and correspondingly choices of spin structures for the Lagrangian fillings we consider, allow a further identification to actual (untwisted) local systems. We implicit have these choices on the background and translate them combinatorially in Section \ref{sec:cluster}, through sign curves, when they are needed to assign signs.} In consequence, the microlocal functor $\m_\La$ described above yields a functor
$$\m_{\La,\Xi}:\I_{(\La)}(M)\lr Loc_\La(\La),$$
where we have considered global sections and identified the codomain of $\m_\La$ with $\Loc_\La$ via $\mu\mbox{hom}(\Xi,\cdot)$ and a choice of spin structure. In addition, given a Legendrian link $\La\sse T^*_\infty\R^2$, we only need to consider the moduli substack $\mathcal{M}_1(\La)$ of $\mathcal{M}_{\I_{(\La)}(\R^2)}$ which is associated to the subcategory of objects in $\I_{(\La)}(\R^2)$ whose image under $\m_\La$ is a local system (on $\La$) of locally free $\mathds{k}$-modules of rank one supported in degree zero. In the case that $\La$ admits a binary Maslov index, the stack $\mathcal{M}_1(\La)$ is equal to its truncation $t_0(\mathcal{M}_1(\La))$, which is an Artin stack.

Now, given an embedded exact Lagrangian filling $L\sse T^*M$ of $\La$, the derived stack $\Loc_{L}$ of local systems on $L$ is also equivalent to its truncation and the open inclusion $\iota_L$ described gives an inclusion
$\iota_L:\Loc_1(L)\lr\mathcal{M}_1(\La)$
of Artin stacks, where $\Loc_1(L)$ are local systems (on $L$) of locally free $\mathds{k}$-modules of rank one supported in degree zero. Since Abelian local systems on $L$ can be parametrized by $H^1(L,\mathds{k}^\times)$, the inclusion $\iota_L$ provides a toric chart $\iota_L(Loc_1(L))$ in the moduli stack $\mathcal{M}_1(\La)$. In the course of the article, we typically consider the ground ring $\mathds{k}=\C$. If we are given a Legendrian link for which the stabilizers of $\mathcal{M}_1(\La)$ are trivial and $\mathcal{M}_1(\La)$ is smooth, then $\mathcal{M}_1(\La)$ is (represented by) a smooth affine variety and an embedded exact Lagrangian filling $L$ of $\La$ yields a toric chart
$\iota_L:(\C^\times)^{2g(L)}\lr\mathcal{M}_1(\La),$
where $g(L)$ is the topological genus of the surface $L$.

Finally, both the inclusions $\iota_L:\Loc_1(L)\lr\mathcal{M}_1(\La)$ and the microlocal functors $\m_\La:\mathcal{M}_1(\La)\lr \Loc_1(\La)$ can be computed explicitly from the front via cones of maps between stalks (of the sheaves parametrized by $\mathcal{M}_1(\La)$. Indeed, we shall use the combinatorial model in \cite[Section 3.3]{STZ_ConstrSheaves}, where the points of $\mathcal{M}_1(\La)$ are parametrized by functors from the poset category associated to the stratification induced by the Legendrian front to the Abelian category of $\mathds{k}$-modules (modulo acyclic complexes). Note that in the case of Legendrian weaves, this combinatorial model is explained in \cite[Section 5]{CasalsZaslow}.

\subsubsection{A decorated moduli space.}\label{sssec:sheaf_decorated} Let $T=\{t_1,\ldots,t_s\}$, $t_i\sse\La$, $i\in[1,s]$, be a set of distinct points in a Legendrian link $\La\sse T^*_\infty\R^2$. The elements of $T$ will be referred to as \emph{marked points}. The moduli stack $\mathcal{M}_1(\La)$ can be decorated with additional trivializing information once a set of marked points $T$ has been fixed, as follows.

\begin{definition}\label{def:decoratedsheaves}
Let $\La\sse T^*_\infty\R^2$ be a Legendrian link with a fixed choice of Maslov potential and spin structure. Consider a set of marked points  $T=\{t_1,\ldots,t_s\}$ and label the components of $\La\setminus T$ by $\La_i$, $i\in\pi_0(\La\setminus T)$. The moduli stack $\fM(\La,T)$ is
$$\fM(\La,T):=\{\left(F;\phi_1,\ldots,\phi_{|\pi_0(\La\setminus T)|}\right): F\in\mathcal{M}_1(\La),\phi_i\mbox{  trivialization of }\m_\La(F)\mbox{ on }\La_i\}.$$
Note that an Abelian local system can always be trivialized over $\La_i$ if $\mathds{k}=\C^*$. For a general ground ring $\mathds{k}$, we require that there exist at least one marked point per component of $\La$.
\end{definition}

In Definition \ref{def:decoratedsheaves}, the identification of (global sections of) the codomain of $\m_\La$ with the stack of local systems is fixed by the choice of Maslov potential and spin structure on $\La$. There are at least two advantages to decorating the moduli stack of sheaves $\mathcal{M}_1(\La)$ to $\fM(\La,T)$. First, introducing the data of the trivializations in $\fM(\La,T)$ often results in a smooth affine variety, even if $\mathcal{M}_1(\La)$ was singular; this is similar to the classical setup with character varieties \cite{FockGoncharov_ModuliLocSys}. Second, the trivializations in $\fM(\La,T)$ can be used to define global regular functions. In fact, we will show that $\fM(\La,T)$ admits a cluster $\mathcal{A}$-structure, and our construction of the cluster $\mathcal{A}$-variables crucially relies on the existence of these decorations. Finally, the moduli space $\mathcal{M}_1(\La,T)$ is defined similarly, by considering sheaves in $\mathcal{M}_1(\La)$ with the additional data of trivializations of the stalks of the associated microlocal local systems at each of the marked points in $T$.

\subsection{A clarification on the notion of cluster structures}\label{ssec:clarificationAX} In the literature, the sentence ``{\it a space $Y$ has a cluster structure}'' has different meanings. We record here the precise definitions that have been used, implicitly or explicitly, and clarify the type of results we obtain. Let $Q$ be a quiver, or more generally a skew-symmetrizable matrix. Consider the following concepts:

\begin{itemize}

\item[-] The cluster algebra $\mathbb{A}_Q$. This is a commutative algebra and it comes endowed with a (typically infinite) system of generators $A_i\in \mathbb{A}_Q$, called the {\it cluster variables}. The vertices of the quiver give some of these cluster variables, and the other cluster variables are produced by the process of mutation. Cluster algebras were first introduced and studied by S.~Fomin and A.~Zelevinsky in the series of works \cite{FominZelevinsky_DoubleBruhat,FominZelevinsky_ClusterI,FominZelevinsky_ClusterII}. The affine scheme associated to $\mathbb{A}_Q$ is $\mbox{Spec}(\mathbb{A}_Q)$.\\

\item[-] The space $\mathcal{A}_Q$, called the cluster $\mathcal{A}$-space or cluster $\mbox{K}_2$-space, is a scheme obtained by birationally gluing certain tori according to $Q$. The ring of regular functions $\mathcal{O}(\mathcal{A}_Q)=\Gamma(\mathcal{A}_Q,\mathcal{O}_{\mathcal{A}_Q})$ is often referred to as the upper cluster algebra, due to its connection to \cite{BFZ05}. This scheme is typically not finitely generated but it is separated by \cite[Theorem 3.14]{GHK15}.\\

\item[-] The space $\mathcal{X}_Q$, called the cluster $\mathcal{X}$-space or cluster Poisson space, is also a scheme obtained by birationally gluing certain tori according to $Q$; the gluing maps are different than for $\mathcal{A}_Q$ above. The ring of regular functions $\mathcal{O}(\mathcal{X}_Q)=\Gamma(\mathcal{X}_Q,\mathcal{O}_{\mathcal{X}_Q})$ does not have a name. This scheme is typically not separated, see \cite[Remark 2.6]{GHK15}.\\

\item[-] The subset $\mu_{\leq 1}^\mathcal{A}(Q)\sse \mathcal{A}_Q$ consisting of the union of $\SA$-tori associated to the initial seed for $Q$ and its adjacent seeds, i.e.~those obtained by performing {\it one} cluster $\SA$-mutation. The ring of functions $\SO(\mu_{\leq 1}^\mathcal{A}(Q))$ is known as the upper bound.\\

\item[-] The subset $\mu_{\leq 1}^\mathcal{X}(Q)\sse \mathcal{X}_Q$ consisting of the union of $\SX$-tori associated to the initial seed for $Q$ and its adjacent seeds, i.e.~those obtained by performing {\it one} cluster $\SX$-mutation.\\
\end{itemize}

\noindent The $\mathcal{A}$ and $\mathcal{X}$-schemes were first introduced and studied by V.~Fock and A.~Goncharov
\cite{FockGoncharov_ModuliLocSys,FockGoncharovII} and subsequently featured in \cite{GHK15,GHKK}. There is also the notion of a partial $\SX$-structure (and partial $\SA$-structure), as introduced in \cite[Definition 5.11]{STWZ}, where only some tori in $\mu_{\leq 1}^\mathcal{X}(Q)$ are considered. After studying the literature and discussing with experts, our conclusion is that ``{\it a space $Y$ has a cluster structure}'' might mean that $Y$ is equal to either of the (often quite different) spaces above, or even that it is equal up to codimension 2, i.e.~ $\SO(Y)$ equals any of the (often quite different) rings of functions above. In certain cases, such as \cite{GoncharovKontsevich21_Noncommutative}, it might also mean having a partial $\SA$- or partial $\SX$-structure for what the authors referred to as a non-commutative stack.


\begin{remark}
It is crucial to have a rigorous definition of the ``space'' $Y$, as well as its ``ring of functions'' $\SO(Y)$ so as to give precise meaning to the notion of admitting a cluster structure. If $Y$ is a scheme, the sheaf of regular functions is well-understood \cite{Hartshorne77_AGBook}. In our case, $\mathfrak{M}(\La,T)$ are always affine schemes and $\M_1(\La)$ and $\M_1(\La,T)$ are always algebraic quotients of affine schemes.\hfill$\Box$ 
\end{remark}

\noindent Now, the spaces $\mbox{Spec}(\mathbb{A}_Q)$, $\SA_Q$, $\SX_Q$, $\mu_{\leq 1}^\mathcal{A}(Q)$ and $\mu_{\leq 1}^\mathcal{X}(Q)$ are often quite different from each other, but the following facts hold:

\begin{enumerate}
    \item The inclusion $\mathbb{A}_Q\sse\SO(\SA_Q)$ always holds. This is a non-trivial fact known as the Laurent phenomenon. In particular, {\it all} cluster $\SA$-variables $A_i\in\mathbb{A}_Q$ belong to $\SO(\SA_Q)$. In fact, $\mathbb{A}_Q$ can be defined to be the subalgebra of $\SO(\SA_Q)$ generated by the cluster $\SA$-variables. In stark contrast, the cluster $\SX$-variables $X_i$ are almost never elements of $\SO(\SX_Q)$.\\

    \item The inclusion $\mathbb{A}_Q\sse \SO(\SA_Q)$ of the cluster algebra into its upper cluster algebra may or may not be an equality, depending on the case. This is referred to as the $\SA=\mathcal{U}$ problem, see e.g.~\cite{BFZ05,Muller14}. The inclusion $\SO(\SA_Q)\sse\SO(\mu_{\leq 1}^\mathcal{A}(Q))$ of the upper cluster algebra into its upper bound may or may not be an equality. It is known to be an equality for the case of full rank. In general, it is known that the equality $\SO(\SX_Q)=\SO(\mu_{\leq 1}^\mathcal{X}(Q))$ always holds.\\
\end{enumerate}

In this manuscript, the main spaces $Y$ we study are the affine schemes $\mathfrak{M}(\La,T)$. The results we prove imply that $\SO(\mathfrak{M}(\La,T))$ equals $\SO(\SA_Q)$, where $Q$  is the quiver geometrically defined in Section \ref{sec:weaves}. That is, we construct an inclusion $\SO(\mathfrak{M}(\La,T))\subseteq\SO(\SA_Q)$ and show that it is an equality. We also provide symplectic geometric meaning to the $\SA$-variables in Section \ref{sec:cluster}. In conjunction with other work of the first author \cite{CGGLSS} -- which logically depends on previous work by the second author \cite{ShenWeng} and the present manuscript -- we know that $\SO(\SA_Q)=\mathbb{A}_Q$. Therefore $\mathfrak{M}(\La,T)$ admits a cluster structure in the strongest possible sense: it is an affine scheme whose ring of regular functions {\it equals} the upper cluster algebra $\SO(\SA_Q)$, and also the cluster algebra $\mathbb{A}_Q$.

\begin{remark}
Similarly, a consequence of our results is that $\SO(\M_1(\La))$ equals $\SO(\SX_Q)$, which was also an open question. That is, we show that the inclusion $\SO(\M_1(\La))\subseteq\SO(\SX_Q)$ is an equality. The results of \cite{STWZ}, when combined with their later work \cite{STW}, would likely imply that for $\La$ associated to plabic fence (no lollipops) one has the inclusion $\SO(\M_1(\La))\sse\SO(\SX_Q)$. That said, even \cite{STW,STWZ} combined do not prove the equality in these cases.\hfill$\Box$
\end{remark}

Finally, we emphasize that the geometric description of the cluster $\SA$-variables $A_i\in \mathbb{A}_Q$ and the particular algebraic geometric description of $\mathfrak{M}(\La,T)$ is what allows for the equalities $\SO(\mathfrak{M}(\La,T))=\SO(\SA_Q)$ to be proven in Section \ref{sec:cluster}. In particular, the fact that $\SO\mathfrak(\mathfrak{M}(\La,T))$ is a unique factorization domain (Subsection \ref{ssec:delta-complete}) and the fact that the (candidate) cluster $\SA$-variables are irreducible in $\SO\mathfrak(\mathfrak{M}(\La,T))$ (Subsection \ref{ssec:codim 2 argument}), are key to deduce $\SO(\mathfrak{M}(\La,T))=\SO(\SA_Q)$.



\section{Diagrammatic Weave Calculus and Initial Cycles}\label{sec:weaves}

The new machinery from contact topology that allows us to construct cluster structures is the study of Legendrian weaves, as initiated in \cite{CasalsZaslow}. In this manuscript, we continue to develop techniques for Legendrian weaves so as to prove Theorem \ref{thm:main}. These new weave techniques now relate to GP-graphs $\bG$ and their associated Legendrian links $\La(\bG)$. Among many central facts, the construction of a weave $\ww(\bG)$ associated to $\bG$ yields a canonical embedded exact Lagrangian filling for $\La(\bG)$, a sheaf quantization, and the flag moduli of the weaves $\ww(\bG)$ shall provide the initial seeds for our cluster structures. In addition, as explained in Section \ref{sec:cluster}, the weave $\ww(\bG)$ is used also to carry the explicit computations necessary for the study of cluster $\mathcal{A}$-variables and the proof of Theorem \ref{thm:main}.


\subsection{Preliminaries on weaves}\label{ssec:Weave} The reader is referred to \cite{CasalsZaslow} for the details and background on Legendrian weaves, but we provide here a quick primer on the basics. Let $J,K\sse\R^2$ be two trivalent planar graphs having an isolated intersection point at a common vertex $v\in J\cap K$. By definition, the intersection $v$ is said to be {\it hexagonal} if the six half-edges in $C$ incident to $v$ interlace, i.e. alternately belong to $J$ and $K$. Figure \ref{fig:VertexType} (right) depicts such a hexagonal vertex.

\begin{center}
	\begin{figure}[h!]
		\centering
		\includegraphics[scale=0.4]{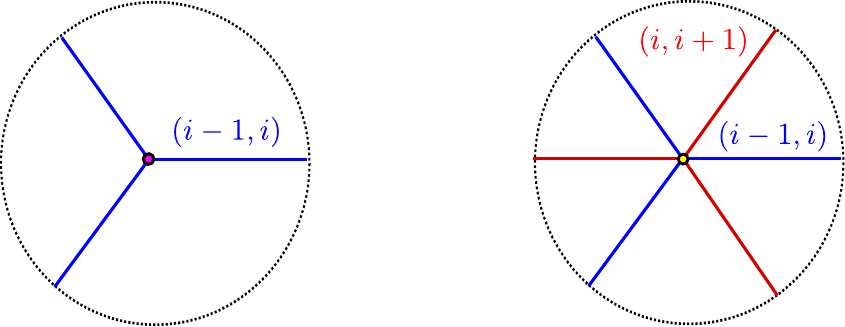}
		\caption{Trivalent vertex (left) and Hexagonal Point (right).}
		\label{fig:VertexType}
	\end{figure}
\end{center}

\vspace{-0.7cm}

\begin{definition}\label{def:Ngraph}
Given $n\in\N$, an $N$-weave $\ww\sse\R^2$ is a set $\ww=\{G_i\}_{1\leq i\leq N-1}$ of $(N-1)$ embedded trivalent planar graphs $G_i\sse \R^2$, possibly empty or disconnected, such that $G_i$ is allowed to intersect $G_{i+1}$ only at hexagonal points, $1\leq i\leq N-2$. By definition, a weave $\ww\sse\R^2$ is an $N$-weave for some $N\in\N$.\hfill$\Box$
\end{definition}

We also refer to the image of a weave in the plane as a weave $\ww$, as no confusion arises. The edges of the graphs that constitute a weave $\ww$ are often referred to as weave lines. We note that two graphs $G_i,G_j\sse \R^2$ are allowed to intersect (anywhere) as long as $j\neq i,i\pm 1$, and we always assume that the intersection is transverse. Through its image, we also think of an $N$-weave as an immersed graph in $\R^2$ with colored (or labeled) edges, the color $i$ corresponding to the graph $G_i$, $1\leq i\leq N-1$.  Edges labeled by numbers differing by two or more may pass through one another (hence the immersed property, which is met generically), but not at a vertex. As a graph in the plane, and $N$-weave has trivalent, tetravalent and hexagonal vertices. 

Let $\{s_i\}_{i=1}^{N-1}$ be the set of Coxeter generators of the symmetric group $S_N$. Instead of colors, we can equivalently label the edges of an $N$-weave $\ww=\{G_i\}$ which belong to the graph $G_i$ with the transposition $s_i$: these labeled edges will also be referred to as $s_i$-edges, or $i$-edges. The theory of weaves as developed in \cite{CasalsZaslow} is grounded on the theory of Legendrian surfaces in $(\R^5,\xi_\st)$ and their spatial wavefronts. In brief, a weave $\ww\sse\R^2$ gives rise to a spatial Legendrian wavefront $\Sigma(\ww)\sse\R^3$, which itself lifts to an embedded Legendrian surface $\La(\ww)$ in $(\R^5,\xi_\st)$. In the present manuscript, the main property of the surface $\La(\ww)$ that we use is that its image $L(\ww):=\pi(\La(\ww))\sse(\R^4,\omega_\st)$ is an exact Lagrangian surface in the standard symplectic Darboux ball, where $\pi:(\R^5,\xi_\st)\lr (\R^4,\omega_\st)$ is the projection along the $\alpha_\st$-Reeb direction.

In this article, unless it is stated otherwise, all the weaves that we construct are free, see \cite[Section 7.1.2]{CasalsZaslow}, which translates into the fact that $L(\ww)\sse(\R^4,\omega_\st)$ will always be an embedded exact Lagrangian surface, and not just immersed. In particular, this implies that $L(\ww)$ must have boundary, which it always will. Moreover, when $\ww$ is free, the Lagrangian projection map $\pi$ is a \emph{homeomorphism}, and hence $H_1(L(\ww))\cong H_1(\Lambda(\ww))$. The underlying contact geometry dictates that certain weaves ought to be considered equivalent. This leads to the following.

\begin{definition}\label{def:EquivalentWeaves}
The moves depicted in Figure \ref{fig:ReidemeisterWeave} are referred to as {\it weave equivalences}. By definition, two weaves $G,G'\sse\R^2$ are said to be equivalent if they differ by a sequence of weave equivalences or diffeomorphisms of the plane. We interchangeably refer to a weave and its weave equivalence class when the context permits.
\end{definition}

\begin{center}
	\begin{figure}[H]
		\centering
		\includegraphics[width=\textwidth]{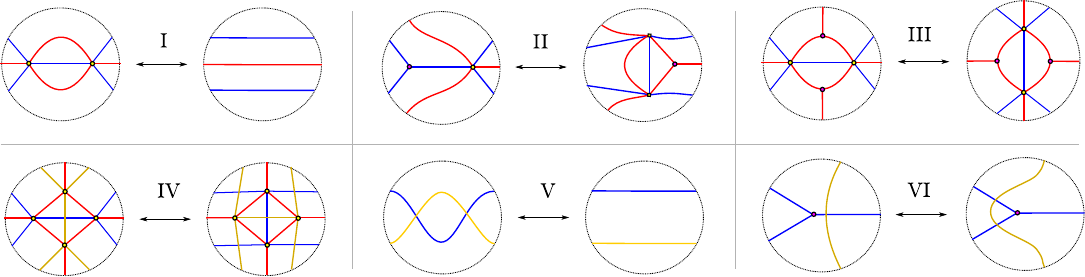}
		\caption{Six weave equivalences.}
		\label{fig:ReidemeisterWeave}
	\end{figure}
\end{center}

\vspace{-0.7cm}

\begin{remark}\label{rmk:non-independence of weave equivalences} As noted in \cite[Section 4]{CasalsZaslow}, these moves are not entirely independent, and Move III can be deduced from Move I and Move II. It is nevertheless useful to underscore Move III when working with weaves. The results in \cite{CasalsZaslow}, using the underlying contact geometry, imply that all the constructions that we associate to a weave are invariant under weave equivalences. (It would to be possible to verify this combinatorially as well, see for instance the computations in \cite{CGGS}.)
\end{remark}

A first goal is constructing a weave for each GP-graph $\bG\sse\R^2$: it is achieved in Subsection \ref{ssec:InitialWeave} once we have reviewed the necessary material on $\sf Y$-cycles and weave mutation.

\subsection{\texorpdfstring{$\sf Y$-cycles and weave mutation}{}}\label{sssec:YCyclesInWeaveAndMutation} Let $\ww\sse\R^2$ be a free weave. The homology group $H_1(L(\ww))\cong H_1(\Lambda(\ww))$ has a central role in our article, as it is a sub-lattice of the defining lattice for the initial seed. In \cite[Section 2]{CasalsZaslow} we devised a method to describe absolute cycles in $L(\ww)$ in terms of $\ww\sse\R^2$. The main concept that is relevant for our purposes is that of a $\sf Y$-cycle on a weave $\ww$, which is defined as follows.

\begin{definition}\label{def:Ycycle} Let $\ww\sse\R^2$ be a weave. An absolute 1-cycle $\gamma\sse \Lambda(\ww)$ is said to be \emph{$\sf Y$-cycle} if its projection onto $\R^2$ consists of weave lines, i.e.~it is contained in $\ww$. A $\sf Y$-cycle is said to be a \emph{$\sf Y$-tree} if its projection image is a tree, considered as a planar embedded graph in $\bR^2$. A $\sf Y$-tree is a \emph{$\sf I$-cycle} if its projection onto $\bR^2$ does not have any trivalent vertices. Finally, an $\sf I$-cycle is \emph{short} if it does not pass through any hexagonal vertex of the weave $\ww$. Figure \ref{fig:local picture of Y-cycles} depicts the four possible local models for a $\sf Y$-tree near a trivalent, tetravalent, and hexagonal vertex of the weave.
\end{definition}
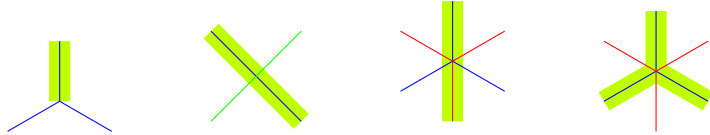
\begin{figure}[H]
    \centering
    \begin{tikzpicture}[scale=0.4]
    \draw [lime, line width = 8] (0,0) -- (0,2);
    \foreach \i in {0,1,2}
    {
        \draw [blue] (0,0) -- (90+\i*120:2);
    }
    \end{tikzpicture} \quad \quad  \quad 
    \begin{tikzpicture}[scale=0.4]
    \draw [lime, line width = 8] (-1.5,1.5) -- (1.5,-1.5);
    \draw [blue] (-1.5,1.5) -- (1.5,-1.5);
    \draw [green] (-1.5,-1.5) -- (1.5,1.5);
    \end{tikzpicture} \quad \quad \quad
    \begin{tikzpicture}[scale=0.4]
    \draw [lime, line width = 8] (0,2) -- (0,-2);
    \foreach \i in {0,1,2}
    {
        \draw [blue] (0,0) -- (90+\i*120:2);
        \draw [red] (0,0) -- (30+\i*120:2);
    }
    \end{tikzpicture} \quad \quad \quad 
    \begin{tikzpicture}[scale=0.4]
    \foreach \i in {0,1,2}
    {
        \draw [lime, line width = 8] (0,0) -- (90+\i*120:2);
        \draw [blue] (0,0) -- (90+\i*120:2);
    }
    \foreach \i in {0,1,2}
    {
        \draw [red] (0,0) -- (30+\i*120:2);
    }
    \end{tikzpicture}
    \caption{The local models for a $\sf Y$-tree. The $\sf Y$-tree is highlighted in light green.}
    \label{fig:local picture of Y-cycles}
\end{figure}
Definition \ref{def:Ycycle} allows us to associate a unique absolute cycle on $\Lambda(\ww)$ (and hence on $L(\ww)$) to each $\sf Y$-tree in a weave $\ww$, as explained in \cite[Section 2]{CasalsZaslow}. (There are two conventions regarding orientations and choice of sheet at which to lift, but once these conventions are fixed, the absolute cycle is defined uniquely.) Note that a $\sf Y$-cycle can stack multiple copies of the above patterns at the same vertex: when this happens at a trivalent or hexagonal vertex, the stacking creates self-intersections of the absolute cycle it represents. The distinction between {\it embedded} and {\it immersed} representatives of absolute homology classes is at the core of the distinction between {\it mutable} and {\it frozen} variables for the cluster structures we construct. The outstanding role of $\sf Y$-trees is justified by the following fact.

\begin{prop}\label{prop:Ytreebounds} Let $\ww\sse\R^2$ be a free weave and $\delta$ be an absolute 1-cycle representing a homology class in $H_1(L(\ww))\cong H_1(\Lambda(\ww))$ which is obtained from a $\sf Y$-tree in $\ww$. Then, there exists a weave equivalence $\ww\sim \ww'$ such that the cycle $\delta\sse\ww$ becomes a short $\sf I$-cycle in $\ww'$. In consequence, any homology class in $H_1(L(\ww))$ represented by a $\sf Y$-tree admits an embedded representative $\gamma\sse L(\ww)$ which bounds an embedded exact Lagrangian disk in $\R^4\setminus L(\ww)$.
\end{prop}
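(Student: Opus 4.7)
The plan is to argue by induction on the combinatorial complexity of the $\sf Y$-tree $T$ representing $\delta$. Assign to $T$ the pair $c(T):=(h,t)\in\N^2$, where $h$ counts the hexagonal vertices of $\ww$ traversed by $T$ and $t$ counts the trivalent vertices of $\ww$ at which $T$ exhibits a local $\sf Y$-branching (first picture of Figure \ref{fig:local picture of Y-cycles}). Order these pairs lexicographically. A short $\sf I$-cycle is precisely a $\sf Y$-tree with $c(T)=(0,0)$: its projection has no trivalent branchings of the tree and passes through no hexagonal vertex, while tetravalent crossings (second picture of Figure \ref{fig:local picture of Y-cycles}) are still admissible for $\sf I$-cycles. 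It therefore suffices to show that if $c(T)>(0,0)$ one can perform a sequence of weave equivalences producing a new weave $\ww'$ and a tree $T'\sse\ww'$ representing the same homology class with $c(T')<c(T)$.

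The reduction is carried out in two stages. First, lower $h$ to zero. Pick a hexagonal vertex $v$ traversed by $T$: either $T$ passes straight through $v$ (third picture of Figure \ref{fig:local picture of Y-cycles}) or $T$ exhibits a $\sf Y$-branching at $v$ (fourth picture). In either situation, the hexagonal weave equivalence from Figure \ref{fig:ReidemeisterWeave} (the $60^\circ$ rotation of the local hexagonal pattern) can be applied to push the affected portion of $T$ off $v$; a local diagrammatic check shows the resulting cycle is homologous to $\delta$, traverses one fewer hexagonal vertex, and introduces only a bounded number of new $\sf Y$-branchings, yielding a strict lexicographic decrease of $c(T)$. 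Once $h=0$, I reduce $t$ by picking a $\sf Y$-branching $w$ of $T$ whose graph-theoretic distance in $T$ to a leaf is minimal. Following the branch of $T$ from $w$ to this nearest leaf lands at a trivalent vertex $w'$ of $\ww$ which, by minimality of $w$, is not separated from $w$ by any other branching of $T$. Move I of Figure \ref{fig:ReidemeisterWeave} cancels the pair $(w,w')$ of same-colored trivalent vertices, turning the $\sf Y$-branching at $w$ into a simple pass-through and removing it from the branching count; since Move I is internal to a single-color subgraph, no hexagonal vertex is created and $h$ stays zero. Iterating drives $t$ to zero.

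The main obstacle is the bookkeeping in the first stage, where one must enumerate the finitely many local configurations in which $T$ can meet a hexagonal vertex of $\ww$ and verify that each application of the hexagonal move indeed produces a strict lexicographic decrease in $c$. Once this case analysis is complete, the lexicographic induction terminates at a weave $\ww'$ in which $\delta$ is realized as a short $\sf I$-cycle. The second assertion of the proposition then follows: a short $\sf I$-cycle lifts to an embedded circle in $L(\ww')$ and bounds the standard embedded Lagrangian thimble in $\R^4\setminus L(\ww')$ coming from the Weinstein $2$-handle attaching construction associated to a single weave edge between two same-color trivalent vertices, as described in \cite{CasalsZaslow}. Since each weave equivalence is realized by a compactly supported Hamiltonian isotopy of $(\R^4,\omega_\st)$ carrying $L(\ww)$ to $L(\ww')$, transporting this embedded disk back along the inverse isotopy produces the desired embedded representative $\gamma\sse L(\ww)$ of $\delta$ together with its bounding embedded exact Lagrangian disk in $\R^4\setminus L(\ww)$.
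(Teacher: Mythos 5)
Your induction scheme is built on a misreading of how $\sf Y$-trees interact with weave vertices, and the two stages of your argument each contain a step that does not go through.

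The local branching of a $\sf Y$-tree $T$ happens at \emph{hexagonal} vertices of $\ww$, not at trivalent ones. In Figure~\ref{fig:local picture of Y-cycles}, the first picture shows $T$ \emph{terminating} at a trivalent vertex (one highlighted leg, so a leaf of $T$), and the fourth picture shows $T$ \emph{branching} at a hexagonal vertex (all three legs highlighted). Your invariant $t$ therefore counts leaves of $T$, not branch points. Consequently a short $\sf I$-cycle has $t=2$, not $t=0$; your claim that short $\sf I$-cycles are characterized by $c(T)=(0,0)$ is false already on definitions, and the base case of your induction is not what you assert it is.

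Your second stage is worse: Move I in Figure~\ref{fig:ReidemeisterWeave} is the candy twist, which creates or removes a \emph{pair of adjacent hexagonal vertices}. It does not cancel a pair of same-colored trivalent vertices. Indeed, no weave equivalence can cancel such a pair: a monochromatic edge joining two same-colored trivalent vertices carries a nontrivial class in $H_1(L(\ww))$ (this is exactly the short $\sf I$-cycle), and erasing it would change the genus of $L(\ww)$. Canceling trivalent vertices is a mutation- or erasing-type move, not an equivalence. So the termination mechanism you propose for $t$ is not available.

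Your first stage has the standard trap of this kind of argument: pushing $T$ off a hexagonal vertex by a weave equivalence does not in general decrease the number of hexagonal vertices that $T$ meets, because the move creates new hexagonal (and trivalent) vertices whose position relative to $T$ is not controlled a priori. You assert that the count decreases lexicographically after a ``local diagrammatic check,'' but that check is precisely the content of the proof. The paper's argument is constructive and makes this explicit: one works from the leaves of $T$ inward, applying the two local equivalences of Figure~\ref{fig:YtoIequivalences} (the first being Move~II, the second a push-through at a branching hexagonal vertex). Each application shortens $T$ by one edge while converting the abandoned edge into a ``double track'' of parallel weave lines that the shortened tree no longer traverses. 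The newly created weave vertices all lie on these double tracks, away from $T$, so the process strictly shrinks the tree and terminates in a short $\sf I$-cycle. This outside-in, double-track bookkeeping is exactly the mechanism your lexicographic scheme fails to supply, and it is what you would need to make stage one rigorous. Your final paragraph about the embedded Lagrangian thimble and Hamiltonian transport is fine, but it rests on the earlier reduction, which as written does not hold.
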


\begin{proof} This readily follows from \cite{CasalsZaslow}, by applying the equivalence moves in Figure \ref{fig:ReidemeisterWeave} and keeping track of the change of a $\sf Y$-tree under these moves. In fact, it suffices to use of the following two local weave equivalences:
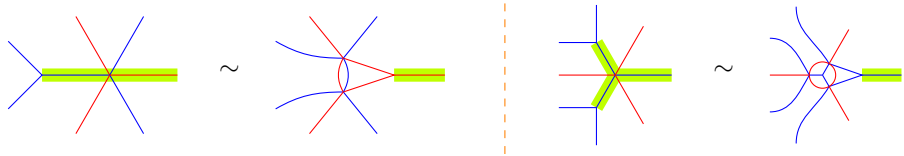
\begin{figure}[H]
    \centering
    \begin{tikzpicture}[baseline=0,scale=0.9]
    \draw [lime, line width=5] (-1,0) -- (1,0);
    \draw [blue] (-1.5,-0.5) -- (-1,0) -- (-1.5,0.5);
    \foreach \i in {0,1,2}
    {
        \draw [blue] (0,0) -- (180+\i*120:1);
        \draw [red] (0,0) -- (120+\i*120:1);
    }
    \end{tikzpicture} \quad $\sim$ \quad 
    \begin{tikzpicture}[baseline=0,scale=0.9]
    \draw [lime, line width=5] (0.75,0) -- (1.5,0);
    \draw [blue] (-1,-0.5) to [out=30, in=180] (0,-0.25) -- (-60:1);
    \draw [blue] (-1,0.5) to [out=-30,in=180] (0,0.25) -- (60:1);
    \draw [red] (120:1) -- (0,0.25) to [out=-120,in=120] (0,-0.25) -- (-120:1);
    \draw [blue] (0,0.25) to [out=-60,in=60] (0,-0.25);
    \draw [red] (0,0.25) -- (0.75,0) -- (0,-0.25);
    \draw [red] (0.75,0) -- (1.5,0);
    \end{tikzpicture}\quad \quad \begin{tikzpicture}[baseline=0,scale=0.7]
    \draw [dashed, orange] (0,-1.5) -- (0,1.5);
    \end{tikzpicture}\quad \quad
    \begin{tikzpicture}[baseline=0]
    \draw [lime, line width=5] (120:0.5) -- (0,0) -- (-120:0.5);
    \draw [lime,line width=5] (0,0) -- (0.75,0);
    \draw [blue] (0,0) -- (120:0.5) -- +(0,0.5);
    \draw [blue] (120:0.5) -- +(-0.5,0);
    \draw [blue] (0,0) -- (-120:0.5) -- +(0,-0.5);
    \draw [blue] (-120:0.5) -- +(-0.5,0);
    \draw [blue] (0,0) -- (0.75,0);
    \foreach \i in {0,1,2}
    {
        \draw [red] (0,0) -- (180+120*\i:0.75);
    }
    \end{tikzpicture} 
    \quad $\sim$ \quad
    \begin{tikzpicture}[baseline=0,scale=0.7]
    \draw [lime, line width=5] (0.75,0) -- (1.5,0);
    \draw [blue] (-1,0.7) to [out=0,in=120] (-0.25,0);
    \draw [blue] (-0.5,1.3) to [out=-90,in=120] (60:0.25);
    \draw [blue] (-1,-0.7) to [out=0,in=-120] (-0.25,0);
    \draw [blue] (-0.5,-1.3) to [out=90,in=-120] (-60:0.25);
    \draw [blue] (60:0.25) -- (0.75,0) -- (-60:0.25);
    \draw [red] (0,0) circle [radius=0.25];
    \foreach \i in {0,1,2}
    {
        \draw [red] (180+120*\i:0.25) -- (180+120*\i:1);
        \draw [blue] (0,0) -- (180+120*\i:0.25);
    }
    \draw [blue] (0.75,0) -- (1.5,0);
    \end{tikzpicture}
    \caption{Local weave equivalences to turn a $\sf Y$-tree into a short $\sf I$-cycle. Note that the first row is just Move II from Figure \ref{fig:ReidemeisterWeave}, where we kept track of the $\sf Y$-tree -- highlighted in light green -- before and after the equivalence.}\label{fig:YtoIequivalences}
\end{figure}

\noindent By using the two weave equivalences in Figure \ref{fig:YtoIequivalences}, we can work outside in on the $\sf Y$-tree $\delta$ and replace each weave line of $\delta$ with a double track, and shorten $\delta$ to a short $\sf I$-cycle somewhere along the original $\sf Y$-tree. The double tracks that appear in this shortening process are schematically depicted in Figure \ref{fig:YtoIequivalence2}. The second half of the proposition follows from the description of a short $\sf I$-cycle in \cite{CasalsZaslow}.
\begin{figure}[H]
    \centering
    \begin{tikzpicture}[baseline=5,scale=0.9]
    \draw [lime, line width=5] (0,0) -- (1.5,0) -- (2,0.5) -- (2.5,0.5) -- (3,0) -- (4,0) -- (4.5,-0.5);
    \draw [lime, line width=5](0,0.5) -- (0.5,1) -- (1.5,1) -- (2,0.5);
    \draw [lime, line width=5](0,1.5) -- (0.5,1);
    \draw [lime, line width=5](2.5,0.5) -- (3,1) -- (3,1.5);
    \draw [lime, line width=5](3,1) -- (4.5,1);
    \draw [lime, line width=5](3,0) -- (3,-0.5);
    \draw [lime, line width=5](4,0) -- (4.5,0.5);
    \end{tikzpicture} \quad \quad $\sim$ \quad \quad 
    \begin{tikzpicture}[baseline=5,scale=0.9]
    \draw [line width=6] (0,0) -- (1.5,0) -- (2,0.5) -- (2.5,0.5) -- (3,0) -- (4,0) -- (4.5,-0.5);
    \draw [line width=6](0,0.5) -- (0.5,1) -- (1.5,1) -- (2,0.5);
    \draw [line width=6](0,1.5) -- (0.5,1);
    \draw [line width=6](2.5,0.5) -- (3,1) -- (3,1.5);
    \draw [line width=6](3,1) -- (4.5,1);
    \draw [line width=6](3,0) -- (3,-0.5);
    \draw [line width=6](4,0) -- (4.5,0.5);
    \draw [white,line width=5] (-0.1,0) -- (1.5,0) -- (2,0.5) -- (2.5,0.5) -- (3,0) -- (4,0) -- (4.6,-0.6);
    \draw [white, line width=5](-0.1,0.4) -- (0.5,1) -- (1.5,1) -- (2,0.5);
    \draw [white, line width=5](-0.1,1.6) -- (0.5,1);
    \draw [white, line width=5](2.5,0.5) -- (3,1) -- (3,1.6);
    \draw [white, line width=5](3,1) -- (4.6,1);
    \draw [white, line width=5](3,0) -- (3,-0.6);
    \draw [white, line width=5](4,0) -- (4.6,0.6);
    \draw [white, line width = 8] (0.25,0) -- (1,0);
    \draw [lime, line width=5] (0.5,0) -- (0.75,0);
    \draw (0.25,0.1) -- (0.5,0);
    \draw (0.25,-0.1) -- (0.5,0);
    \draw (0.5,0) -- (0.75,0);
    \draw (0.75,0) -- (1,0.1);
    \draw (0.75,0) -- (1,-0.1);
    \end{tikzpicture}
    \caption{(Left) A $\sf Y$-tree cycle highlighted in light green. (Right) The double tracks that remain on the (equivalent) weave after the shortening process, where the $\sf Y$-cycle has now become the short $\sf I$-cycle drawn in light green.}\label{fig:YtoIequivalence2}
\end{figure}
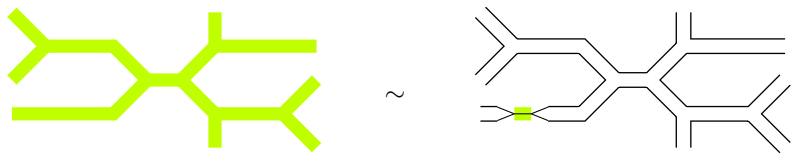
\end{proof}

\begin{remark}
Proposition \ref{prop:Ytreebounds} implies that any one $\sf Y$-tree can be turned into a short $\sf I$-cycle after weave equivalences. It is not the case that a $\sf Y$-cycle, which is not necessarily a $\sf Y$-tree, can always be turned into a short $\sf I$-cycle. It is also not the case that Proposition \ref{prop:Ytreebounds} works for more than one $\sf Y$-tree at once, in the following sense. If two $\sf Y$-trees $Y_1,Y_2\sse \ww$ are given in a weave $\ww$, then there exists a sequence of weave equivalences from $\ww$ to a weave $\ww_1$ such that $Y_1$ becomes a short $\sf I$-cycle in $\ww_1$.There is no guarantee that $Y_2$ will be a short $\sf I$-cycle in $\ww_1$; $Y_2$ will be a short $\sf I$-cycle in another weave $\ww_2$ equivalent to $\ww$, a priori different from $\ww_1$. More generally, there are collections of $\sf Y$-trees in a weave $\ww$ such that there is no sequence of weave equivalences that would turn at once all those $\sf Y$-trees in $\ww$ into short $\sf I$-cycles in any one weave $\ww'$ equivalent to $\ww$.\hfill$\Box$
\end{remark}

The existence of the embedded Lagrangian disks from Proposition \ref{prop:Ytreebounds}, i.e.~$\mathbb{L}$-compressible curves in $L(\ww)$, allows us to perform Lagrangian disk surgeries along $\sf Y$-trees and produce new exact Lagrangian fillings. In \cite[Section 4.8]{CasalsZaslow} we proved that it is possible to describe this symplectic geometric operation via a diagrammatic change in a piece of the weave called ``weave mutations''. This leads to the following definition.

\begin{definition}\label{def:WeaveMutation} Let $\gamma\sse\ww$ be the short $\sf I$-cycle -- a monochromatic blue edge -- depicted at the left of Figure \ref{fig:MutationWeave} (left). Then, the local move illustrated in Figure \ref{fig:MutationWeave} (left) is said to be a \emph{weave mutation} at the short $\sf I$-cycle $\gamma$. This is the standard Whitehead move for trivalent graphs, dual to a flip in a triangulation. Note that a weave mutation replaces the short $\sf I$-cycle $\gamma$ with a new short $\sf I$-cycle, which we often denote by $\gamma'$; we call $\gamma'$ the \emph{image} of $\gamma$ under the weave mutation.
\end{definition}

\begin{definition} For a $\sf Y$-tree in general, one can apply Proposition \ref{prop:Ytreebounds} to turn it into a short $\sf I$-cycle, perform a weave mutation, and then apply some other weave equivalences. Thus, a \emph{weave mutation} at a $\sf Y$-tree $\gamma$ in general means a weave mutation at its short $\sf I$-cycle counterpart conjugated by sequences of weave equivalences. By following the sequences of weave equivalences, the weave mutation replaces $\gamma$ by its \emph{image}, which is a new $\sf Y$-tree $\gamma'$. 
\end{definition}

\begin{center}
	\begin{figure}[h!]
		\centering
		\includegraphics[scale=0.7]{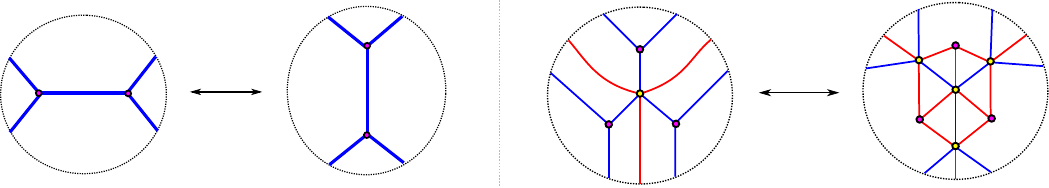}
		\caption{(Left) Weave mutation along the short $\sf I$-cycle given by the blue edge. (Right) A weave mutation along a monochromatic $\sf Y$-tree.}
		\label{fig:MutationWeave}
	\end{figure}
\end{center}

\begin{definition} Two weaves $\ww,\ww'$ are said to be (weave) \emph{mutation equivalent} if they can be connected by a sequence of moves consisting of weave equivalences and weave mutations.
\end{definition}

\noindent Finally, we emphasize that weave mutations will allow us to mutate at $\sf Y$-trees of $\ww(\bG)$ corresponding to faces and regions of $\bG$, even if they might not be square. The resulting weave, typically, will not be of the form $\ww(\bG')$ for any GP-graph $\bG'$, but all the diagrammatic and symplectic geometric results developed in this article and \cite{CasalsZaslow} can still be applied.


\subsection{Initial Weave for a GP-Graph}\label{ssec:InitialWeave} In this section we construct the initial weave $\ww(\bG)$ associated to a GP-graph $\bG$. This is done by breaking $\bG$ into elementary columns and assigning a local weave associated to each such column. Recall the standard reduced word $\w_{0,n}=s_{[1,1]}s_{[2,1]}s_{[3,1]}\ldots s_{[n-1,1]}$ for the longest element $w_{0,n}\in S_n$. Let us define $\ell=\ell(\w_{0,n})=n(n-1)/2$. The first three local weaves $\mathfrak{n}(w),\c^{\uparrow}(w)$ and $\mathfrak{c}^{\downarrow}(w)$ are defined as follows.
 
\begin{definition}
Let $w=s_{i_1}\cdots s_{i_\ell}$ be a reduced expression for $w_{0,n}\in S_n$. By definition, the weave $\mathfrak{n}(w)$ is given by $n$ horizontal parallel weave lines such that the $j$-th strand, counting from the bottom, is labeled by the transposition $s_{i_j}$, $j\in[1,\ell]$.

\noindent The weave $\c^{\uparrow}(w)$ is given by the weave $\mathfrak{n}(w)$ where a trivalent vertex is added at the top strand -- labelled by $s_{i_\ell}$ -- such that the third leg of this trivalent vertex is a vertical ray starting at the top strand and continuing upwards. 

\noindent Similarly, the weave $\mathfrak{c}^{\downarrow}(w)$ is given by the weave $\mathfrak{n}(w)$ where a trivalent vertex is added at the bottom strand -- labelled by $s_{i_1}$ -- such that the third leg of this trivalent vertex is a vertical ray starting at the bottom strand and continuing downwards.\hfill$\Box$
\end{definition}

As explained above, the weave $\ww(\bG)$ associated to $\bG$ is built by horizontally concatenating weaves local models: each local model is associated to one of the three types of elementary columns. 
The corresponding weaves for each of these occurrences are described as follows. 


\subsubsection{Local weaves for Type 1 Columns}\label{sssec:Type1weave} 

\begin{definition}[Weave for Type 1]\label{def:weaveType1}
The weave associated to a Type 1 column of a GP-graph $\bG$, which consists of $n$ parallel horizontal lines, is $\n(\w_{0,n})$, where $\w_{0,n}$ is the standard reduced expression for the longest element in a symmetric group $S_k$.
\end{definition}

It is important to note that the $s_i$-transpositions labeling the strands of $\n(\w_{0,n})$ depend on the simple transpositions that generate the symmetric group $S_{[a,b]}$ associated to that specific region (see Subsection \ref{ssec:GPgraph}). Due to the appearance of lollipops in the GP-graph, the different symmetric groups $S_{[a,b]}$ that we encounter (as we read the GP-graph left to right) have varying discrete intervals $[a,b]$.


\subsubsection{Local weaves for Type 2 Columns} \label{sssec:crossingweave}

It is a well-known property of the symmetric group, see for instance \cite[Section 3.3]{BjornerBrenti}, that any two reduced word expressions for the same element can be transformed into each other via finite sequences of the following two moves:
\begin{itemize}
    \item[-] $s_is_j\sim s_js_i$, if $|i-j|>1$;
    \item[-] $s_is_js_i\sim s_js_is_j$, if $|i-j|=1$.
\end{itemize}

Now consider the weave $\n(\w_{0,n})$ and the $s_1$-strand labelled by the $i$-th appearance of $s_1$ in the standard reduced expression $\w_{0,n}$. In order to construct the weave for a Type 2 column, in Definition \ref{def:weaveType2} below, we need the following auxiliary local weaves:

\begin{definition}\label{def:weavenormalize}
The weave $\n_i^{\uparrow}(\w_{0,n})$ is the unique horizontal weave that coincides with $\n(\w_{0,n})$ at the left, contains only tetravalent and hexagonal vertices, and brings the $i$-th $s_1$-strand of $\w_{0,n}$ to the top level, using a minimal number of weave vertices.

\noindent Similarly, the weave $\n_i^{\downarrow}(\w_{0,n})$ is the unique weave that coincides with $\n(\w_{0,n})$ at the left, contains only tetravalent and hexagonal vertices, and brings the $i$-th $s_1$-strand of $\w_{0,n}$ to the bottom level, using a minimal number of weave vertices.

\noindent Finally, we denote by $\n_i^{\uparrow}(\w_{0,n})^{op}$ and $\n_i^{\downarrow}(\w_{0,n})^{op}$ the weaves obtained by reflecting $\n_i^{\uparrow}(\w_{0,n})$ and $\n_i^{\downarrow}(\w_{0,n})$ along a (disjoint) vertical axis.\hfill$\Box$
\end{definition}

\noindent In Definition \ref{def:weavenormalize}, bringing the $i$-th $s_1$-strand of $\w_{0,n}$ to the top level means considering a horizontal weave which starts at $\n(\w_{0,n})$ on the left hand side and contains a sequence of tetravalent and hexagonal vertices (no trivalent vertices) such that following the $i$-th $s_1$-strand of $\w_{0,n}$ under these vertices (passing through them straight) ends up at the top strand at the right hand side. There are many weaves that verify this property, but by the Zamolodchikov relation proven in \cite{CasalsZaslow}, they are all equivalent and we might as well take the one with a minimal number of vertices. Figure \ref{fig:RulesWeaves_CrossingExamples} illustrates several examples of the weaves $\n_i^{\uparrow}(\w_{0,n})$ and $\n_i^{\downarrow}(\w_{0,n})$ in Definition \ref{def:weavenormalize} in the cases of $n=3,4$. Note that $\n_{n-1}^{\uparrow}(\w_{0,n})=\n_{n-1}^{\uparrow}(\w_{0,n})^{op}=\n_1^{\downarrow}(\w_{0,n})=\n_1^{\downarrow}(\w_{0,n})^{op}=\n(\w_{0,n})$ for any $n\in\N$.

\begin{center}
	\begin{figure}[h!]
		\centering
		\includegraphics[width=\textwidth]{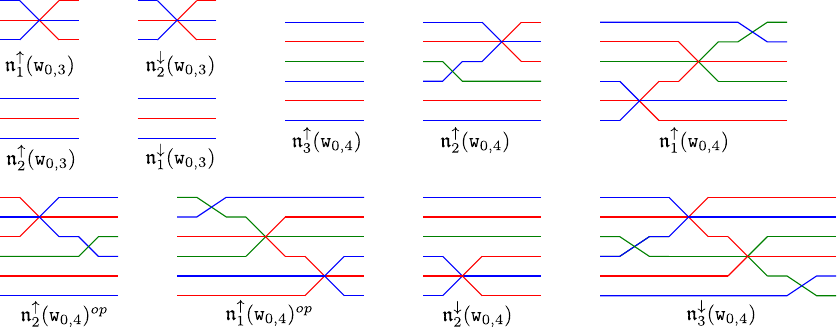}
		\caption{Instances of the weaves $\n_i^{\uparrow}(\w_{0,n})$ and $\n_i^{\downarrow}(\w_{0,n})$, and their opposites, from Definition \ref{def:weavenormalize}, in some of the cases for $n=3,4$.}
		\label{fig:RulesWeaves_CrossingExamples}
	\end{figure}
\end{center}

\vspace{-0.7cm}

\begin{definition}\label{def:weavecrossing}
The weave $\c_i^{\uparrow}(\w_{0,n})$ is the weave obtained by horizontally concatenating the three weaves $\n_i^{\uparrow}(\w_{0,n})$, $\c^{\uparrow}(w_i)$, and $\n_i^{\uparrow}(\w_{0,n})^{op}$, left to right, where $w_i$ denotes the reduced expression for $\w_{0,n}$ found at the right of the weave $\n_i^{\uparrow}(\w_{0,n})$.
\end{definition}
\begin{definition} \label{def:weavecrossing2} Similarly, the weave $\c_i^{\downarrow}(\w_{0,n})$ is the weave obtained by horizontally concatenating the three weaves $\n_i^{\downarrow}(\w_{0,n})$, $\c^{\downarrow}(w_i)$ and $\n_i^{\downarrow}(\w_{0,n})^{op}$, left to right, where $w_i$ denotes the reduced expression for $\w_{0,n}$ found at the right of the weave $\n_i^{\downarrow}(\w_{0,n})$.
\end{definition}

\noindent Figures \ref{fig:RulesWeaves_CrossingExamples2}  illustrates examples of the weaves $\c_i^{\uparrow}(\w_{0,n})$ and $\c_i^{\downarrow}(\w_{0,n})$ in Definitions \ref{def:weavecrossing} and \ref{def:weavecrossing2} for $n=3$. For the next definition, recall that we always label the horizontal lines, in a Type 1 and Type 2 column of the GP-graph $\bG$, with consecutive natural numbers, from bottom to top. For the moment, let us assume that these labels are in $[1,n]$.

\begin{definition}[Weave for Type 2]\label{def:weaveType2}
For $i\in [1,n-1]$, the weave associated to a Type 2 column of a GP-graph $\bG$ whose vertical edge has a white vertex at the $i$th horizontal line and a black vertex at the $(i+1)$st horizontal line is the weave $\c_i^{\uparrow}(\w_{0,n})$, and the weave associated to a Type 2 column of a GP-graph $\bG$ whose vertical edge has a black vertex at the $i$th horizontal line and a white vertex at the $(i+1)$st horizontal line is the weave $\c_{n-i}^{\downarrow}(\w_{0,n})$.
\end{definition}

\begin{center}
	\begin{figure}
		\centering
		\includegraphics[scale=0.8]{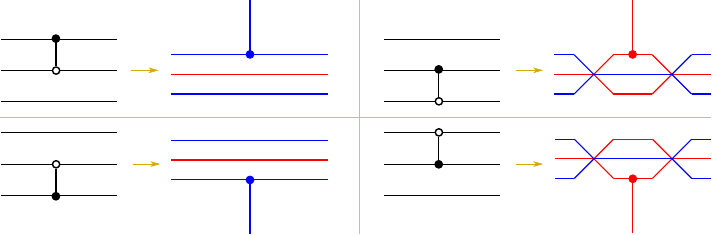}
		\caption{All possible Type 2 columns for $n=3$ and their corresponding weaves. In detail, $\c_{2}^{\uparrow}(\w_{0,3})$ (upper left), $\c_{1}^{\uparrow}(\w_{0,3})$ (upper right), $\c_{1}^{\downarrow}(\w_{0,3})$ (lower left) and $\c_{2}^{\downarrow}(\w_{0,3})$ (lower right).}
		\label{fig:RulesWeaves_CrossingExamples2}
	\end{figure}
\end{center}


\subsubsection{Local weaves for Type 3 Columns} \label{sssec:lollipopweave}
Let us consider a column of Type 3 with labels $1,2,\ldots,n$ for the horizontal lines on the right (counting from bottom to top) and with a white lollipop attached to the $i$th horizontal line with $i\in [1,n]$. The case of a black lollipop is similar, and discussed later.

\noindent By construction, the weaves associated with the two Type 1 columns sandwiching this Type 3 column are $\n(\w_{0,n-1})$ and $\n(\w_{0,n})$, respectively. Hence, the weave we associate to such a Type 3 column must have these boundary conditions. Let us start with the following weave:

\begin{center}
	\begin{figure}[h!]
		\centering
		\includegraphics[width=\textwidth]{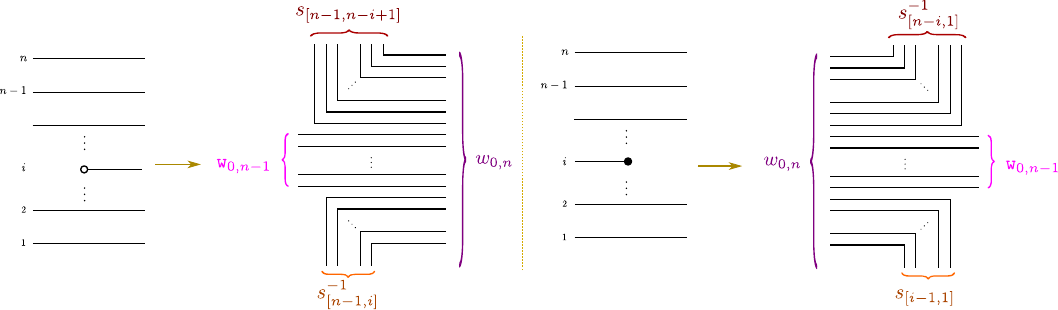}
		\caption{The weaves $\i_i^w$ (left) and $\i_i^b$ (right) from Definitions \ref{def:incomingweave} and \ref{def:outgoingweave}.}
		\label{fig:LollipopWeave}
	\end{figure}
\end{center}

\begin{definition}\label{def:incomingweave}
The weave $\i^w_i$ is the unique weave with no weave vertices, satisfying:
\begin{itemize}\setlength{\itemsep}{0.5em}
    \item[(i)] At its left, $\i^w_i$ coincides with the horizontal weave $\n(\w_{0,n-1})$, and at its right, $\i^w_i$ coincides with the horizontal weave $\n(s_{[n-1,i]}^{-1}\w_{0,n-1}s_{[n-1,n-i+1]})$.
    
    \item[(ii)] The weave lines in $\n(s_{[n-1,i]}^{-1}\w_{0,n-1}s_{[n-1,n-i+1]})$ labeled by the transpositions in the reduced expression $s_{[n-1,n-i+1]}$ diverge upwards to vertical rays.
    
    \item[(iii)] The weave lines in $\n(s_{[n-1,i]}^{-1}\w_{0,n-1}s_{[n-1,n-i+1]})$ labeled by the transpositions in the reduced expression $s_{[n-1,i]}^{-1}$ diverge downwards to vertical rays.
\end{itemize}
See Figure \ref{fig:LollipopWeave} (left) for a depiction of $\i^w_i$, illustrating what is meant by diverging upwards and downwards to vertical rays. 
\end{definition}

\noindent Note that the word $\n(s_{[n-1,i]}^{-1}\w_{0,n-1}s_{[n-1,n-i+1]})$ in Definition \ref{def:incomingweave} is a reduced expression for the half-twist $w_{0,n}$. Now, the weaves $\i^w_i$ in Definition \ref{def:incomingweave} cannot quite be the weaves for the Type 3 column yet because the labeling on the right hand side is not $\w_{0,n}$, but rather $\n(s_{[n-1,i]}^{-1}\w_{0,n-1}s_{[n-1,n-i+1]})$. To fix this, let $\n^w_i$ be any horizontal weave that coincides with $\n(s_{[n-1,i]}^{-1}\w_{0,n-1}s_{[n-1,n-i+1]})$ on the left, coincides with $\n(\w_{0,n})$ on the right, and with no trivalent weave vertices in the middle. Any choice of $\n(\w_{0,n})$ would yield an equivalent weave.

\begin{definition}[Weave for White Lollipop]\label{def:weaveType3white}
The weave $\l^w_i$ associated to a Type 3 column with a white lollipop at the $i$th horizontal line is the horizontal concatenation of $\i^w_i$ and $\n^w_i$.
\end{definition}

\noindent Figures \ref{fig:RulesWeaves2_ExampleN3} and \ref{fig:RulesWeaves2_ExampleN3_Part2} illustrate the weaves $\l^w_1,\l^w_2,\l^w_3$, and $\l^w_4$ for $n=4$, with the coloring convention that $s_1$ is blue, $s_2$ is red and $s_3$ is green. The pink boxes in the figures contain the $\i^w_i$ pieces, and the yellow boxes contain the $\n^w_i$ pieces. The figures also draw the corresponding pieces of the fronts $\mathfrak{f}(\bG)$, explaining the contact geometric origin of these weaves.

\begin{center}
	\begin{figure}[H]
		\centering
		\includegraphics{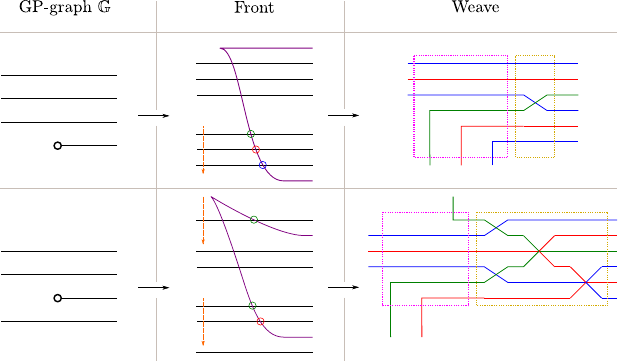}
		\caption{Weaves $\l^w_i$ associated to a white lollipop with $n=4$, as in Definition \ref{def:weaveType3white}. The first row depicts the case $i=1$ and the second row the case $i=2$. The weaves $\n^w_i$ are drawn within the yellow boxes. The weaves $\i^w_i$, with the incoming weave strands, are depicted within the pink boxes.}
		\label{fig:RulesWeaves2_ExampleN3}
	\end{figure}
\end{center}

\begin{center}
	\begin{figure}[H]
		\centering
		\includegraphics{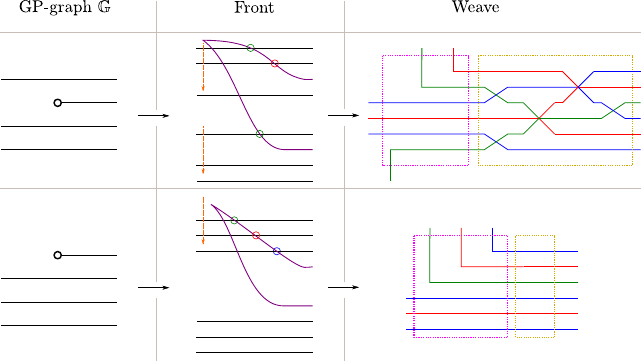}
		\caption{Weaves $\l^w_i$ associated to a white lollipop with $n=4$. The first row depicts the case $i=3$ and the second row the case $i=4$. The weaves $\n^w_i$ are drawn within the yellow boxes, and the weaves $\i^w_i$ are depicted in the pink boxes.}
		\label{fig:RulesWeaves2_ExampleN3_Part2}
	\end{figure}
\end{center}
\vspace{-0.8cm}

\begin{center}
	\begin{figure}[H]
		\centering
		\includegraphics{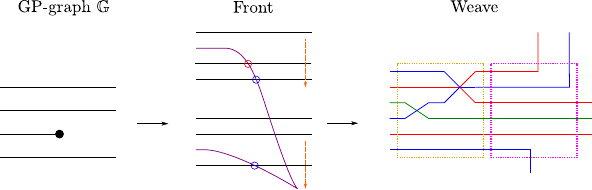}
		\caption{The weave $\l^b_2$ associated to a black lollipop when $n=4$, in accordance with Definition \ref{def:weaveType3black}. The weave $\n^b_2$ is drawn in the yellow box, and $\i^b_2$ in the pink box. Note that the departing strands of the weave (in the pink box) are in bijection with the crossings of the front, as indicated.}
		\label{fig:RulesWeaves2_ExampleN3_Part3}
	\end{figure}
\end{center}

The case of a column of Type 3 with labels $1,2,\ldots,n$ for the horizontal lines on the right, and a black lollipop at the $i$th horizontal line is similar. The necessary definitions are as follows.

\begin{definition}\label{def:outgoingweave}
The weave $\i^b_i$ is the unique weave with no weave vertices, satisfying:

\begin{itemize}
    \item[(i)] At its left, $\i^b_i$ coincides with the horizontal weave $\n(s_{[i-1,1]}\w_{0,n-1}s^{-1}_{[n-i,1]})$, and at its right, $\i^b_i$ coincides with the horizontal weave $\n(\w_{0,n-1})$.
    
    \item[(ii)] The weave lines in $\n(s_{[i-1,1]}\w_{0,n-1}s^{-1}_{[n-i,1]})$ labeled by the transpositions in the reduced expression $s_{[i-1,1]}$ diverge downwards to vertical rays.
    
    \item[(iii)] The weave lines in $\n(s_{[i-1,1]}\w_{0,n-1}s^{-1}_{[n-i,1]})$ labeled by the transpositions in the reduced expression $s_{[n-i,1]}^{-1}$ diverge upwards to vertical rays.
\end{itemize}

\end{definition}

\noindent  See Figure \ref{fig:LollipopWeave} (right) for a depiction of $\i^b_i$. Similarly, we denote by $\n^b_i$ any horizontal weave which coincides with the horizontal weave $\n(s_{[i-1,1]}\w_{0,n-1}s^{-1}_{[n-i,1]})$ on the right, coincides with $\n(\w_{0,n})$ on the left, and with no trivalent weave vertices in the middle.

\begin{definition} [Weave for Black Lollipop]\label{def:weaveType3black} 
The weave $\mathfrak{l}^b_i$ associated to a Type 3 column with a black lollipop at the $i$th horizontal line is the horizontal concatenation of $\mathfrak{n}^b_i$ and $\mathfrak{i}^b_i$.
\end{definition}

\noindent See Figure \ref{fig:RulesWeaves2_ExampleN3_Part3} for an example of $\l^b_i$ in the case $i=2$ and $n=4$.


\subsubsection{The initial weave}\label{sssec:initialweave} Let $\bG$ be a GP-graph. The previous three Subsections \ref{sssec:Type1weave}, \ref{sssec:crossingweave}, \ref{sssec:lollipopweave} have explained how to obtain a weave from each of the three types of elementary columns. Note that each these weaves coincides with $\n(\w_{0,k})$ and with $\n(\w_{0,m})$ for some $k$ and $m$ at its two ends. For Type 1 and Type 2, $k=m$, and for Type 3, $|k-m|=1$. Note that, if we consider two adjacent elementary columns in $\bG$, the associated weaves will coincide at the common side, and thus can be horizontally concatenated.

\begin{definition}[Initial weave]\label{def:initialweave} Let $\bG$ be a GP-graph. The initial weave $\mathfrak{w}(\bG)$ associated weave $\bG$ is the weave obtained by subdividing $\bG$ into elementary columns and then concatenating the weaves associated with each elementary column, in the order dictated by the columns.
\end{definition}



\subsection{Topology of the Initial Weave}\label{ssec:topologyweave} Let $\bG$ be a GP-graph and $\mathfrak{w}(\bG)\sse\R^2$ its initial weave. In this subsection we show how to obtain a Legendrian link $\La(\bG)\sse(\R^3,\xi_\st)$ and an embedded exact Lagrangian filling $L(\ww)\sse(\R^4,\la_\st)$ from the weave $\ww=\ww(\bG)$.

\subsubsection{The braid of an initial weave}\label{sssec:braidweave} Suppose $\ww(\bG)$ is an $N$-weave. Let $K\sse\R^2$ be a compact subset such that $(\R^2\setminus K)\cap\mathfrak{w}(\bG)$ contains no weave vertices. Note that the number of weave lines in $(\R^2\setminus K)\cap\mathfrak{w}(\bG)$ and their labeling is independent of any $K$ with such property. The weave lines are labeled by simple transpositions $s_i\in S_N$, which can be lifted to unique positive generators $\sigma_i$ in the Artin braid group $\Br_N$.

\begin{definition}\label{def:braidweave} Let $\beta(\bG)$ be the positive braid word obtained by reading the positive braid generators associated with the weave lines of $(\R^2\setminus K)\cap\mathfrak{w}(\bG)$ in a counterclockwise manner, starting at the unique strand the corresponds to the left-most white lollipop in $\bG$.
\end{definition}

Part of the usefulness of Definition \ref{def:braidweave} is the following simple lemma:

\begin{lemma}\label{lem:linkweave}
Let $\bG$ be a GP-graph, $\mathfrak{w}(\bG)\sse\R^2$ its initial weave and $\beta(\bG)$ its positive braid word. Then the $(-1)$-framed closure of $\beta(\ww(\bG))$ is a front for the Legendrian link $\La(\bG)$.
\end{lemma}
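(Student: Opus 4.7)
The plan is to reduce to Proposition \ref{prop:sameLegendrian}, which identifies $\La(\bG)$ with the Legendrian $\La(\mathfrak{f}(\bG))$ lifted from the front $\mathfrak{f}(\bG)$ built in Figures \ref{fig:RulesFronts1} and \ref{fig:RulesFronts2}. Thus it suffices to verify that the $(-1)$-framed closure of the positive braid word $\beta(\bG)$ produces a front that coincides with $\mathfrak{f}(\bG)$ up to planar isotopy. First, I would invoke the general principle from \cite{CasalsZaslow} relating a free weave $\ww$ to its Legendrian boundary: the boundary Legendrian link of $\La(\ww)$ is the $(-1)$-framed closure of the positive braid obtained by reading the labels of the weave lines that exit any compact region $K \supset \ww$ in counterclockwise order. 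Combined with Definition \ref{def:braidweave}, the statement then reduces to a local column-by-column comparison.

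Next, I would check the three elementary column types separately. For a Type 1 column, the weave $\n(\w_{0,n})$ is an entirely horizontal configuration without trivalent or cusp vertices, so no weave lines exit vertically in the interior; this matches the absence of crossings in the $2n$ parallel strands of the front piece in Figure \ref{fig:RulesFronts1}. For a Type 2 column, the weaves $\c_i^{\uparrow}(\w_{0,n})$ and $\c_i^{\downarrow}(\w_{0,n})$ each contribute exactly one trivalent vertex whose third leg exits vertically through the top or the bottom, respectively. This single exiting weave line, whose label is the $s_1$ of the appropriate bracketed word, matches the single crossing placed in the top $n$-block of $\mathfrak{f}(\bG)$ for white-on-top, or in the bottom $n$-block for black-on-top.

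The Type 3 columns are the most delicate step. For a white lollipop at the $i$-th line, the weave $\l_i^w$ has, by Definition \ref{def:incomingweave}, vertical rays exiting upwards labeled by the transpositions of $s_{[n-1,n-i]}$ and exiting downwards labeled by those of $s_{[n-1,i+1]}^{-1}$. These are precisely the new crossings introduced in the front as the new cusp strand propagates through the other strands via the sequence of Reidemeister II moves depicted in Figure \ref{fig:RulesFronts_CuspModification}. The black-lollipop case is symmetric, using Definition \ref{def:outgoingweave}. After the local braid words are matched in each column, concatenating them horizontally according to Definition \ref{def:braidweave}'s counterclockwise convention yields the braid word of the rainbow closure of $\mathfrak{f}(\bG)$, completing the identification.

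The principal obstacle will be the bookkeeping in the Type 3 step: one must confirm that the normalizing weaves $\n^w_i$ and $\n^b_i$, which bring the label $\n(s_{[n-1,i+1]}^{-1}\w_{0,n-1}s_{[n-1,n-i]})$ (respectively its mirror) back to the standard word $\w_{0,n}$, do not introduce any additional exiting weave lines. Since these normalizing weaves contain only tetravalent and hexagonal vertices, they contribute nothing to the external boundary; moreover any choice of $\n^w_i$ is weave-equivalent to any other by the Zamolodchikov relation established in \cite{CasalsZaslow}, so the resulting boundary braid $\beta(\bG)$ is well defined and independent of auxiliary choices.
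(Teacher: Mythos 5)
Your proposal is essentially correct and follows the same underlying strategy as the paper: both reduce via Proposition~\ref{prop:sameLegendrian} to comparing the $(-1)$-framed closure of $\beta(\ww(\bG))$ with the front $\mathfrak{f}(\bG)$, and both ultimately rest on the observation that the extra exiting weave lines created near a lollipop (Definitions~\ref{def:incomingweave} and~\ref{def:outgoingweave}) correspond exactly to the new crossings produced by the Reidemeister II pull-outs of Figure~\ref{fig:RulesFronts_CuspModification}. The paper phrases this globally: it first applies the RII moves of that figure at every internal cusp of $\mathfrak{f}(\bG)$ and then pushes left cusps to the far left and right cusps to the far right via RII and RIII moves, landing on a $(-1)$-framed closure whose braid is identified as $\beta(\ww(\bG))$ by inspection of Subsection~\ref{ssec:InitialWeave}. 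Your version makes that last ``by inspection'' step explicit through a column-by-column match, which is a useful elaboration; you are slightly less explicit than the paper about the global manipulation (that all the left cusps really can be pushed to the extreme left, with right cusps to the right, so that the end result literally has the shape of a $(-1)$-framed closure rather than a mere collection of local braid pieces). Two small remarks: your invocation of the ``weave boundary'' principle from \cite{CasalsZaslow} is not actually needed --- the braid word $\beta(\bG)$ is simply defined by reading off exiting weave lines, and the comparison can be done entirely between fronts --- and ``rainbow closure of $\mathfrak{f}(\bG)$'' in your final paragraph should read ``$(-1)$-framed closure,'' since the front $\mathfrak{f}(\bG)$ is not generally a rainbow closure.
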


\noindent Lemma \ref{lem:linkweave} can be phrased as follows. Consider the Legendrian link $\La(\beta(\bG))\sse(\R^3,\xi_\st)$ whose front is the $(-1)$-framed closure of the braid word $\beta(\bG)$. Then the Legendrian links $\La(\beta(\bG))$ and $\La(\bG)$ are Legendrian isotopic in $(\R^3,\xi_\st)$. 

\subsubsection{The surface of the initial weave}\label{sssec:surfaceweave} Let $\ww\sse\R^2$ be a weave and $\Lambda(\ww)\sse(\R^5,\xi_\st)$ the Legendrian represented by its front. By definition, the Lagrangian $L(\ww)\sse(\R^4,\la_\st)$ is the Lagrangian projection of $\Lambda(\ww)$. We refer to \cite[Section 7.1]{CasalsZaslow} for details on how weaves yield exact Lagrangian fillings of Legendrian links in $(\R^3,\xi_\st)$, and recall that $\ww$ is said to be free if $L(\ww)\sse(\R^4,\la_\st)$ is embedded. The following lemma is readily proven:

\begin{lemma}\label{lem:Eulerchar}
Let $\bG$ be a GP-graph and suppose its initial weave $\ww=\mathfrak{w}(\bG)$ is an $N$-weave. Then $L=L(\ww)\sse(\R^4,\la_\st)$ is an embedded exact Lagrangian filling of $\La(\bG)$ with Euler characteristic
$$\chi(L)=N-\#(\mbox{trivalent vertices of }\mathfrak{w})=\#(\text{horizontal lines in $\bG$})-\#(\text{vertical edges in $\bG$}).$$
\end{lemma}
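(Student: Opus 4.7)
For embeddedness, I would show that $\ww(\bG)$ is free. The atomic building blocks $\n(\w_{0,n})$, $\c^{\uparrow}_i(\w_{0,n})$, $\c^{\downarrow}_i(\w_{0,n})$, $\l^w_i$, and $\l^b_i$ from Subsection \ref{ssec:InitialWeave} are each free by direct inspection of their Lagrangian projections, and freeness is preserved under horizontal concatenation with matching boundary data, which is always the case in $\ww(\bG)$ by construction. By the general theory of weaves in \cite{CasalsZaslow}, freeness implies that $L = L(\ww(\bG))$ is an embedded exact Lagrangian filling of $\La(\ww(\bG))$, which is Legendrian isotopic to $\La(\bG)$ by Lemma \ref{lem:linkweave}.

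For the Euler characteristic identity $\chi(L) = N - \#\{\text{trivalent vertices of }\ww\}$, I would use the saddle cobordism interpretation of weaves. Sweeping $\R^2$ with a generic height function decomposes $\ww(\bG)$ into a time-ordered sequence of elementary moves on horizontal Legendrian slices. Tetravalent and hexagonal vertices correspond to Reidemeister-type moves, and to the Zamolodchikov relations, on the Legendrian front, and they preserve the diffeomorphism type of $L$; each trivalent vertex, by contrast, corresponds to a $1$-handle attachment (a saddle cobordism) that decreases $\chi$ by one. Concretely, $L$ admits a handle decomposition with $N$ $0$-handles (one per sheet of the weave) and $t$ $1$-handles (one per trivalent vertex), giving $\chi(L) = N - t$.

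Finally, I would count both quantities directly from $\bG$. Only Type 2 elementary columns contribute trivalent vertices: by Definitions \ref{def:weavecrossing} and \ref{def:weavecrossing2}, each piece $\c^{\uparrow}_i$ or $\c^{\downarrow}_i$ contains exactly one trivalent vertex (inside its $\c^{\uparrow}(w)$ or $\c^{\downarrow}(w)$ sub-piece), while Type 1 columns $\n(\w_{0,n})$ and Type 3 columns $\l^w_i,\l^b_i$ contribute none. Hence $t = \#\{\text{vertical edges in }\bG\}$. For $N$, I would track the evolution of the symmetric groups $S_{[a,b]}$ along $\bG$ from left to right: the upper index $b$ strictly increases by one at each white lollipop and is unchanged otherwise, so $b$ reaches its maximum $h$ (the total number of horizontal lines in $\bG$) after the rightmost white lollipop. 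The simple transpositions appearing as weave labels thus span $\{s_1,\ldots,s_{h-1}\}$, and $\ww(\bG)$ is naturally an $h$-weave with $N = h$. Combining the three steps yields $\chi(L) = h - \#\{\text{vertical edges in }\bG\}$. The main obstacle is the rigorous Morse-theoretic bookkeeping in the second paragraph, namely verifying that tetravalent and hexagonal vertices preserve the diffeomorphism type of $L$ while each trivalent vertex contributes exactly one $1$-handle; this requires a careful local analysis of the front $\Sigma(\ww)$ at each vertex type, and also a consistent identification of $N$ with the total sheet count across $\ww(\bG)$ rather than the maximum instantaneous sheet count at a vertical slice.
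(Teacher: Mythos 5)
Your argument is correct and reaches the same conclusion as the paper, but you compute the Euler characteristic by a genuinely different route. The paper derives the identity $\chi(L) = N - \#\{\text{trivalent vertices}\}$ in a single stroke from the Riemann--Hurwitz formula, viewing the spatial front $\Sigma(\ww)$ as an $N$-fold branched cover of a contractible base with simple branch points precisely at the trivalent weave vertices, and citing \cite[Section 2.4]{CasalsZaslow} for this. You instead build a handle decomposition by sweeping a height function, interpreting each trivalent vertex as a saddle attaching a $1$-handle to $N$ zero-handles (one per sheet after truncation to a large disk). Both routes are standard and give the same answer, but the Riemann--Hurwitz route avoids exactly the bookkeeping you flag as the main obstacle: one never has to argue that tetravalent and hexagonal vertices are Morse-regular, because the branched-cover count depends only on the branch locus, not on a choice of height function. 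Your phrase ``tetravalent and hexagonal vertices preserve the diffeomorphism type of $L$'' is also slightly off in framing --- $L$ is fixed, not being modified --- what you mean, and what needs verification, is that slices through these vertices are non-critical; the paper sidesteps this entirely.

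On embeddedness, your approach matches the paper's in outline --- reduce to freeness of the elementary column weaves and concatenate --- but the paper is more specific where ``direct inspection'' would need work. For $\n(\w_{0,n})$ it argues that each pair of sheets crosses exactly once and slopes can be chosen to increase away from the crossing, so no Reeb chords appear; for Type 2 columns it observes that $\c^{\uparrow}_i$ and $\c^{\downarrow}_i$ are obtained from $\n(\w_{0,n})$ by adding a trivalent vertex on a strand running to infinity, and that this particular saddle attachment is an embedded cobordism and hence preserves freeness. Your appeal to ``freeness is preserved under horizontal concatenation with matching boundary data'' is used implicitly by the paper as well (it is what licenses the reduction to columns) and is fine, though it is a statement that deserves its own sentence of justification if made explicit. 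The final counting steps --- one trivalent vertex per Type 2 column, hence $t = \#\{\text{vertical edges}\}$, and $N = \#\{\text{horizontal lines}\}$ --- agree with the paper and are correct.
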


The number of boundary components of $L(\ww(\bG))$ is readily computed from $\beta(\bG)$: it is given by the number of cycles in the cycle decomposition of the Coxeter projection of $\beta(\bG)$. Finally, a central feature of weaves is the following: it is possible to draw many weaves which coincide with $\ww(\bG)$, outside a large enough compact set $K\sse\R^2$, and which represent embedded exact Lagrangian fillings of $\La(\bG)$. In fact, as explained in Subsection \ref{sssec:YCyclesInWeaveAndMutation}, there are some local modifications that we can perform to the weave -- {\it weave mutations} -- such that the smooth embedded class of the associated (Lagrangian) surface in $(\R^4,\la_\st)$ remains the same but the Hamiltonian isotopy class typically changes. Square face mutation of a GP-graph $\bG$ is recovered by weave mutations but, importantly, weave mutations allow for more general mutations, including mutations at non-square faces of $\bG$ and sugar-free regions. The result of such {\it weave mutations} applied to $\ww=\ww(\bG)$ is again another weave $\mu(\ww)$: it may no longer be of the form $\ww(\bG')$ for any GP-graph $\bG'$, but it is a weave and thus, using the calculus in \cite{CasalsZaslow}, we can manipulate it efficiently and use it to prove the results in this article. In this process, we need explicit geometric cycles representing generators of the absolute homology $H_1(L(\ww))$. In fact, such geometric cycles lead to the quiver for the initial seed. Thus, we now gear towards understanding how to construct geometric representatives of homology classes using weaves.


\subsection{Naive Absolute Cycles in \texorpdfstring{$L(\ww(\bG))$}{}}\label{ssec:NaiveAbsoluteCycles} In this section we explain how to find a set of geometric (absolute) cycles on $L=L(\ww(\bG))$ which generate $H_1(L)$.

Since the genus of an embedded exact Lagrangian filling is determined by the (maximal) Thurston-Bennequin invariant of its Legendrian boundary, all embedded exact Lagrangian fillings of a given Legendrian link are topologically equivalent as abstract surfaces, i.e.~ they have the same genus. In the case of a Legendrian link $\La(\bG)$ associated with a GP-graph, it is readily seen that this is the same abstract topological type as that of the Goncharov-Kenyon conjugate surface $S=S(\bG)$ \cite{GonKen}. Since the conjugate surface $S$ deformation retracts back to the GP-graph $\bG$, it follows that the boundaries of the faces of $\bG$ form a basis for the absolute homology groups $H_1(\bG)\cong H_1(S)\cong H_1(L)$. This basis, indexed by the faces of $\bG$, will be referred to as the \emph{naive basis} of $H_1(L)$. 

\begin{remark} We emphasize that this set of generating absolute cycles is {\it not} good enough in order to construct cluster structures, nor its intersection quiver gives the correct initial quiver. Thus, these cycles will be referred to as the set of {\it naive} absolute cycles, and we will perform the necessary corrections in Subsection \ref{ssec:InitialCycles} below. 
\end{remark}

In order to proceed geometrically, we would like identify the naive basis elements of $H_1(L)$ as lifts of a specific collection of absolute cycles on the weave front $\Sigma=\Sigma(\ww(\bG))$, ideally a collection of $\sf Y$-cycles on $\ww(\bG)$ (Definition \ref{def:Ycycle}). Since any GP-graph $\bG$ can be decomposed into elementary columns, we can try to build these absolute cycles by concatenating appropriate relative cycles associated with each elementary column. 

\subsubsection{Local representatives of naive absolute cycles in a Type 1 Column}\label{sssec:WeaveType1Column} In an elementary Type 1 column of $\bG$ with $n$ horizontal lines, there are $n-1$ faces, i.e.~gaps, between these $n$ horizontal lines. For each of these $n-1$ gaps, we identify a unique weave line as follows. First, we observe that a cross-section of the weave front $\Sigma$ associated to a Type 1 column is, by construction, the reduced expression $\w_{0,n}$ of $w_{0,n}$. In this reduced expression, the lowest Coxeter generator ($s_i$ with the smallest $i$) appears exactly $(n-1)$ times. Second, there is a geometric bijection between these $(n-1)$ faces and the $(n-1)$ appearances of the lowest Coxeter generator in the reduced expression $\w_{0,k}$. Indeed, for a face $f$ at a Type 1 column, the intersection of $\partial f$ with a Type 1 column has two connected components, which go along two neighboring horizontal lines, say the $j$th and the $(j+1)$st. Since each horizontal line is the deformation retract of a sheet in the weave front $\Sigma$, a natural choice of the local weave line representative will be the intersection of the two corresponding sheets in $\Sigma$, which in turn corresponds to the $j$th appearance of the lowest Coxeter generator. Figure \ref{fig:type_1_abs_cyc} illustrates a cross-section of the weave front for $n=4$. Figure \ref{fig:RulesCycles_Type1} illustrates all the possible cases for $n=3,4$.

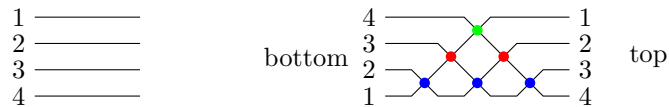
\begin{figure}[H]
    \centering
    \begin{tikzpicture}[scale=0.7]
    \foreach \i in {1,...,4}
    {
        \draw (0,-\i/2) node [left] {$\i$}  -- (2,-\i/2);
    }
 
    \end{tikzpicture} \quad \quad \quad \quad 
    \begin{tikzpicture}[scale=0.7]
    \draw (0,0) node [left] {$1$} -- (0.5,0) -- (2,1.5) -- (3.5,1.5) node [right] {$1$};
    \draw (0,0.5) node [left] {$2$} -- (0.5,0.5) -- (1,0) --(1.5,0) -- (2.5,1) -- (3.5,1) node [right] {$2$};
    \draw (0,1) node [left] {$3$} -- (1,1) -- (2,0) -- (2.5,0) -- (3,0.5) -- (3.5,0.5) node [right] {$3$};
    \draw (0,1.5) node [left] {$4$} -- (1.5,1.5) -- (3,0) -- (3.5,0) node [right] {$4$};
    \node at (-1.5,0.75) [] {bottom};
    \node at (5,0.75) [] {top};
 ;
    \foreach \i in {0,1,2}
    {
        \path [fill=blue] (0.75+\i,0.25) circle [radius =0.1];
    }
    \foreach \i in {0,1}
    {
        \path [fill=red] (1.25+\i,0.75) circle [radius=0.1];
    }
    \path [fill=green] (1.75,1.25) circle [radius=0.1];
    \end{tikzpicture}
    \caption{(Left) An elementary column with four horizontal lines. (Right) The corresponding cross-section for its associated weave surface $\Sigma(\bG)$.}
    \label{fig:type_1_abs_cyc}
\end{figure}

\begin{center}
	\begin{figure}[h!]
		\centering
		\includegraphics{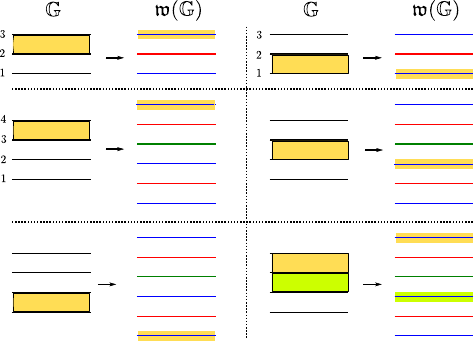}
		\caption{Associating a (piece of a) cycle in the weave for Type 1 columns. The first row depicts the two cases for $n=3$ strands, and the second and third rows depict the three cases for $n=4$, and an example with a union, in the lower right corner. In all cases, the face $f\sse\bG$ is highlighted in yellow, and the associated $s_1$-edge in the weave $\ww(\bG)$ is also highlighted in the same color. In the last case of the union, in the lower right, one of the faces and its cycle are highligthed in green.}
		\label{fig:RulesCycles_Type1}
	\end{figure}
\end{center}

\subsubsection{Local representatives of naive absolute cycles in a Type 2 column}\label{sssec:WeaveType2Column} Consider a Type 2 column with $n$ horizontal lines and a single vertical edge between the $j$th and $(j+1)$st horizontal lines. First, for the faces bounded by any other pair of consecutive horizontal lines, say $k$th and $(k+1)$st with $k\neq j$, the associated naive absolute cycle in the weave is the {\it unique} long $\sf I$-cycle connecting the corresponding weave cycles on the two adjacent Type 1 columns. In other words, one starts at the $k$th appearance (counting from below) of the lowest Coxeter generator on the left (see Subsection \ref{sssec:WeaveType1Column}) and follows that weave line straight through any hexagonal vertices. By the construction of the weave $\ww$ in Subsection \ref{sssec:crossingweave}, this process will go through the weave until it reaches its right hand side at the $k$th appearance of the lowest Coxeter generator. Figure \ref{fig:RulesCycles_Type2} depicts examples of such faces in purple. Note that, as depicted on the right of the second row in that figure, the $\sf I$-cycle might go through hexagonal vertices but shall always have the $k$th lowest Coxeter generator in $\w_{0,n}$ at the two ends.

Second, for the two faces that involve the unique vertical edge, the associated absolute cycle in $\ww$ is the unique $\sf I$-cycle that starts with the $j$th appearance of the lowest Coxeter generator at its boundary end (see Subsection \ref{sssec:WeaveType1Column}) and has the other end at the unique trivalent vertex of $\ww$. 
Figure \ref{fig:RulesCycles_Type2} depicts examples of such faces in yellow and green. Observe that, in general, these $\sf I$-cycles will also go through hexagonal vertices but always have the $j$th lowest Coxeter generator at its boundary end.

\begin{center}
	\begin{figure}[h!]
		\centering
		\includegraphics[scale=0.7]{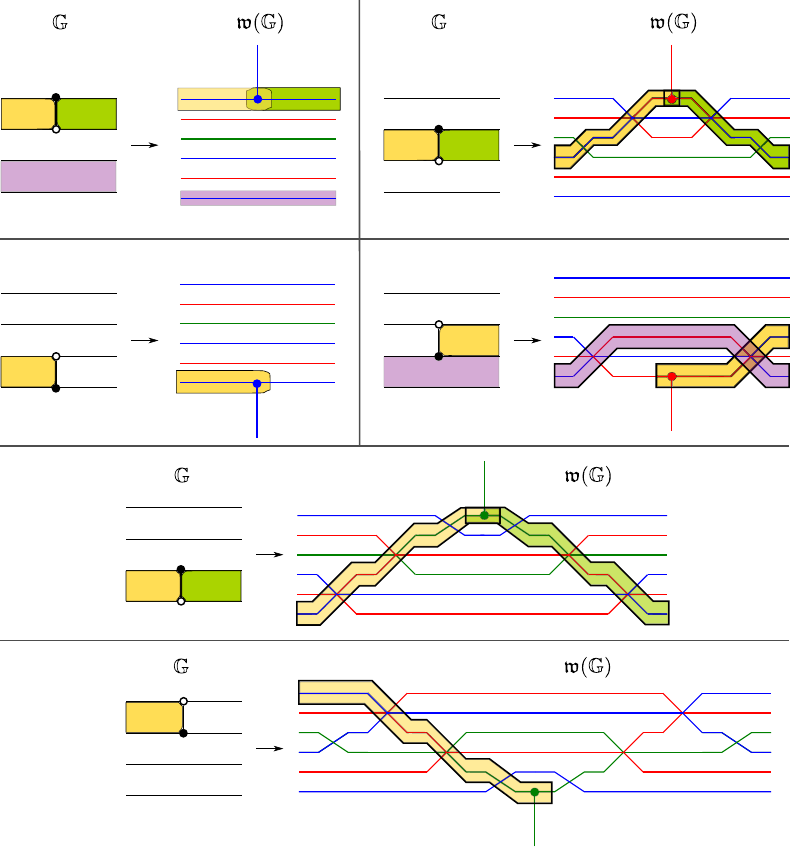}
		\caption{Several examples of faces in Type 2 elementary columns with $n=4$ $\bG$-strands, and their associated $\sf I$-cycle in the weaves $\ww(\bG)$.}
		\label{fig:RulesCycles_Type2}
	\end{figure}
\end{center}

\subsubsection{Local representatives of naive absolute cycles in a Type 3 column}\label{sssec:WeaveType3Column} In a Type 3 column, the majority of faces are similar to a face in a Type 1 column. In the weave, their boundaries are represented by weave lines going across the weave as the unique long $\sf I$-cycles with the correct boundary conditions. The only exceptional face in a Type 3 column is the face $f$ which contains a lollipop, 
which we will discuss in detail in this subsection.

Let us first consider the case of a white lollipop attaching to the $j$th horizontal line on the right. For simplicity let us assume that the horizontal lines on the right are indexed by $1,2,\dots, n$ starting from the bottom. Following Subsection \ref{sssec:WeaveType1Column}, the leftmost and rightmost ends of the cycle $\gamma_f$ are determined by the Type 1 rules. Namely, given that the face $f$ restricts to one gap on the left and two gaps on the right, the ends of the cycle $\gamma_f$ in $\ww$ must be the unique $\sf I$-cycle associated to those gaps. Thus, the cycle $\gamma_f$ will start at a blue $s_1$ edge of the weave on the left and finish at two blue $s_1$ edges on the right. Now, in general, there does not exist a $\sf I$-cycle (nor a $\sf Y$-cycle) with these boundary conditions in $\ww$. This requires introducing a {\it bident}, as follows. Consider the middle slice of $\ww$ where all the newly emerged weave lines have become horizontal, i.e.~the right boundary of the weave building block $\i_j^w$ (Definition \ref{def:incomingweave}). Reading the weave lines from bottom to top at this slice yields an expression for the half-twist $w_{0,n}\in S_n$ (note that it is not $\w_{0,n}$). Let us draw the weave slice as the positive braid $s_{[n-1,i]}^{-1}\w_{0,n-1}s_{[n-1,n-i+1]}$ and mark the $s_1$ edge in $\w_{0,n-1}$ that corresponds to the gap on the left within which the white lollipop emerges. Then, starting at this marked $s_1$-edge, we go along the upper-left and upper-right strands until we reach the highest (and last) possible crossing in each of the strands. These two crossings correspond to two weave lines on the right boundary of $\i_j^w$. These two weave lines are said to be obtained from the (left) $s_1$-edge by a \emph{bifurcation}.

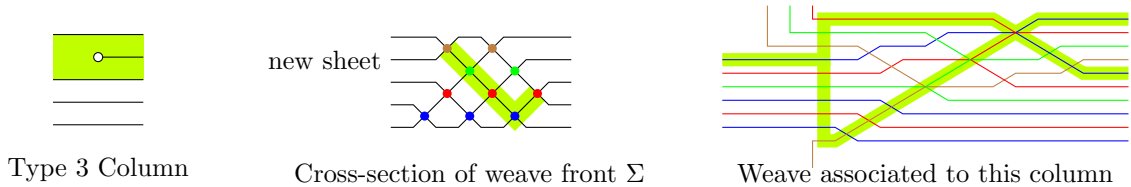
\begin{figure}[H]
    \centering
    \begin{tikzpicture}[scale=0.6]
    \path [fill=lime] (0,1) rectangle (2,2);
    \draw (0,0) -- (2,0);
    \draw (0,0.5) -- (2,0.5);
    \draw (1,1.5) -- (2,1.5);
    \draw (0,1) -- (2,1);
    \draw (0,2) -- (2,2);
    \draw [fill=white] (1,1.5) circle [radius=0.1];
    \node at (1,-1) [] {Type 3 Column};
    \end{tikzpicture} \quad \quad 
    \begin{tikzpicture}[scale=0.6]
    \draw [lime, line width = 8] (1.25,1.75) -- (2.75,0.25) -- (3.25,0.75);
    \draw (0,0) -- (0.5,0) -- (2.5,2) -- (4,2);
    \draw (0,0.5) -- (0.5,0.5) -- (1,0) --(1.5,0) -- (3,1.5) -- (4,1.5);
    \draw (0,1) -- (1,1) -- (2,0) -- (2.5,0) -- (3.5,1) -- (4,1);
    \draw (0,1.5) node [left] {new sheet} -- (1,1.5) -- (1.5,2) -- (2,2) -- (3.5,0.5) -- (4,0.5);
    \draw (0,2) -- (1,2) -- (3,0) -- (4,0);
    \node at (1.75,-1) [] {Cross-section of weave front $\Sigma$};
    \foreach \i in {0,1,2}
    {
        \path [fill=blue] (0.75+\i,0.25) circle [radius =0.1];
        \path [fill=red] (1.25+\i,0.75) circle [radius=0.1];
    }
    \foreach \i in {0,1}
    {
        \path [fill=green] (1.75+\i,1.25) circle [radius=0.1];
        \path [fill=brown] (1.25+\i,1.75) circle [radius=0.1];
    }
    \end{tikzpicture}
    \quad \quad 
    \begin{tikzpicture}[scale=0.6]
    \node at (3.5,3) [] {};
    \draw [lime, line width = 5] (-1,1.5) -- (1.25,1.5) -- (1.25,2.4) -- (5,2.4) -- (7,1.2) -- (8,1.2);
    \draw [lime, line width = 5] (1.25,1.5) -- (1.25,-0.3) -- (1.5,-0.3) -- (6,2.4) -- (8,2.4);
    \draw [blue] (-1,0) -- (1.5,0) -- (2,-0.3) -- (8,-0.3);
    \draw [red] (-1,0.3) -- (2,0.3) -- (2.5,0) -- (8,0);
    \draw [blue] (-1,0.6) -- (2.5,0.6) -- (3,0.3) -- (8,0.3);
    \draw [green] (-1,0.9) -- (3.5,0.9) -- (4,0.6) -- (8,0.6);
    \draw [brown] (1,-0.9) -- (1,-0.3) -- (1.5,-0.3) -- (3.5,0.9) -- (5,0.9) -- (5.5,1.2) -- (6.5,1.2) -- (7,1.5) -- (8,1.5);
    \draw [red] (-1,1.2) -- (2.5,1.2) -- (3,1.5) -- (4.5,1.5) -- (5.5,0.9) -- (8,0.9);
    \draw [blue] (-1,1.5) -- (2,1.5) -- (2.5,1.8) -- (3.5,1.8) -- (4,2.1) -- (5.5,2.1) -- (6,2.4) -- (8,2.4);
    \draw [brown] (0,2.7) -- (0,1.8) -- (2,1.8) -- (3.5,0.9);
    \draw [green] (0.5,2.7) -- (0.5,2.1) -- (3.5,2.1) -- (4.5,1.5) -- (3.5,0.9);
    \draw [green] (4.5,1.5) -- (6,1.5) -- (6.5,1.8) -- (8,1.8);
    \draw [red] (1,2.7) -- (1,2.4) -- (5,2.4) -- (5.5,2.1) -- (8,2.1);
    \draw [red] (4.5,1.5) -- (5.5,2.1);
    \draw [blue] (5.5,2.1) -- (7,1.2) -- (8,1.2);
    \node at (3.5,-1) [] {Weave associated to this column};
    \end{tikzpicture}
    \caption{The left is an elementary Type 3 column with a white lollipop. Its associated weave $\ww$ is depicted on the right. In the middle, a vertical slice of the weave front highlighting the two directions of bifurcation that come up from the $s_1$-crossing (in blue) at the bottom.}
    \label{fig:type_3_abs_cyc_white}
\end{figure}

\begin{definition}
A {\it bident} is a PL-embedding of a $T$-shape domain into the plane containing the weave such that on the left it coincides with an $s_1$-edge and on the right it coincides with the two crossings obtained by bifurcation on this $s_1$-edge.
\end{definition}

\begin{center}
	\begin{figure}[H]
		\centering
		\includegraphics[scale=0.8]{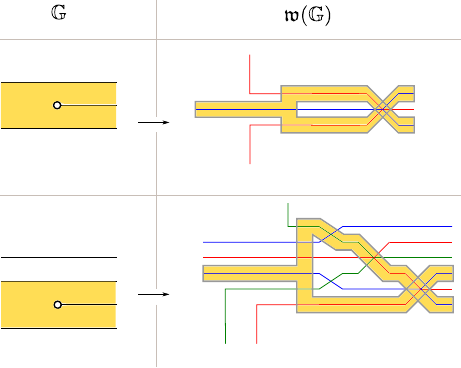}
		\caption{(Left) Examples of faces in Type 3 elementary columns with $n=3,4$ $\bG$-strands. (Right) The associated naive cycles, with the bidents, in the corresponding weaves $\ww(\bG)$.}
		\label{fig:RulesCycles_Type3}
	\end{figure}
\end{center}

Finally, the case of an elementary column of Type 3 with a black lollipop is treated in exactly the same manner as for a white lollipop, with the roles vertically reversed. For a face $f$ containing a black lollipop in a Type 3 column, the boundary conditions being $\sf I$-cycles on the weave and having a unique bident determine the cycle $\gamma_f\sse\R^2$ in the same manner as in the white lollipop case, except now the bident is left-pointing.

\begin{figure}[H]
    \centering
    \begin{tikzpicture}[scale=0.7]
    \path [fill=lime] (0,0.5) rectangle (2,1.5);
    \foreach \i in {0,1,3}
    {
        \draw (0,\i*0.5)--(2,\i*0.5);
    }
    \draw (0,1) -- (1,1);
    \draw [fill=black] (1,1) circle [radius=0.1];
    \node at (1,-1) [] {A Type 3 column $\bG$};
    \end{tikzpicture} \quad \quad \quad \quad
    \begin{tikzpicture}[scale=0.7]
    \draw [lime, line width = 5] (0,0.6) -- (1,0.6) -- (2,0) -- (2.25,0) -- (2.25,1.2);
    \draw [lime, line width = 5] (0,1.5) -- (2.25,1.5) -- (2.25,1.2) -- (5,1.2);
    \draw [blue] (0,0) -- (1,0) -- (1.5,0.3) -- (3,0.3) -- (3,-0.6);
    \draw [red] (0,0.3) -- (1.5,0.3) -- (2,0) -- (2.5,0) -- (2.5,-0.6);
    \draw [blue] (0,0.6) -- (1,0.6) -- (1.5,0.3);
    \draw [red] (1.5,0.3) -- (2,0.6) -- (5,0.6);
    \draw [green] (0,0.9) -- (5,0.9);
    \draw [red] (0,1.2) -- (5,1.2);
    \draw [blue] (0,1.5) -- (2.5,1.5) -- (2.5,2.1);
    \node at (2.5,-1) [] {Associated weave $\ww(\bG)$};
    \end{tikzpicture}
    \caption{A case for the naive absolute cycle in a Type 3 column with a black lollipop. The face $f\in\bG$ and its associated cycle $\gamma_f\sse\R^2$ are both highlighted in light green.}
    \label{fig:type_3_abs_cyc_black}
\end{figure}
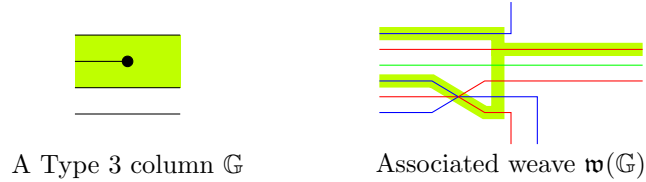

Due to the possible existence of bidents, it is hard to tell whether a naive absolute cycle has self intersections (and hence it is not an embedded absolute cycle or $\mathbb{L}$-compressible) or not. It is easier if we can represent the naive absolute cycles are $\sf Y$-cycles (Definition \ref{def:Ycycle}). Thus, we prove the following:

\begin{prop}\label{all cycles are Y-cycles} Let $\bG\sse\R^2$ be a GP-graph and $\ww=\ww(\bG)$ its associated weave. Then, there exist a weave $\ww'$ and an equivalence $\ww'\sim \ww$ such that, under the isotopy\footnote{This isotopy naturally induces a Hamiltonian isotopy between $L(\ww)$ and $L(\ww')$.} between $\Sigma(\ww)$ and $\Sigma(\ww')$, the image of each naive absolute cycle on $L(\ww)$ is homologous to a $\sf Y$-cycle on $L(\ww')$.
\end{prop}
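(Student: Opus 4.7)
The plan is to reduce to a local statement in each Type 3 column of $\bG$ and to resolve it via weave equivalences.

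By construction, the portions of any naive absolute cycle lying in Type 1 and Type 2 columns are already (pieces of) $\sf Y$-cycles: they consist of $\sf I$-cycles possibly meeting at trivalent vertices in Type 2 columns, which are precisely the local models from Figure \ref{fig:local picture of Y-cycles}. Consequently, the only obstruction arises from the bident pieces introduced in Type 3 columns, as in Subsection \ref{sssec:WeaveType3Column}. It therefore suffices to show that, for each Type 3 column $C$ containing a bident of the naive cycle, there is a local weave equivalence supported in a neighbourhood of $C$ that carries the bident to a $\sf Y$-tree; the global equivalence $\ww \sim \ww'$ is then obtained by concatenating these disjoint local equivalences, which is possible because Type 3 columns in our decomposition are sandwiched by Type 1 buffer columns.

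For the local resolution, I would treat the white-lollipop case first (the black case follows by a vertical reflection). The bident has three branches: its stem lies on an $s_1$-edge in the left boundary of $\i^w_j$, and its two tops lie on two specific crossings in the right boundary obtained by bifurcation. My plan is to apply a sequence of Moves I–III of Figure \ref{fig:ReidemeisterWeave} in a neighbourhood of the bident, in analogy with the shortening process used in the proof of Proposition \ref{prop:Ytreebounds}, so as to rearrange the three branches of the bident into the local model of a $\sf Y$-tree. The geometric content is best seen in the vertical-slice movie of Figure \ref{fig:RulesCycles_Type3_Bident}: the pair of points created by bifurcation can be slid through the cusp created by the lollipop and reconnected to the stem, producing a short $\sf I$-cycle attached to the $\sf I$-cycles that carry $\gamma_f$ in the adjacent Type 1 columns.

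The main obstacle I anticipate is the homological verification: I must confirm that the $\sf Y$-tree produced in $\ww'$, when lifted to $L(\ww')$, is homologous to the image of the naive cycle $\gamma_f$ from $L(\ww)$ under the Hamiltonian isotopy induced by the weave equivalence. By the results of \cite{CasalsZaslow}, weave equivalences correspond to Legendrian isotopies of $\La(\ww)$ and hence to Hamiltonian isotopies of $L(\ww)$, so relative and absolute homology classes are preserved; identifying the two representatives then reduces to comparing their cross-section pictures slice by slice in $\Sigma$, checking that both yield the same $0$-cycle in each vertical slice and therefore represent the same class in $H_1(L(\ww'))$.
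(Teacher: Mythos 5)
Your overall decomposition — reduce to local statements in each Type 3 column — matches the paper's, but the resolution step contains a genuine gap and also misses the key asymmetry between white and black lollipops.

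First, "apply a sequence of Moves I–III ... so as to rearrange the three branches of the bident into the local model of a $\sf Y$-tree" does not quite parse as a weave operation. Weave equivalences modify the weave $\ww$, not the bident; the bident is a curve on the weave surface whose tops do not follow weave lines, so there is nothing for Moves I–III to rearrange. What is actually needed is a \emph{different representative} of the homology class of $\gamma_f$ that is a $\sf Y$-tree, and you never say where that representative comes from. The paper's key observation is that the two crossings at the tops of the bident are the two weave lines meeting at a nearby \emph{hexagonal} vertex of $\ww(\bG)$; routing the cycle through that hexagonal vertex as a single $\sf Y$-vertex produces a genuine $\sf Y$-tree $\wt\gamma_f$ homologous to $\gamma_f$. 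Without this identification of the hexagonal vertex, the proof does not get off the ground.

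Second, you assert that the black-lollipop case "follows by a vertical reflection," but the two cases are not symmetric in the relevant respect. For a white lollipop, the hexagonal vertex already exists in $\ww(\bG)$ and one can take $\ww' = \ww$, i.e.\ no weave equivalence is required at all. For a black lollipop the required hexagonal vertex may be absent, and this is precisely where a weave equivalence enters: one inserts a candy twist (Move I only) to create two consecutive hexagonal vertices, and then the white-lollipop argument applies. So the black case genuinely requires a local modification of the weave, while the white case does not; claiming reflection symmetry would lead you to conclude, incorrectly, that either both need a move or neither does. Your homological verification via vertical slices is compatible with the paper's, but it rests on a target $\sf Y$-tree that you have not yet constructed.
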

\begin{proof} For Type 1 and Type 2 elementary columns, the associated (pieces of) naive absolute cycles are already $\sf I$-cycles, and hence $\sf Y$-cycles. In particular, for $\bG$ with no (internal) lollipops, we can take $\ww'=\ww$. It thus suffices to study the case of a Type 3 column, where a bident appears: it suffices to show that there exists a weave equivalence that allows us to replace a bident by a $\sf Y$-cycle.

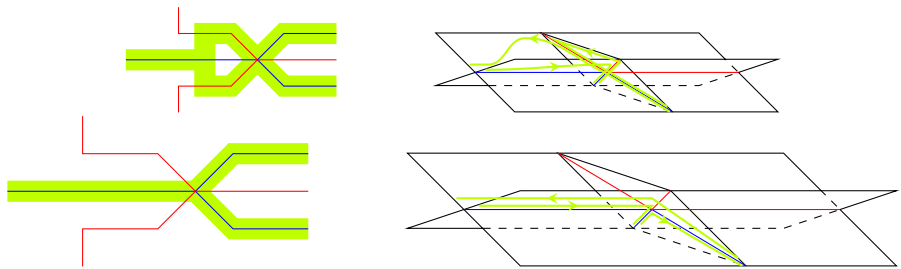
\begin{figure}[H]
    \centering
    \begin{tikzpicture}[scale=0.7]
    \draw [lime, line width = 8] (0,1) -- (1.5,1);
    \draw [lime, line width = 8] (4,0.5) -- (3,0.5) -- (2,1.5) -- (1.5,1.5) -- (1.5,0.5) -- (2,0.5) -- (3,1.5) -- (4,1.5);
    \draw [blue] (0,1) -- (2.5,1);
    \draw [red] (1,0) -- (1,0.5) -- (2,0.5) -- (2.5,1);
    \draw [red] (1,2) -- (1,1.5) -- (2,1.5) -- (2.5,1);
    \draw [blue] (2.5,1) -- (3,1.5) -- (4,1.5);
    \draw [blue] (2.5,1) -- (3,0.5) -- (4,0.5);
    \draw [red] (2.5,1) -- (4,1);
    \end{tikzpicture} \quad \quad \quad 
    \begin{tikzpicture}[scale=0.7]
    \draw (1,0.5) -- (0,0.5) -- (1.5,1) -- (6.5,1) -- (5.75,0.75) -- (6.5,0) -- (1.5,0) -- (0,1.5) -- (5,1.5) -- (5.5,1);
    \draw (2.5,1) -- (2,1.5) -- (3.5,1) -- (4.5,0);
    \draw [dashed] (1,0.5) -- (5,0.5) -- (5.75,0.75) -- (5.5,1);
    \draw [dashed] (2.5,1) -- (3,0.5) -- (4.5,0);
    \draw [blue] (0.75,0.75) -- (3.25,0.75);
    \draw [blue] (3.25,0.75) -- (4.5,0);
    \draw [blue] (3.25,0.75) -- (3,0.5);
    \draw [red] (3.5,1) -- (3.25,0.75);
    \draw [red] (2,1.5) -- (3.25,0.75);
    \draw [red] (3.25,0.75) -- (5.75,0.75);
    \draw [thick,lime, decoration={markings,mark=at position 0.5 with {\arrow{stealth}}},postaction={decorate}] (0.95,0.8) to [out=0,in=180] (3.3,0.9) to [out=0,in=45] (3.2,0.75) -- (3,0.55);
    \draw [thick,lime,decoration={markings,mark=at position 0.7 with {\arrow{stealth}}},postaction={decorate}] (4.45,0) -- (3.2,0.75) to [out=155,in=0](1.7,1.4) to [out=180,in=0] (1,0.9)--(0.65,0.9);
    \draw [thick,lime,smooth,decoration={markings,mark=at position 0.3 with {\arrow{stealth}}},postaction={decorate}] (3.1,0.55) -- (3.5,0.95) -- (2.3,1.35) -- (4.4,0.1);
    \end{tikzpicture} 
\\
    \begin{tikzpicture}
    \draw [lime, line width = 8] (0,1) -- (2.5,1);
    \draw [lime, line width = 8] (4,0.5) -- (3,0.5) -- (2.5,1) -- (3,1.5) -- (4,1.5);
    \draw [blue] (0,1) -- (2.5,1);
    \draw [red] (1,0) -- (1,0.5) -- (2,0.5) -- (2.5,1);
    \draw [red] (1,2) -- (1,1.5) -- (2,1.5) -- (2.5,1);
    \draw [blue] (2.5,1) -- (3,1.5) -- (4,1.5);
    \draw [blue] (2.5,1) -- (3,0.5) -- (4,0.5);
    \draw [red] (2.5,1) -- (4,1);
    \end{tikzpicture} \quad \quad \quad 
    \begin{tikzpicture}
    \draw (1,0.5) -- (0,0.5) -- (1.5,1) -- (6.5,1) -- (5.75,0.75) -- (6.5,0) -- (1.5,0) -- (0,1.5) -- (5,1.5) -- (5.5,1);
    \draw (2.5,1) -- (2,1.5) -- (3.5,1) -- (4.5,0);
    \draw [dashed] (1,0.5) -- (5,0.5) -- (5.75,0.75) -- (5.5,1);
    \draw [dashed] (2.5,1) -- (3,0.5) -- (4.5,0);
    \draw [blue] (0.75,0.75) -- (3.25,0.75);
    \draw [blue] (3.25,0.75) -- (4.5,0);
    \draw [blue] (3.25,0.75) -- (3,0.5);
    \draw [red] (3.5,1) -- (3.25,0.75);
    \draw [red] (2,1.5) -- (3.25,0.75);
    \draw [red] (3.25,0.75) -- (5.75,0.75);
    \draw [thick,lime, decoration={markings,mark=at position 0.5 with {\arrow{stealth}}},postaction={decorate}] (0.95,0.8) -- (3.25,0.8) -- (3,0.55);
    \draw [thick,lime,decoration={markings,mark=at position 0.7 with {\arrow{stealth}}},postaction={decorate}](4.4,0.1)  -- (3.25,0.9) --(0.65,0.9);
    \draw [thick,lime,smooth,decoration={markings,mark=at position 0.3 with {\arrow{stealth}}},postaction={decorate}] (3.1,0.55) -- (3.25,0.7) -- (4.45,0) ;
    \end{tikzpicture} 
    \caption{Two homologous cycles $\gamma_f$ and $\wt\gamma_f$ depicted on the left. The upper left cycle $\gamma_f$ contains a bident and is not a $\sf Y$-cycle, the lower left cycle $\wt\gamma_f$ is $\sf Y$-cycle. The right hand side of each row depicts these cycles in their spatial Legendrian fronts.}
    \label{fig:homotopy of a bident}
\end{figure}

For a Type 3 column with a face $f$ containing a white lollipop, this is done as follows. Consider the two weave lines to the right of the bident where the cycle $\gamma_f$ propagates. By construction, these two weave lines intersect (to the right) at a unique hexagonal weave vertex of $\ww(\bG)$. In addition, the horizontal weave line entering from the left at this hexagonal vertex connects with an $\sf I$-cycle to the $s_1$-edge on the left of the bident where $\gamma_f$ starts. Therefore, we can consider the $\sf Y$-cycle $\wt\gamma_f$ which starts with this $s_1$-edge at the left, propagates to the left (as an $\sf I$-cycle) until the hexagonal vertex and then contains a unique $\sf Y$-vertex at the hexagonal vertex. Figure \ref{fig:homotopy of a bident} depicts both cycles $\gamma_f$, at the left of the first row, and $\wt\gamma_f$, at the left of the second row, in the case of the Type 3 elementary column draw in Figure \ref{fig:RulesCycles_Type3} (upper left). By considering the description of $\gamma_f$ via vertical slices, it is readily seen that $\gamma_f$ is homologous to $\wt \gamma_f$. In consequence, in the case of a white lollipop we can consider the same weave $\ww'=\ww$ and have the naive absolute cycle $\gamma_f$ with a bident be homologous to the $\sf Y$-cycle $\wt \gamma_f$.

For a Type 3 column $\bG$ with a black lollipop, the situation is similar, with the exception that a hexagonal vertex might not exist in $\ww(\bG)$ and thus the bident cannot readily be substituted by a $\sf Y$-cycle. Nevertheless, we can insert two consecutive hexagonal vertices with a candy twist -- Move I in Figure \ref{fig:ReidemeisterWeave} -- and then apply the same argument as above.
\end{proof}


\subsection{Naive Relative Cycles in \texorpdfstring{$L(\ww(\bG))$}{}}\label{ssec:naiverelativecyles} 
Let $L=L(\ww(\bG))$ be the initial filling of the GP-link $\Lambda=\Lambda(\bG)$. Subsection \ref{ssec:NaiveAbsoluteCycles} constructed an explicit set of generators for a basis of $H_1(L)$ in terms of $\sf Y$-cycles. Nevertheless, in order to construct cluster $\mathcal{A}$-variables, we also need access to the lattice given by the relative homology group $H_1(L, \Lambda)=H_1(L,\partial L)$. Recall that, by Poincar\'{e} duality, there exists a non-degenerate pairing between the absolute homology group $H_1(L)$ and the relative homology group $H_1(L,\Lambda)$:
$$\inprod{\cdot}{\cdot}:H_1(L)\otimes H_1(L,\Lambda)\lr\Z.$$
Let $\{\gamma_f\}$ be the basis of naive absolute cycles constructed in Subsection \ref{ssec:NaiveAbsoluteCycles}, where the index $f$ runs over all faces of $\bG$. Consider the Poincar\'e dual basis $\{\eta_f\}$ on $H_1(L,\Lambda)$.\footnote{The dual of an absolute cycle $\gamma_f$ is constructed from the entire basis of naive absolute cycles, not just $\gamma_f$.} In order to perform computations in the moduli stack of sheaves, we also want to describe the relative cycles in $\{\eta_f\}$ combinatorially in terms of the weave $\ww=\ww(\bG)$. This is done according to the following discussion.

In general, given an $N$-weave $\ww\sse\bR^2$, we can consider an (unoriented) curve $\kappa\sse\bR^2$ that ends at unbounded regions in the complement of $\ww\sse\R^2$ and intersect weave lines of $\ww$ transversely and generically -- in particular, away from the weave vertices. There are $N$ natural ways to lift $\kappa$ to the weave front $\Sigma(\ww)$, which in turn correspond to $N$ unoriented curves on $L$. In consequence, any subset of these $N$ lifts, together with any orientation we choose for each of element of such a subset, defines a relative homology cycle $\eta\in H_1(L,\Lambda)$. Figure \ref{fig:intersection numbers between weave lines and dashed curves} (left) depicts two possible oriented lifts of the (dashed) yellow curve $\kappa$ drawn to its right.

\begin{figure}[H]
    \centering
    \begin{tikzpicture}[scale=0.7,baseline=-15]
    \draw [yellow, decoration={markings,mark=at position 0.5 with {\arrow{stealth}}},postaction={decorate}] (3.25,0.75) -- (2.75,0.75) -- (2,0) -- (1.25,0) -- (0.5,0.75) -- (0,0.75);
    \draw [yellow, decoration={markings,mark=at position 0.4 with {\arrow{stealth}}},postaction={decorate}] (3.25,0) -- (2.75,0) -- (1.25,1.5) -- (0,1.5);
    \path [fill=blue] (0.875,0.375) circle [radius=0.1];
    \path [fill=blue] (2.375,0.375) circle [radius=0.1];
    \path [fill=red] (1.625,1.125) circle [radius=0.1];
    \node at (-0.5,0.75) [] {bottom};
    \node at (3.75,0.75) [] {top};
    \end{tikzpicture} \quad \quad \quad\quad \quad\quad
    \begin{tikzpicture}[scale=0.7]
    \draw [blue] (0,0) -- (2,0);
    \draw [red] (0,0.75) -- (2,0.75);
    \draw [blue] (0,1.5) -- (2,1.5);
    \draw [dashed, yellow, thick] (1,-0.5) -- (1,2);
    \node at (1,0) [below right] {$1$};
    \node at (1,0.75) [below right] {$1$};
    \node at (1,1.5) [below right] {$0$};
    \end{tikzpicture}
    \caption{(Left) In yellow, two lifts of the $\kappa$ dashed curve depicted on the right. (Right) A 3-weave with a labeled dashed curve $\kappa$: the labels $0,1,1$ indicate the intersection number with each of the (pieces of cycles associated to) the weave lines.}
    \label{fig:intersection numbers between weave lines and dashed curves}
\end{figure}
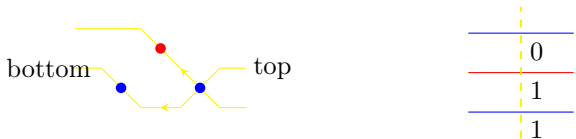

Back to the case with $\ww=\ww(\bG)$, where we have the basis of naive absolute cycles available, we can compute the intersection numbers between such relative cycles $\eta$ and the naive absolute cycles. This leads to a tuple of integers $I(\eta):=(\inprod{\eta}{\gamma_f})_{\text{faces $f$}}$. By the non-degeneracy of the Poincar\'e pairing $\inprod{\cdot}{\cdot}$ and the fact that $\{\gamma_f\}_{\text{faces $f$}}$ is a basis, the tuple of intersection numbers $I(\eta)$ uniquely determines the relative homology class of $\eta$. In fact, given that all naive absolute cycles that $\eta$ intersects non-trivially must pass through weave lines, in order to describe the relative homology class of $\eta$ it suffices to draw the unoriented curve $\kappa\sse\R^2$ and record the collection of the intersection number of its lift $\eta$ with each of the weave lines. In order to distinguish such curves from weave lines, we will use dashed lines to depict such a curve $\kappa$.

\begin{definition} A dashed curve $\kappa\sse\R^2$ as above, with the data of intersection numbers for each weave line it crosses, is called a \emph{labeled dashed curve}.
\end{definition} 

Figure \ref{fig:intersection numbers between weave lines and dashed curves} (right) depicts a labeled dashed curve $\kappa$. From a diagrammatic perspective, it is desirable to be manipulate labeled dashed curves in a weave diagram in the same manner that \cite{CasalsZaslow} explained how to combinatorially manipulate absolute cycles. For that, we have depicted in Figure \ref{fig:equivalence for labeled dashed curves} the key moves on labeled dashed curves: these are all equivalences, in that these moves do not change the relative homology classes that the labeled dashed curves represent.

\begin{figure}[H]
    \centering
    \begin{tikzpicture}[scale=0.7,baseline=0]
    \draw [blue] (0,-1) -- (0,1);
    \draw [dashed, thick, yellow] (-1,0) -- (0.5,0) -- (0.5,-1);
    \node at (0,0) [above left] {any number};
    \node at (0,-1) [below] {exterior};
    \end{tikzpicture} \quad = \quad \begin{tikzpicture}[scale=0.7,baseline=0]
    \draw [blue] (0,-1) -- (0,1);
    \draw [dashed, thick, yellow] (-1,0) -- (-0.5,0) -- (-0.5,-1);
    \node at (0,-1) [below] {exterior};
    \end{tikzpicture}\quad \quad \quad \quad
    \begin{tikzpicture}[scale=0.7,baseline=0]
    \draw [blue] (0,-1) -- (0,1);
    \draw [dashed, thick, yellow] (-1,0.7) -- (0.5,0.7) -- (0.5,-0.7) -- (-1,-0.7);
    \node at (0,0.7) [above right] {$n$};
    \node at (0,-0.7) [below right] {$-n$};
    \end{tikzpicture} \quad = \quad \begin{tikzpicture}[scale=0.7,baseline=0]
    \draw [blue] (0,-1) -- (0,1);
    \draw [dashed, thick, yellow] (-1,0.7) -- (-0.5,0.7) -- (-0.5,-0.7) -- (-1,-0.7);
    \end{tikzpicture}\\
    \begin{tikzpicture}[baseline=0]
    \foreach \i in {0,1,2}
    {
        \draw [blue] (0,0) -- (90+\i*120:1.5);
    }
    \draw [dashed, thick, yellow] (0,-0.5) -- (-0.75,0.5) -- (0.75,0.5);
    \node at (-120:0.5) [left] {$0$};
    \node at (0,0.5) [above right] {$0$};
    \end{tikzpicture} \quad =\quad \begin{tikzpicture}[scale=0.7,baseline=0]
    \foreach \i in {0,1,2}
    {
        \draw [blue] (0,0) -- (90+\i*120:1.5);
    }
    \draw [dashed, thick, yellow] (0,-0.5) -- (0.75,0.5);
    \node at (-30:0.5) [below] {$0$};
    \end{tikzpicture} \quad \quad \quad \quad
    \begin{tikzpicture}[scale=0.7,baseline=0]
    \foreach \i in {0,1,2}
    {
    \draw [red] (0,0) -- (\i*120:1.5);
    \draw [blue] (0,0) -- (60+\i*120:1.5);
    }
    \draw [dashed, thick, yellow] (-0.5,-1.25) -- (-0.5,1.25);
    \node at (-0.5,1) [right] {$i$};
    \node at (-0.5,0) [above left] {$j$};
    \node at (-0.5,-1) [right] {$k$};
    \end{tikzpicture} \quad = \quad \begin{tikzpicture}[scale=0.7,baseline=0]
    \foreach \i in {0,1,2}
    {
    \draw [red] (0,0) -- (\i*120:1.5);
    \draw [blue] (0,0) -- (60+\i*120:1.5);
    }
    \draw [dashed, thick, yellow] (0.5,-1.25) -- (0.5,1.25);
    \node at (0.5,1) [left] {$k$};
    \node at (0.5,0) [above right] {$j$};
    \node at (0.5,-1) [left] {$i$};
    \end{tikzpicture}
    \caption{Four equivalence moves for labeled dashed curves.}
    \label{fig:equivalence for labeled dashed curves}
\end{figure}
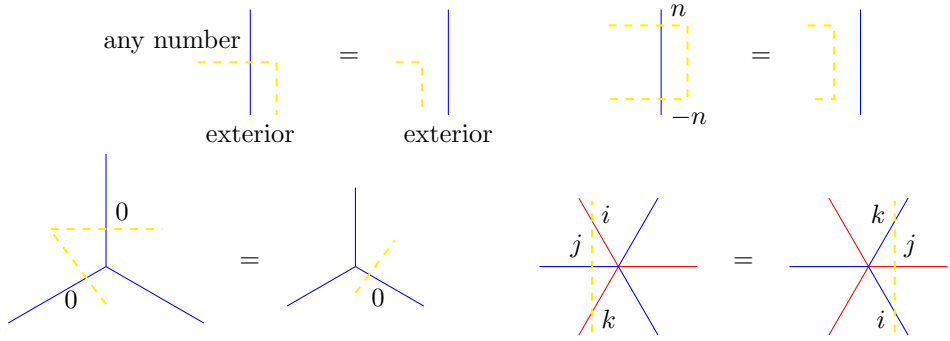

Finally, we can now diagrammatically describe a collection of labeled dashed curves that is a basis of the relative homology group $H_1(\Sigma, \Lambda)$, dual to the naive basis of $H_1(\Sigma)$ built in Subsection \ref{ssec:NaiveAbsoluteCycles}, as follows. First, for each face $f\sse\bG$, we select a Type 1 elementary column inside of $f$. Second, consider the piece of the weave $\ww(\bG)$ associated to this column -- weave lines arranged according to $\w_{0,n}$ -- and draw a vertical dashed curve $\kappa_f$ transverse to this piece of the weave. It now suffices to specify the correct labels encoding the intersection between the cycle for $\kappa$ and the naive absolute cycle associated to this face $f\in\bG$. For that, consider the braid given by slicing the weave front $\Sigma$ along $\kappa$. Recall that there is a natural bijection between the appearances of the lowest Coxeter generator $s_i$ in $\w_{0,n}$ and the gaps in this Type 1 column (see Subsection \ref{sssec:WeaveType1Column}). Locate the appearance of $s_i$ that corresponds to a gap belonging to $f$ and consider set of crossings in this braid which are contained within the rhomboid diamond whose unique lowest vertex is at this appearance of $s_i$. Figure \ref{fig:naive relative cycle} (center) draws an example of such a diamond for the face $f\sse\bG$ depicted to its left. Then we label the curve $\kappa$, to a labeled curve $\kappa_f$, by assigning the intersection number $1$ for all the weave lines in $\ww(\bG)$ which are associated to crossings in the braid {\it inside} of the diamond, and by assigning the intersection number $0$ for all the remaining weave lines. Figure \ref{fig:naive relative cycle} (right) depicts the corresponding curve $\kappa_f$ with its intersection labels for the face $f\sse\bG$.

\begin{figure}[H]
    \centering
    \begin{tikzpicture}
    \foreach \i in {0,...,4}
    {
    \draw (0,\i*0.6) -- (2,\i*0.6);
    }
    \node at (1,0.9) [] {$f$};
    \end{tikzpicture} \quad \quad \quad \quad \quad 
    \begin{tikzpicture}
    \draw [yellow,  decoration={markings,mark=at position 0.4 with {\arrow{stealth}}},postaction={decorate}] (0,0) -- (0.3,0) -- (1.5,2)-- (2.7,2);
    \draw [yellow,  decoration={markings,mark=at position 0.5 with {\arrow{stealth}}},postaction={decorate}] (0,0.5) -- (0.3,0.5) -- (0.6,0) -- (0.9,0) -- (1.8,1.5) -- (2.7,1.5);
    \foreach \i in {0,...,3}
    {
    \path [fill=blue] (0.45+\i*0.6,0.25) circle [radius=0.1];
    }
    \foreach \i in {0,1,2}
    {
    \path [fill=red] (0.75+\i*0.6,0.75) circle [radius=0.1];
    }
    \foreach \i in {0,1}
    {
    \path [fill=green] (1.05+\i*0.6,1.25) circle [radius=0.1];
    }
    \path [fill=brown] (1.35,1.75) circle [radius=0.1];
    \draw [dotted] (1.05,-0.25) -- (1.95,1.25) -- (1.35,2.25) -- (0.45,0.75) -- cycle;
    \end{tikzpicture} \quad \quad \quad \quad \quad
    \begin{tikzpicture}[scale=0.5]
    \draw [blue] (0,0) -- (4,0);
    \draw [red] (0,0.5) -- (4,0.5);
    \draw [blue] (0,1) -- (4,1);
    \draw [green] (0,1.5) -- (4,1.5);
    \draw [red] (0,2) -- (4,2);
    \draw [blue] (0,2.5) -- (4,2.5);
    \draw [brown] (0,3) -- (4,3);
    \draw [green] (0,3.5) -- (4,3.5);
    \draw [red] (0,4) -- (4,4);
    \draw [blue] (0,4.5) -- (4,4.5);
    \draw [thick, dashed, yellow] (2,-0.5) -- (2,5);
    \foreach \i in {1,2,3,4,6,7}
    {
    \node at (2,\i*0.5) [right] {\footnotesize{$1$}};
    }
    \foreach \i in {0,5,8,9}
    {
    \node at (2,\i*0.5) [right] {\footnotesize{$0$}};
    }
    \end{tikzpicture}
    \caption{(Left) A face $f\in\bG$ chosen at an elementary Type 1 column. (Center) A slice of the weave $\ww(\bG)$ with the rhomboid diamond associated to the $s_1$-crossing, in blue, for the face $f\in\bG$. (Right) The labeled dashed curve $\kappa_f$, in yellow, in the weave $\ww(\bG)$, with $1$ in the weave lines for the crossings inside the diamond, and $0$ otherwise.}
    \label{fig:naive relative cycle}
\end{figure}
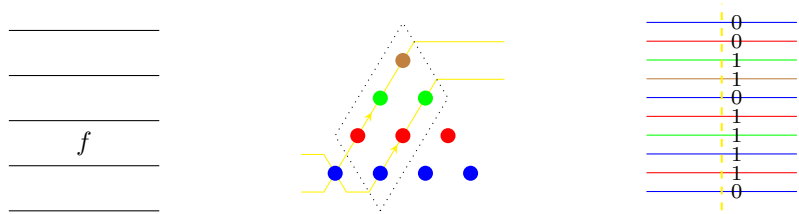

By construction, the intersection pairing between the labeled dashed curve $\kappa_f$ and the naive absolute cycles is given by $\langle \eta_f,\gamma_f\rangle=1$ and $\langle \eta_f,\gamma_g\rangle=0$ for $g\neq f$. Thus, the collection of relative cycles $\{\eta_f\}$, associated to these particular labeled dashed curves $\kappa_f$, are representatives of a dual naive basis of $H_1(L,\Lambda)$. We call this collection of relative cycles the \emph{naive basis of relative cycles} of the relative homology group $H_1(L,\Lambda)$.


\subsection{Initial Absolute Cycles and Initial Relative Cycles in \texorpdfstring{$L(\ww(\bG))$}{}}\label{ssec:InitialCycles} Subsections \ref{ssec:NaiveAbsoluteCycles} and \ref{ssec:naiverelativecyles} above explain the construction of the naive basis of absolute cycles and the corresponding naive basis of relative cycles. The generators of these basis are not geometrically appropriate: despite being $\sf Y$-cycles in the weave (or dual to them), they are often represented by {\it immersed} cycles and it is a priori unclear whether it is possible to mutate at them.\footnote{In any sense of the word mutation: geometrically, through a Lagrangian surgery, diagrammatically, via a weave mutation, or cluster-theoretically, mutating at the naive vertex representing them in the naive quiver.} A key idea in this manuscript is the consideration and study of {\it sugar-free} hulls, as introduced in Subsection \ref{ssec:hulls}. In this section, these two parts, sugar-free hulls and the study of homology cycles compatible with the weave $\ww(\bG)$, converge together: we show that it is possible to associate a $\sf Y$-tree absolute cycle on $\ww(\bG)$ to every sugar-free hull of $\bG$. In consequence, given that $\sf Y$-trees are {\it embedded}, it will be possible to perform a weave mutation at every sugar-free hull of $\bG$. This leads to the notion of {\it initial basis}, which eventually give rise to the initial seeds for our cluster structures.


\noindent In the study of sugar-free hulls, we must consider cycles which are associated to regions of $\bG$ -- namely, the sugar-free hulls -- and not just faces $f\sse\bG$. The simplest case is that of a region in an elementary column of Type 1, which is considered in the following simple lemma, where we use the weave $\n(\w_{0,n})$ introduced in Definition \ref{def:weaveType1}.

\begin{lemma}\label{lemma: gap & weave line correspondence} Let $\bG$ be a GP-graph and $C\sse\bG$ a Type 1 elementary column. Consider the region $R\sse C$ given by the union of $k$ consecutive gaps in $C$. Then the boundary of $\dd R$ is homologous to the lift of a unique weave line on the $k$th level.
\end{lemma}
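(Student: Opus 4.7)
The plan is to reduce the statement to a telescoping computation in $H_1(L(\ww))$ that identifies $[\partial R]$ with the homology class of a specific $\sf I$-cycle in $\n(\w_{0,n})$. First, I would set up the standard dictionary between weave lines in $\n(\w_{0,n})$ and positive roots of the $A_{n-1}$ root system: if $\w_{0,n} = s_{i_1}s_{i_2}\cdots s_{i_\ell}$ with $\ell = n(n-1)/2$, then the weave line at height $m$ (counted from the bottom) is naturally labeled by the positive root $\beta_m := s_{i_1}\cdots s_{i_{m-1}}(\alpha_{i_m})$, which under the identification of positive roots with transpositions in $S_n$ corresponds to a unique pair $(a,b)$ with $1\les a<b\les n$. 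Under this dictionary, the weave line carries the Coxeter label $s_k$ precisely when the height of $\beta_m$ equals $k$, i.e.~$b-a=k$, which is what is meant by ``$k$-th level''. Note that the number of such weave lines, namely $n-k$, matches the number of regions of $k$ consecutive gaps in a Type 1 column with $n$ horizontal lines.

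Second, I would identify the absolute homology class of the $\sf I$-cycle associated to each weave line. Following the short $\sf I$-cycle description in Subsection \ref{sssec:YCyclesInWeaveAndMutation} and the cross-section analysis of \cite[Section 2]{CasalsZaslow}, the $\sf I$-cycle for the weave line labeled $(a,b)$ lifts to a cycle on $L(\ww)$ that is the intersection of the $a$-th and $b$-th sheets; formally writing $e_i\in H_1(L(\ww))$ for the class carried by the $i$-th horizontal sheet (subject to $\sum_i e_i = 0$), this $\sf I$-cycle represents $e_a-e_b$. By Subsection \ref{sssec:WeaveType1Column}, the naive absolute cycle $\gamma_{f_i}$ of the single gap $f_i$ between levels $i$ and $i+1$ corresponds to the $i$-th appearance of $s_1$ in $\w_{0,n}$, namely the simple transposition $(i,i+1)$, so $[\gamma_{f_i}]=e_i-e_{i+1}$.

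Third, since $R$ is the union of the $k$ consecutive faces $f_j, f_{j+1},\ldots, f_{j+k-1}$, the boundary in the conjugate surface satisfies $[\partial R] = \sum_{i=j}^{j+k-1}[\partial f_i]$; applying the previous identification and telescoping yields
\[
[\partial R] \;=\; \sum_{i=j}^{j+k-1} (e_i - e_{i+1}) \;=\; e_j - e_{j+k}.
\]
This class is by definition that of the $\sf I$-cycle corresponding to the transposition $(j,j+k)$, namely the unique weave line in $\n(\w_{0,n})$ whose associated positive root has height $k$ and lower endpoint $j$, which is the asserted unique $s_k$-labeled weave line on the $k$-th level.

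The main technical point is justifying the identification $[\gamma_{f_i}]=e_i-e_{i+1}$ combinatorially on the weave, rather than on the conjugate surface; this is handled by the explicit cross-sectional description of naive cycles in Subsection \ref{sssec:WeaveType1Column} (cf.~Figure \ref{fig:type_1_abs_cyc}), together with the additivity of intersection numbers with the dual labeled dashed curves of Subsection \ref{ssec:naiverelativecyles}, which pins down each homology class uniquely via the non-degeneracy of the Poincar\'e pairing.
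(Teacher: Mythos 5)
Your proof is correct and rests on the same facts as the paper's argument: that the two boundary components of $\partial R$ retract to the sheets $j$ and $j+k$, that these two sheets meet at a unique weave line, and that in the standard reduced word $\w_{0,n}$ the crossing of strands $j$ and $j+k$ is the one labeled $s_k$. The paper states this as a direct geometric observation (deformation retract of boundary components onto the intersection locus), whereas you package it through the positive-root dictionary and a telescoping sum; note, however, that once you assert the identification ``the $\sf I$-cycle at the weave line joining sheets $a$ and $b$ represents $e_a-e_b$'' for \emph{all} pairs $(a,b)$, the telescoping step becomes redundant --- applying that identification directly at $(j,j+k)$ already yields the conclusion, which is essentially the paper's shorter route.
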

\begin{proof} The boundary of a single gap has two connected components, and each of them is a deformation retract of a sheet of the spatial wavefront $\Sigma(C)$. For a single gap, the two sheets associated with its boundary intersect at a unique weave line at the bottom level. For a union $R$ of $k$ consecutive gaps, the two sheets associated with $\partial R$ intersect at the $k$th level. The conclusion follows.
\end{proof}

Figure \ref{fig:RulesCycles_Type1Gaps} depicts four cases illustrating how to associate a cycle on a weave line for a region on a Type 1 column. The case of arbitrary strands can be readily imagined by examining these few cases. 

\begin{center}
	\begin{figure}[h!]
		\centering
		\includegraphics{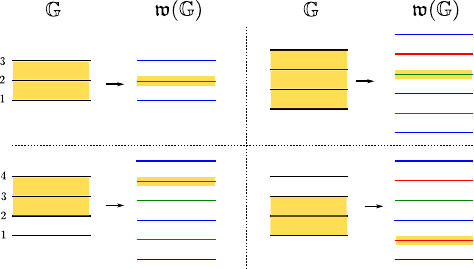}
		\caption{Associating a (piece of a) cycle in the weave for regions on a Type 1 column. The first row depicts the case where the region is the entire column, for $n=3$ and $4$ strands. The second row depicts the remaining two cases for $n=4$ strands.}
		\label{fig:RulesCycles_Type1Gaps}
	\end{figure}
\end{center}

\noindent Proposition \ref{all cycles are Y-cycles} showed that it is possible, up to possibly performing a weave equivalence, to represent the naive absolute cycles with $\sf Y$-cycles. Nevertheless, these are typically immersed: it is not always possible, in general, to find embedded $\sf Y$-cycles representing these homology classes. Now, the following result, which we refer to as the $\sf Y$-representability Lemma, shows that it {\it is} possible to represent the boundary cycle $\dd R$ by a $\sf Y$-tree if the region $R$ is sugar-free.

\begin{lemma}[$\sf Y$-Representability Lemma]\label{lem: Y representability} Let $\bG$ be a GP-graph and $R\sse\bG$ a sugar-free region. Then the boundary cycle $\partial R$ is homologous to a $\sf Y$-tree on $\ww(\bG)$, up possibly performing a weave equivalence that adds $\n_k^\uparrow(\w_{n,0})\n_k^\uparrow(\w_{n,0})^{op}$ and $\n_k^\downarrow(\w_{n,0})\n_k^\downarrow(\w_{n,0})^{op}$ to $\ww(\bG)$.
\end{lemma}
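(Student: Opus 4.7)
The plan is to scan $R$ column by column, using the staircase characterization of Lemma \ref{lemma:staircase} to produce an explicit embedded $\sf Y$-tree in (a weave equivalent to) $\ww(\bG)$ representing $\partial R$. First I would reduce to the case where $R$ is simply connected and meets each elementary column $C$ in a single connected interval of consecutive gaps, guaranteed by Corollary \ref{cor:sugar-free hull simply-connected} and Lemma \ref{lemma:single component}. In each Type 1 column $C\sse R$, let $k=|R\cap C|$; then Lemma \ref{lemma: gap & weave line correspondence} represents $\partial(R\cap C)$ by a single horizontal $\sf I$-cycle on the level-$k$ weave line of the block $\n(\w_{0,n})$ associated to $C$. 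These will be the ``spines'' of the $\sf Y$-tree to be assembled, and they are manifestly embedded.

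The heart of the argument is the gluing across consecutive columns. By Lemma \ref{lemma:staircase}, $\partial R$ decomposes into a concatenation of the four staircase building blocks, and sugar-freeness forces a specific color alternation along each staircase. When passing across a Type 2 column whose vertical edge does \emph{not} lie on $\partial R$, the level $k$ of the representative weave line does not change, and the spines on either side simply glue into a longer $\sf I$-cycle passing only through hexagonal vertices (permitted in a $\sf Y$-tree). When the vertical edge does lie on $\partial R$, the level must change by one, and the unique trivalent vertex of $\c_i^\uparrow(\w_{0,n})$ (or $\c_i^\downarrow(\w_{0,n})$) must serve as the branching $\sf Y$-vertex joining the two horizontal spines and the vertical ``step'' of the staircase. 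The color requirement from Definition \ref{def:sugarfree} (white at left-pointing $270^\circ$, black at right-pointing $270^\circ$) is exactly what ensures that the orientation of this trivalent vertex matches the local model in Figure \ref{fig:local picture of Y-cycles}; wrong colors would produce instead a bident or an immersed configuration.

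The main obstacle, and the reason for the weave-equivalence leeway in the statement, is that the trivalent vertex in $\c_i^\uparrow(\w_{0,n})$ was placed so as to be adjacent to the \emph{lowest} Coxeter generator strand, which in general is not the level-$k$ strand where the current spine lives. To remedy this, I would insert a Zamolodchikov-trivial block of the form $\n_k^\uparrow(\w_{0,n})\,\n_k^\uparrow(\w_{0,n})^{op}$ (or its downward analogue) on one side of the trivalent vertex, which reshuffles the cross-sectional word so that the spine at level $k$ is brought into coincidence with the foot of the trivalent vertex. This is precisely the weave equivalence allowed by the statement of the lemma. After the insertion, the two horizontal spines and the short weave segment through the trivalent vertex meet transversely at a single trivalent weave vertex, producing a local $\sf Y$-branching.

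Finally, for Type 3 columns (lollipops), the sugar-free condition prohibits a lollipop at a $270^\circ$ corner of the wrong color; a lollipop of the \emph{correct} color at a corner can be absorbed without changing $\La(\bG)$ (as noted after Definition \ref{def:sugarfree}) and so contributes no bident, while an interior lollipop is disjoint from $\partial R$. For the only remaining case, where a lollipop appears along $\partial R$ forcing a bident in the naive representative, the same replacement as in the proof of Proposition \ref{all cycles are Y-cycles} (possibly after inserting a Move I candy twist) converts the bident into a $\sf Y$-vertex at a hexagonal point of the weave. Concatenating all these local $\sf Y$-branchings and $\sf I$-cycle spines produces a connected embedded tree in the weave; by construction its lift to $L(\ww(\bG))$ is homologous to $\partial R$, proving the lemma.
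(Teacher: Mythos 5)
Your overall plan matches the paper's: scan column by column, invoke Lemma~\ref{lemma:staircase} to reduce to the four staircase patterns, represent the boundary in Type 1 columns by the level-$k$ $\sf I$-cycle via Lemma~\ref{lemma: gap \& weave line correspondence}, and allow a weave-equivalence insertion of $\n_k^\uparrow(\w_{0,n})\n_k^\uparrow(\w_{0,n})^{op}$ where needed. However, there is a genuine error in the Type 2 gluing step that prevents your construction from producing a $\sf Y$-tree at all. You claim that ``the unique trivalent vertex of $\c_i^\uparrow(\w_{0,n})$ must serve as the branching $\sf Y$-vertex joining the two horizontal spines and the vertical step of the staircase.'' This is impossible: a $\sf Y$-tree can only branch (have degree $3$) at a \emph{hexagonal} weave vertex, per the four local models in Figure~\ref{fig:local picture of Y-cycles}. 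At a trivalent weave vertex the $\sf Y$-tree may only \emph{terminate} along a single incident edge. Moreover, the three edges at the trivalent vertex are monochromatic and lie on the same weave strand, so the two horizontal spines, which sit on weave lines at levels $k$ and $k\pm 1$, cannot both reach it. The paper's proof instead locates the branch at the unique hexavalent weave vertex inside $\n_k^\uparrow(\w_{0,n})^{op}$ that matches the level-$k$ boundary condition on the right; one leg then runs up from that hexagonal vertex and \emph{ends} at the trivalent vertex, while the other two legs run to the left and right spines. Compare Figure~\ref{fig:Example of the second staircase}.

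Consequently, your explanation for the weave-equivalence insertion (``bringing the spine into coincidence with the foot of the trivalent vertex'') is also off. The insertion is needed precisely in the two staircase patterns where the assigned weave is $\c_k^\downarrow(\w_{0,n})$: that weave by itself has no hexagonal vertex adjacent to its trivalent vertex with the correct boundary conditions, so one must first concatenate $\n_k^\uparrow(\w_{0,n})\n_k^\uparrow(\w_{0,n})^{op}$ to \emph{manufacture} the needed hexagonal branch vertex, as in Figure~\ref{fig:Example of the first staircase}. A smaller point: the paper treats Type 3 columns by directly extending Lemma~\ref{lemma: gap \& weave line correspondence}, so $\partial(R\cap C)$ is already a single $\sf I$-cycle weave line (possibly of changing color) when $R$ is sugar-free; there is no bident to fix and hence no need to invoke Proposition~\ref{all cycles are Y-cycles} in this lemma.
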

\begin{proof} Let $C$ be an elementary column of $\bG$. By Lemma \ref{lemma:single component}, the intersection $R\cap C$ has at most one connected component. Therefore, if $R$ intersects $C$ non-trivially, $R$ must be a union of consecutive gaps in the column $C$. By Lemma \ref{lemma: gap & weave line correspondence}, for a Type 1 or a Type 3 elementary column $C$, we can represent the boundary $\partial (R\cap C)$ by a single $\sf I$-cycle weave line going from left to right; note that the weave line color may change within a Type 3 column. It thus remains to treat the cases of elementary columns of Type 2. By Lemma \ref{lemma:staircase} applied to a Type 2 column, we conclude that the following four cases -- in correspondence with the four staircase patterns -- are to be analyzed.
\[
\begin{tikzpicture}[scale=0.7],
\draw (0,0) -- (1,0) -- (1,1) -- (2,1);
\node at (0,1) [] {$R$};
\draw [fill=white] (1,1) circle [radius=0.1];
\draw [fill=black] (1,0) circle [radius=0.1];
\end{tikzpicture} \quad \quad 
\begin{tikzpicture}[scale=0.7],
\draw (0,0) -- (-1,0) -- (-1,1) -- (-2,1);
\node at (0,1) [] {$R$};
\draw [fill=black] (-1,1) circle [radius=0.1];
\draw [fill=white] (-1,0) circle [radius=0.1];
\end{tikzpicture}\quad \quad
\begin{tikzpicture}[scale=0.7],
\draw (0,1) -- (1,1) -- (1,2) -- (2,2);
\node at (2,1) [] {$R$};
\draw [fill=black] (1,1) circle [radius=0.1];
\draw [fill=white] (1,2) circle [radius=0.1];
\end{tikzpicture}\quad \quad
\begin{tikzpicture}[scale=0.7],
\draw (0,2) -- (1,2) -- (1,1) -- (2,1);
\node at (0,1) [] {$R$};
\draw [fill=white] (1,1) circle [radius=0.1];
\draw [fill=black] (1,2) circle [radius=0.1];
\end{tikzpicture}
\]

First, let us consider the staircase pattern which is second from the left. The corresponding local weave pattern is $\c_k^\uparrow(\w_{0,n})$, as introduced in Definition \ref{def:weavecrossing}. In the construction of this weave pattern, we first bring the $k$th strand in the bottom level upward, using the weave pattern $\n_k^\uparrow(\w_{0,n})$, subsequently insert a trivalent weave vertex at the top strand, and then insert the weave pattern $\n_k^\uparrow(\w_{0,n})^{op}$. Figure \ref{fig:Example of the second staircase} depicts a case with $4$ horizontal lines and $k=2$. (See also Figures \ref{fig:RulesWeaves_CrossingExamples} and \ref{fig:RulesWeaves_CrossingExamples2}.) Since the $k$th horizontal line is the bottom boundary of $\partial R$, there must be a unique hexavalent weave vertex in the $\n_k^\uparrow(\w_{0,n})^{op}$ that connects to the weave line corresponding to the union of all gaps in $R$ at the right boundary. Therefore, we can create a $\sf Y$-tree in this region, with the required boundary conditions, by inserting a tripod leg that connects to the trivalent weave vertex (at the top) and continues to the left towards whichever weave line is required by the boundary condition of $R$. This resulting $\sf Y$-tree, in the shape of a tripod, then represents $\partial R$ locally, as desired. Figure \ref{fig:Example of the second staircase} (right) depicts this $\sf Y$-tree, highlighted in light green, for the region $R$ on the left, also drawn in the same color. This concludes the second case among the four staircase patterns; the third case, which also contains the region $R$ to the right of the crossing, can be resolved analogously.

\begin{figure}[H]
    \centering
    \begin{tikzpicture}[scale=0.7]
    \path [fill=lime] (0,1.4) -- (1.5,1.4) -- (1.5,0.7) -- (3,0.7) -- (3,2.1) -- (0,2.1) -- cycle;
    \foreach \i in {0,...,3}
    {
        \draw (0,\i*0.7) -- (3,\i*0.7);
    }
    \draw (1.5,0.7) -- (1.5,1.4);
    \draw [fill=black] (1.5,1.4) circle [radius=0.15];
    \draw [fill=white] (1.5,0.7) circle [radius=0.15];
    \end{tikzpicture} \quad \quad \quad\quad \quad \quad 
    \begin{tikzpicture}[scale=0.7]
    \draw [lime, line width = 5] (2,2) -- (2.5,1.6) -- (4,1.6);
    \draw [lime, line width=5] (0,2) -- (1,2) -- (2,1.2) -- (2.5,1.6);
    \draw [blue] (0,0) -- (4,0);
    \draw [red] (0,0.4) -- (4,0.4);
    \draw [blue] (0,0.8) -- (0.5,0.8) -- (1.5,1.6) -- (2.5,1.6) -- (3.5,0.8) -- (4,0.8);
    \draw [green] (0,1.2) -- (0.5,1.2) -- (1,0.8) -- (3,0.8) -- (3.5,1.2) -- (4,1.2);
    \draw [red] (0,1.6) -- (1.5,1.6) -- (2,1.2) -- (2.5,1.6) -- (4,1.6);
    \draw [blue] (0,2) -- (1,2) -- (1.5,1.6);
    \draw [red] (1.5,1.6) -- (2,2) -- (2.5,1.6);
    \draw [blue] (2.5,1.6) -- (3,2) -- (4,2);
    \draw [red] (2,2) -- (2,3);
    \end{tikzpicture}
    \caption{(Left) Type 2 column with region $R$ highlighted in light green. (Right) The associated local weave and the $\sf Y$-tree, in the shape of a tripod.}\label{fig:Example of the second staircase}
\end{figure}
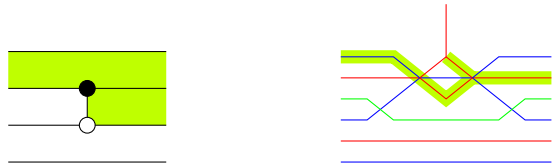

Next let us study the first (leftmost) of the four staircase patterns. In this case, the chosen weave pattern $\c_k^\downarrow(\w_{0,n})$, as assigned in Subsection \ref{ssec:InitialWeave}, does not already have a hexavalent weave vertex that meets our need. Nevertheless, we can create it by concatenating the local weave pieces $\n_k^\uparrow(\w_{0,n})$ and $\n_k^\uparrow(\w_{0,n})^{op}$ on the left first. Note that this can be achieved by inserting a series of Moves I and V, and thus the equivalence class of the weave remains the same. Now, inside of the weave piece $\n_k^\uparrow(\w_{0,n})^{op}$, there exists a unique hexavalent weave vertex that connects to both the weave line representative of $\partial R$, to the left, and the trivalent weave vertex in $\c_k^\downarrow(\w_{0,n})$, to the right. The $\sf Y$-tree, again in a tripod shape, represents $\partial R$ locally, as desired. Figure \ref{fig:Example of the first staircase} depicts an example of such tripod with $n=4$ and $k=2$. An analogous argument also resolves the case of the fourth (rightmost) staircase pattern.
\begin{figure}[H]
    \centering
    \begin{tikzpicture}[scale=0.7]
    \path [fill=lime] (0,0.7) -- (1.5,0.7) -- (1.5,1.4) -- (3,1.4) -- (3,2.1) -- (0,2.1) -- cycle;
    \foreach \i in {0,...,3}
    {
        \draw (0,\i*0.7) -- (3,\i*0.7);
    }
    \draw (1.5,0.7) -- (1.5,1.4);
    \draw [fill=white] (1.5,1.4) circle [radius=0.15];
    \draw [fill=black] (1.5,0.7) circle [radius=0.15];
    \end{tikzpicture}\quad \quad \quad\quad \quad \quad
    \begin{tikzpicture}[scale=0.7]
    \draw [lime, line width = 5] (0,1.6) -- (2.5,1.6) -- (3,2) -- (6.5,2);
    \draw [lime, line width=5] (2.5,1.6) -- (3.5,0.8) -- (4,0.8) -- (5,0);
    \draw [blue] (0,0) -- (4,0) -- (4.5,0.4) -- (5.5,0.4) -- (6,0) -- (6.5,0);
    \draw [red] (0,0.4) -- (4.5,0.4) -- (5,0) -- (5.5,0.4) -- (6.5,0.4);
    \draw [blue] (0,0.8) -- (0.5,0.8) -- (1.5,1.6) -- (2.5,1.6) -- (3.5,0.8) -- (4,0.8) -- (4.5,0.4);
    \draw [blue] (5.5,0.4) -- (6,0.8) -- (6.5,0.8);
    \draw [red] (5,0) -- (5,-1);
    \draw [red] (4.5,0.4) -- (5,0.8) -- (5.5,0.4);
    \draw [green] (0,1.2) -- (0.5,1.2) -- (1,0.8) -- (3,0.8) -- (3.5,1.2) -- (6.5,1.2);
    \draw [red] (0,1.6) -- (1.5,1.6) -- (2,1.2) -- (2.5,1.6) -- (6.5,1.6);
    \draw [blue] (0,2) -- (1,2) -- (1.5,1.6);
    \draw [red] (1.5,1.6) -- (2,2) -- (2.5,1.6);
    \draw [blue] (2.5,1.6) -- (3,2) -- (6.5,2);
    \end{tikzpicture}
    \caption{Example of the first staircase.}\label{fig:Example of the first staircase}
\end{figure}
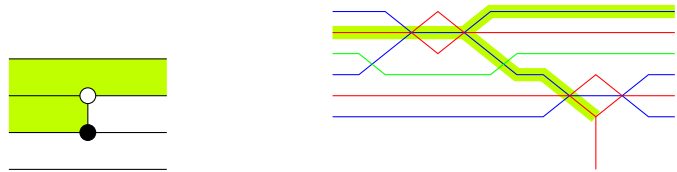
Finally, by combining the local pictures for all three types of columns, we conclude that there exists a representative for $\partial R$ which is a $\sf Y$-tree.\end{proof}

The $\sf Y$-representability Lemma allows us to introduce the following definition.

\begin{definition}
Let $\bG$ be a GP-graph. The set of \emph{initial absolute cycles} $\fS(\bG)$ is the set of all $\sf Y$-trees on $L(\ww(\bG))$ which are associated to the (non-empty) sugar-free hulls in $\bG$.
\end{definition}

The inclusion relation between sugar-free hulls naturally puts a partial order on the set $\fS(\bG)$: by definition, $\partial R\leq \partial R'$ if $R\subset R'$ as sugar-free hulls. Also, given a face $f\sse\bG$, we note that a face $g\in \S_f$ in its sugar-free hall, must satisfy $\S_g\subset \S_f$ and hence $\partial \S_g\leq \partial \S_f$. Finally, note that multiple faces in $\bG$ can share the same sugar-free hull, and there may exist faces with an empty sugar-free hull. Thus, in general, the number of sugar-free hulls may be smaller than the number of faces in $\bG$. Nevertheless, we can prove that the set of initial absolute cycles $\fS(\bG)$ is always linearly independent.

\begin{prop}\label{prop:replacement construction} Let $\bG$ be a GP-graph and let $\fS(\bG)$ be its set of initial absolute cycles. Then $\fS(\bG)$ is a linearly independent subset of $H_1(L(\ww(\bG)))$. In addition, it is possible to add naive absolute cycles to $\fS(\bG)$ to complete $\fS(\bG)$ into a basis of $H_1(L(\ww(\bG)))$.
\end{prop}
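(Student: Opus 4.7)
The plan is to exhibit a lower unitriangular change-of-basis matrix between $\fS(\bG)$, together with a selected subset of the naive basis, and the full naive basis $\{[\gamma_f]\}$ indexed by faces of $\bG$. The triangular structure will come from the inclusion partial order on the set of sugar-free hulls.

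The central identity that I will prove first is that, for every sugar-free hull $R$ of $\bG$,
\[
[\partial R] = \sum_{g \sse R}[\gamma_g] \quad \text{in } H_1(L(\ww(\bG))),
\]
where the sum runs over faces $g$ of $\bG$ contained in $R$, with orientations fixed consistently. By Lemma \ref{lem: Y representability}, the $\sf Y$-tree representative of $[\partial R]$ is homologous to the boundary of the planar region $R$; and on the conjugate surface (equivalently on $L(\ww(\bG))$) this boundary decomposes as the signed sum of the naive cycles of its constituent faces under the identification $H_1(\bG) \cong H_1(L(\ww(\bG)))$ used in Subsection \ref{ssec:NaiveAbsoluteCycles}. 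Once orientations are fixed this is the elementary topological fact that the boundary of a planar 2-chain is the sum of the boundaries of its constituent 2-cells, but verifying it with coefficient $+1$ throughout is the main delicate point of the proof.

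Next I choose distinguished faces. For each sugar-free hull $R$, the definition of sugar-free hull guarantees at least one face $f_R$ with $\S_{f_R} = R$; fix one such choice. Since $\S_{f_R}$ is uniquely determined by $f_R$, the assignment $R \mapsto f_R$ is automatically injective, so the distinguished faces $\{f_R\}$ are pairwise distinct. Order the sugar-free hulls $R_1, \ldots, R_k$ by a linear extension of the inclusion partial order, so that $R_i \sse R_j$ implies $i \leq j$. Crucially, $f_{R_i} \in R_j$ then forces $R_i = \S_{f_{R_i}} \sse R_j$, and hence $i \leq j$.

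Expanding $[\partial R_j]$ in the naive basis via the central identity yields
\[
[\partial R_j] = [\gamma_{f_{R_j}}] + \sum_{\substack{i < j \\ f_{R_i} \in R_j}}[\gamma_{f_{R_i}}] + \sum_{\substack{g \sse R_j \\ g \notin \{f_{R_1},\ldots,f_{R_k}\}}}[\gamma_g].
\]
Setting
\[
\fB := \{[\partial R_1], \ldots, [\partial R_k]\} \cup \{[\gamma_g] : g \text{ is not } f_{R_i} \text{ for any } i\},
\]
the transition matrix expressing $\fB$ in the naive basis is block unitriangular over $\Z$ in this ordering, hence invertible. This simultaneously shows that $\fS(\bG) = \{[\partial R_1],\ldots,[\partial R_k]\}$ is $\Z$-linearly independent and that $\fB$ is a $\Z$-basis of $H_1(L(\ww(\bG)))$, completing $\fS(\bG)$ by the stated naive cycles. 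The only genuine obstacle is pinning down the orientation conventions on the $\sf Y$-tree representatives produced in Lemma \ref{lem: Y representability} so that the central identity holds as an equality of integral homology classes; once that is in place, the rest is pure linear algebra.
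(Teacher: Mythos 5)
Your proof is correct and takes essentially the same approach as the paper. Both hinge on (i) the identity $[\partial R]=\sum_{g\sse R}[\gamma_g]$ for a sugar-free region $R$, (ii) the inclusion partial order on sugar-free hulls, and (iii) the existence of a distinguished face $f_R$ with $\S_{f_R}=R$ for each sugar-free hull $R$. Where the paper runs a recursive replacement from the minimal elements of the partial order upward, arguing at each stage that $\gamma_f$ can be recovered from $\partial\S_f$ and the previously replaced cycles, you package exactly this inductive content into a single block-unitriangular transition matrix. The matrix formulation is arguably cleaner — the invertibility over $\Z$ falls out immediately rather than through the paper's somewhat circuitous spanning argument — but the underlying mathematics is the same. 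As for the orientation issue you flag: you are right that it is the only genuinely nontrivial point, and the paper does not dwell on it either. It reduces to the elementary fact that, with all faces oriented coherently on the surface $L(\ww(\bG))$ (as the naive cycles from Subsection~\ref{ssec:NaiveAbsoluteCycles} are), the boundary of a planar subregion is the sum of the boundaries of its constituent faces with uniform sign; this is implicit in Lemma~\ref{lem: Y representability} and in the paper's own use of "$\gamma_f=\partial f$" in its proof, so your proof inherits no gap that the original did not already have.
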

\begin{proof} Let $L=L(\ww(\bG))$ be the initial filling. Consider the naive basis $\{\gamma_f\}_{\text{faces $f$}}$ of $H_1(L)$. It suffices to show that we can replace $|\fS(\bG)|$ many naive basis elements with elements in $\fS(\bG)$ while maintain the spanning property of the set. This is can be done by using the partial order on $\fS(\bG)$ as follows. 

Let us start with the minimal elements in $\fS(\bG)$ and work our way up, replacing the appropriate naive basis elements in $\{\gamma_f\}$ with elements in $\fS(\bG)$. At each turn, we select a face $f\sse\bG$ whose sugar-free hull $\S_f$ defines a $\sf Y$-tree $\partial \S_f\in \fS(\bG)$ and then replace the naive absolute cycle $\gamma_f$ with the $\sf Y$-tree $\partial \S_f$. Note that it is always possible to find such a face $f\sse\bG$ in this process: if no such face $f$ were available, any faces within the sugar-free hull would not have this particular sugar-free region as their sugar-free hull, which is tautologically absurd.

In the process of implementing each of these replacements, we must argue that the resulting set is still spanning the homology group $H_1(L)$. For that, we observe that if a replacement of $\gamma_g$ by $\partial \S_g$ is done before the replacement of $\gamma_f$ by $\S_f$, the sugar-free hull $\S_g$ must not contain the face $f$ inside. This follows from the fact that $f\in \S_g$ implies $\S_f\subset \S_g$. Therefore, we can use the set $\{\partial \S_g \mid g\in \S_f\}$, which is already in the basis set due to the partial order, and $\partial \S_f$ to recover $\gamma_f=\partial f$. This shows that the resulting set after the replacement of $\gamma_f$ by $\partial \S_f$ as above, is still a basis for $H_1(L)$. 
\end{proof}

The choice of completion of $\fS(\bG)$ to a basis in $H_1(L(\ww(\bG))$ is a neat instance of the natural appearance of quasi-cluster structures in (symplectic) geometry. There is no particular canonical manner by which we can typically choose a basis for this complement but, as we shall explain, the different choices all lead to the same cluster structure {\it up to} monomials in the frozen variables, i.e.~a quasi-cluster structure. The dualization process, via the Poincar\'e pairing, requires a choice of basis. Therefore, there is no canonical choice of {\it initial relative cycles} associated to the set $\fS(\bG)$ unless the later spans $H_1(L(\ww(\bG))$. In a general situation, we can at least consider the following concept.

\begin{definition} Let $\bG$ be a GP-graph, $L=L(\ww(\bG))$, $\Lambda=\Lambda(\bG)$ and $\fS(\bG)$ its set of initial absolute cycles on $L$. Let $\fB$ be a basis of $H_1(L)$ which is obtained by adding naive absolute cycles to the set $\fS(\bG)$. Then the \emph{initial relative cycles} associated to $\fB$ is the collection $\fB^\vee$ of linear combinations of naive relative cycles whose relative homology classes form a basis of $H_1(L,\Lambda)$ dual to the basis $\fB$ of $H_1(L)$.\end{definition}

We emphasize that the basis $\fB^\vee$ depends not only on $\fS(\bG)$, but also on the chosen basis completion $\fB$. Note that a few simple choices of $\fB$ are available as a result of the replacement construction in the proof of Proposition \ref{prop:replacement construction}. Namely, we start with the naive absolute basis $\{\gamma_f\}$, and then swaps some of the basis elements $\gamma_f$ with their corresponding initial absolute cycles $\partial \S_f$. In case that there are multiple faces sharing the same sugar-free hull, only one of the naive absolute cycles gets replaced, and the rest remain in the basis, which will become frozen basis elements.

Furthermore, if the basis $\fB$ is chosen via such a basis replacement process, then the corresponding dual relative cycle basis $\fB^\vee$ can be described with respect to the partial order on sugar-free hulls as well. Indeed, suppose that we have an equality $\S_f=\S_g=\cdots$ of sugar-free hulls, for some faces $f,g\sse\bG$, among others, and $\gamma_f$ was chosen to be replaced by $\partial \S_f$ in the replacement process. Then, the naive relative cycle $\eta_g$ needs to be replaced by $\eta_g-\eta_f$ for each $g$ with $\S_g=\S_f$. Similarly, $\eta_f$ would need to be replaced by $\eta_f+N$, where the $N$ summand is a linear combination of the naive relative cycles $\eta_h$ associated to the chosen faces $h$ with $\S_f\subsetneq \S_h$, such that the pairing of $(\eta_f+N)$ with $\partial \S_h$ vanishes for all such $h$.


\subsection{The naive quiver of a GP-graph}\label{ssec:QuiverGP} Let us start by emphasizing that the quiver of the initial seed for the cluster structure we construct is {\it not} always the dual quiver of the GP-graph $\bG$. Nevertheless, that dual quiver is useful in order to construct the actual quiver of the initial seed because it can be used to compute the intersection form on the initial filling, which is needed to define the initial quiver. Let us provide the details.

\noindent Following \cite{GHKK}, in order to define a cluster structure, we first fix an integer lattice with a skew-symmetric form on it. In the context of a GP-graph $\bG$, the integer lattice is $H_1(L(\ww(\bG)))$, and a natural skew-symmetric form on it is given by the intersection pairing between absolute homology classes. By using the {\it naive} basis $\{\gamma_f\}$ of naive absolute cycles and the GP-graph $\bG$, we can describe the intersection pairing form $\{\cdot,\cdot\}$ combinatorially using a quiver.

\begin{definition} Let $\bG\sse\R^2$ be a GP-graph. The \emph{naive quiver} $Q_0(\bG)$, or {\it dual quiver}, associated to $\bG$ is the quiver constructed as follows:
\begin{enumerate}
    \item A quiver vertex is associated to each face $f\sse\bG$.
    \item For every bipartite edge in $\bG$, we draw an arrow according to $\begin{tikzpicture}[scale=0.5,baseline=0]
    \draw (0,-0.5) -- (0,0.5);
    \draw [fill=black] (0,0.5) circle[radius=0.1];
    \draw [fill=white] (0,-0.5) circle[radius=0.1];
    \node [blue] (l) at (-1,0) [] {$\bullet$};
    \node[ blue](r) at (1,0) [] {$\bullet$};
    \draw [->,blue] (l) -- (r);
    \end{tikzpicture}$.
    \item For each pair of quiver vertices, sum up the arrows between them.
\end{enumerate}
Note that in the third step (3) there might be cancellations.
\end{definition}


\begin{lemma}[{\cite[Definition 8.2 and Proposition 8.3]{GonKen}}]\label{lemma:skew-symmetric pairing and quiver} Let $\epsilon_{fg}$ be the exchange matrix of the quiver $Q_0(\bG)$. Then the intersection pairing between $\gamma_f$ and $\gamma_g$ is given by $\{\gamma_f,\gamma_g\}=\epsilon_{fg}$.
\end{lemma}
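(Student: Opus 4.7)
The plan is to compute the pairing $\{\gamma_f,\gamma_g\}$ directly on $L=L(\ww(\bG))$, exploiting the fact that each naive absolute cycle is built by concatenating local pieces across the elementary columns of $\bG$, as established in Subsections \ref{sssec:WeaveType1Column}, \ref{sssec:WeaveType2Column}, and \ref{sssec:WeaveType3Column}. Since the cycles live on the exact Lagrangian surface $L$, their intersection numbers decompose as a signed sum of local contributions, one per elementary column. The strategy is therefore to match each column-local contribution to $\{\gamma_f,\gamma_g\}$ with the column-local contribution to $\epsilon_{fg}$ coming from $Q_0(\bG)$.

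First, I would verify that Type 1 columns contribute nothing. Indeed, by the rules in Subsection \ref{sssec:WeaveType1Column}, two distinct naive cycles in a Type 1 column use distinct parallel weave lines, corresponding to distinct appearances of the lowest Coxeter generator $s_i$ in $\w_{0,n}$. Such parallel weave lines are disjoint, so the local contribution to the intersection pairing vanishes. This matches the fact that Type 1 columns carry no vertical $\bG$-edges and therefore induce no arrows in $Q_0(\bG)$.

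Second, I would analyze Type 2 columns, which are the main source of intersections. Such a column contains a unique vertical $\bG$-edge, producing a unique trivalent weave vertex. By the description in Subsection \ref{sssec:WeaveType2Column}, the two long $\sf I$-cycles $\gamma_f$ and $\gamma_g$ for the two adjacent faces both terminate at this trivalent vertex, approaching it from opposite sides. I would then invoke the local intersection calculus of \cite[Section 2]{CasalsZaslow}, which computes the pairing of two short $\sf I$-cycles meeting at a common trivalent vertex to be $\pm1$, with the sign dictated by the color pattern (white-on-top vs.\ white-on-bottom) of the vertical bar. This sign rule exactly reproduces the arrow convention for the vertical edge in the definition of $Q_0(\bG)$, so the local contribution to $\{\gamma_f,\gamma_g\}$ matches $\epsilon_{fg}$ restricted to this column. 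Finally, for Type 3 columns with a lollipop, only the face $f$ carrying the lollipop has a non-standard local representative, namely a bident. A bident by itself carries no transverse intersection with other weave lines beyond the boundary $\sf I$-cycle pieces, and a direct case analysis using Figures \ref{fig:RulesCycles_Type3} and \ref{fig:RulesCycles_Type3_Bident} shows that the intersections agree with the arrows produced in $Q_0(\bG)$ by bipartite edges within the Type 3 column; alternatively, one can first apply Proposition \ref{all cycles are Y-cycles} to deform each bident to a genuine $\sf Y$-tree and then use the explicit intersection formulas at hexagonal vertices from \cite[Section 2]{CasalsZaslow}.

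The principal obstacle will be the sign bookkeeping, particularly in the Type 3 case where bidents must be deformed to $\sf Y$-trees. One needs to verify that the ambient orientation of $L$, the choice of lift used to define $\gamma_f$, and the combinatorial sign rule at trivalent vertices in \cite{CasalsZaslow} together produce exactly the arrow orientation prescribed by the black-on-top/white-on-bottom convention of $Q_0(\bG)$. Once these signs are matched, summing the local contributions across all elementary columns yields $\{\gamma_f,\gamma_g\}=\epsilon_{fg}$, as required. As a sanity check, the resulting formula coincides with the intersection form on the Goncharov-Kenyon conjugate surface $S(\bG)$ described in \cite[Proposition 8.3]{GonKen}, which is consistent with the fact (used only heuristically here) that $L(\ww(\bG))$ is Hamiltonian isotopic to $S(\bG)$.
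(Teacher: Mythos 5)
The paper gives no proof of this lemma: it is cited verbatim from Goncharov and Kenyon, who establish it for the conjugate surface $S(\bG)$, and the paper silently transports it to $H_1(L(\ww(\bG)))$ via the identification $H_1(\bG)\cong H_1(S)\cong H_1(L)$ (the precise Hamiltonian-isotopy statement between $S(\bG)$ and $L(\ww(\bG))$ is deferred to forthcoming work). Your proposal is therefore a genuinely different route: a self-contained computation of the pairing directly on the weave surface. This is worthwhile because it would make explicit the compatibility between the conjugate-surface intersection form and the weave intersection form that the paper's citation implicitly relies on, whereas the paper buys brevity at the cost of leaning on that identification.

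That said, your argument is a plan rather than a proof. In Type 2 columns, the two naive cycles adjacent to the vertical edge both terminate at the \emph{same} trivalent weave vertex, so they are not transverse as drawn; you need to perturb and then run the \cite{CasalsZaslow} intersection calculus, and what you assert is the conclusion of that calculus, not a verification that the sign it produces coincides with the black-on-top arrow orientation of $Q_0(\bG)$. In Type 3 columns, the claim that ``a bident by itself carries no transverse intersection'' is too quick: the bident's stem and prongs genuinely cross other weave lines in the column, so you need to argue either that those weave lines are not on the support of any other naive cycle (a case analysis against Figures \ref{fig:RulesCycles_Type3} and \ref{fig:RulesCycles_Type3_Bident}) or that the signed crossings cancel; invoking Proposition \ref{all cycles are Y-cycles} to trade the bident for a $\sf Y$-tree is the right move, but then the hexagonal-vertex sign conventions of \cite{CasalsZaslow} enter and must be tracked. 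Finally, you should confirm that the column-by-column decomposition is exhaustive, i.e.\ that no intersection survives at column interfaces after perturbation and that Type 1 segments of two different cycles lie on distinct $s_1$-level weave lines and hence contribute nothing. None of these points would derail the approach, but until the sign bookkeeping at trivalent and hexagonal vertices is checked against the quiver's arrow convention, the equality $\{\gamma_f,\gamma_g\}=\epsilon_{fg}$ is not yet established.
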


Since $\{\gamma_f\}_{\text{faces $f$}}$ is a basis of $H_1(L(\ww(\bG)))$, Lemma \ref{lemma:skew-symmetric pairing and quiver} uniquely determines the intersection skew-symmetric form. This intersection form, i.e.~the quiver $Q_0(\bG)$, is then used to compute the correct quiver $Q(\bG)$ for the initial seed. The unfrozen vertices of the correct initial quiver $Q(\bG)$ will be indexed by $\fS(\bG)$, the set of initial absolute cycles, and the remaining frozen vertices are determined by the choice of completion of $\fS(\bG)$ to a basis of $H_1(L)$. Since we will elaborate more on this in Section \ref{sec:cluster}, we conclude this discussion for now and revisit $Q(\bG)$ then.

\begin{remark} In the case of a plabic fence $\bG$, all naive absolute cycles are $\sf I$-cycles and thus they also are initial absolute cycles. Thus, for a plabic fence, $Q(\bG)$ coincides with $Q_0(\bG)$.
\end{remark}


\subsection{Bases and homology lattices in the presence of marked points}\label{ssec:marked_points} The construction of the cluster structures in Theorem \ref{thm:main}, and the definition of the moduli space $\fM(\Lambda, T)$, in general require an additional piece of data:
a set $T$ of marked points on $\La(\bG)$. 

\begin{definition}\label{def:markedpoints} Let $\bG$ be a GP-graph and let $\Lambda=\Lambda(\bG)$ be its GP-link. A set of \emph{marked points} $T\subset \Lambda$ is a subset of distinct points in $\La$, where we require that there is at least one marked point on each link component of $\Lambda$ and, without loss of generality, the set $T$ is disjoint from all crossings and all cusps in the front $\mathfrak{f}(\bG)$ of $\Lambda$.
\end{definition}

All prior statements in Section \ref{sec:weaves} remain unchanged by the addition of marked points, as they do not affect the associated weaves or the Hamiltonian isotopy class of exact Lagrangian fillings. Therefore, we can still consider the initial embedded exact filling $L=L(\ww(\bG))$ of the GP-link $\Lambda$. As before, we select the collection of initial absolute cycles $\fS(\bG)$ associated with sugar-free hulls, and they form a linearly independent subset of $H_1(L)$. The addition of marked points affects only the cluster-theoretic constructions: we need to replace the lattice of absolute homology $H_1(L)$ by the lattice of relative homology $H_1(L,T)$. The natural inclusion $H_1(L)\subset H_1(L,T)$, induced by the inclusions $T\sse\La=\dd L\sse L$, allows us to include the initial absolute cycles $\fS(\bG)$ as a linearly independent subset of $H_1(L,T)$. The only difference is that, in order to fix a cluster structure, we must expand $\fS(\bG)$ further to a basis $\fB$ of $H_1(L,T)$. This expansion can be done in two steps: we first expand $\fS(\bG)$ to a basis of $H_1(L)$, as done via the replacement process in Subsection \ref{ssec:InitialCycles}, and then expand this basis of $H_1(L)$ to a basis of $H_1(L,T)$. 

As was the case for $H_1(L)$ and its dual $H_1(L,\La)$, we shall need a dual space of $H_1(L,T)$ together with a basis dual to a chosen basis $\fB$ of $H_1(L,T)$. In fact, there is a natural intersection pairing
$$\inprod{\cdot}{\cdot}:H_1(L,T)\otimes H_1(L\setminus T,\Lambda\setminus T)\lr\Z$$
obtained by algebraically counting geometric intersections of relative cycles in generic position. In the same manner that Poincar\'e duality was used in Subsection \ref{ssec:InitialCycles}, a duality also exists in the setting with marked points. We record the precise statement in the following.

\begin{prop}\label{prop:pairing_markedpoints}
Let $L$ be a connected smooth surface with boundary $\La=\dd L$, and $i:T\lr\La$ an inclusion of a set of marked points with $\pi_0(i)$ being surjective. Then $$\rank (H_1(L,T))=\rank (H_1(L\setminus T,\Lambda\setminus T)),$$
and the intersection pairing $\inprod{\cdot}{\cdot}$ is non-degenerate.
\end{prop}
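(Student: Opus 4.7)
The plan is to establish the rank equality by running the long exact sequences of the two pairs, and then to deduce non-degeneracy of the intersection pairing from Poincar\'e--Lefschetz duality adapted to $2$-manifolds with corners.

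First, I would compute $\rank H_1(L,T)$ from the long exact sequence of $(L,T)$. Since $T$ is a finite set of points we have $H_1(T)=0$ and $H_0(T)=\Z^{|T|}$; since $L$ is connected and $T$ is non-empty, the map $H_0(T)\to H_0(L)=\Z$ is surjective, producing a short exact sequence
\[0 \lr H_1(L) \lr H_1(L,T) \lr \ker\!\left(H_0(T)\to H_0(L)\right) \lr 0,\]
and hence $\rank H_1(L,T) = \rank H_1(L) + |T|-1$. The analogous computation for $H_1(L\setminus T,\La\setminus T)$ uses the hypothesis that $\pi_0(i)$ is surjective in an essential way: since each boundary circle of $\La$ contains at least one point of $T$, the space $\La\setminus T$ is homotopy equivalent to a disjoint union of $|T|$ open arcs, so $H_1(\La\setminus T)=0$ and $H_0(\La\setminus T)=\Z^{|T|}$. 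The surface $L\setminus T$ is homotopy equivalent to $L$ (boundary punctures of a $2$-manifold with boundary can be pushed off into a collar) and remains connected, so the long exact sequence of $(L\setminus T, \La\setminus T)$ produces the parallel short exact sequence
\[0 \lr H_1(L) \lr H_1(L\setminus T,\La\setminus T) \lr \ker\!\left(\Z^{|T|}\to\Z\right) \lr 0,\]
giving the rank equality.

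For non-degeneracy of the pairing, I would pass to a compact model with corners. Remove a small half-disk neighborhood $U_t\sse L$ of each $t\in T$ and set $L':= L\setminus\bigsqcup_t U_t$. This is a compact $2$-manifold with corners whose boundary decomposes as $\partial L' = A \cup B$, where $A$ is the closure in $\partial L'$ of $\La\setminus\bigsqcup_t U_t$ and $B = \bigsqcup_t(L\cap\partial U_t)$ is the disjoint union of inward-pointing semicircular arcs. The pair $(L',A)$ deformation retracts onto $(L\setminus T,\La\setminus T)$, while collapsing each arc $B_t$ to the point $t\in T$ identifies $(L',B)$ with $(L,T)$ up to homotopy. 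Lefschetz duality for manifolds with corners then provides an isomorphism $H_1(L',A)\cong H^1(L',B)$, and combined with the universal coefficients theorem---where the vanishing $H_0(L,T)=0$ computed above removes the extension obstruction---this yields the non-degenerate pairing between $H_1(L,T)$ and $H_1(L\setminus T,\La\setminus T)$.

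The step I expect to require the most care is the identification of this abstract duality pairing with the geometric intersection form $\inprod{\cdot}{\cdot}$ defined by algebraically counting transverse intersections of relative cycles. The classical cap-product interpretation does transport to this corner setting, but verifying it demands choosing representatives transverse to both $A$ and $B$, discarding intersections along $A$, and interpreting intersections along $B$ as the contribution of the marked points to the pairing. Once this compatibility is confirmed, integrality and non-degeneracy of $\inprod{\cdot}{\cdot}$ follow directly from the Lefschetz isomorphism, since all groups involved are free abelian by the short exact sequences above.
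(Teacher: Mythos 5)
Your proof is correct. The paper records this proposition without a proof, treating it as standard, so there is no argument in the text to compare against; your two exact-sequence computations, both yielding $\rank H_1(L,T)=\rank H_1(L)+|T|-1=\rank H_1(L\setminus T,\La\setminus T)$, are right (with the hypothesis that $\pi_0(i)$ is surjective entering exactly where you say, to make $\La\setminus T$ a disjoint union of contractible arcs), and the Lefschetz duality argument on the corner model $L'=L\setminus\bigsqcup_t U_t$ with $\partial L'=A\cup B$ produces the perfect pairing once one verifies, as you flag, that the duality isomorphism agrees with the geometric transverse-intersection count $\inprod{\cdot}{\cdot}$. One point worth making explicit: orientability of $L$ is needed both for the statement to hold over $\Z$ and for your Lefschetz duality step; this is implicit in the proposition and automatic in the paper's setting, where $L$ is an orientable exact Lagrangian filling.
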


\noindent It is possible to consider intermediate lattices $M$ and $N$ in between the lattices discussed above. Namely, we can consider sublattices $N$ of $H_1(L,T)$ which include $H_1(L)$, and dually quotients $M$ of $H_1(L\setminus T,\La\setminus T)$, as in the following diagram, where all horizontal arrows are dual lattices:
$$
\begin{tikzcd}
 H_1(L,T) \arrow[r,<->] & H_1(L\setminus T,\La\setminus T) \arrow[d, two heads] \\
N \arrow[u,hook] \arrow[r,<->] & M \arrow[d, two heads] \\
H_1(L)  \arrow[u,hook]\arrow[r,<->] & H_1(L,\La) 
\end{tikzcd}$$


\section{Construction of Quasi-Cluster Structures on Sheaf Moduli}\label{sec:cluster}

In this section we develop the necessary results to study the geometry of the moduli stack $\FM(\La,T)$ associated to $\La=\La(\bG)$ and prove Theorem \ref{thm:main}. In particular, we introduce microlocal merodromies in Subsection \ref{ssec:clusterAvars} which, as we will prove, become the cluster $\mathcal{A}$-variables. The construction of the cluster structures is obtained purely by symplectic geometric means, using the results for Legendrian weaves from Section \ref{sec:weaves} above and \cite{CasalsZaslow} and the microlocal theory of sheaves \cite{Sheaves1,KashiwaraSchapira_Book,STZ_ConstrSheaves}. Let us review what we have developed in Sections \ref{sec:gridplabic} and \ref{sec:weaves} thus far. Given a GP-graph $\bG$, we constructed the following list of objects:
\begin{itemize}\setlength\itemsep{0.5em}
    \item[(i)] A Legendrian link $\La=\La(\bG)$, which is a $(-1)$ closure of a positive braid $\beta(\bG)$.
    \item[(ii)] An exact Lagrangian filling $L=L(\ww)$ of $\La$ called the \emph{initial filling}. This exact Lagrangian filling $L$ is obtained as the Lagrangian projection of the Legendrian lift associated with the spatial front defined by the \emph{initial weave} $\ww=\ww(\bG)$.
    \item[(iii)] A collection of \emph{initial absolute cycles} $\fS(\bG)$, which form an $\mathbb{L}$-compressing system for $L$ and can be described by $\sf Y$-trees on $\ww$.
    \item[(iv)] A skew-symmetric intersection pairing on the lattice $H_1(L)$. This intersection pairing can be computed directly from the GP-graph $\bG$.
\end{itemize}

By specifying an additional generic set of \emph{marked points} $T\subset \La$ with at least one marked point per component, we also obtain the lattice $H_1(L,T)$, which contains $H_1(L)$ and hence the linearly independent subset $\fS(\bG)$. The skew-symmetric pairing on $H_1(L)$ extends naturally to a skew-symmetric pairing on $H_1(L,T)$. By Poincar\'e duality, we can identify the dual lattice of $H_1(L,T)$ with the relative homology $H_1(L\setminus T, \Lambda\setminus T)$. Any completion of $\fS(\bG)$ to a basis $\fB$ of $H_1(L,T)$ gives rise to a unique dual basis $\fB^\vee$ of $H_1(L\setminus T,\Lambda\setminus T)$.

The outline for this section is as follows. First, we give working definitions of the moduli space $\fM(\La,T)$, which allows us to draw connections to Lie-theortical moduli spaces and also deduce the factoriality of its ring of regular functions $\mathcal{O}(\fM(\La,T))$. Next, on the moduli space $\fM(\La,T)$, we construct a new family of rational functions called \emph{microlocal merodromies}, which are associated with relative cycles in $H_1(L\setminus T,\Lambda\setminus T)$. Although the definition of microlocal merodromies depends on the initial filling $L$, we show that for elements in the dual basis $\fB^\vee$, their microlocal merodromies actually extend to $\mathbb{C}$-valued regular functions on the entire moduli space $\fM(\La,T)$. Moreover, we also prove that within these special microlocal merodromies, those dual to $\fS(\bG)$ can be mutated according to the cluster $\mathcal{A}$-mutation formula as the initial weave $\ww$ undergoes weave mutation, corresponding to a Lagrangian disk surgery on $L(\ww)$. Then, we show that the codimension-2 argument in cluster varieties can be applied by studying immersed Lagrangian fillings represented by non-free weaves. These results together with \cite{BFZ05} allow us to conclude the existence of a cluster $\mathcal{A}$-structure on $\fM(\La,T)$, where the initial and adjacent seeds are constructed via the Lagrangian filling $L(\bG)$, its Lagrangian surgeries and the associated microlocal merodromies.


\subsection{Descriptions of sheaves with singular support on the Legendrian \texorpdfstring{$\La(\bG)$}{}}\label{ssec:moduli_Legendrian} Let $\La\sse(\R^3\xi_\st)$ be a Legendrian link, $T\sse\La$ a set of marked points, and consider the moduli stacks $\M_1(\Lambda)$ and $\FM(\Lambda, T)$ discussed in Section \ref{ssec:sheaves}. These stacks classify (complexes of) constructible sheaves on $\R^2$ with a singular support condition. In this section, we provide Lie-theoretical descriptions for $\M_1(\Lambda)$ and $\FM(\Lambda, T)$ which are suited for our computations, using \cite{KashiwaraSchapira_Book} and closely following \cite[Section 3.3]{STZ_ConstrSheaves} and Section 5 in ibid.\footnote{An expert in the results of \cite{KashiwaraSchapira_Book,STZ_ConstrSheaves} might be able to quickly move forward to Subsection \ref{ssec:delta-complete}.} These are more combinatorial presentations of these stacks, as the constructible and microlocal aspects of the original definition are translated into explicit quiver representations satisfying certain conditions.

Given a co-oriented front projection $\pi_F(\La)$, consider the following quiver $Q_F(\Lambda)$: 
\begin{enumerate}
    \item[-] A vertex of $Q_F(\La)$ is placed at each connected component of the $\mathbb{R}^2\setminus \pi_F(\Lambda)$,
    \item[-] For each (1-dimensional) connected component of $\pi_F(\Lambda)\setminus S_0$, where $S_0$ denotes the set of crossings and cusps in $\pi_F(\Lambda)$, draw an arrow connecting the two vertices associated to the two adjacent 2-dimensional cell (that contain that stratum in their closure). The direction of the arrow is opposite to the co-orientation of the front $\pi_F(\La)$.
\end{enumerate}  

\noindent The following is then proven in \cite[Section 3]{STZ_ConstrSheaves}:

\begin{prop}\label{def: working def for M1}  Let $\La\sse(\R^3,\xi)$ be a Legendrian and $\pi_F(\La)\sse\R^2$ a front, with a binary Maslov potential, such that $(\R^2,\pi_F(\La))$ is a regular stratification. Consider the stack $\mathcal{M}(Q_F(\La))$ classifying linear representations of the quiver $Q_F(\Lambda)$ that satisfy the following conditions:
\begin{enumerate}
    \item The vector space associated with the unbounded region in $\mathbb{R}^2$ is $0$.
    \item Any two vector spaces associated with neighboring vertices differ in dimension by $1$.
    \item At each cusp, the composition depicted in Figure \ref{fig: id condition and exact condition} (left) is the identity map.
    \item At each crossing, the four linear maps involved form a commuting square which is exact, as precised in Figure \ref{fig: id condition and exact condition} (right).
    
    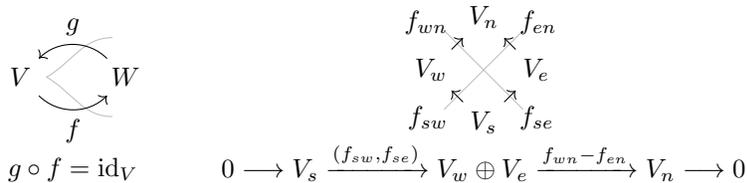
\begin{figure}[H]
        \begin{tikzpicture}[scale=0.7]
    \draw [lightgray] (0.75,0.75) to [out=180,in=30] (-0.5,0) to [out=-30,in=180] (0.75,-0.75);
    \node (v) at (-1,0) [] {$V$};
    \node (w) at (1,0) [] {$W$};
    \draw [->] (v) to [out=-45,in=-135] node [below] {$f$} (w);
    \draw [->] (w) to [out=135,in=45] node [above] {$g$} (v);
    \node at (0,-1.75) [] {$g\circ f=\mathrm{id}_V$};
    \end{tikzpicture}
   \quad \quad  \begin{tikzpicture}[scale=0.7]
    \draw [lightgray] (-0.75,-0.75) -- (0.75,0.75);
    \draw [lightgray] (-0.75,0.75) -- (0.75,-0.75);
    \node (e) at (1,0) [] {$V_e$};
    \node (s) at (0,-1) [] {$V_s$};
    \node (w) at (-1,0) [] {$V_w$};
    \node (n) at (0,1) [] {$V_n$};
    \draw [->] (s) -- node[below right] {$f_{se}$} (e);
    \draw [->] (s) -- node [below left] {$f_{sw}$} (w);
    \draw [->] (e) -- node [above right] {$f_{en}$} (n);
    \draw [->] (w) -- node [above left] {$f_{wn}$} (n);
    \node at (0,-1.75) [] {$0\longrightarrow V_s\xrightarrow{(f_{sw},f_{se})} V_w\oplus V_e\xrightarrow{f_{wn}-f_{en}} V_n\longrightarrow 0$};
    \end{tikzpicture}
        \caption{Identity condition at cusps and exactness condition at crossings.}
        \label{fig: id condition and exact condition}
    \end{figure}
\end{enumerate}
Then the stack $\M_1(\Lambda)$ is isomorphic to $\mathcal{M}(Q_F(\La))$.
\end{prop}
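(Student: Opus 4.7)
The plan is to view this as a dictionary: we translate the microlocal/derived definition of $\mathcal{M}_1(\Lambda)$ into a purely combinatorial description, and show that each clause in the right-hand quiver conditions corresponds to a geometric/microlocal constraint on the sheaves. I would follow the three-step chain: (i) first reduce from injective dg-complexes with singular support in $\Lambda$ to genuine constructible sheaves in degree zero, using the binary Maslov potential hypothesis (cf.\ \cite[Section 5.3]{STZ_ConstrSheaves}) and the fact \cite[Theorem 8.4.2]{KashiwaraSchapira_Book} that singular support contained in the Legendrian lift of a front forces constructibility with respect to the induced stratification $(\R^2, \pi_F(\Lambda))$. (ii) Then apply MacPherson's description, as established by Treumann \cite[Theorem 1.2]{Treumann09_ExitPaths} and Curry \cite[Chapter 11]{Curry14_Cosheaves}, which identifies $S$-constructible sheaves on $M$ with functors on the exit-path $\infty$-category of $(M,S)$; because our stratification by a front is regular with simply-connected strata, this exit-path category collapses to the ordinary poset category of cells, and its representations are exactly quiver representations on the incidence quiver, with arrows reversed relative to co-orientation (the generization arrow for sheaves).

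The second step is to cut down this incidence quiver to the smaller quiver $Q_F(\Lambda)$ on $2$-cells only, using the singular support condition to eliminate the $1$-cell and $0$-cell vertices. Away from crossings and cusps, having singular support in $\Lambda$ precisely means that the stalk at a smooth $1$-stratum $e \subset \pi_F(\Lambda)$ is equal (via the corestriction map) to the stalk on the non-cooriented side, so the quiver vertex for $e$ is redundant and one records only the generization map between the two adjacent $2$-strata \cite[Theorem 3.12]{STZ_ConstrSheaves}. At a cusp, a local model computation shows that the singular support condition collapses the cusp vertex and forces the round-trip map (restrict into the cuspidal region, then corestrict back out) to be the identity, which is condition (3). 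At a transverse crossing, the singular support condition is equivalent to the local square of restriction/corestriction maps being a short exact sequence on stalks, which is condition (4). These are standard local computations that I would carry out model by model, mirroring \cite[Section 3.3]{STZ_ConstrSheaves}.

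The final step is to impose the microlocal rank-one condition. By definition, objects of $\mathcal{M}_1(\Lambda)$ are those whose image under the microlocalization functor $\mu_\Lambda$ is a rank-one local system on $\Lambda$ concentrated in degree zero, with respect to the fixed Maslov potential. Unwinding $\mu_\Lambda$ on the combinatorial model of the exact triangle of stalks across a front edge \cite[Sections 3--5]{STZ_ConstrSheaves} identifies the microlocal stalk at a point of the $1$-stratum $e$ with the cone of the generization map $V_+ \to V_-$ across $e$; this cone is rank one concentrated in degree zero exactly when $\dim V_+ - \dim V_- = \pm 1$ with the sign dictated by the Maslov potential, i.e.\ condition (2). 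Finally, the vanishing condition (1) on the unbounded region is the normalization fixing the binary Maslov potential so that microlocal rank one (and not microlocal rank zero shifted) is selected, together with the requirement that the sheaves extend by zero past the front to infinity, which is automatic from the support of the objects and the link being compact in $\R^2$. The main obstacle I anticipate is a careful bookkeeping at cusps to ensure the identity condition in (3) is the correct one under our Maslov potential conventions and orientations of co-orientations; but since the statement is a repackaging of the arguments in \cite[Section 3]{STZ_ConstrSheaves}, no new geometric input is needed, and once the cusp and crossing local models are verified to match (3) and (4) the isomorphism of stacks $\mathcal{M}_1(\Lambda) \simeq \mathcal{M}(Q_F(\Lambda))$ follows by gluing the local identifications.
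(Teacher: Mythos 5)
Your proposal is correct and follows essentially the same route as the paper: reduce to genuine constructible sheaves via the binary Maslov hypothesis and Kashiwara--Schapira Theorem~8.4.2, pass through MacPherson/Treumann/Curry to the poset category of the front stratification, then eliminate the $1$- and $0$-cell vertices and impose the cusp/crossing conditions via Shende--Treumann--Zaslow's Theorem~3.12 and Section~3.3. The paper itself does not reprove the proposition but cites \cite[Section 3]{STZ_ConstrSheaves}, which is also where your local model computations would land.
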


Proposition \ref{def: working def for M1} describes $\mathcal{M}_1(\La)$. Now we gear towards the decorated moduli $\mathfrak{M}(\La,T)$. First, we need a description of microlocal monodromy in terms of these quiver representations, which is provided in \cite[Section 5]{STZ_ConstrSheaves} and we briefly summarize as follows. Let $S$ be the set of singular points (crossings and cusps) in $\pi_F(\Lambda)$ and note that each connected components of $\pi_F(\Lambda)\setminus S$ is associated with a 1-dimensional kernel or a 1-dimensional cokernel. These kernels and cokernels can be glued together along strands of $\Lambda$ using the identity condition at cusps and the exactness condition at crossings, as follows:
\begin{itemize}
    \item[-] At a cusp, as in Figure \ref{fig: id condition and exact condition} (left), the condition $g\circ f=\id_V$ forces the composition $\ker g\hookrightarrow W\twoheadrightarrow \coker f$ to be an isomorphism. By definition, we glue $\ker g$ and $\coker f$ using this isomorphism.\\
    \item[-] At a crossing, as in Figure \ref{fig: id condition and exact condition} (right), there are three cases depending, on the injectivity or surjectivity of the four maps: the four maps can be all injective, all surjective, or two injective with two surjective. In each of the three cases, we have the following isomorphisms from the exactness condition:
    \begin{align}
    \begin{tikzpicture}[scale=0.7,baseline=0]
    \draw [lightgray] (-0.75,-0.75) -- (0.75,0.75);
    \draw [lightgray] (-0.75,0.75) -- (0.75,-0.75);
    \node (e) at (1,0) [] {$V_e$};
    \node (s) at (0,-1) [] {$V_s$};
    \node (w) at (-1,0) [] {$V_w$};
    \node (n) at (0,1) [] {$V_n$};
    \draw [right hook->] (s) -- node[below right] {$f_{se}$} (e);
    \draw [left hook->] (s) -- node [below left] {$f_{sw}$} (w);
    \draw [left hook->] (e) -- node [above right] {$f_{en}$} (n);
    \draw [right hook->] (w) -- node [above left] {$f_{wn}$} (n);
    \end{tikzpicture} \quad & \quad \quad \begin{array}{l}
    \coker f_{sw} \hookrightarrow\dfrac{V_n}{V_s} \twoheadrightarrow \coker f_{en}       \\ \\ 
    \coker f_{wn} \twoheadleftarrow\dfrac{V_n}{V_s} \hookleftarrow \coker f_{se}
    \end{array} \label{eq:gluing with inclusion maps}
    \\
    \begin{tikzpicture}[scale=0.7,baseline=0]
    \draw [lightgray] (-0.75,-0.75) -- (0.75,0.75);
    \draw [lightgray] (-0.75,0.75) -- (0.75,-0.75);
    \node (e) at (1,0) [] {$V_e$};
    \node (s) at (0,-1) [] {$V_s$};
    \node (w) at (-1,0) [] {$V_w$};
    \node (n) at (0,1) [] {$V_n$};
    \draw [->>] (s) -- node[below right] {$f_{se}$} (e);
    \draw [->>] (s) -- node [below left] {$f_{sw}$} (w);
    \draw [->>] (e) -- node [above right] {$f_{en}$} (n);
    \draw [->>] (w) -- node [above left] {$f_{wn}$} (n);
    \end{tikzpicture} \quad & \quad \quad \begin{array}{l}
    \ker f_{sw} \hookrightarrow \ker(f_{wn}\circ f_{sw})=\ker(f_{en}\circ f_{se}) \twoheadrightarrow \ker f_{en}       \\ \\ 
    \ker f_{wn} \twoheadleftarrow\ker(f_{wn}\circ f_{sw})=\ker(f_{en}\circ f_{se}) \hookleftarrow \ker f_{se}
    \end{array}
    \\
    \begin{tikzpicture}[scale=0.7,baseline=0]
    \draw [lightgray] (-0.75,-0.75) -- (0.75,0.75);
    \draw [lightgray] (-0.75,0.75) -- (0.75,-0.75);
    \node (e) at (1,0) [] {$V_e$};
    \node (s) at (0,-1) [] {$V_s$};
    \node (w) at (-1,0) [] {$V_w$};
    \node (n) at (0,1) [] {$V_n$};
    \draw [->>] (s) -- node[below right] {$f_{se}$} (e);
    \draw [left hook->] (s) -- node [below left] {$f_{sw}$} (w);
    \draw [left hook->] (e) -- node [above right] {$f_{en}$} (n);
    \draw [->>] (w) -- node [above left] {$f_{wn}$} (n);
    \end{tikzpicture} \quad & \quad \quad \begin{array}{l}
    \coker f_{sw} \xrightarrow{f_{wn}} \coker f_{en}       \\ \\ 
    \ker f_{wn} \xleftarrow{f_{sw}} \ker f_{se}
    \end{array}
    \end{align}
\end{itemize}
The result of gluing these 1-dimensional vector spaces is a rank-1 local system $\Phi$ on $\Lambda$. In fact, it coincides with the microlocal monodromy functor, see \cite[Section 5.1]{STZ_ConstrSheaves} for more details. Given the set $T$ of marked points on $\Lambda$, with at least one marked point per link component, $\Lambda\setminus T$ is a collection of open intervals. Thus, along each such open interval $I$, we can trivialize the rank-1 local system $\Phi$ by specifying an isomorphism $\phi_I:I\times\mathbb{C}\xrightarrow{\cong}\Phi|_I$. By definition, a collection of such maps $\{\phi_I\}$ are said to be a \emph{framing} for the local system $\Phi$.

In conclusion, a point in the decorated moduli space $\FM(\Lambda,T)$, as defined in Subsection \ref{sssec:sheaf_decorated}, is a point in $\M_1(\Lambda)$, which is combinatorialized via Proposition \ref{def: working def for M1}, together with a framing for the local system $\Phi$, i.e.~ a trivialization of the (trivial) local system $\Phi|_{\Lambda\setminus T}$. Here two framings are considered \emph{equivalent} if they differ by a global scaling $\mathbb{C}^\times$-factor, and thus $\dim \FM(\Lambda,T)=\dim \M_1(\Lambda)+|T|-1$.

\subsubsection{Description for a GP-graph $\bG$.}\label{sssec:flag_description} In the case that $\La=\La(\bG)$ comes from a GP-graph $\bG$, Section \ref{ssec:legendrianlink} provides a specific front $\mathfrak{f}(\bG)\sse\R^2$. For this front, the description from Proposition \ref{def: working def for M1} can be translated in terms of configurations of flags, as follows. The front projection $\pi_F(\Lambda)=\mathfrak{f}(\bG)$ can be sliced into the three types of elementary columns.\\

\noindent For a Type 1 column, there are $n$ strands in the bottom region, with Maslov potential 0, and $n$ strands in the top region, with Maslov potential 1. By Proposition \ref{def: working def for M1}, the vector space associated with the central region must be $\mathbb{C}^n$. From the quiver representation data, we can construct the following pair of flags in $\mathbb{C}^n$. Since all of the linear maps in the bottom region are injective, their images in the middle $\mathbb{C}^n$ naturally form a first flag. Similarly, since all of the linear maps in the top region are surjective, their kernels in the middle vector space $\mathbb{C}^n$ form a second flag. See Figure \ref{fig:Flags_Type1} for a depiction of the front in Type 1 and its associated pair of flags. We adopt the convention of indexing flags from the bottom region with a subscript, and indexing flags from the top region with a superscript, so as to distinguish between these two types of two flags.
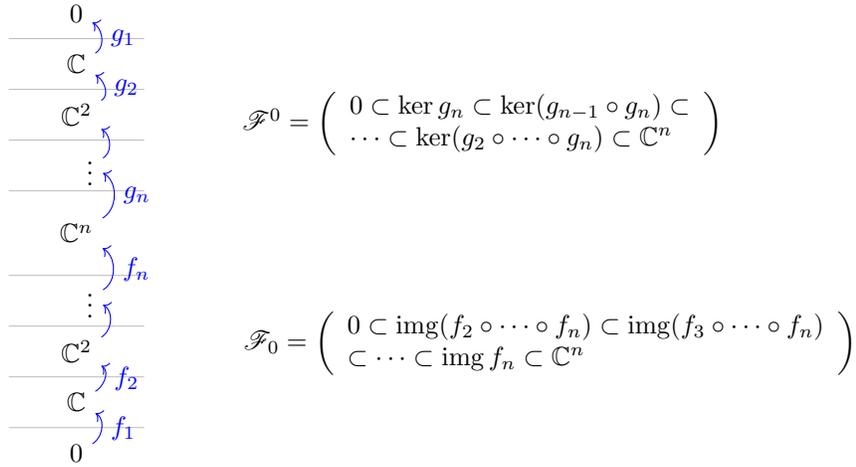
\begin{figure}[H]
\centering
    \begin{tikzpicture}[scale=0.9]
    \foreach \i in {0,1,2,3}
    {
        \draw[lightgray] (0,\i*0.75) -- (2,\i*0.75);
        \draw[lightgray] (0,\i*0.75+3.5) -- (2,\i*0.75+3.5);
    }
    \foreach \i in {2}
    {
        \node (b\i) at (1,\i*0.75-0.375) [] {$\mathbb{C}^\i$};
        \node (t\i) at (1,3*0.75+3.5+0.375-\i*0.75) [] {$\mathbb{C}^\i$};
    }
    \node (b1) at (1,0.375) [] {$\mathbb{C}$};
    \node (t1) at (1,3*0.75+3.5+0.375-0.75) [] {$\mathbb{C}$};
    \node (m)  at (1,2.875) [] {$\mathbb{C}^n$};
    \node (b3) at (1.2,1.925) [] {$\vdots$};
    \node (t3) at (1.2,3.875) [] {$\vdots$};
    \node (b0) at (1,-0.375) [] {$0$};
    \node (t0) at (1,3.5+3*0.75+0.375) [] {$0$};
    \draw [blue, ->] (b0) to [out=30,in=-30] node [right] {$f_1$} (b1);
    \draw [blue, ->] (b1) to [out=30,in=-30] node [right] {$f_2$} (b2);
    \draw [blue, ->] (b2) to [out=30,in=-30] (b3);
    \draw [blue, ->] (b3) to [out=30,in=-30] node [right] {$f_n$} (m);
    \draw [blue, ->] (m) to [out=30,in=-30] node [right] {$g_n$} (t3);
    \draw [blue, ->] (t3) to [out=30,in=-30] (t2);
    \draw [blue, ->] (t2) to [out=30,in=-30] node [right] {$g_2$} (t1);
    \draw [blue, ->] (t1) to [out=30,in=-30] node [right] {$g_1$} (t0);
    \node at (8,1.25) [] {$\SF_0=\left(\begin{array}{l} 0\subset \img (f_2\circ \cdots \circ f_n) \subset \img(f_3\circ  \cdots \circ f_n) \\ \subset \cdots\subset \img f_n\subset \mathbb{C}^n\end{array}\right)$};
    \node at (7,4.5) [] {$\SF^0=\left(\begin{array}{l} 0\subset \ker g_n\subset \ker (g_{n-1}\circ g_n)  \subset \\ \cdots \subset \ker(g_2\circ \cdots \circ g_n)\subset \mathbb{C}^n\end{array}\right)$};
    \end{tikzpicture}
    \caption{The pair of flags associated to a Type 1 column.}\label{fig:Flags_Type1}
\end{figure}

Before discussing Type 2 and 3 columns, we recall that the relative positions relations between two flags in $\mathbb{C}^n$ are classified by elements of the symmetric group $S_n$, which is a Coxeter group with Coxeter generators $\{s_i\}_{i=1}^{n-1}$. For two flags $\SF=(0\subset \SF_1\subset \cdots \subset \mathbb{C}^n)$ and $\SF'=(0\subset \SF'_1\subset \cdots\subset \mathbb{C}^n)$, we denote
\begin{itemize}
    \item $\SF\stackrel{s_i}{\sim}\SF'$ if $F_i\neq F'_i$ but $F_j=F'_j$ for all $j\neq i$;
    \item $\SF\stackrel{w}{\sim}\SF'$ if there exists a sequence of flags $\SG_0, \SG_1,\dots, \SG_l$ such that 
    \[
    \SF=\SG_0\stackrel{s_{i_1}}{\sim} \SG_1\stackrel{s_{i_2}}{\sim}\SG_2\stackrel{s_{i_3}}{\sim}\cdots \stackrel{s_{i_l}}{\sim}\SG_l=\SF'
    \] 
    and $s_{i_1}s_{i_2}\cdots s_{i_l}$ is a reduced word of $w$.
\end{itemize}
This classification can be identified with the Tits distance obtained from a Bruhat decomposition of $\GL_n$. In particular, being in $w$ relative position does not depend on the choice of reduced word of $w$. If $\SF\stackrel{w}{\sim}\SF'$, then for each choice of reduced word $(i_1,\dots, i_l)$ for $w$, there exists a unique sequence of flags $(\SG_k)_{k=0}^l$ that relate the two flags $\SF$ and $\SF'$.

We can now translate the local quiver representation data associated with a Type 2 column into relative position relations between flags. Suppose the pair of flags to the left of a Type 2 column is $(\SL_0,\SL^0)$, and the pair of flags to the right of a Type 2 column is $(\SR_0,\SR^0)$. If there is a crossing in the bottom region at the $i$th gap, counting from the bottom in both the front and in the GP-graph, then from the exactness condition at the crossing we obtain the constraints
\begin{equation}\label{eq: type 2 bottom relative position}
\SL_0\overset{s_i}{\sim}\SR_0 \quad \quad \text{and} \quad \quad \SL^0=\SR^0.
\end{equation}
Similarly, if there is a crossing in the top region at the $i$th gap, counting from the top in the front projection or counting from the bottom in the GP-graph, then the exactness condition at the crossing yields
\begin{equation}\label{eq: type 2 top relative position}
\SL_0=\SR_0 \quad \quad \text{and} \quad \quad \SL^0\overset{s_{n-i}}{\sim}\SR^0.
\end{equation}

\noindent Since crossings in Type 2 columns correspond to vertical edges in the GP-graph, we can infer the relative position relation between pairs of flags from the GP-graph as well.

For a Type 3 column, the pairs of flags on the (Type 1 column on the) left and on the (Type 1 column on the) right are not in the same ambient vector space, as the dimensions of the two vector spaces differ by one. Instead, there is a linear map {\it from} the ambient vector space for the pair of flags on the left {\it to} the ambient vector space for the pair of flags on the right. This linear map is injective if the lollipop is white and it is surjective if the lollipop is black. Let us investigate how the two pairs of flags are related.

\noindent Suppose first that the lollipop is white, so that the linear map $h:\mathbb{C}^{n-1}\rightarrow \mathbb{C}^{n}$ between the two (middle) adjacent ambient vector spaces is injective. Given any flag $\SF=(0\subset \SF_1\subset \cdots\subset \SF_{n-1}=\mathbb{C}^{n-1})$ in $\mathbb{C}^{n-1}$, we can use $h$ to naturally extend it to a flag $h(\SF)$ in $\mathbb{C}^{n}$ by defining
\[
h(\SF):=(0\subset h(\SF_1)\subset h(\SF_2)\subset \cdots\subset h(\SF_{n-1})\subset \mathbb{C}^{n}).
\]

This extension from $(\SL_0,\SL^0)$ to $(h(\SL_0),h(\SL^0))$ can be achieved geometrically by a sequence of RII moves that pulls the left cusp upward in the front projection. Indeed, consider the following local example:
\begin{figure}[H]
\centering
\begin{tikzpicture}[scale=0.9]
\foreach \i in {0,1,3}
{
\draw [lightgray] (0,\i*0.7) -- (3,\i*0.7);
\draw [lightgray] (0,6-\i*0.7) -- (3,6-\i*0.7);
}
\draw [lightgray] (3,4.6) -- (1,4.6) -- (2,1.4) -- (3,1.4);
\node (bl0) at (0.5,-0.35) [] {$0$};
\node (bl1) at (0.5,0.35) [] {$\mathbb{C}$};
\node (bl2) at (0.5,1.4) [] {$\mathbb{C}^2$};
\node (l3) at (0.5,3) [] {$\mathbb{C}^3$};
\node (tl2) at (0.5,4.6) [] {$\mathbb{C}^2$};
\node (tl1) at (0.5,5.65) [] {$\mathbb{C}$};
\node (tl0) at (0.5,6.35) [] {$0$};
\node (br0) at (2.5,-0.35) [] {$0$};
\node (br1) at (2.5,0.35) [] {$\mathbb{C}$};
\node (br2) at (2.5,1.05) [] {$\mathbb{C}^2$};
\node (br3) at (2.5,1.8) [] {$\mathbb{C}^3$};
\node (r4) at (2.5,3) [] {$\mathbb{C}^4$};
\node (tr3) at (2.5,4.25) [] {$\mathbb{C}^3$};
\node (tr2) at (2.5,4.95) [] {$\mathbb{C}^2$};
\node (tr1) at (2.5,5.65) [] {$\mathbb{C}$};
\node (tr0) at (2.5,6.35) [] {$0$};
\draw [->] (bl0) to [out=150,in=-150] (bl1);
\draw [->] (bl1) to [out=150,in=-150] (bl2);
\draw [->] (bl2) to [out=150,in=-150] (l3);
\draw [->] (l3) to [out=150,in=-150] (tl2);
\draw [->] (tl2) to [out=150,in=-150] (tl1);
\draw [->] (tl1) to [out=150,in=-150] (tl0);
\draw [->] (l3) -- node [above] {$h$} (r4);
\draw [->] (br0) to [out=30,in=-30] (br1);
\draw [->] (br1) to [out=30,in=-30] (br2);
\draw [->] (br2) to [out=30,in=-30] (br3);
\draw [->] (br3) to [out=30,in=-30] (r4);
\draw [->] (r4) to [out=30,in=-30] (tr3);
\draw [->] (tr3) to [out=30,in=-30] (tr2);
\draw [->] (tr2) to [out=30,in=-30] (tr1);
\draw [->] (tr1) to [out=30,in=-30] (tr0);
\node at (-1,1) [] {$\SL_0$};
\node at (-1,5) [] {$\SL^0$};
\node at (4,1) [] {$\SR_0$};
\node at (4,5) [] {$\SR^0$};
\end{tikzpicture}\quad \quad \quad \quad  \quad \quad
\begin{tikzpicture}[scale=0.9]
\foreach \i in {0,1,3}
{
\draw [lightgray] (-1,\i*0.7) -- (3,\i*0.7);
\draw [lightgray] (-1,6-\i*0.7) -- (3,6-\i*0.7);
}
\draw [lightgray] (3,4.6) -- (2,4.6) -- (0,6.7) -- (2,1.4) -- (3,1.4);
\draw [blue] (0,6.7) -- (2,1.4);
\node (bl0) at (-0.5,-0.35) [] {$0$};
\node (bl1) at (-0.5,0.35) [] {$\mathbb{C}$};
\node (bl2) at (-0.5,1.4) [] {$\mathbb{C}^2$};
\node (l3) at (-0.5,3) [] {$\mathbb{C}^3$};
\node (tl2) at (-0.5,4.6) [] {$\mathbb{C}^2$};
\node (tl1) at (-0.5,5.65) [] {$\mathbb{C}$};
\node (tl0) at (-0.5,6.35) [] {$0$};
\node (br0) at (2.5,-0.35) [] {$0$};
\node (br1) at (2.5,0.35) [] {$\mathbb{C}$};
\node (br2) at (2.5,1.05) [] {$\mathbb{C}^2$};
\node (br3) at (2.5,1.8) [] {$\mathbb{C}^3$};
\node (r4) at (2.5,3) [] {$\mathbb{C}^4$};
\node (tr3) at (2.5,4.25) [] {$\mathbb{C}^3$};
\node (tr2) at (2.5,4.95) [] {$\mathbb{C}^2$};
\node (tr1) at (2.5,5.65) [] {$\mathbb{C}$};
\node (tr0) at (2.5,6.35) [] {$0$};
\node (m3) at (1.5,4.6) [] {$\mathbb{C}^3$};
\node (m2) at (0.8,5.65) [] {$\mathbb{C}^2$};
\node (m1) at (0.25,6.35) [] {$\mathbb{C}$};
\node (m0) at (1,6.5) [] {$0$};
\draw [teal,->] (bl0) to [out=150,in=-150] (bl1);
\draw [teal,->] (bl1) to [out=150,in=-150] (bl2);
\draw [teal,->] (bl2) to [out=150,in=-150] (l3);
\draw [->] (l3) to [out=150,in=-150] (tl2);
\draw [->] (tl2) to [out=150,in=-150] (tl1);
\draw [->] (tl1) to [out=150,in=-150] (tl0);
\draw [teal,->] (l3) -- node [above] {$h$} (r4);
\draw [->] (br0) to [out=30,in=-30] (br1);
\draw [->] (br1) to [out=30,in=-30] (br2);
\draw [->] (br2) to [out=30,in=-30] (br3);
\draw [->] (br3) to [out=30,in=-30] (r4);
\draw [->] (r4) to [out=30,in=-30] (tr3);
\draw [->] (tr3) to [out=30,in=-30] (tr2);
\draw [->] (tr2) to [out=30,in=-30] (tr1);
\draw [->] (tr1) to [out=30,in=-30] (tr0);
\draw [red, ->] (r4) -- (m3);
\draw [red, ->] (m3) -- (m2);
\draw [red, ->] (m2) -- (m1);
\draw [red, ->] (m1) -- (m0);
\draw [->] (tl2) -- (m3);
\draw [->] (tl1) -- (m2);
\draw [->] (tl0) -- (m1);
\node at (-2,1) [] {$\SL_0$};
\node at (-2,5) [] {$\SL^0$};
\node at (4,1) [] {$\SR_0$};
\node at (4,5) [] {$\SR^0$};
\end{tikzpicture}
\caption{Pulling up a left cusp}
\label{fig: pulling up a left cusp}
\end{figure}
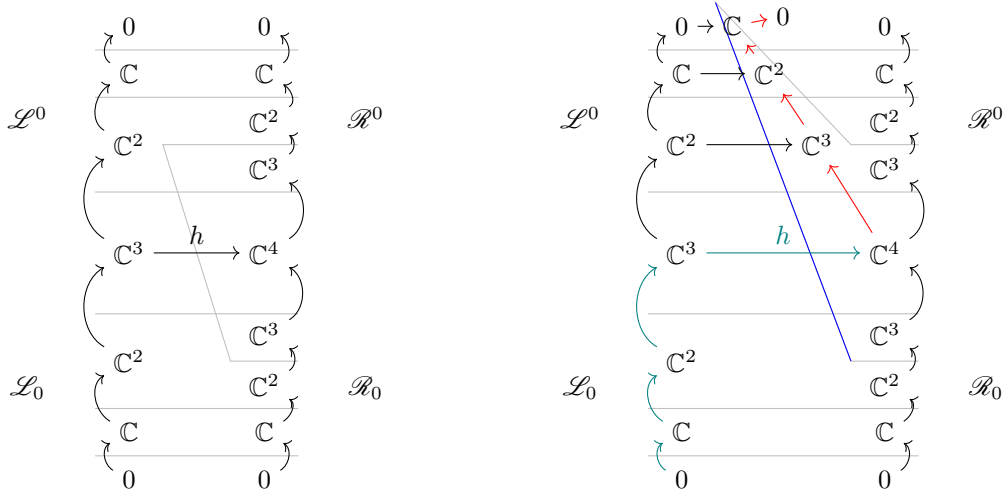


The green maps in the bottom region define the extension $h(\SL_0)$. By the exactness of the quadrilaterals in the top region, the red maps define the extension $h(\SL^0)$. In particular, the extensions $h(\SL_0)$ and $h(\SL^0)$ are completely determined by the original data of the quiver representations. 

\noindent Now, with these extensions defined, it follows that if there is a white lollipop emerging in the $i$th gap with $0\leq i\leq n-1$, counting from below in the GP-graph\footnote{The case $i=0$ is a lollipop at the bottom, and $i=n-1$ is a lollipop at the top.}, then the corresponding relative position conditions are
\begin{equation}\label{eq: type 3 white lollipop relative position}
h(\SL_0)\overset{s_{n-1}\cdots s_{i+1}}{\sim}\SR_0 \quad \text{and} \quad h(\SL^0) \overset{s_{n-1}\cdots s_{n-i}}{\sim} \SR^0.
\end{equation}

Suppose that there is a black lollipop, and thus the linear map between the two ambient vector spaces $h:\mathbb{C}^{n}\rightarrow \mathbb{C}^{n-1}$ is surjective. Then, given any flag $\SF=(0\subset \SF_1\subset \cdots\subset \SF_{n-1}=\mathbb{C}^{n-1})$ in $\mathbb{C}^{n-1}$, we consider $h^{-1}(\SF_i)$ and insert $\ker(h)$ in front of it so as to form a flag in $\mathbb{C}^n$:
\[
h^{-1}(\SF) := (0\subset \ker(h)\subset h^{-1}(\SF_1)\subset \cdots \subset h^{-1}(\SF_{n-1})=\mathbb{C}^{n}).
\]

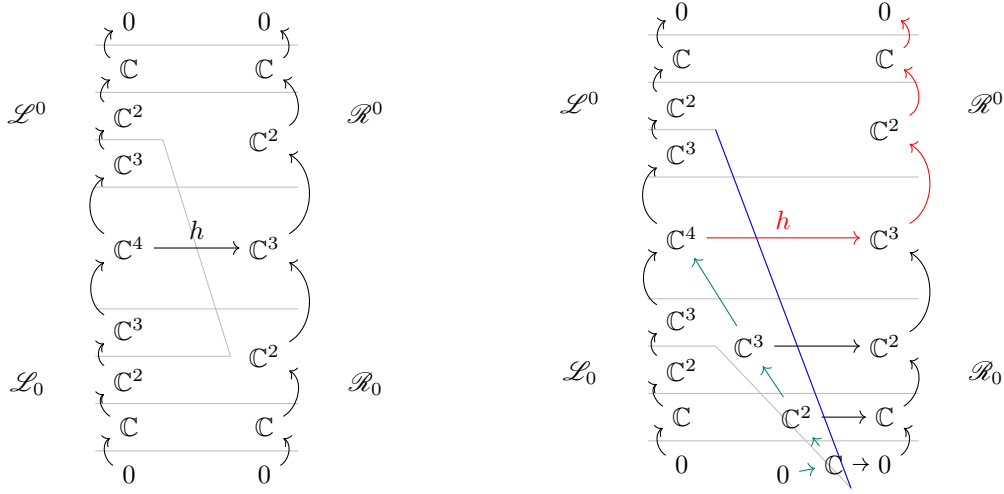
\begin{figure}[H]
\centering
\begin{tikzpicture}[scale=0.9]
\foreach \i in {0,1,3}
{
\draw [lightgray] (0,\i*0.7) -- (3,\i*0.7);
\draw [lightgray] (0,6-\i*0.7) -- (3,6-\i*0.7);
}
\draw [lightgray] (0,4.6) -- (1,4.6) -- (2,1.4) -- (0,1.4);
\node (bl0) at (2.5,-0.35) [] {$0$};
\node (bl1) at (2.5,0.35) [] {$\mathbb{C}$};
\node (bl2) at (2.5,1.4) [] {$\mathbb{C}^2$};
\node (l3) at (2.5,3) [] {$\mathbb{C}^3$};
\node (tl2) at (2.5,4.6) [] {$\mathbb{C}^2$};
\node (tl1) at (2.5,5.65) [] {$\mathbb{C}$};
\node (tl0) at (2.5,6.35) [] {$0$};
\node (br0) at (0.5,-0.35) [] {$0$};
\node (br1) at (0.5,0.35) [] {$\mathbb{C}$};
\node (br2) at (0.5,1.05) [] {$\mathbb{C}^2$};
\node (br3) at (0.5,1.8) [] {$\mathbb{C}^3$};
\node (r4) at (0.5,3) [] {$\mathbb{C}^4$};
\node (tr3) at (0.5,4.25) [] {$\mathbb{C}^3$};
\node (tr2) at (0.5,4.95) [] {$\mathbb{C}^2$};
\node (tr1) at (0.5,5.65) [] {$\mathbb{C}$};
\node (tr0) at (0.5,6.35) [] {$0$};
\draw [->] (bl0) to [out=30,in=-30] (bl1);
\draw [->] (bl1) to [out=30,in=-30] (bl2);
\draw [->] (bl2) to [out=30,in=-30] (l3);
\draw [->] (l3) to [out=30,in=-30] (tl2);
\draw [->] (tl2) to [out=30,in=-30] (tl1);
\draw [->] (tl1) to [out=30,in=-30] (tl0);
\draw [->] (r4) -- node [above] {$h$} (l3);
\draw [->] (br0) to [out=150,in=-150] (br1);
\draw [->] (br1) to [out=150,in=-150] (br2);
\draw [->] (br2) to [out=150,in=-150] (br3);
\draw [->] (br3) to [out=150,in=-150] (r4);
\draw [->] (r4) to [out=150,in=-150] (tr3);
\draw [->] (tr3) to [out=150,in=-150] (tr2);
\draw [->] (tr2) to [out=150,in=-150] (tr1);
\draw [->] (tr1) to [out=150,in=-150] (tr0);
\node at (-1,1) [] {$\SL_0$};
\node at (-1,5) [] {$\SL^0$};
\node at (4,1) [] {$\SR_0$};
\node at (4,5) [] {$\SR^0$};
\end{tikzpicture}\quad \quad \quad \quad  \quad \quad
\begin{tikzpicture}[scale=0.9]
\foreach \i in {0,1,3}
{
\draw [lightgray] (0,\i*0.7) -- (4,\i*0.7);
\draw [lightgray] (0,6-\i*0.7) -- (4,6-\i*0.7);
}
\draw [lightgray] (0,4.6) -- (1,4.6) -- (3,-0.7) -- (1,1.4) -- (0,1.4);
\draw [blue] (3,-0.7) -- (1,4.6);
\node (bl0) at (3.5,-0.35) [] {$0$};
\node (bl1) at (3.5,0.35) [] {$\mathbb{C}$};
\node (bl2) at (3.5,1.4) [] {$\mathbb{C}^2$};
\node (l3) at (3.5,3) [] {$\mathbb{C}^3$};
\node (tl2) at (3.5,4.6) [] {$\mathbb{C}^2$};
\node (tl1) at (3.5,5.65) [] {$\mathbb{C}$};
\node (tl0) at (3.5,6.35) [] {$0$};
\node (br0) at (0.5,-0.35) [] {$0$};
\node (br1) at (0.5,0.35) [] {$\mathbb{C}$};
\node (br2) at (0.5,1.05) [] {$\mathbb{C}^2$};
\node (br3) at (0.5,1.8) [] {$\mathbb{C}^3$};
\node (r4) at (0.5,3) [] {$\mathbb{C}^4$};
\node (tr3) at (0.5,4.25) [] {$\mathbb{C}^3$};
\node (tr2) at (0.5,4.95) [] {$\mathbb{C}^2$};
\node (tr1) at (0.5,5.65) [] {$\mathbb{C}$};
\node (tr0) at (0.5,6.35) [] {$0$};
\node (m3) at (1.5,1.4) [] {$\mathbb{C}^3$};
\node (m2) at (2.2,0.35) [] {$\mathbb{C}^2$};
\node (m1) at (2.75,-0.35) [] {$\mathbb{C}$};
\node (m0) at (2,-0.5)[] {$0$};
\draw [->] (bl0) to [out=30,in=-30] (bl1);
\draw [->] (bl1) to [out=30,in=-30] (bl2);
\draw [->] (bl2) to [out=30,in=-30] (l3);
\draw [->, red] (l3) to [out=30,in=-30] (tl2);
\draw [->, red] (tl2) to [out=30,in=-30] (tl1);
\draw [->, red] (tl1) to [out=30,in=-30] (tl0);
\draw [->, red] (r4) -- node [above] {$h$} (l3);
\draw [->] (br0) to [out=150,in=-150] (br1);
\draw [->] (br1) to [out=150,in=-150] (br2);
\draw [->] (br2) to [out=150,in=-150] (br3);
\draw [->] (br3) to [out=150,in=-150] (r4);
\draw [->] (r4) to [out=150,in=-150] (tr3);
\draw [->] (tr3) to [out=150,in=-150] (tr2);
\draw [->] (tr2) to [out=150,in=-150] (tr1);
\draw [->] (tr1) to [out=150,in=-150] (tr0);
\node at (-1,1) [] {$\SL_0$};
\node at (-1,5) [] {$\SL^0$};
\node at (5,1) [] {$\SR_0$};
\node at (5,5) [] {$\SR^0$};
\draw [teal, <-] (r4) -- (m3);
\draw [teal, <-] (m3) -- (m2);
\draw [teal, <-] (m2) -- (m1);
\draw [teal, <-] (m1) -- (m0);
\draw [<-] (bl2) -- (m3);
\draw [<-] (bl1) -- (m2);
\draw [<-] (bl0) -- (m1);
\end{tikzpicture}
\caption{Pulling down a right cusp}
\label{fig: pulling down a right cusp}
\end{figure}

Similar to the white lollipop case, the extension of $(\SR_0,\SR^0)$ to $(h^{-1}(\SR_0),h^{-1}(\SR^0))$ can be achieved geometrically by a sequence of RII moves that pulls the right cusp downward in the front projection, as depicted in Figure \ref{fig: pulling down a right cusp}. Note that the green maps in bottom region define the extension $h^{-1}(\SR_0)$, whereas the red maps in the top region define the extension $h^{-1}(\SR^0)$. 

\noindent It follows that if there is a black lollipop occurring in the $i$th gap with $0\leq i\leq n$, counting from below in the GP-graph, then the corresponding relative position conditions are
\begin{equation}\label{eq: type 3 black lollipop relative position}
\SL_0\overset{s_{i-1}\cdots s_1}{\sim}h^{-1}(\SR_0) \quad \text{and} \quad  \SL^0\overset{s_{n-i+1}\cdots s_1}{\sim}h^{-1}(\SR^0).
\end{equation}

In summary, given a GP-graph $\bG$, we can divide $\bG$ into columns of three types such that every consecutive pair of non-Type 1 columns is separated by a Type 1 column and every consecutive pair of Type 1 columns is separated by a non-Type 1 column. In between Type 3 columns there is a unique ambient vector space $V_i= \mathbb{C}^n$ for some $n$, and they are linked by linear maps $h_i:V_{i-1}\rightarrow V_i$ that are either injective with a 1-dimensional cokernel or surjective with a 1-dimensional kernel. The above discussion proves the following lemma:

\begin{lemma}\label{lem:flag_description} For a GP-graph $\bG$ with a decomposition into columns as above, the moduli space $\M_1(\Lambda)$ can be described by the following data:
\begin{enumerate}
    \item a pair of flags in $V_i$ for each Type 1 column contained in the $V_i$ part of $\bG$;
    \item for each Type 2 column, the neighboring flags satisfy the relative position condition according to \eqref{eq: type 2 bottom relative position} and \eqref{eq: type 2 top relative position};
    \item for each Type 3 column, the neighboring flags satisfy the relative position condition according to \eqref{eq: type 3 white lollipop relative position} and \eqref{eq: type 3 black lollipop relative position};
\end{enumerate}
where we quotient this data by the equivalence relation $(\SF,h)\sim (\SF',h')$ for a collection of elements $g_i\in \GL(V_i)$ such that $h_i\circ g_{i-1}=g_i\circ h'_i$.
\end{lemma}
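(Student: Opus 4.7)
The plan is to translate the quiver-representation description of $\M_1(\Lambda)$ from Proposition \ref{def: working def for M1} column-by-column, using the work done above each instance of the lemma. Starting from a representation of $Q_F(\Lambda)$ satisfying the identity and exactness conditions, the task is to show (a) that all the data over a single Type 1 column is equivalently captured by a pair of flags in the central ambient vector space $V_i$, (b) that the transitions between adjacent Type 1 columns through Type 2 and Type 3 columns are exactly encoded by the relative position conditions \eqref{eq: type 2 bottom relative position}, \eqref{eq: type 2 top relative position}, \eqref{eq: type 3 white lollipop relative position}, \eqref{eq: type 3 black lollipop relative position}, and (c) that the residual gauge freedom coincides with the claimed $\prod_i \GL(V_i)$-action.

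First I would establish (a). Over a Type 1 column, Remark \ref{rem:1-dim vector spaces} combined with the identity condition at cusps forces every downward arrow in the bottom half to be injective and every downward arrow in the top half surjective; each arrow changes dimension by one. Successive images of the injective maps in the bottom half then assemble into a full flag $\SF_0 \subset V_i$, and successive kernels of compositions of surjective maps in the top half assemble into a full flag $\SF^0 \subset V_i$, as depicted in Figure \ref{fig:Flags_Type1}. Conversely, given a pair of flags $(\SF_0, \SF^0)$ in $V_i$, the tower of inclusions $\SF_0^{(k)} \hookrightarrow \SF_0^{(k+1)}$ and the tower of quotient maps $V_i/\SF^{0,(k)} \twoheadleftarrow V_i/\SF^{0,(k+1)}$ reconstruct the quiver representation over the column uniquely. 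This establishes a bijection between local quiver data over a Type 1 column and pairs of flags.

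Next I would verify (b). For a Type 2 column, the only non-trivial map crossing the middle slice is the crossing itself, and the exactness condition at a crossing is precisely what the derivation in \eqref{eq:gluing with inclusion maps} and its variants was designed to capture: the flag entries that do not touch the crossing strand agree on the two sides, while the single entry that does differs in a codimension-one way. This yields \eqref{eq: type 2 bottom relative position} (for a bottom crossing) or \eqref{eq: type 2 top relative position} (for a top crossing). For a Type 3 column, the key point is that applying the chain of RII moves illustrated in Figure \ref{fig: pulling up a left cusp} (white lollipop) and Figure \ref{fig: pulling down a right cusp} (black lollipop) does not change the quiver data (this is where we invoke invariance of $\M_1(\Lambda)$ under Legendrian isotopy of the front, or equivalently that the resulting quiver representations are in canonical bijection); after this preparatory move, the extended flags $h(\SL_0), h(\SL^0)$ (resp.\ $h^{-1}(\SR_0), h^{-1}(\SR^0)$) sit in the correct ambient space and the exactness conditions at the new crossings yield exactly the reduced-word staircase \eqref{eq: type 3 white lollipop relative position} or \eqref{eq: type 3 black lollipop relative position}. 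The main technical step here is to check that the extended flags really coincide with the flags produced by direct application of $h$ or $h^{-1}$ to the original flag data; this follows by tracing through Remark \ref{rem:1-dim vector spaces} and the exactness condition inductively.

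Finally, for (c), the isomorphisms of representations of $Q_F(\Lambda)$ that preserve all the identity and exactness conditions are exactly collections of invertible maps on each vector space of the representation, subject to commutation with every arrow. Since the data at every Type 1 column is collapsed to a pair of flags in $V_i$, and since the flags in the top and bottom halves of the column determine the entire quiver data over the column, the isomorphisms on the intermediate vector spaces within a column are determined by the isomorphism on $V_i$. Across a Type 2 column the central $V_i$ is common to both sides, so one single $g_i \in \GL(V_i)$ suffices, and across a Type 3 column the compatibility $h_i \circ g_{i-1} = g_i \circ h_i'$ is exactly the condition that the isomorphism intertwines the linking map $h_i$. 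I expect the main obstacle to be the bookkeeping in the Type 3 case, where one must carefully verify that the two flags produced by the extension procedure of the RII-move argument agree with the flags canonically defined by $h$ (resp.\ $h^{-1}$), and that the staircase reduced word in \eqref{eq: type 3 white lollipop relative position} and \eqref{eq: type 3 black lollipop relative position} has the correct length and indexing; once this is pinned down, the equivalence with $\mathcal{M}(Q_F(\Lambda))$ of Proposition \ref{def: working def for M1} is immediate.
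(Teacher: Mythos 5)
Your proposal is correct and matches the paper's approach: the paper's ``proof'' of Lemma~\ref{lem:flag_description} is exactly the preceding column-by-column discussion, which builds the pair of flags over each Type~1 column from images and kernels of the quiver maps, translates exactness at crossings into the relative-position conditions \eqref{eq: type 2 bottom relative position}--\eqref{eq: type 2 top relative position}, uses the RII-move extensions of Figures~\ref{fig: pulling up a left cusp}/\ref{fig: pulling down a right cusp} to produce $h(\SL_0),h(\SL^0)$ and $h^{-1}(\SR_0),h^{-1}(\SR^0)$ for Type~3 columns, and identifies the isomorphism group as $\prod_i\GL(V_i)$ intertwining the $h_i$. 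The only small difference is that you invoke Legendrian-isotopy invariance of $\M_1(\Lambda)$ to justify that the RII moves do not change the moduli, whereas the paper derives this directly and combinatorially from the exactness condition (the extensions ``are completely determined by the original data of the quiver representations''); both routes are valid and lead to the same conclusion.
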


In the flag description of Lemma \ref{lem:flag_description}, the rank-1 local system $\Phi$ on $\Lambda$ can be constructed by taking quotients of consecutive vector subspaces in each flag and then gluing them along strands of $\Lambda$ at crossings and cusps in the same manner as before. Note that in this context, only \eqref{eq:gluing with inclusion maps} is used when gluing these rank-1 local systems at crossings because all linear maps near a crossing are now inclusions of vector subspaces. In particular, the surjective maps in the top region of the front projection are now turned into inclusions of kernels.


\subsubsection{Description for (-1)-closures.}\label{sssec:description_closure} Finally, there is another description of $\M_1(\Lambda)$ and $\FM(\La,\T)$ as moduli space of configurations of flags, which aligns better when comparing with the flag moduli of the weaves $\ww(\bG)$. In that latter case, there will be only one ambient vector space. The description in Lemma \ref{lem:flag_description}, which is associated to the specific front $\mathfrak{f}(\bG)$, after using RII and RIII moves, can be shown to be equivalent to a description with a unique ambient (top-dimensional) vector space. Indeed, rather than using flags from different ambient spaces with varying dimensions, we can perform additional RII and RIII moves to push strands as the blue one in Figure \ref{fig: pulling up a left cusp} all the way to the left and push strands as the blue one in Figure \ref{fig: pulling down a right cusp} all the way to the right (see also Lemma \ref{lem:linkweave}). This will extend all flags from all Type 1 columns to flags in $\mathbb{C}^h$ where $h$ is the total number of horizontal lines in the GP-graph $\bG$. Moreover, these flags will satisfy the relative position conditions imposed by the external weave lines of the initial weave $\ww=\ww(\bG)$, or equivalently, the cyclic positive braid word $\beta=\beta(\bG)$ for which $\Lambda$ is its $(-1)$ closure. In this context, \cite[Proposition 1.5]{STZ_ConstrSheaves}, or Lemma \ref{def: working def for M1}, reads:

\begin{lemma}\label{lem:1closure} Let $\beta=(i_1,i_2,\dots,i_l)\in\mbox{Br}_h^+$ be a positive braid word on $h$  strands and $\Lambda$ be the Legendrian link associated to the front given by the $(-1)$-closure of $\beta$. Then
\[
\M_1(\Lambda)\cong\left.\left\{(\SF_0,\SF_1,\SF_2,\dots, \SF_l)\ \middle| \ \begin{array}{l}\text{$\SF_i$ is a flag in $\mathbb{C}^h$ for all $i$} \\ 
\SF_0\overset{s_{i_1}}{\sim}\SF_1\overset{s_{i_2}}{\sim}\cdots \overset{s_{i_l}}{\sim}\SF_l=\SF_0\end{array}\right\}\right/\PGL_h.
\]
\end{lemma}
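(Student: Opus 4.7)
The plan is to derive Lemma \ref{lem:1closure} from the general description of $\M_1(\Lambda)$ given in Proposition \ref{def: working def for M1} (or equivalently Lemma \ref{lem:flag_description}), by reducing the quiver representation data to a single sequence of flags in a common ambient vector space $\mathbb{C}^h$. First, I would take the front $\pi_F(\Lambda)$ to be the $(-1)$-framed rainbow closure of $\beta$: $h$ strands running across the page, with the $l$ crossings of $\beta$ in the middle region and nested left and right cusps at the two ends. Starting from the quiver representation description of Proposition \ref{def: working def for M1}, the dimension constraints force the unique top-dimensional region (the one surrounded by all $2h$ strands) to carry $\mathbb{C}^h$, and each of the $l+1$ maximal-dimension regions separated by the crossings between the middle strand and one of its neighbors carries $\mathbb{C}^h$ as well.

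Next, I would apply the flag construction of Section \ref{sssec:flag_description} in the simpler setting where there are no Type 3 columns (no internal lollipops): in each of the $l+1$ regions between consecutive crossings, the tower of inclusions going downward from the middle $\mathbb{C}^h$ to the bottom $0$ defines a flag $\SF_k$ in $\mathbb{C}^h$, and by the identity condition at each cusp, the corresponding flag of kernels going upward from the middle to the top $0$ is canonically identified with the same $\SF_k$. (This identification is exactly the content of the sequence of RII moves in Figures \ref{fig: pulling up a left cusp} and \ref{fig: pulling down a right cusp}, applied all the way so that every cusp is pushed past every crossing to the extreme of the front.) Each crossing $s_{i_k}$ of $\beta$ then translates, via \eqref{eq: type 2 bottom relative position}, into the relative position condition $\SF_{k-1}\stackrel{s_{i_k}}{\sim}\SF_k$.

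The next step is to encode the cyclic closure of $\beta$, i.e.\ the identification $\SF_l=\SF_0$. The rainbow closure glues the region to the right of the last crossing to the region to the left of the first crossing by wrapping around the cusps; under the canonical identification of the bottom flag with the top-kernel flag at each cusp, this gluing becomes an equality of flags in the single ambient $\mathbb{C}^h$. Thus a point in $\M_1(\Lambda)$ produces a cyclic chain $\SF_0\overset{s_{i_1}}{\sim}\SF_1\overset{s_{i_2}}{\sim}\cdots\overset{s_{i_l}}{\sim}\SF_l=\SF_0$, and conversely any such chain reconstructs a quiver representation satisfying the exactness and identity conditions.

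Finally, I would address the equivalence relation. In Proposition \ref{def: working def for M1}, two quiver representations are identified up to isomorphism, which at the level of the single ambient space $\mathbb{C}^h$ is the action of $\GL_h$ by simultaneous change of basis. Because the scalars $\mathbb{C}^\times\subset\GL_h$ act trivially on the set of flags, this descends to an action of $\PGL_h$, yielding the quotient appearing in the statement. The main obstacle I anticipate is the bookkeeping in the second step: showing cleanly that the RII moves pushing every cusp to the boundary of the front do not alter the moduli space and do produce a single ambient $\mathbb{C}^h$ with the claimed flags (with top-kernel flag and bottom-image flag canonically identified). Once that reduction is made precise, the remaining steps are direct transcriptions of the relative-position/exactness dictionary already established for the flag description.
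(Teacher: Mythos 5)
Your proposal is essentially correct and follows the same route the paper takes: start from the quiver-representation description of Proposition \ref{def: working def for M1}, translate to the flag description, push every cusp to the extremes of the front via RII/RIII moves so that a single ambient $\mathbb{C}^h$ appears, read off the conditions $\SF_{k-1}\overset{s_{i_k}}{\sim}\SF_k$ from the crossings of $\beta$, and quotient by $\PGL_h$ (the scalars acting trivially on flags, as you note). The paper treats this lemma as an almost immediate rephrasing of Lemma \ref{lem:flag_description} together with the RII-move reduction, and defers the detailed justification to \cite[Proposition 1.5]{STZ_ConstrSheaves}; your proposal is a more explicit unwinding of that same reduction.

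One step is misphrased and should be tightened. You assert that in each of the $l+1$ regions the downward (image) flag $\SF_k$ is ``canonically identified'', by the cusp identity conditions, with the upward (kernel) flag in that same region. For the $(-1)$-closure front, where all crossings of $\beta$ sit in one half of the front (say the bottom), the upward kernel flag is a \emph{single constant} flag $\SF^0$; the identity conditions at the nested left cusps identify $\SF^0$ with $\SF_0$, and the identity conditions at the nested right cusps identify $\SF^0$ with $\SF_l$, but generically $\SF^0\neq\SF_k$ for intermediate $0<k<l$. So the correct output of the cusp conditions is the equality $\SF_0=\SF^0=\SF_l$, which eliminates $\SF^0$ from the data and produces the advertised cyclic chain $\SF_0\overset{s_{i_1}}{\sim}\cdots\overset{s_{i_l}}{\sim}\SF_l=\SF_0$. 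Your final moduli description is therefore correct, but the stated reason in that intermediate step is not; the RII-move parenthetical should be read as saying that the cusps can be pushed to the extremes so that only the cyclic identification $\SF_0=\SF_l$ survives, not that the top flag equals $\SF_k$ region by region.
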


\subsection{Factoriality Property}\label{ssec:delta-complete} In the upcoming construction of a cluster $\mathcal{A}$-structure for the moduli space $\FM(\La,T)$, we shall need that the coordinate ring $\mathcal{O}(\FM(\La,T))$ is a unique factorization domain (a.k.a.~factorial). This can be a subtle condition to verify and thus we provide in this section an argument that the condition of $\Delta$-completeness of the braid $\beta(\bG)$, as introduced in Section \ref{sssec:examplesGPgraph}, is sufficient for factoriality. Note that all shuffle graphs have $\beta(\bG)$ be a $\Delta$-complete braid, and thus the rings $\mathcal{O}(\FM(\La(\bG),T))$ are factorial if $\bG$ is shuffle.

\begin{prop}\label{prop:UFD} Let $\bG$ be a GP-graph with $\beta(\bG)$ a $\Delta$-complete braid. Then the moduli space $\FM(\Lambda(\bG),T)$ is an affine variety whose coordinate ring is factorial.
\end{prop}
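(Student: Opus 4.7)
The plan is to reduce the factoriality claim to Lemma \ref{thm:braid variety UFD} by expressing $\mathcal{O}(\FM(\Lambda(\bG),T))$ as a Laurent polynomial extension of $\mathcal{O}(X(\gamma,w_0))$ for a suitable positive braid $\gamma$. First I would use Lemma \ref{lem:1closure} to present $\M_1(\Lambda(\bG))$ as the moduli stack of cyclic configurations of flags in $\C^h$, with relative positions prescribed by the braid word $\beta(\bG)$, modulo the diagonal $\PGL_h$-action, where $h$ is the total number of horizontal lines in $\bG$. Since $\beta(\bG)$ is $\Delta$-complete, we may cyclically rotate --- which does not alter the moduli --- so that $\beta(\bG)=\Delta\gamma$ with $\Dem(\gamma)=w_0$.

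Next I would fix the gauge. The $\Delta$-subword forces the two flags $\SF_0,\SF_k$ bracketing it to be in relative position $w_0$, and hence transverse; using $\PGL_h$ we may uniquely place $\SF_0=\SF_{\std}$ and $\SF_k=\SF_{\std}^{\mathrm{op}}$ up to the residual maximal torus $T_{h-1}\subset\PGL_h$. Under this rigidification, the remaining configuration $\SF_k\stackrel{\gamma}{\sim}\SF_0$ is precisely a point of the braid variety $X(\gamma,w_0)$ as defined in \cite{CGGS,CGGS2}, and hence $\M_1(\Lambda(\bG))\cong X(\gamma,w_0)/T_{h-1}$. Incorporating the decorations at marked points $T$ amounts to adjoining $|T|$ many $\C^\times$-torsors for framings of the microlocal local system on the arcs of $\Lambda\setminus T$, modulo a single global $\C^\times$. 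A dimension count together with the explicit flag and framing identification shows that these framings pair with the residual $T_{h-1}$-action and yield an isomorphism
$$
\FM(\Lambda(\bG),T)\;\cong\;X(\gamma,w_0)\times(\C^\times)^{|T|-h}
$$
of affine varieties, using that the Picard group of $X(\gamma,w_0)$ vanishes (by the proof of Lemma \ref{thm:braid variety UFD}) to trivialise the associated torus torsor.

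Finally, I would conclude as follows. Lemma \ref{thm:braid variety UFD} gives $\mathcal{O}(X(\gamma,w_0))$ factorial, and factoriality is preserved under passing to a Laurent polynomial ring in finitely many variables, since $\C[z,z^{-1}]$ is a UFD and a localisation of a UFD is a UFD. Hence $\mathcal{O}(\FM(\Lambda(\bG),T))$ is factorial, and affineness follows from the same identification combined with the affineness of braid varieties and tori. The principal obstacle I anticipate is in the gauge-fixing step: one must match the framings at marked points with the residual torus action carefully enough to realise the product decomposition above, since the combinatorics of link components of $\Lambda$ and the distribution of marked points among them enter in a subtle way. A robust fallback, if the clean product identification proves cumbersome to write down explicitly, is a class-group-theoretic argument via an excision sequence analogous to the one in the proof of Lemma \ref{thm:braid variety UFD}, using that every $(\C^\times)^N$-torsor over a smooth affine variety with vanishing Picard group is trivial.
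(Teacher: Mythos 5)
Your opening moves are the same as the paper's: present $\cM_1(\Lambda(\bG))$ via cyclic flag configurations (Lemma \ref{lem:1closure}), rotate so $\beta(\bG)=\Delta\gamma$, gauge-fix the two flags bracketing $\Delta$, and reduce the factoriality to Lemma \ref{thm:braid variety UFD}. But the explicit formula
\[
\FM(\Lambda(\bG),T)\;\cong\;X(\gamma,w_0)\times(\C^\times)^{|T|-h}
\]
cannot be right as written: the proposition must cover every set $T$ with at least one marked point per link component, and for the \emph{minimal} such $T$ one has $|T|=N$, the number of components of $\Lambda$, which is typically strictly less than $h$. Your exponent is then negative, so the claimed product decomposition has no meaning. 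This is not a cosmetic issue --- it is exactly the subtle case the argument must confront.

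The paper gets around it with a two-step reduction that your proposal misses. Step one is to prove the statement for a single carefully chosen $T$ with $|T|=h$: put one marked point per level along a vertical line between $\Delta$ and $\gamma$; then gauge-fixing uses the full $\PGL_h$ (placing the two flags adjacent to $\Delta$ at $B_+$ and $B_-$) together with the decoration at one flag, and produces $\FM(\Lambda,T)\cong X(\beta(\bG),w_0)$ directly, with no residual torus to quotient by and no torsor to trivialise. Step two is the reduction to a smaller marked-point set $T'$ with one point per component: one shows $\FM(\Lambda,T)\cong\FM(\Lambda,T')\times(\C^\times)^{h-N}$ and then deduces factoriality of the \emph{subring} $\mathcal{O}(\FM(\Lambda,T'))$ from factoriality of the Laurent-extension. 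This direction of the implication is the nontrivial one: it works because an irreducible element of a domain $R$ stays irreducible in $R[t^{\pm 1}]$ and non-associates stay non-associates (the units of $R[t^{\pm 1}]$ are $R^\times\cdot t^{\Z}$), so a non-unique factorisation in $R$ would persist in $R[t^{\pm 1}]$, contradicting the latter's factoriality. Only after this does the "adjoin a Laurent torus factor" step apply to general $T''\supseteq T'$. Your proposal effectively tries to do both directions in one formula and only the expansion direction is available that way.

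Two secondary points. First, $\M_1(\Lambda(\bG))\cong X(\gamma,w_0)/T_{h-1}$ is dubious as an isomorphism of varieties: without decorations the quotient is a priori a stack, and the dimensions do not match what one computes from $\dim\FM(\Lambda,T)=\dim\M_1(\Lambda)+|T|-1$ together with the identification of $\FM$ with a braid variety. Second, your fallback via vanishing Picard groups and trivialisation of $(\C^\times)^N$-torsors addresses how to split off a torus factor, but does not address the direction-of-reduction problem above, so it would not rescue the argument for $|T|<h$.
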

\begin{proof} Since the moduli space $\FM(\Lambda,T)$ is an Legendrian invariant, without loss of generality we can turn $\Lambda$ into the $(-1)$-closure of an $n$-stranded positive braid $\beta(\bG)=\Delta \gamma$ and use the description from Section \ref{sssec:description_closure}. Let us first consider the case where the set $T$ of marked points can be arranged into a configuration with one marked point per level along a vertical line between $\Delta$ and $\gamma$. (It follows that $|T|=n$.) This case is depicted as follows:
\[
\begin{tikzpicture}[scale=0.7]
\draw [dashed] (0,0) rectangle node [] {$\Delta$} (2,2);
\draw [dashed] (3,0) rectangle node [] {$\gamma$} (5,2);
\draw [dashed] (-1,0) -- (-1,2);
\draw [dashed] (6,0) -- (6,2);
\foreach \i in {0,1,3} 
{
\draw (2,\i*0.5+0.25) -- (3,\i*0.5+0.25);
\draw (-1,\i*0.5+0.25) -- (0,\i*0.5+0.25);
\draw (5,\i*0.5+0.25) -- (6,\i*0.5+0.25);
\node at (2.5,\i*0.5+0.25) [] {$\bullet$};
}
\node at (2.5,1.25) [] {$\vdots$};
\node at (-0.5,1.25) [] {$\vdots$};
\node at (5.5,1.25) [] {$\vdots$};
\end{tikzpicture}
\]
Let $B_+$ and $B_-$ be the Borel subgroups of $\mbox{PGL}_n$ of upper triangular and lower triangular matrices, respectively. We can exhaust the $\PGL_n$-action on flag configurations by fixing the two flags at the two ends of $\Delta$ to be the two unique flags stabilized by $B_+$ and $B_-$, respectively, while requiring that the decoration on the flag $\SF_l$ at the dashed line (after $\gamma$ on the right or before $\Delta$ on the left) to be the standard one, i.e., mapping $\overline{e}_i$ to $1$ for each consecutive quotient $\Span\{e_1,\dots, e_i\}/\Span\{e_1,\dots, e_{i-1}\}\cong \Span(\overline{e}_i)$. 

Let $(i_1,\dots, i_l)$ be a positive word for the positive braid $\gamma$ such that $\beta(\bG)=\Delta\gamma$. Let us record a flag as a matrix with row vectors, such that the span of the last $k$ row vectors give the $k$-dimensional subspace in the flag. Then $\SF_l$ can be recorded by the permutation matrix $w_0$. Starting from the flag $\SF_l$, the flags $\SF_{l-1}, \SF_{l-2},\dots$ to the left of $\SF_l$ can then be given by
\[
\SF_k=B_{i_{k+1}}(z_{k+1})B_{i_{k+2}}(z_{k+2})\cdots B_{i_l}(z_l)w_0.
\]
In the end, we need $\SF_0$ to be the standard flag 
\[
0\subset \Span\{e_n\}\subset \Span\{e_{n-1},e_n\}\subset \cdots \subset \Span\{e_2,\dots, e_n\}\subset \mathbb{C}^n,
\]
which is equivalent to requiring that $B_{i_1}(z_1)B_{i_2}(z_2)\cdots B_{i_k}(z_l)w_0$ to be upper triangular. This shows that $\FM(\Lambda(\bG),T)$ is isomorphic to the braid variety $X(\beta(\bG), w_0)$ from \cite{CGGS}, which has a factorial coordinate ring.\footnote{We thank Eugene Gorsky for an explanation of why this is the case. See also upcoming work of the first author with E.~Gorsky and co-authors where this is written in detail.}

Now let us consider the case of an arbitrary number of marked points. Let us start with the set $T$ having one marked point per level, as in the case above. Suppose $m$ of the marked points share the same link component, then we can move these marked points along that link component until they get inside an horizontal interval with no crossings or cusps. Then these marked points are just changing decorations on the same underlying 1-dimensional quotient of consecutive vector spaces of the same flag. Thus we can extract out a $(\mathbb{C}^\times)^{m-1}$-torus factor and replace these marked points with one marked point. By doing this for each link component, we can reduce $T$ to a set $T'$ with one marked point per link component, and conclude that
\[
\FM(\Lambda, T)\cong \FM(\Lambda,T')\times (\mathbb{C}^\times)^{n-N}
\]
as affine varieties, where $N$ is the number of link components in $\La=\Lambda(\bG)$. This implies that
\[
\mathcal{O}(\FM(\Lambda,T))\cong \mathcal{O}(\FM(\Lambda, T'))\otimes \mathbb{C}[t_i^{\pm 1}]_{i=1}^{n-N}.
\]
If there is an element in $\mathcal{O}(\FM(\Lambda, T'))$ admitting two non-equivalent factorizations, then these two factorizations are still valid and non-equivalent in $\mathcal{O}(\FM(\Lambda,T))$, contradicting the fact that $\mathcal{O}(\FM(\Lambda,T))$ is factorial. Thus, we can conclude that $\mathcal{O}(\FM(\Lambda, T'))$ is factorial when $T'$ consists of one marked point per link component. In general, for any set $T''$ with at least one marked point per link component, we can implement the same argument above and write
\[
\FM(\Lambda, T'')\cong \FM(\Lambda, T')\times (\mathbb{C}^\times)^{|T''|-N}.
\]
Algebraically this implies that 
\[
\mathcal{O}(\FM(\Lambda, T''))\cong \mathcal{O}(\FM(\Lambda, T'))\otimes \mathbb{C}[t_i^{\pm 1}]_{i=1}^{|T''|-N}.
\]
Again, since $\mathcal{O}(\FM(\Lambda, T'))$ is factorial, so is the tensor product $\mathcal{O}(\FM(\Lambda, T'))\otimes \mathbb{C}[t_i]_{i=1}^{|T''|-N}$. Given that $\mathcal{O}(\FM(\Lambda, T'))\otimes \mathbb{C}[t_i^{\pm 1}]_{i=1}^{|T''|-N}$ is a localization of this factorial tensor product, it is factorial as well.
\end{proof}

\subsection{Moduli spaces for the Lagrangian \texorpdfstring{$L(\ww(\bG))$}{}}\label{ssec:moduli_Lagrangian} The moduli spaces $\M_1(\Lambda)$ and $\FM(\Lambda, T)$ depend only on the Legendrian isotopy type of $\La$. In particular, if $\Lambda=\Lambda(\bG)$ is a GP-link, then these moduli spaces are invariant under square moves and other combinatorial equivalences of the GP-graph $\bG$ which preserve the Legendrian isotopy class of $\La$. The GP-graph also provides the information of an embedded exact Lagrangian filling for $\La(\bG)$. Namely, the exact Lagrangian filling $L=L(\bG)$ described by the initial weave $\ww=\ww(\bG)$. The Guillermou-Jin-Treumann map \cite{JinTreumann}, or \cite{EHK,CasalsNg}, imply that there are open embeddings
$$H^1(L;\C^\times)\lr\M_1(\Lambda),\quad H^1(L,T;\C^\times)\lr\FM(\Lambda,T),$$
where the domains of these embeddings parametrize (decorated) $\C$-local systems on $L$ (with decoration $T$), and the map is essentially the microlocalization functor. These open torus charts $(\C^\times)^{b_1(L)}$ and $(\C^\times)^{b_1(L,T)}$ can be described in terms of flags if the Lagrangian filling $L$ is obtained from a weave, as explained in \cite{CasalsZaslow}; we shall use it in the proof of Theorem \ref{thm:main}. The definition of $\M_1(\ww)$ from \cite{CasalsZaslow} is as follows:

\begin{definition}\label{def:flagmoduli} Let $\ww\sse\R^2$ be a weave. By definition, the {\it total flag moduli space} $\tilde{\mathcal{M}}_1(\ww)$ associated to $\ww$ is comprised of tuples of flags, as follows:  
	
	\begin{itemize}
		\item[i)] There is a flag $\SF^\bullet(F)$ assigned to each face $F$ of the weave $\ww$, i.e. to each connected component of $\R^2\setminus\ww$.\\
		
		\item[ii)] 
		\label{def:flagmoduli-general-ii}
		For each pair of adjacent faces $F_1,F_2\sse \R^2\setminus \ww$, sharing an $s_i$-edge, their two associated flags $\SF^\bullet(F_1),\SF^\bullet(F_2)$ are in relative position $s_i\in S_n$, i.e.~ they must satisfy
		$$\SF_j(F_1)=\SF_j(F_2),\quad 0\leq j\leq N, j\neq i,\mbox{ and }\SF_i(F_1)\neq\SF_i(F_2).$$
	\end{itemize}
	The group $\PGL_n$ acts on the space $\tilde{\mathcal{M}}_1(\ww)$ simultaneously. By definition, the {\it flag moduli space} of the weave $\ww$ is the quotient stack $\mathcal{M}_1(\ww):=\tilde{\mathcal{M}}(\ww)/\PGL_n$.
\end{definition}

By Subsection \ref{sssec:flag_description}, $\mathcal{M}_1(\ww)$ is an open subspace of $\mathcal{M}_1(\Lambda)$ via restriction to the boundary. Indeed, since the weaves $\ww$ are free weaves \cite[Section 7.1.2]{CasalsZaslow}, the data of flags at the boundary of the initial weave, uniquely determines the flags at each face of $\ww$. (This fact can also be verified combinatorially.) It follows from \cite{CasalsZaslow} that $\mathcal{M}_1(\ww)$ are complex tori $\mathcal{M}_1(\ww)\cong(\C^\times)^{\dim\mathcal{M}_1(\Lambda)}$, and thus these moduli spaces of flags associated to the initial weave $\ww$ are natural candidates for an initial cluster chart in the moduli space $\mathcal{M}_1(\Lambda)$ for a GP-link $\Lambda$. (These complex tori are indeed the images of the Guillermou-Jin-Treumann maps.)  The definition of candidate cluster $\cX$-variables will be the subject of the next subsection.

The decorated version of the flag moduli $\M_1(\ww)$, which we denote as $\FM(\ww, T)$, is naturally defined by adding a framing away from $T$ along the boundary $\partial L(\ww)=\Lambda$. It also follows that $\FM(\ww,T)$ is naturally an open torus chart in $\FM(\Lambda,T)$. The corresponding definition of the candidate cluster $\cA$-variables is undertaken in Subsection \ref{ssec:clusterAvars} below.


\subsection{Microlocal monodromies: unsigned candidate \texorpdfstring{$\cX$-variables}{}}\label{ssec:clusterXvars}

Let us consider the open  toric chart $\M_1(\ww)\sse\M_1(\Lambda)$ from Subsection \ref{ssec:moduli_Lagrangian}. We now build a function $$X_\gamma:\M_1(\ww)\lr\C$$ associated to each $\sf Y$-cycle $\gamma$, generalizing our previous work in \cite[Section 7]{CasalsZaslow}, see also \cite[Section 5.1]{STZ_ConstrSheaves}, to our context. First we observe that the data of $\M_1(\ww)$ associates a flag $\SF$ in each connected component of the complement of $\ww$ in $\mathbb{R}^2$. We associate the 1-dimensional vector space $\SF_i/\SF_{i-1}$ to the $i$th sheet in the lift of each connected component. Then across the lifts of each weave line, we define two linear isomorphisms
\begin{equation}\label{eq: parallel transport}
\begin{tikzpicture}[scale=0.7, baseline=20]
\draw (0,0) node [left] {$\SL_i/\SL_{i-1}$} to [out=0,in=-135] (2,0.75) to [out=45,in=180] (4,1.5) node [right] {$\SR_{i+1}/\SR_i$};
\draw (0,1.5) node [left] {$\SL_{i+1}/\SL_i$} to [out=0,in=135] (2,0.75) to [out=-45,in=180] (4,0) node [right] {$\SR_i/\SR_{i-1}$};
\node at (0.5,0.75) [] {$\SL_i$};
\node at (3.5,0.75) [] {$\SR_i$};
\node at (2,-0.5) [] {$\SL_{i-1}=\SR_{i-1}$};
\node at (2,2) [] {$\SL_{i+1}=\SR_{i+1}$};
\node at (-4,2) [left] {$\displaystyle \psi_+:\frac{\SL_i}{\SL_{i-1}}\hookrightarrow \frac{\SL_{i+1}}{\SL_{i-1}}=\frac{\SR_{i+1}}{\SR_{i-1}}\twoheadrightarrow \frac{\SR_{i+1}}{\SR_i}.
$};
\node[black] at (-4,0) [left] {$\displaystyle \psi_-:\frac{\SR_i}{\SR_{i-1}}\hookrightarrow \frac{\SR_{i+1}}{\SR_{i-1}}=\frac{\SL_{i+1}}{\SL_{i-1}}\twoheadrightarrow \frac{\SL_{i+1}}{\SL_i}.$};
\end{tikzpicture}
\end{equation}

Note that $\psi_{\pm}$ are isomorphisms because $\SL$ and $\SR$ are in $s_i$-transverse position, as they are separated by a weave line labeled with $s_i$. Now, given a loop $\gamma$ on $L$, we may perturb it so that it intersects with any lifts of weave lines transversely. Then by composing several of the isomorphisms $\psi_\pm$ above and their inverses, we obtain a linear automorphism for each generic fiber along $\gamma$. Since each generic fiber is a 1-dimensional vector space, we can represent this linear automorphism by a non-zero scalar $\psi_\gamma$. This non-zero scalar $\psi_\gamma$ is also known as the \emph{microlocal monodromy} of the sheaf moduli space $\M_1(\ww)$ along $\gamma$. However, the microlocal monodromies $\psi_\gamma$ do not naturally give rise to a local system on $L$\footnote{They give a {\it twisted} local system as in \cite[Part 13]{Guillermou19_SheafSummary}, or a twisted flat connection as in \cite[Part 10]{GMN_SpecNet13}.}, as the following illustrates:

\begin{example}
Consider the weave with a unique trivalent vertex, which depicts a Lagrangian 2-disk filling, as drawn in blue in Figure \ref{fig:parallel_transport_on_unknot} (left). According to the definition of $\cM_1(\ww)$, there is a flag $l_i\subset \mathbb{C}^2$ in each of the three sectors, and they are pairwise transverse from each other. Let $\gamma$ be a curve on $L(\ww)\cong \Lambda(\ww)$, which, under the front projection, goes from the lower sheet to the upper sheet and then back to the lower sheet; see again Figure \ref{fig:parallel_transport_on_unknot} (left). By definition, the microlocal parallel transport $\psi_\gamma$ should be the map drawn in Figure \ref{fig:parallel_transport_on_unknot} (center), which is the linear map that projects parallel to the line $l_2$.
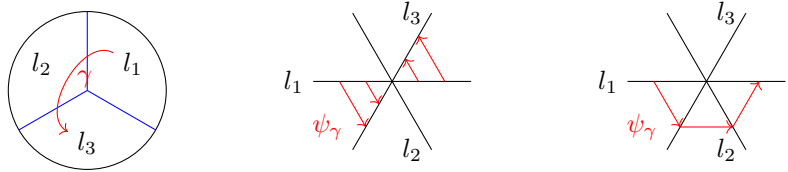
\begin{figure}[H]
    \centering
    \begin{tikzpicture}[scale=0.7]
    \draw (0,0) circle [radius=1.5];
    \foreach \i in {1,2,3}
    {
    \draw [blue] (0,0) -- (90+\i*120:1.5);
    \node (\i) at (-90+\i*120:1) [] {$l_\i$};
    }
    \draw [->,red] (1) to [out=150,in=150] node [right] {$\gamma$}  (3);
    \end{tikzpicture}\quad \quad \quad \quad
    \begin{tikzpicture}[scale=0.7]
    \foreach \i in {1,2,3}
    {
    \draw (60+\i*120:1.5) node [left] {$l_\i$} -- (-120+\i*120:1.5);
    }
    \draw [->,red] (-1,0) -- node [below left] {$\psi_\gamma$} (-0.5,-0.866);
    \draw [->,red] (-0.5,0) -- (-0.25,-0.433);
    \draw [<-,red] (0.25,0.433) -- (0.5,0);
    \draw [<-,red] (0.5,0.866) -- (1,0);
    \end{tikzpicture}
    \quad \quad \quad \quad
    \begin{tikzpicture}[scale=0.7]
    \foreach \i in {1,2,3}
    {
    \draw (60+\i*120:1.5) node [left] {$l_\i$} -- (-120+\i*120:1.5);
    }
    \draw [->,red] (-1,0) -- node [below left] {$\psi_\gamma$} (-0.5,-0.866);
    \draw [->,red] (-0.5,-0.866) -- (0.5,-0.866);
    \draw [->,red] (0.5,-0.866) -- (1,0);
    \end{tikzpicture}
    \caption{A weave for the unique filling of the max tb unknot and microlocal parallel transports from its sheaf quantization.}\label{fig:parallel_transport_on_unknot}
\end{figure}
\noindent Consider the lift $\xi$ of a loop that goes around a trivalent weave vertex in $\mathbb{R}^2$, which is a double cover for the projection onto the weave plane. Without loss of generality, let us suppose $\xi$ starts at the lower sheet in the sector containing $l_1$. The parallel transport along $\xi$ is then the composition of the three linear projections, as drawn in Figure \ref{fig:parallel_transport_on_unknot} (right), which is equal to the linear map $v\mapsto -v$ on $l_1$. In other words, $\psi_\xi=-1$. However, $\xi$ is a contractible cycle on $L(\ww)$ and thus the microlocal monodromy assignment $\xi\mapsto \psi_\xi$ cannot be a local system on $L(\ww)$.
\end{example}


Let us now specialize to our situation, with $\bG$ a GP-graph and $\ww=\ww(\bG)$ its initial weave. By Section \ref{sec:weaves}, there is a distinguished linearly independent subset $\fS(\bG)\subset H_1(L(\ww(\bG)))$ of $\mathbb{L}$-compressible cycles parametrized by the sugar-free hulls in the GP-graph. For each element in $\fS(\bG)$, we choose a $\sf Y$-tree representative $\gamma$, which exists by Lemma \ref{lem: Y representability}, and define
\[
X_\gamma:=-\psi_\gamma.
\]
These functions shall become our cluster $\cX$-variables, once signs are fixed and Theorem \ref{thm:main} is proven. Note that, since we can isotope the $\sf Y$-tree $\gamma$ to a short $\sf I$-cycle, i.e. an equivalent monochromatic edge, we may use it to compute $X_\gamma$ explicitly, as follows. In a neighborhood of a short $\sf I$-cycle, labeled with the permutation $s_i$, a point in the flag moduli $\M_1(\ww)$ is specified by the data of a quadruple of flags.  Each of these flags has the same subspaces $\SF^j$ in each region for $j\neq i$, and for $j = i$ we additionally require the data in each region of a line $l$ in the two-dimensional space $V := \SF^{i+1}/\SF^{i-1}.$  This is the data of four lines $a, b, c, d\sse V$. The function $X_\gamma$ is then equal to the cross-ratio
\[X_\gamma=\langle a, b, c, d\rangle = -\frac{a\wedge b}{b\wedge c}\cdot\frac{c\wedge d}{d\wedge a}.\quad \quad \quad \quad \quad 
\begin{tikzpicture}[baseline=0,scale=0.8]
\draw [blue] (-1,-0.5) -- (-0.25,0) -- (-1,0.5);
\draw [blue] (1,-0.5) -- (0.25,0) -- (1,0.5);
\draw [blue] (-0.25,0) -- (0.25,0);
\node at (-1,0) [] {$a$};
\node at (0,0.5) [] {$b$};
\node at (1,0) [] {$c$};
\node at (0,-0.5) [] {$d$};
\end{tikzpicture}
\]

The definition of $X_\gamma$, following \cite{FockGoncharov_ModuliLocSys} and \cite{STWZ,CasalsZaslow}, is not particularly new. It is also possible to define $X_\gamma$ directly and combinatorially from the $\sf Y$-trees, in line with \cite[Section 7]{CasalsZaslow}. The fact that these functions $\{X_\gamma\}$ transform according to an $\mathcal{X}$-mutation formula under a square-face mutation is due to \cite{STWZ}, and under the more general weave mutation due to \cite{CasalsZaslow}. Indeed, let $\Gamma=\{\gamma_i\}$ be a maximal collection of $\sf Y$-trees in $\ww(\bG)$ which are linearly independent in $H_1(L(\ww(\bG)))$, $Q(\Gamma)$ be their (algebraic) intersection quiver, and $X_\Gamma=\{X_{\gamma_i}\}$ be a labeling of each vertex of the quiver. Then, it is shown in \cite[Section 7.2.2]{CasalsZaslow} that weave mutation at one such $\sf Y$-tree $\gamma\in\Gamma$ induces a quiver mutation of $Q(\Gamma)$ at the vertex associated to $\gamma$, and the set of variables $X_\Gamma$ changes according to a cluster $\cX$-mutation.


Defining these candidate cluster $\cX$-variables is relatively useless for the purpose of proving existence of cluster structures: the variables $X_\gamma$ do {\it not} extend to global in $\M_1(\Lambda)$ in general and we cannot deduce the existence of a cluster $\cX$-structure merely from constructing this initial seed $(Q(\Gamma),X_\Gamma)$. Moreover, in general there could be many choices of $\Gamma$ for a fixed general weave $\ww$, and it is not known whether different choices yields equivalent, or even quasi-equivalent, cluster seeds. It thus becomes crucial to construct cluster $\cA$-variables for $\FM(\Lambda, T)$, ideally in a symplectic invariant manner, as we will momentarily do. By \cite{BFZ05}, a cluster $\cA$-structure can be shown to exist, once the necessary properties of the candidate $\mathcal{A}$-variables are proven. As a byproduct, Corollary \ref{cor:Xstructure} then deduces the existence of the cluster $\cX$-structure on $\M_1(\Lambda)$ where the variables are microlocal monodromies.

\subsection{Collections of sign curves: fixing signs}\label{ssec:fixingsigns} Let $\bG$ be a GP-graph, $\ww=\ww(\bG)$ its initial weave and $L:=L(\ww)$ its initial filling, and $T$ a set of marked points in $\La(\bG)=\dd L$. Let us denote the set of lifts of trivalent weave vertices on $L$ by $P\sse L$. It follows from Subsections \ref{ssec:moduli_Lagrangian} and \ref{ssec:clusterXvars} that each point of the flag moduli $\M_1(\ww)$ defines a rank 1 local system on $L\setminus P$ with $-1$ monodromy around each point in $P$. In this section, we describe a way to add signs to monodromies to obtain a (non-canonical) isomorphism between $\M_1(\ww)$ and $\Loc_1(L)$. This is a combinatorial expression of the fact that, in our case, global sections of the Kashiwara-Schapira stack are (canonically) isomorphic to the category of twisted local systems and (non-canonically) also isomorphic to the category of local systems. In terms of weave combinatorics, we proceed as follows:

\begin{definition} A \emph{sign curve} is an unoriented curve on the weave surface $L$ that intersects the lifts of weave lines transversely and whose endpoints lie in the set $P\sqcup T$. By definition, a collection $C$ of sign curves on $L$ is \emph{coherent} if each point in $P$ is incident to one and only one sign curve in $C$, and all curves in $C$ intersect transversely.
\end{definition}

\noindent We record sign curves on $L$ by drawing dotted curves on $\mathbb{R}^2$ in juxtaposition with the weave $\ww$ and labeling the indices of the sheets they are on.

Fix a coherent set $C$ of sign curves on $L$. For any path $\gamma$ on $L$, we may perturb $\gamma$ so that it intersects elements of $C$ transversely. Then, we redefine the parallel transport along $\gamma$ to be the microlocal parallel transport $\psi_\gamma$ multiplied by a factor of $-1$ whenever the curve $\gamma$ passes through a sign curve in $C$. Since each branch point of $L$ is incident to one and only one sign curve, this new parallel transport corrects the monodromy around each point in $P$ to be $1$, defining an isomorphism
\[
\Phi_C: \M_1(\ww)\longrightarrow \Loc_1(L)\cong H^1(L;\C^\times)\cong (\C^\times)^{b_1(L)}.
\]

In fact, we can do better than an arbitrary isomorphism $\M_1(\ww)\xrightarrow{\cong} \Loc_1(L)$. From Subsection \ref{ssec:clusterXvars}, our candidates for cluster $\mathcal{X}$-variables are of the form $-\psi_\gamma$ for initial absolute cycles $\gamma\in \fS(\bG)$, and we can in fact incorporate this extra sign in front of $\psi_\gamma$ into the set of coherent sign curves.

\begin{definition}\label{defn:compatible sign curves} A coherent set $C$ of sign curves on $L$ is said to be {\it compatible} if for all initial absolute cycles $\gamma\in \fS(\bG)$:
$$\Phi_C(p)(\gamma)=X_\gamma(p):=-\psi_\gamma(p),\quad \forall p\in \cM_1(\ww).$$
\end{definition}



For the initial free weave $\ww=\ww(\bG)$ constructed from a GP-graph $\bG$, we can find a compatible set of sign curves as follows. First, we observe that all trivalent weave vertices of $\ww$ occur near the boundary of the weave. Thus, at each trivalent weave vertex, two of the three adjacent sectors are facing away from the weave: we will draw our sign curves inside these two sectors. Next, we break the weave $\ww$ down into weave columns, and by Section \ref{sec:weaves} trivalent weave vertices only occur inside Type 2 columns.

\noindent Let us further classify Type 2 columns into two types: a Type 2 column is said to be \emph{critical} if it is the rightmost Type 2 column that contains part of an initial cycle $\gamma\in \fS(\bG)$; it is said to be \emph{non-critical} otherwise. By construction, each critical Type 2 column has a unique initial cycle $\gamma$ that ends there.

If a Type 2 column is non-critical, we draw a sign curve in either sector on either sheet, and then lead it towards the boundary of $L$; once it gets within a collar neighborhood of the boundary $\partial L=\Lambda$, the sign curve will follow along $\Lambda$ until it reaches a marked point. (Such a marked point exists because we have at least one marked point per link component.)

\noindent If a Type 2 column is critical, we consider the unique initial cycle $\gamma$ that ends at this Type 2 column. We compute the product of all the signs $\gamma$ has picked up along all the previous trivalent weave vertices. If the product is $1$, we add a sign curve $c$ on the appropriate sheet of either of the two sectors so that $\gamma$ intersects with $c$ non-trivially. If the product is $-1$, we add a sign curve $c$ on the appropriate sheet of either of the two sectors so that $\gamma$ intersects with $c$ trivially. By doing so, we guarantee that $\Phi_C$ maps $\gamma$ to $X_\gamma=-\psi_\gamma$, as desired.

Thus, a compatible set $C$ of sign curves exists in our setting, and we can explicitly identify $\M_1(\ww)$ with the moduli space $\Loc_1(L)$ of rank 1 local systems on $L$. This identification allows us to interpret the cluster $\mathcal{X}$-variables $X_\gamma$ as actual monodromies of local systems along the initial cycles $\gamma$, and also fixing the necessary signs for the upcoming constructions. 

\begin{prop} If $\ww$ admits a compatible set of sign curves and $\ww'$ is mutation equivalent to $\ww$, then $\ww'$ also admits a (non-canonical) compatible set of sign curves.
\end{prop}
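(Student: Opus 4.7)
The plan is to proceed by induction on the length of a sequence of weave equivalences and weave mutations connecting $\ww$ to $\ww'$, so it suffices to treat the case when $\ww'$ is obtained from $\ww$ by a single move. Every such move is local, supported in a small disk $D\subset\mathbb{R}^2$, and outside $D$ the compatible sign curves on $\ww$ transport verbatim to $\ww'$; only the local modification inside $D$ need be analyzed. For the six weave equivalences of Figure \ref{fig:ReidemeisterWeave}, the analysis is a direct diagrammatic check. Those moves that preserve the number of trivalent vertices in $D$ allow the sign curves in $D$ to be rerouted to the corresponding new trivalent vertices, while the moves that create or destroy a pair of trivalent vertices are handled by, respectively, adjoining a single short sign curve joining the new pair inside $D$, or concatenating the two sign curves previously attached to the destroyed pair. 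A finite computation using the cross-ratio expression for $X_\gamma$ in Subsection \ref{ssec:clusterXvars} then verifies that compatibility with each initial cycle is preserved.

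For a weave mutation, Proposition \ref{prop:Ytreebounds} reduces matters to a mutation at a short $\sf I$-cycle $\gamma$, which alters $\ww$ only inside a small disk around $\gamma$. The set of initial cycles $\fS(\bG)$ transforms into a new set $\fS(\bG)'$ on $L(\ww')$ in which $\gamma$ is replaced by its image $\gamma'$ and the remaining cycles are tracked through $D$. The starting point is then to apply the recipe from Subsection \ref{ssec:fixingsigns} to $\ww'$: draw, from each trivalent vertex of $\ww'$, a sign curve out to the boundary $\partial L(\ww')$ and along a collar to a marked point in $T$. This produces a coherent system $C'$ and a resulting isomorphism $\Phi_{C'}:\M_1(\ww')\to\Loc_1(L(\ww'))$, which need not yet be compatible.

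The main step, and the principal obstacle, is to modify $C'$ until compatibility holds. The mechanism is the following: adjoining to a coherent system either a closed sign curve disjoint from the branch point set $P$, or a sign curve whose two endpoints lie in $T$, preserves coherence and multiplies $\Phi_{C'}(\eta)$ by $(-1)^{\langle\eta,c\rangle_{\mathbb{Z}/2}}$ for every class $\eta\in H_1(L(\ww');\mathbb{Z})$. The key claim to be verified is that the induced map from isotopy classes of such augmentations to $\mathrm{Hom}(H_1(L(\ww');\mathbb{Z}),\{\pm 1\})$ is surjective; this is an elementary consequence of the fact that on a smooth surface with boundary every mod-$2$ cohomology class has a smoothly embedded Poincar\'e dual, applied relative to $T$. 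Granting the claim, one measures the discrepancy between $\Phi_{C'}(\eta)$ and $-\psi_\eta$ on each initial cycle $\eta\in\fS(\bG)'$, extends the resulting sign pattern (arbitrarily) to a character on all of $H_1(L(\ww');\mathbb{Z})$, and realizes it by a disjoint collection of extra sign curves kept in generic position with respect to $C'$, $P$, and the lifts of the weave lines. The augmented system is then coherent and compatible, completing the induction; the non-canonicity of the construction is exactly the freedom in the choice of extension of the discrepancy character.
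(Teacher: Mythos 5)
Your proof is correct, and for weave mutations it takes a genuinely different path from the paper's. The paper treats every move, including mutation, by the same entirely local mechanism: re-route the sign curves inside the disk supporting the move and verify compatibility on the local picture (Figure \ref{fig:sign_curves_under_weave_mutations} gives two cases, with the rest left as an exercise). You follow this for the weave equivalences, but for mutations you rebuild a coherent system $C'$ on $\ww'$ wholesale and then correct it globally: the compatibility defect is a $\{\pm 1\}$-valued function on the linearly independent set $\fS(\bG)'$, which extends to a character of $H_1(L(\ww');\mathbb{Z}/2)$ because $\fS(\bG)'$ spans a saturated sublattice (this uses Proposition \ref{prop:replacement construction}, as $\fS(\bG)$ extends to a basis and the mutation is a lattice isomorphism), and the extended character is then killed by adjoining an embedded Lefschetz dual in $H_1(L(\ww'),T;\mathbb{Z}/2)$ kept away from the branch locus $P$. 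Your approach avoids the case-by-case diagrammatics at the mutation and makes the non-canonicity transparent as the choice of character extension; the paper's local re-routing yields explicit control over the image sign curves across a single mutation, which is convenient for later computations such as the merodromy calculations of Subsection \ref{ssec:final_examples}. Both arguments are correct; yours is arguably the cleaner route to the bare existence statement.
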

\begin{proof} Both weave equivalences and weave mutations are local operations on the weave. Therefore, it suffices to verify that compatible sets of sign curves can be constructed locally, before and after such local operations. That is, locally in a neighborhood where the weave equivalence or mutation is going to be performed, we want to argue that any given compatible set of sign curves on that piece of the initial weave -- before an equivalence or mutation -- we can construct a compatible set of sign curves afterwards, locally on that piece of the weave after the operation.

For weave equivalences, Moves I, IV, and V in Definition \ref{def:EquivalentWeaves} do not involve any trivalent weave vertices. Thus, sign curves that pass through any of these local pictures can be carried through these equivalences using planar homotopies. In contrast, Move II (the push-through move), III, and VI do involve trivalent weave vertices. In the case of Move VI, the weave lines lift to sheets that are not adjacent to each other, therefore the weave line with no trivalent vertex (yellow in Figure \ref{fig:ReidemeisterWeave}) can be ignored when studying the set of compatible sign curves, reducing to the constant case of a trivalent vertex. By Remark \ref{rmk:non-independence of weave equivalences}, Move III is a concatenation of Moves I and II. Thus, the only weave equivalence move that remains to be studied is Move II, which will be discussed momentarily. For weave mutations, it suffices to check mutations along short $\sf I$-cycles since any $\sf Y$-tree is weave equivalent to a short $\sf I$-cycle by Proposition \ref{prop:Ytreebounds}. In conclusion, we need to study compatible sets of sign curves locally near a push-through and a weave mutation.

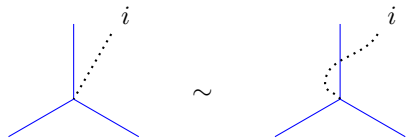
\begin{figure}[H]
    \centering
    \begin{tikzpicture}[baseline=0]
        \foreach \i in {0,1,2}
        {
        \draw [blue] (0,0) -- (90+\i*120:1);
        }
        \draw[dotted, thick] (60:1) node [above right] {$i$} -- (0,0);
    \end{tikzpicture}\quad \quad $\sim$ \quad \quad 
    \begin{tikzpicture}[baseline=0]
        \foreach \i in {0,1,2}
        {
        \draw [blue] (0,0) -- (90+\i*120:1);
        }
        \draw[dotted, thick] (60:1) node [above right] {$i$} to [out=-120,in=90] (-0.2,0.2) to [out=-90,in=120] (0,0);
    \end{tikzpicture}
    \caption{Applying a planar homotopy to a sign curve so that it arrives at the trivalent vertex from a different face. (The index $i$ can be either $1$ or $2$.)}
    \label{fig: adjacent sector}
\end{figure}

A priori, we must study sign curves that arrive to the trivalent vertices from different faces of the weave; a face being any connected component of the complement of the weave lines. That said, if a sign curve is incident to a trivalent weave vertex, we can apply a planar homotopy to the sign curve so that it arrives to the trivalent vertex from any of the another faces near the trivalent vertex. This is depicted in Figure \ref{fig: adjacent sector}. Therefore, it suffices to study the case that a sign curve arrives at a trivalent only from one of the three faces near the trivalent vertex. (If the curve arrived from another face, we can homotope the curve and, locally in a small neighborhood around the vertex, have it arrive from another face.) 

With this reduction, it suffices to study compatible sets of sign curves locally near a push-through and a weave mutation which arrive from one face (of our choice) at each trivalent vertex. Up to symmetry, there are only three cases to check, shown below in Figure \ref{fig:sign_curves_under_weave_mutations}. The figure illustrates how to resolve the problem at hand: for each set of compatible sign curves before the equivalence or mutation (on the left of each diagram), we can construct a set of compatible sign curves afterwards (on the right of each diagram).
%
\end{proof}

\begin{figure}[H]
    \centering
    \begin{tikzpicture}[baseline=0,scale=0.8]
    \draw [blue] (-1.5,-0.5) -- (-1,0) -- (-1.5,0.5);
    \foreach \i in {0,1,2}
    {
        \draw [blue] (0,0) -- (180+\i*120:1);
        \draw [red] (0,0) -- (120+\i*120:1);
    }
    \draw [dotted, thick] (-2,0) node [left] {\footnotesize{$i$}} -- (-1,0);
    \end{tikzpicture} \quad $\sim$ \quad 
    \begin{tikzpicture}[baseline=0,scale=0.8]
    \draw [blue] (-1,-0.5) to [out=30, in=180] (0,-0.25) -- (-60:1);
    \draw [blue] (-1,0.5) to [out=-30,in=180] (0,0.25) -- (60:1);
    \draw [red] (120:1) -- (0,0.25) to [out=-120,in=120] (0,-0.25) -- (-120:1);
    \draw [blue] (0,0.25) to [out=-60,in=60] (0,-0.25);
    \draw [red] (0,0.25) -- (0.75,0) -- (0,-0.25);
    \draw [red] (0.75,0) -- (1.5,0);
    \draw [dotted, thick] (-1,0) node [left] {\footnotesize{$i$}} -- (0.75,0);
    \end{tikzpicture}\\
    \begin{tikzpicture}[baseline=0,scale=0.8]
    \draw [blue] (45:1) -- (0.3,0) -- (-45:1);
    \draw [blue] (135:1) -- (-0.3,0) -- (-135:1);
    \draw [blue] (-0.3,0) -- (0.3,0);
    \draw [dotted,thick] (-1,0) node [left] {\footnotesize{$i$}} -- (-0.3,0);
    \draw [dotted,thick] (0,1) node [above]{\footnotesize{$i$}} -- (0.3,0);
    \end{tikzpicture} $\quad \leftrightarrow \quad $
    \begin{tikzpicture}[baseline=0,scale=0.8]
    \draw [blue] (45:1) -- (0,0.3) -- (135:1);
    \draw [blue] (-45:1) -- (0,-0.3) -- (-135:1);
    \draw [blue] (0,-0.3) -- (0,0.3);
    \draw [dotted,thick] (-1,0) node [left] {\footnotesize{$i$}} -- (0,-0.3);
    \draw [dotted,thick] (0,1) node [above]{\footnotesize{$i$}} -- (0,0.3);
    \end{tikzpicture}\quad \quad \quad \quad\quad \quad \quad \quad
    \begin{tikzpicture}[baseline=0,scale=0.8]
    \draw [blue] (45:1) -- (0.3,0) -- (-45:1);
    \draw [blue] (135:1) -- (-0.3,0) -- (-135:1);
    \draw [blue] (-0.3,0) -- (0.3,0);
    \draw [dotted,thick] (-0.3,0) arc (180:0:0.3) ;
    \node at (0,0.3) [above] {\footnotesize{$i$}};
    \end{tikzpicture} $\quad \leftrightarrow \quad $
    \begin{tikzpicture}[baseline=0,scale=0.8]
    \draw [blue] (45:1) -- (0,0.3) -- (135:1);
    \draw [blue] (-45:1) -- (0,-0.3) -- (-135:1);
    \draw [blue] (0,-0.3) -- (0,0.3);
    \draw [dotted,thick] (0,0.3) arc (90:-90:0.3) ;
    \node at (0.3,0) [right] {\footnotesize{$i$}};
    \end{tikzpicture}
    \caption{Existence of compatible sets of sign curves before and after weave equivalences (Top) and weave mutations (Bottom). (The index $i$ can be either $1$ or $2$.)}
    \label{fig:sign_curves_under_weave_mutations}
\end{figure}
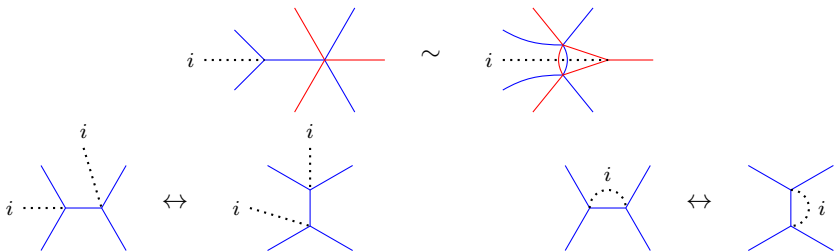

\subsection{Microlocal merodromies: candidate cluster \texorpdfstring{$\cA$-variables}{}}\label{ssec:clusterAvars}

This subsection addresses the construction of what shall become the cluster $\cA$-variables on the moduli space $\FM(\Lambda,T)$ for a GP-link $\Lambda=\Lambda(\bG)$. In the previous section, we explained that cluster $\cX$-variables were indexed by certain {\it absolute} cycles $\gamma\in H_1(L)$ in the Lagrangian filling $L=L(\ww(\bG))$ and $X_\gamma$ was a natural rational function with a symplectic origin: the microlocal monodromy along $\gamma$ of the sheaf associated with the weave $\ww=\ww(\bG)$.

Now, the new idea is that cluster $\cA$-variables $\{A_\eta\}$ will be indexed by certain {\it relative} cycles $\eta\in H_1(L\setminus T,\La\setminus T)$ and the functions $A_\eta:\FM(\Lambda,T)\lr\C$ will be defined by what we call the {\it microlocal merodromy} along $\eta$. Intuitively, this merodromy along $\eta$ is constructed as a microlocal parallel transport along $\eta$. Here are the details.

\subsubsection{Microlocal merodromy}\label{ssec:microlocal_merodromies}

Let $\bG$ be an GP-graph, $\Lambda=\Lambda(\bG)$ be its GP-link, and $T$ be a collection of marked points on $\Lambda$ with at least one marked point per link component. Fix a compatible set $C$ of sign curves and let $\ww=\ww(\bG)$ be the initial weave. The flag moduli $\FM(\ww,T)$ is an open subset of the moduli space $\FM(\Lambda, T)$, and every point in this open subset defines a local system, via the identification $\Phi_C:\cM_1(\ww)\xrightarrow{\cong} \Loc_1(L)$ in Subsection \ref{ssec:fixingsigns}, together with a framing (trivialization) of the rank-1 local system $\Phi$ on the connected components of $\Lambda\setminus T=(\partial L)\setminus T$.

The framing data defines a special vector $\phi_x\in \Phi_x$ at any point $x\in \Lambda\setminus T$. Given an oriented curve $\eta\sse L$ with both the source point $s=\dd_- \eta$ and the target point $t=\dd_+ \eta$ contained inside $\Lambda\setminus T$, we can parallel transport $\phi_s$ from the source $s$ to the target $t$ along $\eta$, obtaining a non-zero vector in $\eta(\phi_s)\in\Phi_t$. The ratio $\frac{\eta(\phi_s)}{\phi_t}$ is a non-zero number $A_\eta$, which defines a $\mathbb{C}^\times$-valued function on $\FM(\ww,T)$. This can be naturally generalized to relative 1-cycles $\eta\in H_1(L\setminus T,\Lambda\setminus T)$.

\begin{definition}\label{def:merodromy} The function $A_\eta:\FM(\ww, T)\longrightarrow \mathbb{C}^\times$ is said to be the \emph{microlocal merodromy} along the oriented curve $\eta$.
\end{definition}

\noindent Since $\FM(\ww,T)$ is an open subset of $\FM(\Lambda, T)$, $A_\eta$ can also be viewed as a rational function on $\FM(\La,T)$. Note that, a priori, $A_\eta$ might not extend to a regular function on $\FM(\La,T)$. We emphasize that the decoration in $\FM(\La,T)$ are needed in order to define $A_\eta$, and thus microlocal merodromies cannot be defined in $\mathcal{M}_1(\La)$.

The microlocal merodromies associated to relative cycles coming from marked points are non-vanishing. Indeed, for each marked point $t\in T$, we pick a small half-disk neighborhood $U_t$ of $t$, as drawn in Figure \ref{fig:merodromy_markedpoint}, and define $\xi_t:=\partial U_t$. By definition, we have that
\[
A_t:=A_{\xi_t}=\frac{\xi_t(\lambda)}{\rho}\neq 0.
\]
\begin{figure}[H]
\begin{tikzpicture}[scale=0.6]
\draw (0,0) -- node [above] {$\lambda$} (2,0) -- node [above] {$\rho$} (4,0);
\node at (2,0) [] {$\ast$};
\draw [blue,decoration={markings,mark=at position 0.5 with {\arrow{stealth}}},postaction={decorate}] (1.5,0) arc (-180:0:0.5);
\end{tikzpicture}
\caption{A marked point $t\in T$, with decorations $\la$ and $\rho$ to the left and right, and the boundary of a half-disk neighborhood $U_t$.}\label{fig:merodromy_markedpoint}
\end{figure}
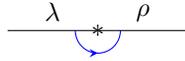
In particular, by using the ratio $\frac{\xi_t(\lambda)}{\rho}$, we can extend $A_t$ to a global invertible function on the entire moduli space $\FM(\Lambda,T)$. (This property does not in general hold for $A_\eta$ if $\eta$ is an arbitrary relative cycle in $H_1(L\setminus T,\Lambda\setminus T)$.) Now consider the following exact sequence of lattices:
\[
0\longrightarrow \mathbb{Z}\overset{i}{\longrightarrow} \bigoplus_{t\in T}\mathbb{Z}\xi_t\longrightarrow H_1(L\setminus T,\Lambda\setminus T)\overset{\pi}{\longrightarrow} H_1(L,\Lambda)\longrightarrow 0,
\]
where $i(1):=\sum_{t\in T}\xi_t$. This exact sequence implies the following two corollaries.

\begin{cor}\label{cor:prod A_t=1} $\prod_{t\in T} A_t=1$ and hence $A_t$ is a unit in $\mathcal{O}(\FM(\Lambda,T))$ for every $t\in T$.
\end{cor}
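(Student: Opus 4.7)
The plan is to first establish that the microlocal merodromy assignment descends to a group homomorphism
\[
A:H_1(L\setminus T,\Lambda\setminus T)\lr \mathcal{O}(\FM(\ww,T))^{\times},\quad \eta\longmapsto A_\eta,
\]
and then feed the relation $\sum_{t\in T}\xi_t = 0$ coming from the displayed exact sequence into this homomorphism. Verifying multiplicativity is straightforward from the construction in Subsection \ref{ssec:microlocal_merodromies}: $A_\eta$ is defined as a ratio of parallel transports of the framing vectors in the rank-one microlocal local system $\Phi$, so homotopy invariance of $\Phi$-parallel transport on $L\setminus T$ implies that $A_\eta$ depends only on the relative homology class $[\eta]$, and concatenation of oriented curves translates to composition of parallel transports, which in turn translates to multiplication of the corresponding scalar ratios. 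The compatible sign-curve system $C$ fixed in Subsection \ref{ssec:fixingsigns} turns $\Phi$ into an honest (rather than twisted) local system on $L\setminus T$, so there is no residual sign ambiguity to worry about when summing classes.

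Next I would apply $A$ to the exact sequence. By the definition $i(1)=\sum_{t\in T}\xi_t$, the class $\sum_{t\in T}\xi_t$ vanishes in $H_1(L\setminus T,\Lambda\setminus T)$, and therefore
\[
\prod_{t\in T} A_t \;=\; A_{\sum_{t\in T}\xi_t} \;=\; A_0 \;=\; 1
\]
as elements of $\mathcal{O}(\FM(\ww,T))$. Since $\FM(\ww,T)$ is a Zariski open subset of $\FM(\Lambda,T)$ and this is a polynomial identity between restrictions of rational functions, the equality persists on all of $\FM(\Lambda,T)$ once each $A_t$ is shown to extend globally.

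Finally, I would argue that each $A_t$ extends to a global regular function on $\FM(\Lambda,T)$ and is nowhere vanishing there. This is the cleanest step: the arc $\xi_t$ sits inside an arbitrarily small half-disk neighborhood $U_t$ of the marked point $t$, and the formula $A_t=\xi_t(\lambda)/\rho$ only involves the two framings $\lambda,\rho$ on the two components of $\Lambda\setminus T$ adjacent to $t$ and the local microlocal parallel transport of $\Phi$ across $U_t$. All of this data is defined and takes non-zero values at every point of $\FM(\Lambda,T)$ by Definition \ref{def:decoratedsheaves}, so $A_t\in\mathcal{O}(\FM(\Lambda,T))$ and $A_t$ does not vanish. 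Combined with $\prod_{t\in T}A_t=1$, each $A_t$ has an explicit inverse $\prod_{t'\neq t}A_{t'}$ in $\mathcal{O}(\FM(\Lambda,T))$, giving the required unit property. The only mildly subtle point in executing this plan is ensuring that the multiplicativity of $A$ is formulated globally and not merely on $\FM(\ww,T)$; this is handled by noting that both sides of $A_{\eta_1+\eta_2}=A_{\eta_1}A_{\eta_2}$ are rational functions on $\FM(\Lambda,T)$ that agree on the dense open chart $\FM(\ww,T)$.
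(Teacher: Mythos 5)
Your proposal is correct and follows essentially the same route as the paper, whose one-line proof (``it follows from $\sum_{t\in T}\xi_t=0$'') is simply a compressed version of your argument: it applies the multiplicativity of $\eta\mapsto A_\eta$ to the exact sequence, and it relies on the sentence immediately preceding the corollary (where $A_t=\xi_t(\lambda)/\rho$ is observed to extend to a global invertible function on all of $\FM(\Lambda,T)$ since it only compares the framings on the two arcs adjacent to $t$) for the ``hence $A_t$ is a unit'' part. Your extra care in spelling out that $A$ is a homomorphism on $\FM(\ww,T)$ and that the identity then propagates to all of $\FM(\Lambda,T)$ by density is implicit in the paper but not incorrect.
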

\begin{proof} It follows from the fact that $\sum_{t\in T}\xi_t=0$ in $H_1(L\setminus T,\Lambda\setminus T)$.
\end{proof}




\begin{cor}\label{cor: merodromies of the same relative cycle only differ by units} If $\eta_1,\eta_2\in H_1(\Sigma\setminus T,\Lambda\setminus T)$ satisfy $\pi(\eta_1)=\pi(\eta_2)$, then $A_{\eta_1}$ and $A_{\eta_2}$ are related to each other by a Laurent monomial in the variables $A_t$, $t\in T$.
\end{cor}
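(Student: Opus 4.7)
The plan is to exploit the exact sequence
\[
0\longrightarrow \mathbb{Z}\overset{i}{\longrightarrow} \bigoplus_{t\in T}\mathbb{Z}\xi_t\longrightarrow H_1(L\setminus T,\Lambda\setminus T)\overset{\pi}{\longrightarrow} H_1(L,\Lambda)\longrightarrow 0
\]
displayed just above the statement, together with the multiplicativity of microlocal merodromy. Since $\pi(\eta_1)=\pi(\eta_2)$, the difference $\eta_1-\eta_2$ lies in $\ker\pi$, which equals the image of $\bigoplus_{t\in T}\mathbb{Z}\xi_t$. Hence I can write
\[
\eta_1-\eta_2=\sum_{t\in T} n_t\,\xi_t
\]
for some integers $n_t\in\mathbb{Z}$, uniquely determined modulo the diagonal relation $\sum_{t\in T}\xi_t=0$.

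Next, I would establish that the map $\eta\mapsto A_\eta$ from $H_1(L\setminus T,\Lambda\setminus T)$ into the group of $\mathbb{C}^\times$-valued rational functions on $\FM(\Lambda,T)$ is a group homomorphism. This is essentially the content of Definition \ref{def:merodromy}: microlocal parallel transport of the framing vector $\phi_s$ along a concatenation of oriented paths equals the composition of the individual transports, and endpoint framings cancel in ratios. Thus $A_{\eta+\eta'}=A_\eta\cdot A_{\eta'}$ and $A_{-\eta}=A_\eta^{-1}$ as rational functions on the open chart $\FM(\ww,T)$, hence as rational functions on all of $\FM(\Lambda,T)$.

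Applying multiplicativity to $\eta_1-\eta_2=\sum_t n_t\xi_t$ gives
\[
\frac{A_{\eta_1}}{A_{\eta_2}}\;=\;A_{\eta_1-\eta_2}\;=\;\prod_{t\in T}A_{\xi_t}^{\,n_t}\;=\;\prod_{t\in T}A_t^{\,n_t},
\]
which is a Laurent monomial in $\{A_t\}_{t\in T}$. By Corollary \ref{cor:prod A_t=1}, each $A_t$ is a unit in $\mathcal{O}(\FM(\Lambda,T))$, so this monomial is a globally defined unit and the identity holds as an equality of rational (indeed, regular up to units) functions on $\FM(\Lambda,T)$, yielding $A_{\eta_1}=\big(\prod_t A_t^{n_t}\big)\,A_{\eta_2}$ as claimed.

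The only real subtlety is the well-definedness of the exponents: different choices of the $n_t$ differ by an integer multiple of $(1,1,\dots,1)$, but this ambiguity is absorbed by the relation $\prod_{t\in T}A_t=1$ of Corollary \ref{cor:prod A_t=1}, so the resulting Laurent monomial is unambiguous. I do not anticipate this step to be a genuine obstacle; the main conceptual point is simply confirming that the microlocal merodromy really respects addition of relative cycles, which is built into its definition via parallel transport of framing vectors.
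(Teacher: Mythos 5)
Your argument is correct and follows the same route the paper takes: the paper's proof is the single line ``It follows from the fact that $\ker\pi=\Span\{\xi_t\mid t\in T\}$.'' You have simply unpacked the implicit ingredients (multiplicativity of $\eta\mapsto A_\eta$, and the absorption of the $\mathbb{Z}(1,\dots,1)$ ambiguity by $\prod_t A_t=1$), which the authors leave tacit.
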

\begin{proof} It follows from the fact that $\ker \pi = \Span\{\xi_t\mid t\in T\}$.
\end{proof}

\noindent The later corollary starts to hint at the quasi-cluster equivalence that appears if different basis completions in $H_1(\Sigma\setminus T,\Lambda\setminus T)$ are chosen, as the former corollary indeed hints at the fact that $A_t$, $t\in T$, are a type of frozen variables.

\subsubsection{Crossing values}\label{sssec:} The next aim is to compute $A_\eta$ for curves whose support is transverse to a weave $\ww$. Consider the following commutative diagram of vector space inclusions 
\[
\begin{tikzpicture}[scale=0.9]
\node (s) at (0,-1) [] {$V_s$};
\node (w) at (-1,0) [] {$V_w$};
\node (e) at (1,0) [] {$V_e$};
\node (n) at (0,1) [] {$V_n$};
\draw [->] (s) -- (w);
\draw [->] (s) -- (e);
\draw [->] (w) -- (n);
\draw [->] (e) -- (n);
\end{tikzpicture}
\]
and assume that $0\rightarrow V_s\rightarrow V_w\oplus V_e\rightarrow V_n\rightarrow 0$ is exact. Let $\alpha_s,\alpha_w,\alpha_e,\alpha_n$ be non-zero top-dimensional (volume) forms in $V_s,V_w,V_e,V_n$, respectively. Then we write $\alpha_w=\beta_w\wedge\alpha_s$ and $\alpha_e=\beta_e\wedge \alpha_s$ for some forms $\beta_w$ and $\beta_e$. 

\begin{definition}\label{def:crossingvalue} In the context of a diagram as above, we define 
\[
\alpha_w\overset{\alpha_s}{\wedge}\alpha_e:=\beta_w \wedge \beta_e\wedge \alpha_s.
\]
The top form $\alpha_w\overset{\alpha_s}{\wedge}\alpha_e$ is a non-zero on $V_n$ and does not depend on the choice of $\beta_w$ or $\beta_e$. By definition, the ratio $\frac{\alpha_w\overset{\alpha_s}{\wedge}\alpha_e}{\alpha_n}$ is said to be the \emph{crossing value} of the quadruple of top forms $\alpha_s,\alpha_w,\alpha_e,\alpha_n$.
\end{definition}

Let us describe how to use crossing values to compute merodromies along planar relative cycles. Consider a flag $\SF=(0\subset \SF_1\subset \SF_2\subset \cdots \subset \SF_n=\bC^n)$ with a choice of $\phi_i\neq 0\in \SF_i/\SF_{i-1}$ for all $i\in [1,n]$. The choice of such $\phi=(\phi_i)$, $i\in[1,n]$ is said to be a \emph{framing} for the flag $\SF$. Given such \emph{framed flag} $(\SF,\phi)$, we can construct top forms $\alpha_i\in \bigwedge^i\SF_i$, $i\in[1,n]$, by first lifting each $\phi_j$ to a vector in $\tilde{\phi}_j\in \SF_j$, $j\in[1,n]$, and then taking ordered wedges, leading to the forms
\begin{equation}\label{eq: definition of decoration}
\alpha_i:=\tilde{\phi}_i\wedge \tilde{\phi}_{i-1}\wedge \cdots \wedge\tilde{\phi_1},\quad i\in[1,n].
\end{equation}
Note that each form $\alpha_i$ is independent of the choice of lifts. 

\begin{definition} Given a flag $\SF$, a collection $\alpha=(\alpha_1,\alpha_2,\dots, \alpha_n)$ of non-vanishing forms $\alpha_i\in\bigwedge^i\SF_i$, $i\in[1,n]$, is said to be a \emph{decoration} on the flag $\SF$. A flag with a decoration is referred to as a \emph{decorated flag}.
\end{definition}

\noindent Note that we can reverse the construction above and recover a framing from a decoration. Thus, framings $(\phi_1,\ldots,\phi_n)$, and decorations $(\alpha_1,\alpha_2,\dots, \alpha_n)$ of a flag $\SF$ are equivalent pieces of data. By definition, two decorated (or framed) flags $(\SL,\alpha)$ and $(\SR,\beta)$ are in $s_i$-transverse position if the underlying flags $\SL\overset{s_i}{\sim} \SR$ are in $s_i$-transverse position, i.e. $s_i$-transversality does not see decorations (or framings).


\color{black}


\noindent {Suppose $(\SL,\lambda)$ and $(\SR,\rho)$ are two framed flags such that $\SL\overset{s_i}{\sim} \SR$. Let $\alpha$ and $\beta$ be the decorations constructed from $\lambda$ and $\rho$, respectively. Consider the parallel transport maps $\psi_\pm$ defined in \eqref{eq: parallel transport}. The images $\psi_+(\lambda_i)$ and $\psi_-(\rho_i)$ are readily computed in terms of decorations as follows:

\begin{lemma}\label{lem: local crossing value} $\psi_+(\lambda_i)= \dfrac{\alpha_i\overset{\alpha_{i-1}}{\wedge}\beta_i}{\beta_{i+1}}\rho_{i+1}$ and $\psi_-(\rho_i)=\dfrac{\beta_i\overset{\beta_{i-1}}{\wedge}\alpha_i}{\alpha_{i+1}}\lambda_{i+1}$.
\end{lemma}
\begin{proof} Let $\tilde{\lambda}_i$ and $\tilde{\rho}_{i+1}$ be lifts of $\lambda_i$ and $\rho_{i+1}$. By construction, the framing $\psi_+(\lambda_i)$ can obtained as follows. First, lift $\la_i\in\SL_i/\SL_{i-1}$ to a vector $\tilde\la_i\in\SL_i$ and consider this vector as $\tilde\la_i\in\SL_{i+1}$ via the inclusion $\SL_i\sse\SL_{i+1}$. Then, using $\SL_{i+1}=\SR_{i+1}$, we can view $\tilde\la_i\in\SR_{i+1}$ and thus finally $\psi_+(\la_i)=\pi(\tilde\la_i)$, where $\pi:\SR_{i+1}\lr\SR_{i+1}/\SR_i$ is the quotient map. Each of $\psi_+(\lambda_i)$ and $\rho_{i+1}$ is a (volume) 1-form on $\SR_{i+1}/\SR_i$, and can be pulled-back via $\pi$ to 1-forms in $\SR_{i+1}$. By wedging these forms with (any) top form in $\SR_{i}$, such as $\beta_i$, we obtain the top forms $\tilde{\lambda}_i\wedge\beta_i$ and $\tilde{\rho}_{i+1}\wedge \beta_i$. Since we wedged both $\psi_+(\lambda_i)$ and $\rho_{i+1}$ with the same form $\beta_i$, their ratios are equal:

$$\frac{\psi_+(\lambda_i)}{\rho_{i+1}}=\frac{\tilde{\lambda}_i\wedge\beta_i }{ \tilde{\rho}_{i+1}\wedge \beta_i}.$$

 By \eqref{eq: definition of decoration}, we also have $\alpha_i=  \tilde{\lambda}_i\wedge \alpha_{i-1}$ and $\beta_{i+1}= \tilde{\rho}_{i+1}\wedge \beta_i$. Therefore

$$\frac{\psi_+(\lambda_i)}{\rho_{i+1}}=\frac{\tilde{\lambda}_i\wedge\beta_i }{ \tilde{\rho}_{i+1}\wedge \beta_i}=\frac{\alpha_i\overset{\alpha_{i-1}}{\wedge}\beta_i}{\beta_{i+1}}.$$

The equality for $\psi_-$ is obtained similarly.
\end{proof}

\begin{remark}
By Lemma \ref{lem: local crossing value}, the inverses of $\psi_\pm$ are computed analogously. Namely:
$$\psi_+^{-1}(\rho_{i+1})= \dfrac{\beta_{i+1}}{\alpha_i\overset{\alpha_{i-1}}{\wedge}\beta_i}\lambda_i,\qquad \mbox{and} \qquad\psi_-^{-1}(\lambda_{i+1})=\dfrac{\alpha_{i+1}}{\beta_i\overset{\beta_{i-1}}{\wedge}\alpha_i}\rho_i.$$
\hfill$\Box$
\end{remark}

Now suppose $\eta\sse\Sigma(\ww)$ is a lift of a planar curve in $\mathbb{R}^2$ to the weave front. Then, it defines a partial cross-section of the weave surface, where $\eta$ passes through a collection of (framed) flags $\SL=\SF_0, \SF_1,\SF_2,\dots, \SF_l=\SR$. For each flag $\SF_i$, $0<i<l$, we choose a sequence of top forms $\alpha_j=(\alpha_{i,j})$. Since the parallel transport along $\eta$ consists of compositions of linear isomorphisms like the maps $\psi_\pm$ in Lemma \ref{lem: local crossing value}, or their inverses, Lemma \ref{lem: local crossing value} allows us to compute $A_\eta$.

\begin{example}\label{exmp:crossing value} Consider the cross-section of a weave surface depicted in Figure \ref{fig:merodromy_cross_section}, and let $\eta$ be the blue relative cycle. The sequences of top forms $\alpha$ and $\delta$ are determined by the decorations $\lambda$ and $\rho$, respectively. The tuples of top forms $\beta$ and $\gamma$ are chosen arbitrary. Note that $\alpha_0,\beta_0,\gamma_0$ and $\delta_0$ are trivial. Let us denote the $\psi_\pm$ maps associated to each of the three crossings by $_{i}\psi_\pm$, where $i\in[1,3]$, $i=1$ being associated to the leftmost crossing, $i=2$ to the center crossing and $i=3$ to the rightmost crossing.\\

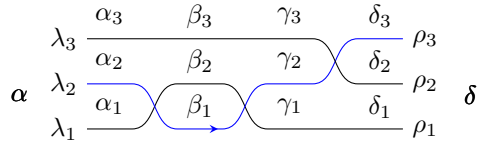
\begin{figure}[H]
    \centering
    \begin{tikzpicture}[scale=0.6]
\draw (0,0) node [left] {$\lambda_1$} -- (1,0) to [out=0,in=180] (2,1) -- (3,1) to [out=0,in=180] (4,0) -- (7,0) node [right] {$\rho_1$};
\draw [blue, decoration={markings,mark=at position 0.4 with {\arrow{stealth}}},postaction={decorate}] (0,1) node [left, black] {$\lambda_2$} -- (1,1) to [out=0,in=180] (2,0) -- (3,0) to [out=0,in=180] (4,1) -- (5,1) to [out=0,in=180] (6,2) -- (7,2) node [right, black] {$\rho_3$};
\draw (0,2) node [left] {$\lambda_3$} -- (5,2) to [out=0,in=180] (6,1) -- (7,1) node [right] {$\rho_2$};
\foreach \i in {1,2,3} {
\node at (0.5,\i-0.5) [] {$\alpha_\i$};
\node at (2.5,\i-0.5) [] {$\beta_\i$};
\node at (4.5,\i-0.5) [] {$\gamma_\i$};
\node at (6.5,\i-0.5) [] {$\delta_\i$};
\node at (-1.5,0.75) [] {$\alpha$};
\node at (8.5,0.75) [] {$\delta$};
}
\end{tikzpicture}
    \caption{Computation of a merodromy}\label{fig:merodromy_cross_section}
\end{figure}

By definition, the microlocal merodromy along $\eta$ is
$$A_\eta=\frac{\psi_\eta(\lambda_2)}{\rho_3}=\dfrac{({_3\psi_+}\circ {_2\psi_+}\circ {_1\psi_-^{-1})}(\la_2)}{\rho_3}.$$

By Lemma \ref{lem: local crossing value}, each of the microlocal merodromy $_i\psi_\pm$ and their inverses are computed as
\[
A_\eta=\frac{\psi_\eta(\lambda_2)}{\rho_3}=\frac{\gamma_2\overset{\gamma_1}{\wedge}\delta_2}{\delta_3}\cdot\frac{\beta_1\wedge \gamma_1}{\gamma_2}\cdot\frac{\alpha_2}{\beta_1\wedge \alpha_1}=\frac{\beta_1\wedge \delta_2}{\delta_3}\cdot \frac{\alpha_2}{\beta_1\wedge \alpha_1}.
\]
\noindent  Two observations based on this computation. First, the right-hand side of this expression shows that $A_\eta$ depends on the underlying undecorated flag associated with the $\beta$, but it is invariant under any non-zero rescaling of the decoration $\beta$. This is a general fact. Namely, the function $A_\eta$ does depend on the intermediate flags between the two flags at the endpoints; nevertheless, it does not depend on the decorations of these intermediate flags.

Second, a reason for the decoration $\gamma$ not appearing in the computation of $A_\eta$ above is that the flag associated to $\gamma$ is uniquely determined by the flags associated to $\beta$ and $\delta$. Observe that in the case that the slice along $\eta$ yields a reduced braid word, the intermediate flags are uniquely determined by the flags at the endpoints and thus the microlocal merodromy only depends on the decorated flags at the endpoints. For more general computations, the study of microlocal merodromies involves understanding properties, such as regularity, of products of crossing values and inverses thereof.\hfill$\Box$ 
\end{example}

In general, a microlocal merodromy $A_\eta$ will be expressed in terms of ratios of crossing values, and is only a {\it rational} function. Nevertheless, certain choices of $\eta$ within the initial weave $\ww=\ww(\bG)$ yield {\it regular} funtions. Indeed, let us consider the following special family of merodromies. By Subsection \ref{ssec:naiverelativecyles}, each face $f$ of $\bG$ has an associated naive relative cycle $\eta_f$ in $H_1(L\setminus T,\Lambda\setminus T)$. Let $A_f:=A_{\eta_f}$ be the microlocal merodromy of this naive relative cycle. Since $\{\partial f\}$ is a basis of $H_1(L)$, it follows from Poincar\'{e} duality that $\{\pi(\eta_f)\}$ is a basis of $H_1(L,\Lambda)$, where $\pi:H_1(L\setminus T,\Lambda\setminus T)\rightarrow H_1(L,\Lambda)$ is the natural projection map. By Corollary \ref{cor: merodromies of the same relative cycle only differ by units}, we conclude that different choices of $\eta_f$ only change $A_f$ by a multiple of units.

\begin{prop}\label{prop: A_f are regular functions} Let $f$ be a face in a GP-graph $\bG$. Then the microlocal merodromy $$A_f:\FM(\La(\bG),T)\lr\C$$
is a regular function.
\end{prop}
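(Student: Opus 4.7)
The plan is to compute $A_f$ explicitly on the open chart $\FM(\ww,T)$ using the combinatorial description of $\eta_f$ given in Subsection \ref{ssec:naiverelativecyles}, and then to recognize the resulting expression as a manifestly regular function on the ambient moduli $\FM(\La,T)$ via the flag configuration description from Lemma \ref{lem:flag_description} (equivalently Lemma \ref{lem:1closure}). By Corollary \ref{cor: merodromies of the same relative cycle only differ by units}, it suffices to prove regularity for any representative of $\pi(\eta_f)\in H_1(L,\La)$, so we may use the specific labeled dashed curve $\kappa_f$ constructed in Subsection \ref{ssec:naiverelativecyles}, placed in the interior of a Type 1 elementary column containing $f$. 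The advantage of this choice is that $\kappa_f$ is transverse to a purely horizontal slice of the weave front $\Sigma(\ww)$ whose cross-section is exactly the reduced word $\w_{0,n}$.

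First, I would compute $A_f$ on $\FM(\ww,T)$ by applying Lemma \ref{lem: local crossing value} to each crossing in the vertical slice. The labeled dashed curve $\kappa_f$ has intersection number $1$ exactly with the weave lines corresponding to crossings in the rhomboid diamond whose lowest vertex is the appropriate $s_1$-crossing associated to $f$, and intersection number $0$ elsewhere. The merodromy then becomes a product of crossing values indexed by the diamond, where all internal top-forms appearing in numerators of one factor also appear in denominators of an adjacent factor. The key combinatorial observation is that this product telescopes: the internal top-forms (which were arbitrary auxiliary choices) all cancel, leaving a single ratio whose numerator is an iterated wedge $\overset{\alpha_{i-1}}{\wedge}$-product of the two decorating top-forms $\alpha_i$ and $\beta_i$ coming from the bottom flag $\SL_0$ and the top flag $\SL^0$ on the Type 1 column (in the notation of Figure \ref{fig:Flags_Type1}), and whose denominator is exactly one of the decorating volume forms at the marked point.

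Next, I would identify this ratio with a Plücker-type polynomial in the global coordinates on $\FM(\La,T)$. Using the flag description of Lemma \ref{lem:1closure}, a point of $\FM(\La,T)$ is a cyclic configuration of flags in $\C^h$ with prescribed relative positions plus decorations on their associated $\C^\times$-local system. The decorations of $\SL_0$ and $\SL^0$ are polynomial functions of the global flag/decoration data, and the iterated wedge $\alpha_i\overset{\alpha_{i-1}}{\wedge}\beta_i$ from Definition \ref{def:crossingvalue} is, by construction, a polynomial (indeed a minor-type expression) in the components of these decorations. The denominator, being the decorating top-form at a marked point $t$, is nothing other than the unit $A_t$ from Corollary \ref{cor:prod A_t=1}. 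Because $A_t$ is a global unit in $\mathcal{O}(\FM(\La,T))$, dividing by it preserves regularity.

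The main obstacle — and the reason this is not quite automatic — is ensuring that the auxiliary choices made inside $\kappa_f$'s diamond really do telescope, particularly when the slice of the weave traverses hexagonal vertices coming from $\w_{0,n}$; one must verify that the identities from Subsection \ref{sssec:flag_description} (the exactness conditions at crossings, and the identifications across cusps) guarantee that all intermediate auxiliary forms cancel in pairs. Once that telescoping is established, however, the resulting expression is a polynomial divided by the global unit $A_t$, hence extends from the toric chart $\FM(\ww,T)$ to the whole moduli $\FM(\La,T)$ as a regular function, completing the proof.
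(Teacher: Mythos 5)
Your approach follows the paper's proof closely: both compute $A_f$ on a vertical slice through a Type~1 column using the crossing-value technique, and both exploit the telescoping cancellation of auxiliary forms. The paper packages this as $A_f=\prod_{k=1}^i A_{\xi_k}$, with each elementary path $\xi_k$ contributing $\frac{\alpha_k\wedge\beta_{n-k}}{\alpha_{k-1}\wedge\beta_{n-k+1}}$, so the product telescopes to $\frac{\alpha_i\wedge\beta_{n-i}}{\beta_n}$.

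There is, however, an error in your final step. You identify the surviving denominator with ``the decorating top-form at a marked point $t$'' and then with the frozen unit $A_t$ of Corollary~\ref{cor:prod A_t=1}. The denominator is in fact $\beta_n$, the top-degree decoration form $\phi_n\wedge\cdots\wedge\phi_1$ of the top flag $\SL^0$ at the Type~1 column; this is not $A_t=\xi_t(\lambda)/\rho$, which is a ratio of framings on the two sides of $t$. The two are different objects, and the invocation of Corollary~\ref{cor:prod A_t=1} is not what does the work here. The conclusion still holds for a simpler reason: framings are by definition nonvanishing, so $\beta_n\neq 0$ on all of $\FM(\La,T)$ (after fixing a trivialization on the pre-quotient it is a unit, and the ratio $\frac{\alpha_i\wedge\beta_{n-i}}{\beta_n}$ is invariant and descends), which directly gives regularity. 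As written, though, the final justification of your argument rests on an incorrect identification. Also, a small typo: your numerator should involve $\beta_{n-i}$, not $\beta_i$, and it is an ordinary wedge rather than the crossing-value operation.
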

\begin{proof}
Suppose that the face $f\in\bG$ corresponds to the $k$th gap in $\bG$, counting from the bottom. Then the associated relative cycle $\eta_f$ can be written as
$$\eta_f=\sum_{i=1}^k \xi_i,$$
where the relative cycles $\xi_i$ are described as follows. Consider the braid word $\w_{0,n}$, then $\xi_i$ is the relative cycle given by the $i$th strand in $\w_{0,n}$, counting from the bottom on the left, when considered as a slice of the weave $\ww(\bG)$ along $\eta_f$. The following figure depicts such $\xi_i$ for $n=4$:
\[
\begin{tikzpicture}
\draw [decoration={markings,mark=at position 0.3 with {\arrow{stealth}}},postaction={decorate}] (0,0) node [left] {$\xi_1$} -- (0.5,0) -- (2,1.5) -- (3.5,1.5);
\draw [decoration={markings,mark=at position 0.3 with {\arrow{stealth}}},postaction={decorate}] (0,0.5) node [left] {$\xi_2$}-- (0.5,0.5) -- (1,0) -- (1.5,0) -- (2.5,1) -- (3.5,1);
\draw [decoration={markings,mark=at position 0.3 with {\arrow{stealth}}},postaction={decorate}] (0,1) node [left] {$\xi_3$} -- (1,1) -- (2,0) -- (2.5,0) -- (3,0.5) -- (3.5,0.5);
\draw [decoration={markings,mark=at position 0.3 with {\arrow{stealth}}},postaction={decorate}] (0,1.5) node [left] {$\xi_4$} -- (1.5,1.5) -- (3,0) -- (3.5,0);
\end{tikzpicture}
\]

Now, the microlocal parallel transport along $\xi_i$, from the $i$th strand on the left to the $(n-i+1)$th strand on the right, can be computed via Lemma \ref{lem: local crossing value}. In particular, each microlocal merodromy $A_{\xi_i}$ has contributions from $(i-1)$ crossings because $\w_{0,n}$ is a half-twist, and thus it is obtained after composing $(i-1)$ instances of $\psi^{-1}_\pm$. In such a slice spelling $\w_{0,n}$, let $(\SL,\alpha)$ be the decorated flag at the left endpoints and $(\SR,\beta)$ the decorated flag at the left endpoints. In line with Example \ref{exmp:crossing value}, we obtain
\[
A_{\xi_i} = \frac{\gamma\wedge \beta_{n-i}}{\beta_{n-i+1}}\cdot \frac{\alpha_{i}}{\gamma\wedge \alpha_{i-1}},\quad i\in[1,k],
\]
where $\gamma$ is a non-zero vector in the line $\SL_{i}\cap \SR_{n-i+1}$. Note that the formula for $A_{\xi_i}$ does not actually depend on $\gamma$. In fact, since $\w_{0,n}$ is a reduced word, we have $\alpha_{i-1}\wedge \beta_{n-i+1}\neq 0$. Thus, after wedging both $\gamma\wedge \beta_{n-i}$ and $\beta_{n-i+1}$ with $\alpha_{i-1}$, the expression for $A_{\xi_i}$ above reads
$$A_{\xi_i} =\frac{\alpha_{i-1}\wedge\gamma\wedge \beta_{n-i}}{\alpha_{i-1} \wedge\beta_{n-i+1}}\cdot \frac{\alpha_{i}}{\gamma\wedge \alpha_{i-1}}=(-1)^{i-1}\frac{\alpha_{i-1}\wedge\gamma\wedge \beta_{n-i}}{\alpha_{i-1} \wedge\beta_{n-i+1}}\cdot \frac{\alpha_{i}}{\alpha_{i-1}\wedge \gamma}=$$
$$=(-1)^{i-1}\frac{\alpha_{i-1}\wedge\gamma\wedge \beta_{n-i}}{\alpha_{i-1} \wedge\beta_{n-i+1}}\cdot \vartheta=(-1)^{i-1}\frac{\vartheta\cdot (\alpha_{i-1}\wedge\gamma)\wedge \beta_{n-i}}{\alpha_{i-1} \wedge\beta_{n-i+1}}=(-1)^{i-1}\dfrac{\alpha_{i}\wedge \beta_{n-i}}{\alpha_{i-1}\wedge \beta_{n-i+1}},\qquad i\in[1,k],$$
where we have denoted by $\vartheta\in\mathbb{C}^\times$ the unique non-zero scalar such that $\alpha_i=\vartheta\cdot (\alpha_{i-1}\wedge\gamma)$.
In conclusion, we obtain
\[
A_f=\prod_{i=1}^kA_{\xi_i}=(-1)^{k(k-1)/2}\cdot \frac{\alpha_k\wedge \beta_{n-k}}{\beta_n}.
\]
By definition $\beta_n\neq 0$ is non-zero and therefore $A_f$ is a regular function for each face $f$.
\end{proof}

\begin{remark}
Microlocal merodromies can also be used to define the frozen cluster $\mathcal{X}$-variables associated to the relative cycles in $H_1(L,T)$ that are not in the image of $H_1(L)$. In the moduli space $\mathcal{M}_1(\La,T)$, the microlocal merodromy allows one to compare framings at the endpoints of the relative cycles, which are marked points $T$ where the (stalk of the microlocal) local system has been trivialized.
\end{remark}


\subsection{Vanishing of microlocal merodromies and flag relative positions}\label{ssec:relative_position_flags}
In this section we study the vanishing loci of the microlocal merodromies $A_f:\FM(\La(\bG),T)\lr\C$ associated to faces $f\sse\bG$ of a GP-graph $\bG$. The key technical result, Proposition \ref{prop:relative position scanning}, relates the vanishing loci of microlocal merodromies associated to different faces of the graph $\bG$. This result is crucial to deduce the necessary properties of these candidate cluster $\mathcal{A}$-variables, such as regularity, and conclude Theorem \ref{thm:main}.

Thus far, we have parametrized the relative position of a pair of flags in $\mathbb{C}^n$ by the symmetric group $S_n$. This relative position is invariant under the diagonal $\GL_n$ action, and hence is also in bijection with $\GL_n$-orbits in $\mathcal{B}(n)\times \mathcal{B}(n)$. The inclusion relation on closures of these orbits defines a partial order called the \emph{Bruhat order} on $S_n$, i.e.~ $u\leq v$ if $\mathcal{O}_u\subset \overline{\mathcal{O}}_v$. Combinatorially, the Bruhat order can be computed through set comparison. 

\begin{definition} For two equal-sized subsets $I=\{i_1<i_2<\cdots<i_m\}$ and $J=\{j_1<j_2<\cdots<j_m\}$ of $\{1,\dots, n\}$, we define $I\leq J$ if $i_k\leq j_k$ for all $1\leq k\leq m$. By definition, for two permutations $u$ and $v$ of $S_n$, $u\leq v$ in the \emph{Bruhat order} if and only if $\{u(1),\dots, u(m)\}\leq \{v(1),\dots, v(m)\}$ for all $1\leq m< n$.
\end{definition}

By Subsection \ref{ssec:moduli_Legendrian}, the moduli $\FM(\La(\bG),T)$ can be understood in terms of tuples of flags, with maps between them and incidence constraints. The flags can be read directly from the front $\mathfrak{G}$. In particular, for a Type 1 column of $\bG$, there exists a unique pair of (decorated) flags $\SF_0$ and $\SF^0$, see Figure \ref{fig:Flags_Type1}. In the points of the open torus chart $\FM(\ww,T)\sse\FM(\La(\bG),T)$, these two flags $\SF_0$ and $\SF^0$ are in $w_0$-relative position, but in general the relative position between $\SF_0$ and $\SF^0$, at another point of $\FM(\La(\bG),T)$ might vary. The dependence of $\SF_0$ and $\SF^0$ on the point $p\in \FM(\La(\bG),T)$ will be denoted by $\SF_0(p)$ and $\SF^0(p)$.

By Proposition \ref{prop: A_f are regular functions} the microlocal merodromy $A_i$ associated to the $i$th gap of a Type 1 column, counting from below in the GP-graph, is a regular function on $\FM(\Lambda, T)$. Moreover, it can be expressed as $\dfrac{\alpha_i\wedge \beta_{n-i}}{\beta_n}$, up to a multiple of units, where $\alpha$ and $\beta$ are decorations on the pair of flags $\SF_0$ and $\SF^0$ placed at the bottom and the top of that Type 1 column, respectively. In particular, the restriction of $A_i|_{\FM(\ww,T)}$ to the open torus chart $\FM(\ww,T)\sse\FM(\La(\bG),T)$ is a non-vanishing function. The following lemma shows that we can describe the vanishing locus of this microlocal merodromy $A_i:\FM(\La(\bG),T)\lr\C$ in terms of the relative position between the two flags $\SF_0$ and $\SF^0$:

\begin{lemma}\label{lem: vanishing due to bruhat order} Let $\bG$ be a GP-graph, $(\SF_0,\alpha),(\SF^0,\beta)$ the pair of decorated flags associated with a Type 1 column $C$ and $A_i$ the $i$th microlocal merodromy associated to $C$, $i\in[1,n]$. Consider a point $p\in\FM(\La(\bG),T)$ and the permutation $w\in S_n$ such that $\SF_0(p)\overset{w}{\sim}\SF^0(p)$. Then $A_i(p)=0$ if and only if $w\leq s_iw_0$ in the Bruhat order.
\end{lemma}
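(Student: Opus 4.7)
The approach is to reduce the statement to a standard relationship between transversality of flag subspaces and the Bruhat order on $S_n$.

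To start, the plan is to invoke Proposition \ref{prop: A_f are regular functions}, which expresses $A_i$ (up to multiplication by units in $\mathcal{O}(\FM(\La(\bG),T))$) as $(\alpha_i \wedge \beta_{n-i})/\beta_n$. Since $\beta_n$ is a framing on $\SF^0_n = \mathbb{C}^n$ and is therefore nowhere vanishing, the condition $A_i(p) = 0$ becomes $\alpha_i(p) \wedge \beta_{n-i}(p) = 0$. By the standard interpretation of decorations as top forms on the corresponding subspaces, this in turn is equivalent to $\SF_{0,i}(p) + \SF^0_{n-i}(p) \neq \mathbb{C}^n$, that is, to the non-transversality $\dim(\SF_{0,i}(p) \cap \SF^0_{n-i}(p)) \geq 1$.

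Next, I will use the dictionary between Tits distance and intersection dimensions of flag subspaces. Given $\SF_0(p) \stackrel{w}{\sim} \SF^0(p)$, one can choose a basis $v_1,\dots,v_n$ with $\SF_{0,k} = \langle v_1,\dots,v_k\rangle$ and $\SF^0_k = \langle v_{w(1)},\dots,v_{w(k)}\rangle$, which yields
\[
\dim(\SF_{0,a} \cap \SF^0_b) = |\{\, j \leq b : w(j) \leq a \,\}|.
\]
A quick sanity check, $w = w_0$ giving $\dim(\SF_{0,a}\cap \SF^0_b)=\max(0,a+b-n)$, confirms that the convention matches the one used elsewhere in this section (with transversality corresponding to $w_0$). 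Specializing to $a = i$ and $b = n-i$, the vanishing condition for $A_i(p)$ becomes the existence of some $j \leq n-i$ with $w(j) \leq i$, equivalently $\{w(1),\dots,w(n-i)\} \neq \{i+1,\dots,n\}$.

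To conclude, the plan is to match this condition with $w \leq s_i w_0$ via the tableaux criterion for the Bruhat order. A direct calculation shows that $s_i w_0 = [\,n,n-1,\dots,i+2,\,i,\,i+1,\,i-1,\dots,1\,]$ in one-line notation. Consequently $\{s_iw_0(1),\dots,s_iw_0(m)\}$ coincides with the maximal $m$-subset $\{n-m+1,\dots,n\}$ for all $m \neq n-i$ and equals $\{i, i+2, i+3,\dots,n\}$ for $m = n-i$. The tableaux comparison with $w$ is therefore automatic except when $m = n-i$, where it reduces to requiring that the smallest element of $\{w(1),\dots,w(n-i)\}$ be at most $i$, which is exactly the condition obtained in the previous step. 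The only delicate point in this plan is keeping the flag/basis conventions straight (i.e.\ ensuring that the same $w$ parametrizes the relative position and the intersection-dimension formula); once this is pinned down, the rest is a purely combinatorial identification.
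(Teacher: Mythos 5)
Your proof is correct and follows essentially the same route as the paper's: reduce via Proposition~\ref{prop: A_f are regular functions} to the vanishing of $\alpha_i\wedge\beta_{n-i}$, interpret this as non-transversality of $\SF_{0,i}$ and $\SF^0_{n-i}$, compute in a compatible basis, and match with the tableaux criterion for the Bruhat order. One small point worth noting: your analysis of $s_iw_0$ correctly identifies $m=n-i$ as the unique non-maximal level (with level set $\{i,i+2,\dots,n\}$), whereas the paper's proof states the exceptional level as $m=i$ with level set $\{n,\dots,n-i+2,n-i\}$, which is the level structure of $w_0s_i=(s_iw_0)^{-1}$ rather than $s_iw_0$; the two versions are reconciled once one fixes which of $g_1^{-1}g_2$ or $g_2^{-1}g_1$ parametrizes the relative position of the pair $(\SF_0,\SF^0)$, and your bookkeeping is internally consistent and matches the lemma as stated.
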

\begin{proof} Without loss of generality, we may assume that the decoration $\alpha$ is proportional to $(e_{w(1)},e_{w(1)}\wedge e_{w(2)},\dots, e_{w(1)}\wedge e_{w(2)}\wedge \cdots e_{w(n)})$ and the decoration $\beta$ is proportional to $(e_1,e_1\wedge e_2,\dots, e_1\wedge e_2\wedge \cdots e_n)$. From this we know that $A_i=0$ if and only if 
\[
e_{w(1)}\wedge e_{w(2)}\wedge \cdots \wedge e_{w(i)}\wedge e_1\wedge e_2\wedge \cdots \wedge e_{n-i}=0,
\]
which is equivalent to saying that the following intersection is non-empty:
\[
\{w(1),w(2),\dots, w(i)\}\cap\{1,2,\dots, n-i\}\neq \emptyset.
\]

\noindent Now note that for the permutation $v=s_iw_0$, all $\{v(1),\dots, v(m)\}$ are maximal sets with respect to the linear order on $\{1,\dots, n\}$ except when $m=i$, where 
\[
\{v(1),\dots, v(i)\}=\{n,n-1,\dots, n-i+2,n-i\}.
\]

\noindent If $w\leq v$, then $\{w(1),w(2),\dots, w(i)\}\leq \{n,n-1,\dots, n-i+2, n-i\}$. This implies that among $w(1),w(2),\dots, w(i)$, some index no greater than $n-i$ must have appeared. Therefore we have $\{w(1),w(2),\dots, w(i)\}\cap\{1,2,\dots, n-i\}\neq \emptyset$ and hence $A_i=0$.

Conversely, if $w\not\leq v$, then we must have $$\{w(1),w(2),\dots, w(i)\}=\{n,n-1,\dots, n-i+1\},$$
\noindent which implies that $\{w(1),w(2),\dots, w(i)\}\cap\{1,2,\dots, n-i\}=\emptyset$ and hence $A_i\neq 0$.
\end{proof}

Lemma \ref{lem: vanishing due to bruhat order} shows that in order to study whether the microlocal merodromies $A_f$ vanish or not, it suffices to consider the relative position between the pair of flags in a Type 1 column that contains part of the face $f$. We use the following simple lemma in the proof of Proposition \ref{prop:relative position scanning}, through Lemma \ref{lem: Type 3 black lollipop Bruhat inequality}:

\begin{lemma}\label{lem: relative position} Let $u,v\in S_n$ and consider three flags $\SF,\SF'$ and $\SF''$ such that $\SF\overset{u}{\sim}\SF'\overset{v}{\sim}\SF''$. Let $l$ denote the length function on $S_n$ and for any $w\in S_n$ we denote by $\underline{w}$ the positive braid represented by a (equivalently any) reduced word of $w$. Then the following holds:
\begin{enumerate}
    \item If $l(uv)=l(u)+l(v)$, then $\SF\overset{uv}{\sim} \SF''$.
    \item In general, if $\SF\overset{w}{\sim}\SF'$, then $w\leq \Dem(\underline{u} \ \underline{v})$ in the Bruhat order.\footnote{
$\Dem$ denotes the Demazure product on positive braids.}
\end{enumerate}
\end{lemma}
\begin{proof} (1) Follows from the standard fact that, for Bruhat cells, if $l(uv)=l(u)+l(v)$ then $(BuB)(BvB)=BuvB$. For (2), at the level of Bruhat cells we know that $(Bs_iB)(Bs_iB)=Bs_iB\sqcup B$; this is equivalent to saying that if $\SF\overset{s_i}{\sim}\SF'\overset{s_i}{\sim}\SF''$, then there are two possible relative position relations between $\SF$ and $\SF''$: either $\SF\overset{s_i}{\sim}\SF''$ or $\SF=\SF''$. For both cases, the relative position is at most $\Dem(\underline{s_i} \ \underline{s_i})=s_i$. The general statement follows from the well-definedness of the Demazure product. 
\end{proof}

For notational convenience, for $1\leq i\leq j<n$, let us define the following permutation:
\[
w_{[i,j]}:=\begin{tikzpicture}[baseline=15]
\node at (1,2) [] {$s_{n-1}$};
\foreach \i in {0,1}
{
\node at (0.5+\i,1.5) [] {$s_{n-2}$};
}
\node at (0,1) [] {$\iddots$};
\node at (2,1) [] {$\ddots$};
\foreach \i in {0,1,2}
{
\node at (-0.5+\i*0.75,0.5) [] {$s_2$};
}
\node at (1.75,0.5) [] {$\cdots$};
\node at (2.5,0.5) [] {$s_2$};
\foreach \i in {0,1,2}
{
\node at (-1+\i*0.75,0) [] {$s_1$};
}
\node at (1.75,0) [below] {$\underbrace{\quad \quad \quad \quad}_\text{delete all $s_1$'s from the $i$th to $j$th copy}$};
\node at (3,0) [] {$s_1$};
\end{tikzpicture}
\quad \quad 
\text{and} \quad \quad 
\overline{w}:=w_0w^{-1}w_0.
\]
In this notation, $s_iw_0=w_{[i,i]}=\overline{w}_{[i,i]}$. In terms of set comparison, $u\leq w_{[i,j]}$ in the Bruhat order if and only if for all $n-j\leq l\leq n-i$,
\begin{equation}\label{eq: w_[i,j] inequality}
\{u(1),u(2),\dots, u(l)\}\leq \{i\leq \cdots\},
\end{equation}
where $\{i\leq \cdots\}$ means the set of the appropriate size (say of size $l$) consisting of the greatest $l-1$ elements in $\{1,\dots, n\}$ together with the element $i$.

Finally, the core of this subsection is the following result, which states that we can use lollipop reactions (Definition \ref{def:lollipop reaction}) to keep track of the relative position conditions on flags and, in turn, understand vanishing conditions for microlocal merodromies associated to faces. The goal of the remaining part of this subsection is to prove:

\begin{prop}\label{prop:relative position scanning} Let $\bG$ be a GP-graph and $f,g\sse \bG$ two faces. Suppose that $g$ is selected in a lollipop reaction initiated from a lollipop in $f$. Then $A_f=0$ implies $A_g=0$.
\end{prop}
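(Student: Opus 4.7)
The plan is to track a Bruhat-theoretic invariant along the columns scanned by the lollipop reaction. By Lemma~\ref{lem: vanishing due to bruhat order}, the vanishing of the $i$th microlocal merodromy $A_i$ at a Type~1 column $C$ is equivalent to the relative position $w(C)\in S_{n(C)}$ between the bottom flag $\SF_0(C)$ and the top flag $\SF^0(C)$ satisfying $w(C)\leq s_iw_0 = w_{[i,i]}$ in the Bruhat order. I propose the following invariant: at every Type~1 column $C$ the wall passes through, one has $w(C)\leq w_{[a(C),b(C)]}$, where $[a(C),b(C)]$ denotes the range of gaps currently occupied by the wall. This invariant immediately implies the proposition: a routine set-comparison argument based on \eqref{eq: w_[i,j] inequality} shows that $w_{[a,b]}\leq w_{[g,g]}$ whenever $g\in [a,b]$, so for any selected face $g$ Lemma~\ref{lem: vanishing due to bruhat order} forces $A_g = 0$. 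Without loss of generality I will treat the white-lollipop case; the black-lollipop case is completely symmetric by reversing horizontal orientation and exchanging top with bottom.

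\textbf{The induction: easy cases.} The base case occurs at the Type~1 column $C_0$ immediately to the left of the white lollipop in $f$: the wall is the single gap $\{i\}$ with $i$ the vertical position of $f$, and the hypothesis $A_f = 0$ translates into $w(C_0)\leq w_{[i,i]}$. Passing through another Type~1 column changes neither the flags nor the wall, so the invariant is trivially preserved. At a Type~2 column with a crossing at gap~$k$ in the bottom, condition \eqref{eq: type 2 bottom relative position} gives $\SL_0\overset{s_k}{\sim}\SR_0$ and $\SL^0=\SR^0$; applying Lemma~\ref{lem: relative position}(2) to $\SR_0\overset{s_k}{\sim}\SL_0\overset{w(C)}{\sim}\SL^0=\SR^0$ yields $w(C')\leq \Dem(\underline{s_k}\cdot \underline{w(C)})$, and a similar right multiplication bound holds for top crossings via \eqref{eq: type 2 top relative position}. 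A case analysis on whether $k$ is in the interior of $[a,b]$, outside $[a,b]$, or at one of its boundaries then shows, directly at the level of the combinatorics of $w_{[a,b]}$, that $\Dem(\underline{s_k}\cdot \underline{w_{[a,b]}})\leq w_{[a',b']}$ for the new wall range $[a',b']$ prescribed by Definition~\ref{def:lollipop reaction}: in the interior and exterior cases the wall is unaffected and this reduces to the elementary fact that $s_k\cdot w_{[a,b]}\leq w_{[a,b]}$ exactly when $k\in[a,b]$, while at a boundary crossing the wall shrinks by one gap and the inequality tightens accordingly.

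\textbf{The induction: main obstacle.} The principal difficulty is the Type~3 column, where the ambient vector space changes. For a black lollipop attached adjacent to the wall, the wall expands while the ambient dimension drops from $n$ to $n-1$ via a surjection $h\colon \C^n\twoheadrightarrow\C^{n-1}$, and the incidence conditions \eqref{eq: type 3 black lollipop relative position} relate pairs of flags living in different symmetric groups. The core step I would need is a Bruhat-transfer lemma stating that an inequality $u\leq w_{[a,b]}$ in $S_n$ for the extended flag pair $(h^{-1}(\SR_0),h^{-1}(\SR^0))$ descends to $u'\leq w_{[a',b']}$ in $S_{n-1}$ for $(\SR_0,\SR^0)$, where $[a',b']$ is the range dictated by the wall-expansion rule. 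The combinatorial heart of this step is that the permutations $s_{[i-1,1]}$ and $s_{[n-i+1,1]}$ appearing in \eqref{eq: type 3 black lollipop relative position} align precisely with the shape of $w_{[a,b]}$ to produce $w_{[a',b']}$ after applying Lemma~\ref{lem: relative position}(2) twice. Type~3 columns with a white lollipop that do not affect the wall during a white-initiated scan are handled analogously via the flag extension encoded in \eqref{eq: type 3 white lollipop relative position}, using that the extension $\SF\mapsto h(\SF)$ preserves the relevant Bruhat-order data. Finally, if the wall ever reaches an unbounded region then the reaction is incomplete and selects nothing, so there is nothing to prove; if it completes, the invariant has been propagated through every scanned Type~1 column, proving $A_g = 0$ for every selected face~$g$.
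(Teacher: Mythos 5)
Your overall strategy is exactly the paper's: you track the Bruhat invariant $w(C)\leq w_{[a(C),b(C)]}$ along the scanning wall, anchor it at the starting column via Lemma~\ref{lem: vanishing due to bruhat order}, and propagate it column by column. Your treatment of Type~1 columns, Type~2 columns, and Type~3 columns with a white lollipop matches the paper's: the white-lollipop case is indeed handled by chaining through Demazure products using Lemma~\ref{lem: relative position}(2), monotonicity, and a direct computation.

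The genuine gap is precisely the part you flag as the \emph{main obstacle}: a Type~3 column with a black lollipop (dimension dropping from $n$ to $n-1$ while the wall expands). You propose a ``Bruhat-transfer lemma'' obtained ``after applying Lemma~\ref{lem: relative position}(2) twice,'' i.e. a Demazure-product \emph{upper} bound chained through the left-hand pair $(\SL_0,\SL^0)$. This is not how the paper resolves the case, and it is not clear the Demazure chain gives a tight enough bound. The issue is that the pair $(h^{-1}(\SR_0),h^{-1}(\SR^0))$ is geometrically special --- both flags share $\ker h$ as their common bottom line --- and a naive Demazure upper bound discards that information. The paper instead proves a separate \emph{lower}-bound lemma, Lemma~\ref{lem: Type 3 black lollipop Bruhat inequality}, asserting $u\geq s_{k-1}\cdots s_1\,v\,s_1\cdots s_{n-k}$, exploiting exactly the shared first subspace (which forces $v(1)=1$ so that $v s_1\cdots s_{n-k}$ is reduced). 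The proposition then follows by contradiction: assume $v\not\leq w_{[a',b']}$, perform a careful set-comparison on $\tilde v := s_{k-1}\cdots s_1 v s_1\cdots s_{n-k}$, and deduce $\tilde v\not\leq w_{[a,b]}$, contradicting $u\leq w_{[a,b]}$. Your sketch commits neither to this lower-bound formulation nor to the contradiction mechanism, and does not verify that the Demazure product you would compute is actually $\leq w_{[a',b']}$ (the authors evidently found it necessary to take the dual route). Until that computation or the lower-bound lemma is supplied, the hardest case of the proposition remains unproven in your argument.
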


As discussed in Subsection \ref{ssec:lollipop chain reaction}, the scanning wall in in a lollipop reaction moves to the right if the lollipop is white and to the left if the lollipop is black. By Lemma \ref{lem: vanishing due to bruhat order}, at the starting point, $A_f=0$ implies that the flags at the two ends of the wall are at most $w_{[i,i]}=\overline{w}_{[i,i]}$ apart, where $i$ is the index of the gap between the two adjacent horizontal lines, counting from below in the GP-graph. This is schematically illustrated in the following picture:
\[
\begin{tikzpicture}[scale=0.7]
\draw (0,0) -- (2,0);
\draw (0,2) -- (2,2);
\draw (1,1) -- (2,1);
\draw [fill=white] (1,1) circle [radius=0.1];
\draw [red, <->] (0.5,0) -- node [left] {$\leq w_{[i,i]}$} (0.5,2);
\end{tikzpicture}\quad \quad \quad \quad 
\begin{tikzpicture}[scale=0.7]
\draw (0,0) -- (2,0);
\draw (0,2) -- (2,2);
\draw (1,1) -- (0,1);
\draw [fill=black] (1,1) circle [radius=0.1];
\draw [red, <->] (1.5,0) -- node [right] {$\leq \overline{w}_{[i,i]}$} (1.5,2);
\end{tikzpicture}
\]

The heart of the argument is proving that in the case of a white (resp. black) lollipop reaction, as the wall scans to the right (resp. left), the flags at the two ends of the wall will be at most $w_{[i,j]}$ (resp. $\overline{w}_{[i,j]}$) apart, where $[i,j]$ is the interval containing the indices of the gaps that the wall crosses (counting from below in the GP-graph). Due to symmetry, we will only prove Proposition \ref{prop:relative position scanning} for white lollipop reactions; the proof for the case of black lollipop reactions is completely symmetric. Let us start with the following lemma:

\begin{lemma}\label{lem: Type 3 black lollipop Bruhat inequality} Let $\bG$ be a GP-graph and consider a Type 3 column with a black lollipop (we shift the indices on the right to match the Coxeter generators), as follows: 
\[
\begin{tikzpicture}[scale=0.7]
\foreach \i in {-1,0,1,3,4,5} 
{
\draw (0,\i*0.5) -- (2,\i*0.5);
}
\draw (0,1) -- (1,1);
\draw [fill=black] (1,1) circle [radius=0.1];
\node at (1,0.3) [] {$\vdots$};
\node at (1,1.8) [] {$\vdots$};
\node at (0,-0.25) [left] {$1$};
\node at (0,0.75) [left] {$k-1$};
\node at (0,1.25) [left] {$k$};
\node at (0,2.25) [left] {$n-1$};
\node at (2,-0.25) [right] {$2$};
\node at (2,1) [right] {$k$};
\node at (2,2.25) [right] {$n-1$};
\end{tikzpicture}
\]
Let $(\SL_0,\SL^0)$ be the pair of flags to the left and $(\SR_0,\SR^0)$ be the pair of flags to the right. Suppose that $\SL_0\overset{u}{\sim}\SL^0$ and $\SR_0\overset{v}{\sim}\SR^0$. $($Hence $h^{-1}(\SR_0)\overset{v}{\sim}h^{-1}(\SR^0)$.$)$ Then, in the Bruhat order we have that
\[
u\geq s_{k-1}s_{k-2}\cdots s_1 vs_1s_2\cdots s_{n-k}.
\]
\end{lemma}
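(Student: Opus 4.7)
The plan is to prove the Bruhat inequality $u \geq s_{[k-1,1]} \cdot v \cdot s_{[n-k,1]}^{-1}$ via the rank-function characterization of the Bruhat order, combined with the explicit geometric structure of the pulled-down flags $h^{-1}(\SR_0)$ and $h^{-1}(\SR^0)$. The direct composition approach (via the path $\SL_0 \to h^{-1}(\SR_0) \to h^{-1}(\SR^0) \to \SL^0$) combined with Lemma \ref{lem: relative position}(2) yields only an upper bound on $u$, so a more refined analysis is required.

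First, I would unpack the given relative positions into explicit subspace identifications. The condition $\SL_0 \overset{s_{[k-1,1]}}{\sim} h^{-1}(\SR_0)$ implies $(\SL_0)_j = (h^{-1}(\SR_0))_j = h^{-1}((\SR_0)_{j-1})$ for every $j \geq k$, since the reduced expression $s_{k-1}s_{k-2}\cdots s_1$ only involves the generators $s_1,\dots,s_{k-1}$. Similarly, $\SL^0 \overset{s_{[n-k,1]}}{\sim} h^{-1}(\SR^0)$ yields $(\SL^0)_j = h^{-1}((\SR^0)_{j-1})$ for every $j \geq n-k+1$. By construction, both $h^{-1}(\SR_0)$ and $h^{-1}(\SR^0)$ contain $\ker h$ as their $1$-dimensional subspace, hence $\ker h \subseteq (\SL_0)_k \cap (\SL^0)_{n-k+1}$.

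Second, I would translate the desired Bruhat inequality into dimension bounds for the subspace intersections $(\SL_0)_i \cap (\SL^0)_j$. Recall that $u \geq w$ in $S_{n-1}$ if and only if $\dim((\SL_0)_i \cap (\SL^0)_j) \leq r_w(i,j)$ for all $i,j$, where $r_w$ is the rank function of $w$. For indices $i \geq k$ and $j \geq n-k+1$, the identifications above give
\[
(\SL_0)_i \cap (\SL^0)_j \; = \; h^{-1}\bigl((\SR_0)_{i-1} \cap (\SR^0)_{j-1}\bigr),
\]
so its dimension equals $\dim((\SR_0)_{i-1}\cap(\SR^0)_{j-1}) + 1$, which is governed by $v$ via the relation $\SR_0 \overset{v}{\sim} \SR^0$. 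For indices $i < k$ or $j < n-k+1$, I would use the containment $(\SL_0)_i \subseteq (\SL_0)_k$ and the analogous inclusion for $\SL^0$ to bound the intersections from above by the previous case. This produces a family of dimension inequalities that $u$ must satisfy.

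Finally, I would verify that the resulting dimension bounds match those predicted by the rank function of $w = s_{[k-1,1]} \cdot v \cdot s_{[n-k,1]}^{-1}$. Inequality \eqref{eq: w_[i,j] inequality} characterizes the special case $w=w_{[i,j]}$ via set comparisons, and the general statement amounts to a combinatorial check that left-multiplying $v$ by $s_{[k-1,1]}$ shifts the bottom indices by a cyclic rotation mirroring the geometric role of $\ker h$, while right-multiplying by $s_{[n-k,1]}^{-1}$ performs the corresponding shift on the top side. The main obstacle will be precisely this combinatorial bookkeeping: the word $s_{[k-1,1]} v s_{[n-k,1]}^{-1}$ is not in general reduced, and one must carefully track possible cancellations between $v$ (which uses only the generators $s_2,\dots,s_{n-2}$) and the boundary factors $s_{[k-1,1]}$ and $s_{[n-k,1]}^{-1}$ to confirm that $r_w$ agrees with the bounds derived from the geometric step.
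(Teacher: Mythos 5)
Your proposal is a genuinely different strategy from the paper's. The paper also uses the chain $\SL_0 \to h^{-1}(\SR_0) \to h^{-1}(\SR^0) \to \SL^0$ but extracts the \emph{lower} bound by walking it one simple reflection at a time: since $v(1)=1$ (your observation about $\ker h$ is exactly this), $vs_1\cdots s_{n-k}$ is reduced, so $h^{-1}(\SR_0)\overset{vs_1\cdots s_{n-k}}{\sim}\SL^0$; then one left-multiplies by $s_1,s_2,\dots,s_{k-1}$ in turn and uses that at each step the possible relative position is either $s_iw$ (forced, if $s_iw>w$) or lies in $\{w,s_iw\}$ (if $s_iw<w$), so that $s_{k-1}\cdots s_1(vs_1\cdots s_{n-k})$ is the minimum achievable. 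Your approach instead converts the Bruhat inequality into the Ehresmann rank-function criterion and tries to bound $\dim\bigl((\SL_0)_i\cap(\SL^0)_j\bigr)$ directly. That is a legitimate alternative in principle, and your exact identification $(\SL_0)_i\cap(\SL^0)_j=h^{-1}\bigl((\SR_0)_{i-1}\cap(\SR^0)_{j-1}\bigr)$ for $i\geq k$, $j\geq n-k+1$ is correct and is the geometric heart of the matter.

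However, there is a concrete gap in the region $i<k$ or $j<n-k+1$. You propose to use the containments $(\SL_0)_i\subseteq(\SL_0)_k$ and $(\SL^0)_j\subseteq(\SL^0)_{n-k+1}$ to ``bound the intersections from above by the previous case.'' This gives $r_u(i,j)\leq r_u(k,j)\leq r_w(k,j)$, but what you need is $r_u(i,j)\leq r_w(i,j)$, and the rank function $r_w(\cdot,j)$ is \emph{increasing} in the first argument, so $r_w(i,j)\leq r_w(k,j)$ --- the chain of inequalities does not close. In other words, the containment bound lands you on the wrong side of the quantity you actually need to dominate. To repair this you would have to show that for all such $(i,j)$ the bound $r_w(i,j)$ is already forced by the constraints in the geometric region (e.g.\ that the essential set of $w$ lies in $\{i\geq k,\,j\geq n-k+1\}$), or invoke the structural inequalities that any permutation rank function $r_u$ must satisfy to propagate the geometric bounds to small indices. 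Neither is addressed, and you yourself flag the combinatorial verification of $r_w$ (tracking cancellations in the non-reduced word $s_{k-1}\cdots s_1\,v\,s_1\cdots s_{n-k}$) as the main obstacle; that verification is a genuinely nontrivial second step. So the proposal is a sound \emph{plan} of attack with the correct geometric input, but as written it has a real hole at the small-index indices and an unexecuted combinatorial step.
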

\begin{proof} By construction, $h^{-1}(\SR_0)$ and $h^{-1}(\SR^0)$ share the same 1-dimensional subspace. Therefore $v(1)=1$, which implies that $vs_1s_2\cdots s_{n-k}$ is reduced. Lemma \ref{lem: relative position} (1) implies that
\[
\SL_0\overset{s_{k-1}\cdots s_1}{\sim}h^{-1}(\SR_0)\overset{vs_1\cdots s_{n-k}}{\sim} \SL^0,
\]
where the first relative position is given by the Type 3 column requirement.

Let us record a permutation $w\in S_n$ as an $n$-tuple $(w(1),w(2),\dots, w(n))$. Since $v(1)=1$, we may assume that $v=(1,v(2),v(3),\dots, v(n))$. Then 
\[
vs_1s_2\cdots s_{n-k}=(v(2),v(3),\dots, v(n-k+1),1,v(n-k+2),\dots, v(n)).
\]
Note that left multiplication by $s_i$ interchanges the entries $i$ and $i+1$. From (the proof of) Lemma \ref{lem: relative position} (2), we know that when multiplying $s_i$ on the left of a permutation $w$, there is only one possible relative position $s_iw$ if $i$ is on the left of $i+1$, and there can be two possible relative positions $w$ and $s_iw$ if $i$ is on the right of $i+1$, in which case $s_iw<w$. Thus, performing all the left multiplications $s_{k-1}\cdots s_1$ to $vs_1\cdots s_{n-k}$ yield the smallest relative position relation, and hence $u\geq s_{k-1}\cdots s_1vs_1\cdots s_{n-k}$ as claimed. 
\end{proof}

\medskip

\noindent\textit{Proof of Proposition \ref{prop:relative position scanning}.} It suffices to argue the case of a white lollipop reaction, by symmetry. We inductively verify the claim that, as the wall scans from left to right, the relative position between the pair of flags is kept to be at most $w_{[i,j]}$.

Suppose that the wall scanning is passing through a column of Type 2 or 3. Let $(\SL_0,\SL^0)$ be the pair of flags on the left of this column and $(\SR_0,\SR^0)$ be the pair of flags on the right of this Type 3 column. Suppose the that wall on the left goes across the interval $[i,j]$. Then by assumption, $\SL_0\overset{u}{\sim}\SL^0$ for $u\leq w_{[i,j]}$. Let $v$ be the permutation such that $\SR_0\overset{v}{\sim}\SR^0$.

Let us start with the hardest case, namely a Type 3 column with a black lollipop at the $k$th gap with $i< k\leq j$, as depicted in Figure \ref{fig:prop_scanning_cases} (left). By a shift of indices, we can view $v$ as an element in $S_{[2,n]}$, the permutation group that acts on the set $[2,n]=\{2,3,\dots, n\}$. We want to prove that $v\leq w_{[i,j-1]}\in S_{[2,n]}$. By shifting the indices of the Coxeter generators in \eqref{eq: w_[i,j] inequality}, we have that $v\leq w_{[i,j-1]}\in S_{[2,n]}$ if and only if for all $n-j\leq l\leq n-i-1$,
\[
\{v(2),v(3),\dots, v(l+1)\}\leq \{i+1\leq \cdots\}.
\]
Let us proceed by contradiction: suppose $v\not\leq w_{[i,j-1]}\in S_{[2,n]}$; then there must exist some $l$ with $n-j\leq l\leq n-i-1$ such that 
\[
\{v(2),v(3),\dots, v(l+1)\}= \{a,a+\cdots, \dots\},
\]
where $a>i+1$ is the smallest element in the set on the right. To deduce a contradiction, it suffices to prove that $\tilde{v}:=s_{k-1}\cdots s_1vs_1\cdots s_{n-k}\not\leq w_{[i,j]}\in S_{[1,n]}$, since Lemma \ref{lem: Type 3 black lollipop Bruhat inequality} states that $\tilde{v}$ is the smallest possible relative position between $\SL_0$ and $\SL^0$. Direct computation yields that 
\[
\tilde{v} = (v(2)',v(3)',\dots, v(n-k+1)',k,v(n-k+2)',\dots, v(n)'),
\]
where
\[
m':=\left\{\begin{array}{ll} m-1 & \text{if $m\leq k$},\\
m &\text{if $m> k$}.\end{array}\right.
\]

\noindent If $l\leq n-k$, then
\[
\{\tilde{v}(1),\tilde{v}(2),\dots, \tilde{v}(l)\}=\{v(2)',v(3)',\dots, v(l+1)'\}=\{a', a'+\cdots, \dots\}\not\leq \{i\leq \cdots\},
\]
where the last $\not\leq$ relation is because $a'\geq a-1>i$. This shows that $\tilde{v}\not\leq w_{[i,j]}$. 

\noindent Else, if $l>n-k$, then
\begin{align*}
\{\tilde{v}(1),\tilde{v}(2),\dots, \tilde{v}(l+1)\}=&\{v(2)',v(3)',\dots, v(n-k+1)',k,v(n-k+2)',v(l+1)'\}\\
=&\{k,a',a'+\cdots, \dots\}\not\leq \{i\leq \cdots\},
\end{align*}
where the last $\not\leq$ relation is because both $a'$ and $k$ are greater than $i$. 
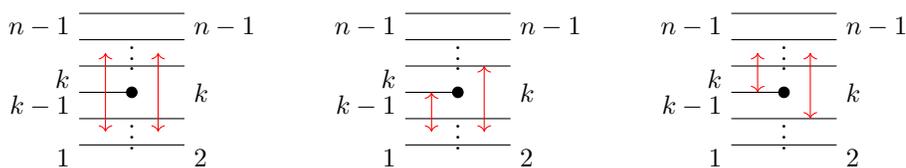
\begin{figure}[H]
\begin{tikzpicture}[scale=0.7]
\foreach \i in {-1,0,1,3,4,5} 
{
\draw (0,\i*0.5) -- (2,\i*0.5);
}
\draw (0,1) -- (1,1);
\draw [fill=black] (1,1) circle [radius=0.1];
\node at (1,0.3) [] {$\vdots$};
\node at (1,1.8) [] {$\vdots$};
\node at (0,-0.25) [left] {$1$};
\node at (0,0.75) [left] {$k-1$};
\node at (0,1.25) [left] {$k$};
\node at (0,2.25) [left] {$n-1$};
\node at (2,-0.25) [right] {$2$};
\node at (2,1) [right] {$k$};
\node at (2,2.25) [right] {$n-1$};
\draw [red,<->] (0.5,0.25) -- (0.5,1.75);
\draw [red,<->] (1.5,0.25) -- (1.5,1.75);
\end{tikzpicture} \quad \quad 
\begin{tikzpicture}[scale=0.7]
\foreach \i in {-1,0,1,3,4,5} 
{
\draw (0,\i*0.5) -- (2,\i*0.5);
}
\draw (0,1) -- (1,1);
\draw [fill=black] (1,1) circle [radius=0.1];
\node at (1,0.3) [] {$\vdots$};
\node at (1,1.8) [] {$\vdots$};
\node at (0,-0.25) [left] {$1$};
\node at (0,0.75) [left] {$k-1$};
\node at (0,1.25) [left] {$k$};
\node at (0,2.25) [left] {$n-1$};
\node at (2,-0.25) [right] {$2$};
\node at (2,1) [right] {$k$};
\node at (2,2.25) [right] {$n-1$};
\draw [red,<->] (0.5,0.25) -- (0.5,1);
\draw [red,<->] (1.5,0.25) -- (1.5,1.5);
\end{tikzpicture} \quad \quad 
\begin{tikzpicture}[scale=0.7]
\foreach \i in {-1,0,1,3,4,5} 
{
\draw (0,\i*0.5) -- (2,\i*0.5);
}
\draw (0,1) -- (1,1);
\draw [fill=black] (1,1) circle [radius=0.1];
\node at (1,0.3) [] {$\vdots$};
\node at (1,1.8) [] {$\vdots$};
\node at (0,-0.25) [left] {$1$};
\node at (0,0.75) [left] {$k-1$};
\node at (0,1.25) [left] {$k$};
\node at (0,2.25) [left] {$n-1$};
\node at (2,-0.25) [right] {$2$};
\node at (2,1) [right] {$k$};
\node at (2,2.25) [right] {$n-1$};
\draw [red,<->] (0.5,1) -- (0.5,1.75);
\draw [red,<->] (1.5,0.5) -- (1.5,1.75);
\end{tikzpicture}
\caption{Three of the cases in the proof of Proposition \ref{prop:relative position scanning}.}\label{fig:prop_scanning_cases}
\end{figure}

There are three more special cases to consider for Type 3 columns with black lollipops, two of them depicted at the center and right of Figure \ref{fig:prop_scanning_cases}, as well as the case where neither $k-1$ nor $k$ is not contained in $[i,j]$. 

For the case of Figure \ref{fig:prop_scanning_cases} (center), we want to prove that if $u\leq w_{[i,k-1]}\in S_{[1,n]}$, then $v\leq w_{[i,k-1]}\in S_{[2,n]}$. We proceed by contradiction again. Suppose $v\not\leq w_{[i,k-1]}$; then there must exist some $l$ with $n-k\leq l\leq n-i-1$ such that
\[
\{v(2),v(3),\dots, v(l+1)\}=\{a,a+\cdots, \dots\},
\]
where $a>i+1$ is the smallest element on the right. By the same argument, we see that 
\begin{align*}
\{\tilde{v}(1),\tilde{v}(2),\dots, \tilde{v}(l+1)\}=&\{v(2)',v(3)',\dots, v(n-k+1)',k,v(n-k+2)',v(l+1)'\}\\
=&\{k,a',a'+\cdots, \dots\}\not\leq \{i\leq \cdots\}.
\end{align*}
This shows that $\tilde{v}\not\leq w_{[i,k-1]}\in S_{[1,n]}$.

For the case of Figure \ref{fig:prop_scanning_cases} (right), we want to prove that if $u\leq w_{[k,j]}\in S_{[1,n]}$, then $v\leq w_{[k-1,j-1]}\in S_{[2,n]}$; a proof by contradiction works again, as follows. Suppose $v\not\leq w_{[k-1,j-1]}$; then there must exist some $l$ with $n-j\leq l\leq n-k$ such that
\[
\{v(2),v(3),\dots, v(l+1)\}=\{a,a+\cdots, \dots\},
\]
where $a>k$ is the smallest element on the right. Since $a>k$, we know that $a'=a$, and hence
\[
\{\tilde{v}(1),\tilde{v}(2),\dots, \tilde{v}(l)\}=\{a',a'+\cdots, \dots\}=\{a,a+\cdots,\dots\}\not\leq \{k\leq \cdots\}.
\]
This shows that $\tilde{v}\not\leq w_{[k,j]}$.

The remaining case, where neither $k-1$ nor $k$ is contained in $[i,j]$, can be proved similarly, and it is left as an exercise. This covers all the cases with a Type 3 column with a black lollipop.

Next we consider the case of a Type 3 column with a white lollipop in the $k$th gap, counting from below in the GP-graph, as depicted below. By the Type 3 column requirement, we know that
\[
\begin{tikzpicture}[scale=0.7,baseline=15]
\foreach \i in {-1,0,1,3,4,5} 
{
\draw (0,\i*0.5) -- (2,\i*0.5);
}
\draw (2,1) -- (1,1);
\draw [fill=white] (1,1) circle [radius=0.1];
\node at (1,0.3) [] {$\vdots$};
\node at (1,1.8) [] {$\vdots$};
\node at (0,1) [left] {$k-1$};
\node at (0,-0.25) [left] {$1$};
\node at (0,2.25) [left] {$n-2$};
\node at (2,-0.25) [right] {$1$};
\node at (2,0.75) [right] {$k-1$};
\node at (2,1.25) [right] {$k$};
\node at (2,2.25) [right] {$n-1$};
\end{tikzpicture}, \quad \quad \quad \quad 
\SR_0 \overset{s_{k}s_{k+1}\cdots s_n}{\sim} h(\SL_0) \overset{u}{\sim} h(\SL^0) \overset{s_ns_{n-1}\cdots s_{n-k}}{\sim} \SR^0,
\]
where by assumption $u\leq w_{[i,j]}$. By Lemma \ref{lem: relative position} (2) and a direct computation, we conclude that $\SR_0\overset{v}{\sim}\SR^0$ for 
\[
v\leq \Dem(\underline{s_k\cdots s_n} \ \underline{u}\ \underline{s_n\cdots s_{n-k}})\leq \Dem(\underline{s_k\cdots s_n} \ \underline{w_{[i,j]}}\ \underline{s_n\cdots s_{n-k}}) =\left\{\begin{array}{ll} w_{[i,j]} & \text{if $j<k-1$}, \\
w_{[i,j+1]} & \text{if $i\leq k-1\leq j$},\\
w_{[i+1,j+1]} &\text{if $i\geq k$}.
\end{array}\right.
\]

Lastly, we consider Type 2 columns. By using Lemma \ref{lem: relative position}, we compute that, unless we are in one of the two wall shrinking situations depicted below, we have the logical implication $u\leq w_{[i,j]}\implies v\leq w_{[i,j]}$.
\[
\begin{tikzpicture}[scale=0.7]
\draw (0,2) -- (2,2);
\draw (0,1) -- (2,1);
\draw (0,0) -- (2,0);
\node at (0,1.5) [left] {$j$};
\vertbar[](1,1,2);
\draw[<->, red] (0.5,-0.5) -- (0.5,2);
\draw[<->, red] (1.5,-0.5) -- (1.5,1);
\node[red] at (1,-0.5) [] {$\rightsquigarrow$};
\end{tikzpicture}\quad \quad \quad \quad \quad \quad 
\begin{tikzpicture}[scale=0.7]
\draw (0,2) -- (2,2);
\draw (0,1) -- (2,1);
\draw (0,0) -- (2,0);
\node at (0,0.5) [left] {$i$};
\vertbar[](1,1,0);
\draw[<->, red] (0.5,0) -- (0.5,2.5);
\draw[<->, red] (1.5,1) -- (1.5,2.5);
\node[red] at (1,2.5) [] {$\rightsquigarrow$};
\end{tikzpicture}
\]
For the leftmost of the two cases depicted above, we directly have $u\leq w_{[i,j]}\implies v\leq w_{[i,j-1]}$, and for the rightmost one, we obtain $u\leq w_{[i,j]}\implies v\leq w_{[i+1,j]}$, as required.
\qed

\subsection{Completeness of GP-graphs}\label{ssec:completenessGP} This brief subsection discusses the concept of {\it complete} GP-graphs, for which the argument we present gives a complete proof of Theorem \ref{thm:main}. As prefaced in Subsection \ref{ssec:delta-complete}, the factoriality of the coordinate ring $\mathcal{O}(\FM(\La(\bG),T))$ is a requirement. Following the results from Subsection \ref{ssec:relative_position_flags}, we add an additional hypothesis, as follows:

\begin{definition}\label{def:completeGPgraph} A grid plabic graph $\bG$ is said to be \emph{complete} if the moduli stack $\fM=\fM(\La(\bG),T)$ satisfies the following properties
\begin{itemize}\setlength{\itemsep}{0.5em}
    \item[-] The coordinate ring $\mathcal{O}(\fM)$ is a unique factorization domain (UFD).
    \item[-] For any face $f\sse\bG$ with a sugar-free hull $\S_f$, the vanishing locus of the microlocal merodromy $A_f$ is contained in the vanishing loci of $A_g$, for all faces $g\in \S_f$.
\end{itemize}
\end{definition}
These two conditions are technical, and are only trying to capture the most general type of GP-graph $\bG$ for which the argument works. In practice, if a reasonable example of a $\bG$ is given, it is possible to verify the second condition by direction computation (e.g.~using Gr\"obner basis), whereas the factoriality condition is, to our knowledge, more subtle. That said, as explained in Subsection \ref{ssec:delta-complete}, we have developed combinatorial criteria to ensure the first condition, e.g.~$\Delta$-completeness of $\beta(\bG)$, or even more combinatorially, $\bG$ being a shuffle graph. In fact, shuffle graphs $\bG$ also satisfy the second condition, as can be seen by examining the following combinatorial property:

\begin{definition}\label{def:complete_GP}
A GP-graph $\bG$ is said to be $\mathbb{S}$-complete if every sugar-free hull of $\bG$ can be obtained via some lollipop chain reaction.
\end{definition}

\noindent By Propositions \ref{prop:UFD} and \ref{prop:relative position scanning}, $\mathbb{S}$-complete GP-graphs satisfy the second condition in Definition \ref{def:complete_GP}. By Proposition \ref{prop:lollipop chain reaction for shuffles}, shuffle graphs are $\mathbb{S}$-complete, and therefore complete. The schematics of implications are: $\mbox{Shuffle graphs } \bG \Longrightarrow \left(\beta(\bG)\mbox{ }\Delta\mbox{-complete}\right)\mbox{ + }\left(\mathbb{S}\mbox{-complete }\bG\right)\mbox{ } \Longrightarrow \mbox{ complete }\bG$.

In summary, though Theorem \ref{thm:main} is proven for complete grid plabic graphs, there are large classes of $\bG$-graphs that can be proven combinatorially to be complete, either because they are shuffle or because $\mathbb{S}$-completeness and $\Delta$-completeness are directly verified. Note that shuffle graphs include all plabic fences, so all open Bott-Samelson varieties at the level of $\FM(\La,T)$, several families of interesting links (such as the twist knots), many braid varieties (e.g.~all 3-stranded ones), and more.


\subsection{Proof of the Main Theorem}\label{ssec:codim 2 argument}


In this section, we conclude the proof Theorem \ref{thm:main}. At this stage, we can consider the initial open torus chart in $\FM(\La,T)$ given by $\FM(\ww(\bG),T)$, as built in Section \ref{sec:weaves} and Subsection \ref{ssec:moduli_Lagrangian}, with the candidate cluster $\mathcal{A}$-variables being the microlocal merodromies (constructed in Subsection \ref{ssec:clusterAvars}) along a set of initial relative cycles (built in Section \ref{sec:weaves}). Namely, given a GP-graph $\bG$ and an initial set of relative cycles $\{\eta_1,\ldots,\eta_s\}$ for the pair $(L(\bG),T)$ associated to $\ww=\ww(\bG)$, we have an isomorphism $\C[A_{\eta_1}^{\pm1},\ldots,A_{\eta_s}^{\pm1}]\cong \mathcal{O}(\FM(\ww,T))$, and thus $\{A_{\eta_1},\ldots,A_{\eta_s}\}$ and their inverses naturally coordinatize the open torus chart $\FM(\ww(\bG),T)\sse\FM(\La,T)$. This defines an initial open toric chart $U_0$, candidate for an initial cluster $\mathcal{A}$-chart, with the quiver $Q(\bG,\eta)$ being the intersection quiver associated to the (duals of the) relative cycles $\{\eta_1,\ldots,\eta_s\}$.

We shall now show that the algebra $\mathcal{O}(\FM(\La,T))$ coincides with an upper cluster algebra, along with the remaining items of Theorem \ref{thm:main}. In geometric terms, the key ingredient for the former claim will be to prove that the initial cluster chart $U_0\sse\FM(\La,T)$ together with all the once-mutated charts cover the moduli space $\FM(\Lambda,T)$ up to codimension 2, i.e.~ $$U_0\bigcup_{\eta\mbox{ \footnotesize mutable\normalsize}} \mu_{\eta}(U_0)\stackrel{\mbox{\tiny up to codim.~2\normalsize}}{=}\FM(\La,T).$$
By Hartogs's extension theorem, any two normal varieties that differ at most in codimension 2 have the same algebra of regular functions. It thus follows from this codimension 2 isomorphism that $\mathcal{O}(\FM(\Lambda, T))$ is equal to the coordinate ring of the union of the initial chart $U_0$ and all the once-mutated charts. Then \cite[Corollary 1.9]{BFZ05} is used to conclude that the coordinate ring of such a union is an upper cluster algebra.

\subsubsection{Erasing $\sf Y$-trees on weaves and vanishing loci of face merodromies.} In order to establish the above covering of $\FM(\Lambda, T)$, up to codimension 2, by  $U_0$ and the once-mutated charts $\mu_\eta(U_0)$, we need to gain understanding of the codimension 1 strata in $\FM(\Lambda, T)$ that appear as vanishing loci of certain microlocal merodromies. These loci can be explicitly described via non-free weaves that are obtained by erasing $\sf Y$-cycle in $\ww=\ww(\bG)$.

First, we begin with the diagrammatic process on the weave that erases $\sf Y$-trees. Let $\ww$ be a free weave and $\sf Y$-tree $\gamma\sse\ww$. By definition, the weave $\ww_{\hat{\gamma}}$  is the weave obtained by erasing $\gamma\sse\ww$ from $\ww$ according to the following local models:
\begin{itemize}\setlength{\itemsep}{0.5em}
    \item If $\gamma$ ends at a trivalent vertex, then we simply erase the weave line contained in $\gamma$ together with the trivalent vertex, turning the local picture into a single weave line. This is depicted in Figure \ref{fig:erasing_tree} (left).
    \item If $\gamma$ goes straight through a hexavalent vertex, then we erase the two weave lines contained in $\gamma$ and pull the remaining weave lines apart according to their colors, turning the local picture into two weave lines. We draw this in Figure \ref{fig:erasing_tree} (center).
    \item If $\gamma$ branches off at a hexavalent vertex, erase the three weave lines contained in $\gamma$, turning the local picture into a trivalent weave vertex picture. See Figure \ref{fig:erasing_tree} (left).
\end{itemize}
\begin{figure}[H]
    \centering
    \begin{tikzpicture}[baseline=0,scale=0.8]
    \draw [lime, line width=5] (0,0) -- (1,0);
    \foreach \i in {0,1,2} 
    {
        \draw[blue] (0,0) -- (120*\i:1);
    }
    \end{tikzpicture} $\rightsquigarrow$ 
    \begin{tikzpicture}[baseline=0,scale=0.8]
    \foreach \i in {1,2} 
    {
        \draw[blue] (0,0) -- (120*\i:1);
    }
    \end{tikzpicture}\quad \quad \quad \quad 
    \begin{tikzpicture}[baseline=0,scale=0.8]
    \draw [lime,line width=5] (-1,0) -- (1,0);
    \foreach \i in {0,1,2} 
    {
        \draw[blue] (0,0) -- (120*\i:1);
        \draw [red] (0,0) -- (60+120*\i:1);
    }
    \end{tikzpicture}$\rightsquigarrow$
    \begin{tikzpicture}[baseline=0,scale=0.8]
    \draw [blue] (120:1) -- (-0.1,0) -- (-120:1);
    \draw [red] (60:1) -- (0.1,0) -- (-60:1);
    \end{tikzpicture}\quad \quad \quad \quad
    \begin{tikzpicture}[baseline=0,scale=0.8]
    \foreach \i in {0,1,2} 
    {
        \draw [lime, line width=5] (0,0) -- (120*\i:1);
        \draw[blue] (0,0) -- (120*\i:1);
    }
    \foreach \i in {0,1,2}
    {
        \draw [red] (0,0) -- (60+120*\i:1);
    }
    \end{tikzpicture}$\rightsquigarrow$
    \begin{tikzpicture}[baseline=0,scale=0.8]
    \foreach \i in {0,1,2} 
    {
        \draw [red] (0,0) -- (60+120*\i:1);
    }
    \end{tikzpicture}
    \caption{The local models for erasing $\sf Y$-trees.}\label{fig:erasing_tree}
\end{figure}
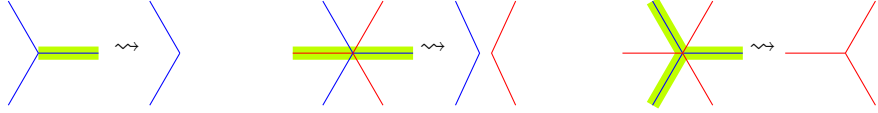

Alternatively, it is possible to first shorten the $\sf Y$-tree to a short $\sf I$-cycle, and erase the $\sf I$-cycle, which only requires applying Figure \ref{fig:erasing_tree} (left) twice. The following lemma verifies that this resulting weave is equivalent to the weave obtained by erasing the $\sf Y$-tree directly:

\begin{lemma}[$\sf Y$-tree erasing]\label{lem: erasing a Y tree} Let $\ww$ be a free weave and $\gamma\sse\ww$ a $\sf Y$-tree. Let $\ww'\sim \ww$ be a weave obtained from $\ww$ by the double track trick that shortens $\gamma$ into a short $\sf I$-cycle (see Proposition \ref{prop:Ytreebounds}), which we still denote $\gamma\sse\ww'$. Then there exists a weave equivalence $\ww'_{\hat{\gamma}}\sim \ww_{\hat{\gamma}}$. In particular, since $\ww'_{\hat{\gamma}}$ is not free, neither is $\ww_{\hat{\gamma}}$.
\end{lemma}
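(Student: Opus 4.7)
The plan is to reduce the statement to a local check: the double track trick that shortens $\gamma$ to a short $\sf I$-cycle is a finite composition of the two local weave equivalences depicted in Figure~\ref{fig:YtoIequivalences} (together with ambient Move~II equivalences that do not interact with $\gamma$), so it suffices to prove the lemma in each of these local models. More precisely, if $\ww \leftrightsquigarrow \tilde\ww$ is a single one of these equivalences, with $\gamma$ transforming to a new $\sf Y$-tree $\tilde\gamma$, I will show $\ww_{\hat\gamma}\sim \tilde\ww_{\hat{\tilde\gamma}}$. Iterating yields the lemma.

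First I would verify the ``outer'' building block: consuming a weave line of $\gamma$ that ends at a trivalent vertex and replacing it with a short $\sf I$-cycle one step closer to the interior, as in the first row of Figure~\ref{fig:YtoIequivalences}. On the left the erasing rule (Figure~\ref{fig:erasing_tree}, left) simply removes the highlighted trivalent vertex and its leg, and on the right the erasing rule removes the double track together with the two new trivalent vertices appearing after Move~II. A direct comparison shows the two resulting local pictures differ by exactly a Move~II in the reverse direction, so they are weave equivalent. Second I would check the ``branching'' building block in the second row of Figure~\ref{fig:YtoIequivalences}, where an outermost trivalent vertex of $\gamma$ is replaced by a candy twist (Move~I) and two hexavalent vertices. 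Here the left erasure removes the trivalent vertex, while the right erasure uses the hexavalent branching rule of Figure~\ref{fig:erasing_tree} (right); matching color monodromy around the candy twist, the two sides are related by Moves~I and II and are therefore equivalent as weaves.

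Once these two local building blocks are established, the general assertion follows by induction on the number of local moves in the shortening procedure, processing the leaves of $\gamma$ outward-in as in the proof of Proposition~\ref{prop:Ytreebounds}. The final outcome is a short $\sf I$-cycle $\gamma\sse\ww'$ together with a double-tracked frame, and by construction both $\ww_{\hat\gamma}$ and $\ww'_{\hat\gamma}$ agree (up to weave equivalence) with the weave obtained by erasing this short $\sf I$-cycle and then collapsing the double tracks through Moves~II, which gives the stated equivalence $\ww'_{\hat\gamma}\sim\ww_{\hat\gamma}$.

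The last sentence is immediate from the fact that erasing a short $\sf I$-cycle in a free weave produces a non-free weave (the corresponding Lagrangian surgery collapses a vanishing cycle to a nodal point), so $\ww'_{\hat\gamma}$ is non-free and therefore so is the equivalent $\ww_{\hat\gamma}$. The main obstacle I anticipate is bookkeeping the branching vertices of the $\sf Y$-tree when they coincide with hexavalent weave vertices: one has to be careful that the ``stacking'' of $\gamma$ at such a point indeed corresponds to the hexavalent erasing rule in Figure~\ref{fig:erasing_tree} (right) rather than creating extraneous trivalent vertices. This is handled by first Morsifying $\gamma$ so that each trivalent/hexavalent weave vertex is incident to at most one weave line of $\gamma$, which can always be arranged by inserting local Move~II equivalences before running the shortening procedure.
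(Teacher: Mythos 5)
Your proposal is correct and takes essentially the same route as the paper: the paper also reduces to the two local moves from Figure~\ref{fig:YtoIequivalences} and argues inductively, just running the induction in the opposite direction (erase the short $\sf I$-cycle in $\ww'$ first, obtaining the Reeb chord of Figure~\ref{fig:lemma_erasing}, then slide that chord back through the double tracks and branchings via Moves~I and~II to recover $\ww_{\hat\gamma}$). One small remark: the concluding ``Morsification'' precaution to avoid stacking of $\gamma$ at weave vertices is unnecessary here, since $\gamma$ is assumed to be a $\sf Y$-tree, whose planar image is by definition an embedded tree and hence never stacks.
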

\begin{proof} Let us start with the short $\sf I$-cycle $\gamma$ in $\ww'$. Erasing $\gamma$ in $\ww'$ leaves two weave lines with a Reeb chord in the middle, see Figure \ref{fig:lemma_erasing} (left). Follow the rest of the double tracks and contract these two weave lines inductively. 
\begin{figure}[H]
\begin{tikzpicture}[baseline=0]
    \draw [lime, line width=5] (-0.25,0) -- (0.25,0);
    \draw [blue] (-0.25,0) -- (0.25,0);
    \draw [blue] (-0.75,-0.25) -- (-0.5,-0.25) -- (-0.25,0) -- (-0.5,0.25) -- (-0.75,0.25);
    \draw [blue] (0.75,-0.25) -- (0.5,-0.25) -- (0.25,0) -- (0.5,0.25) -- (0.75,0.25);
    \end{tikzpicture} $\rightsquigarrow$ 
    \begin{tikzpicture}[baseline=0]
    \draw [blue] (-0.75,-0.25) -- (-0.5,-0.25) -- (-0.25,0) -- (-0.5,0.25) -- (-0.75,0.25);
    \draw [blue] (0.75,-0.25) -- (0.5,-0.25) -- (0.25,0) -- (0.5,0.25) -- (0.75,0.25);
    \end{tikzpicture}\quad \quad \quad \quad 
    \begin{tikzpicture}[baseline=0,scale=0.7]
    \draw [blue] (-0.75,1) -- (0,0.25);
    \draw [red] (0,0.25) -- (0.25,0) -- (0,-0.25);
    \draw [blue] (-0.75,-1) -- (0,-0.25);
    \draw [red] (0.75,1) -- (0,0.25);
    \draw [blue] (0,0.25) -- (-0.25,0) -- (0,-0.25); 
    \draw [red] (0,-0.25) -- (0.75,-1);
    \draw [blue] (0,0.25) -- (0.5,0.25) -- (0.75,0) -- (0.5,-0.25) -- (0,-0.25);
    \draw [red] (0,0.25) -- (-1,0.25);
    \draw [red] (0,-0.25) -- (-1,-0.25);
    \end{tikzpicture}$\rightsquigarrow$
    \begin{tikzpicture}[baseline=0,scale=0.7]
    \draw [blue] (-0.25,1) -- (0,0) -- (-0.25,-1);
    \draw [red] (0.5,1) -- (0.25,0) -- (0.5,-1);
    \draw [red] (-1,0.25) -- (-0.5,0.25) -- (-0.25,0) -- (-0.5,-0.25) -- (-1,-0.25);
    \end{tikzpicture}\quad \quad \quad \quad
    \begin{tikzpicture}[baseline=0,scale=0.8]
    \foreach \i in {0,1,2}
    {   
        \draw [blue] (60+120*\i:0.29) -- (15+120*\i:1);
        \draw [red] (60+120*\i:0.29) -- (60+120*\i:1);
        \draw [blue] (60+120*\i:0.29) -- (105+120*\i:1);
        \draw [blue] (0,0) -- (60+120*\i:0.29);
    }
    \draw [red] (0,0) circle [radius=0.29];
    \draw [blue] (15:1) -- (1.2,0) -- (-15:1);
    \end{tikzpicture}$\rightsquigarrow$
    \begin{tikzpicture}[baseline=0,scale=0.8]
    \foreach \i in {0,1,2} 
    {
        \draw [red] (0,0) -- (60+120*\i:1);
    }
    \foreach \i in {1,2}
    {
    \draw [blue] (15+120*\i:1) -- (30+120*\i:0.5) -- (120*\i:0.29) -- (-30+120*\i:0.5) -- (-15+120*\i:1);
    }
    \end{tikzpicture}
    \caption{Three types of removals of (pieces of) $\sf Y$-tree cycles.}\label{fig:lemma_erasing}
\end{figure}
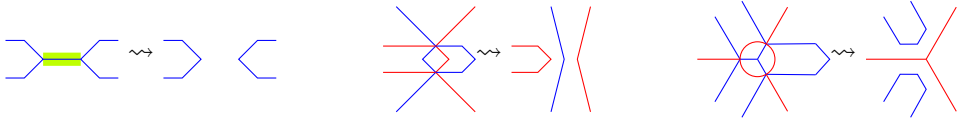
At the part where the double track goes straight though, the contracting weave line can be pulled through this part by undoing a candy twist, as depicted in Figure \ref{fig:lemma_erasing} (center). At the part where the double tracks branch off, the contracting weave line can be pulled through it by using weave equivalences, thus becoming two contracting weave lines, as illustrated Figure \ref{fig:lemma_erasing} (right). In the end, we recover the weave $\ww_{\hat{\gamma}}$, as required.
\end{proof}

Let us now study the vanishing loci of microlocal merodromies $A_f$ associated to faces $f\sse\bG$ by using Lemma \ref{lem: erasing a Y tree}. For that, fix a compatible set $C$ of sign curves on the initial weave $\ww=\ww(\bG)$ so that the chart $\cM_1(\ww)$ can be identified with $\Loc_1(L)$. For each relative 1-cycle $\eta\in H_1(L\setminus T,\Lambda\setminus T)$, its microlocal merodromy $A_\eta$ (Definition \ref{def:merodromy}) is well-defined and, in particular, we can associate a naive microlocal merodromy $A_f$ with each face $f$ of $\bG$. By Corollary \ref{cor: merodromies of the same relative cycle only differ by units} and Proposition \ref{prop: A_f are regular functions}, these $A_f$'s are regular functions on $\FM(\Lambda,T)$ and they are unique up to multiples of units.

\begin{prop}\label{prop: non-empty vanishing locus for sugar-free hulls} Let $\bG$ be a complete GP-graph and $f\sse\bG$ a face. Suppose that the lollipop chain reaction initiated from $f$ is complete. Then for any microlocal merodromy $A_f$ associated with $f$, the vanishing locus $\{A_f=0\}\sse\FM(\La(\bG),T)$ is non-empty.
\end{prop}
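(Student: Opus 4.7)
The plan is to exhibit a point in $\{A_f=0\} \sse \FM(\La(\bG),T)$ by the $\sf Y$-tree erasing construction of Lemma \ref{lem: erasing a Y tree}, applied to a weave representative of $\partial\S_f$. By Corollaries \ref{cor:prod A_t=1} and \ref{cor: merodromies of the same relative cycle only differ by units}, any two microlocal merodromies associated with the face $f$ differ by a Laurent monomial in the unit functions $\{A_t\}_{t\in T}$, and hence share the same vanishing locus; it therefore suffices to establish non-emptiness for a single chosen naive representative $A_f$.

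Completeness of the lollipop chain reaction from $f$ guarantees the existence of the sugar-free hull $\S_f\supseteq f$. By the $\sf Y$-Representability Lemma (Lemma \ref{lem: Y representability}) together with Proposition \ref{prop:Ytreebounds}, the boundary cycle $\partial\S_f$ is represented, up to weave equivalence, by a short $\sf I$-cycle $\gamma$ on a weave $\ww' \sim \ww(\bG)$. Erasing $\gamma$ yields the non-free weave $\ww'_{\hat\gamma}$, and because erasure does not modify the exterior of the weave, the combinatorially-defined flag moduli $\cM_1(\ww'_{\hat\gamma})$ (augmented with framings at $T$) admits a natural map into $\FM(\La(\bG),T)$. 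The containment of the image in $\{A_f = 0\}$ is then automatic: erasure enforces an equality of flags across $\gamma$, which, when propagated through a Type 1 slice interior to $\S_f$, forces the top-bottom relative position there to drop to some $w \leq s_iw_0 = w_{[i,i]}$, exactly the vanishing criterion of Lemma \ref{lem: vanishing due to bruhat order} for $A_f$.

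For non-emptiness I would exhibit an explicit flag configuration on $\ww'_{\hat\gamma}$. The plan is to fix a Type 1 slice inside $\S_f$, choose a pair of flags at this slice in the degenerate Bruhat position $s_iw_0$ (a positive-dimensional stratum of configurations), and extend column-by-column using the transition rules \eqref{eq: type 2 bottom relative position}--\eqref{eq: type 3 black lollipop relative position}. Outside of $\S_f$ the flags can be taken in generic $w_0$-relative position.

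The main obstacle is to verify that the flags obtained by extending the degenerate seed can be glued consistently to the generic flags across $\partial\S_f$: one must check that every boundary column of $\S_f$ admits a chain of flags compatible simultaneously with the staircase structure of $\partial\S_f$ (Lemma \ref{lemma:staircase}) and the rigid relative-position conditions imposed by Type 2 and Type 3 columns on both sides. This is precisely the content of the column-by-column scanning analysis of Proposition \ref{prop:relative position scanning} read in reverse: the lollipop chain reaction carves out exactly those columns in which the bound $w\leq w_{[i,j]}$ propagates consistently, and completeness of the chain reaction is what supplies the combinatorial data needed to assemble the local degenerate flags into a global configuration on $\ww'_{\hat\gamma}$. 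Once the extension is realized as a finite system of solvable linear relations along $\partial\S_f$, the resulting point maps to an element of $\{A_f=0\}$, completing the argument.
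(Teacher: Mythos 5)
Your setup is correct and matches the paper's: reduce to a single representative $A_f$ (via the unit lemmas), invoke Lemma~\ref{lem: Y representability} and Proposition~\ref{prop:Ytreebounds} to replace $\partial\S_f$ by a short $\sf I$-cycle $\gamma$, erase it (Lemma~\ref{lem: erasing a Y tree}), and observe that any point of $\cM_1(\ww'_{\hat\gamma})$ maps into $\{A_f=0\}$ because the Demazure product of the sliced braid drops below $s_iw_0$ and Lemma~\ref{lem: vanishing due to bruhat order} applies. The genuine gap is in the non-emptiness step, which is the actual content of the proposition. Your proposal is to start from a degenerate flag pair in a Type~1 slice and extend column-by-column, and you yourself flag the global gluing along $\partial\S_f$ as the ``main obstacle'', asserting it is handled by ``reading Proposition~\ref{prop:relative position scanning} in reverse.'' That proposition, however, is a one-directional implication ($A_f=0\Rightarrow A_g=0$) about points assumed to already exist; it provides a constraint, not a construction. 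Reversing it would require showing that every local chain of degenerate flags compatible with the lollipop-reaction conditions patches into a global configuration, which is precisely what needs proof. As written, the existence of the desired flag configuration is never established, and the ``finite system of solvable linear relations'' is not identified or shown to be solvable.

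The paper avoids this difficulty entirely with a cut-and-glue argument: after positioning the short $\sf I$-cycle inside a Type~1 column and erasing it, cut the weave $\ww'$ open vertically at that column into two pieces $\ww_1,\ww_2$. Both are free weaves, so $\cM_1(\ww_1)$ and $\cM_1(\ww_2)$ are non-empty (this is where the Guillermou--Jin--Treumann-type input enters). The vertical slice is a reduced word for $w=s_iw_0$, and since $\GL_n$ acts transitively on pairs of flags in fixed relative position $w$, one can use the group action to line up the boundary flag pairs of any two chosen points in $\cM_1(\ww_1)$ and $\cM_1(\ww_2)$ and glue, producing a point of $\cM_1(\ww')$, hence of $\FM(\ww',T)\subseteq\{A_f=0\}$. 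No column-by-column propagation and no appeal to the lollipop scanning are needed for existence; scanning is only used elsewhere (e.g.~Proposition~\ref{prop:A_f/A_g is a unit}) to compare vanishing loci of different faces. You should replace your construction-by-extension with this cutting argument, or else supply a complete proof that the column-by-column extension from a degenerate seed can always be globalized.
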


\begin{proof} By assumption, $f$ admits a sugar-free hull $\mathbb{S}_f$, which, by Lemma \ref{lem: Y representability}, gives rise to a $\sf Y$-tree $\gamma\sse\ww$ in the initial weave $\ww=\ww(\bG)$. Apply Proposition \ref{prop:Ytreebounds} to $\gamma$, making it a short $\sf I$-cycle, and place this short $\sf I$-cycle near the end of the original $\sf Y$-tree, so that it lies inside some Type 1 weave column. If we delete this short $\sf I$-cycle, we obtain a weave $\ww'=\ww_{\hat{\gamma}}$ whose associated weave surface is immersed, with a single interior Reeb chord at the midpoint of the short $\sf I$-cycle. We claim that its associated stratum $\M_1(\ww')$ in $\FM(\La(\bG),T)$ is non-empty.

To prove this claim, cut the weave $\ww'$ open vertically across the column into two weaves $\ww_1$ and $\ww_2$, so that both $\ww_1$ and $\ww_2$ are free weaves again. By \cite{JinTreumann}, or \cite{EHK,NRSSZ}, the two strata $\M_1(\ww_1)$ and $\M_1(\ww_2)$ are non-empty. Now, since this column used to be a Type 1 column, the vertical slice along the cut is a reduced word of $w=s_iw_0$ for some $i$. Given that any two pairs of flags of relative position $w$ are related to each other by a (non-unique) general linear group element, we use this action to line up the two pairs of flags of relative position $w$ and glue any point in $\M_1(\ww_1)$ with any point in $\M_1(\ww_2)$ and get a point in $\M_1(\ww')$. This shows that $\M_1(\ww')$ is non-empty.

\noindent Since $\FM(\ww',T)$ fibers over $\M_1(\ww')$, it follows that $\FM(\ww',T)$ is non-empty as well. Let $p$ be a point in $\FM(\ww',T)$. By Lemma \ref{lem: erasing a Y tree}, the weave $\ww'$ is equivalent to the weave $\ww_{\hat{\gamma}}$. Let $\eta_f$ be a relative 1-chain associated with $f$ and, without loss of generality, we may assume that $\eta_f$ is contained in some Type 1 column. By construction, the cross-section of the initial weave $\ww$ at a Type 1 column is always the positive (half-twist) braid $\Delta$, the positive lift of $w_0$. The erasing of the $\sf Y$-tree $\gamma$ turns the cross-sectional positive braid into a positive braid $\beta$ whose Demazure product satisfies $D(\beta)\leq s_iw_0$, for all $i$ such that the $i$th gap, counting from below in the GP-graph $\bG$, is contained in the face $f$. Moreover, the two flags at the two ends of $\eta_f$ are of relative position $w\leq D(\beta)$. Thus we conclude that $w\leq s_iw_0$ and hence $A_f(p)=0$ by Lemma \ref{lem: vanishing due to bruhat order}.
\end{proof}

We also establish the converse of Proposition \ref{prop: non-empty vanishing locus for sugar-free hulls}, i.e.~ if the lollipop chain reaction initiated from $f$ is incomplete, then the microlocal merodromy $A_f$ must be a unit in $\mathcal{O}(\FM(\La,T))$.

\begin{prop}\label{prop: unit iff chain reaction completes} Let $\bG$ be a GP-graph and $f\sse\bG$ a face. Then the microlocal merodromy $A_f$ is a unit if and only if the lollipop chain reaction initiated at $f$ is incomplete.
\end{prop}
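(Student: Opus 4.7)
The plan is to prove both implications, with the converse direction carrying the main technical weight.

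The forward direction is short. Assume the chain reaction from $f$ is complete. Each single lollipop reaction in the chain is designed to eliminate sugar-content corners at inward-pointing lollipops of newly selected faces, so the output of a complete chain reaction is, by construction, a sugar-free region containing $f$; in particular, the sugar-free hull $\S_f$ is non-empty. Proposition \ref{prop: non-empty vanishing locus for sugar-free hulls} then directly produces a point $p \in \FM(\La, T)$ where $A_f(p)=0$, so $A_f$ is not a unit in $\mathcal{O}(\FM(\La,T))$.

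For the converse, I would argue by contrapositive: assume $A_f(p)=0$ at some $p$, and show that the chain reaction from $f$ must complete. Iterating Proposition \ref{prop:relative position scanning} gives $A_g(p)=0$ for every face $g$ selected within any single lollipop reaction of the chain. The main claim is then that every single lollipop reaction within the chain itself completes. Suppose some single reaction is incomplete, initiated at a face $f'$ with $A_{f'}(p)=0$; without loss of generality the lollipop in $f'$ is white, and the wall scans to the right without ever shrinking to zero, escaping past the topmost horizontal line of $\bG$. Extend the scanning argument from the proof of Proposition \ref{prop:relative position scanning} up to the escape point: at the Type~1 column immediately preceding the escape, the wall spans the full interval $[i,n-1]$ of gaps and the relative position satisfies $w\le w_{[i,n-1]}$. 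Propagating this through the Type~3 black-lollipop column that triggers the escape, via the flag relation \eqref{eq: type 3 black lollipop relative position}, and using that the flags in the exterior of $\bG$ are trivial (the ambient space becomes zero-dimensional there), should force the two flags $\SF_0(p), \SF^0(p)$ at the initial column of $f'$ to coincide on a subspace that is inconsistent with the starting non-trivial vanishing condition $A_{f'}(p)=0$.

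The main obstacle is making this contradiction rigorous: Bruhat inequalities alone do not suffice (since $e\le w_{[i,j]}$ holds vacuously), so the argument must exploit the Type~3 flag relations at the boundary of $\bG$ together with the zero-dimensional nature of flags in the exterior to produce an actual geometric inconsistency in the flag configuration rather than just a Bruhat-order inclusion. A potentially cleaner reformulation, which I would pursue in parallel, proceeds weave-theoretically via Lemma \ref{lem: Y representability}: an incomplete chain reaction means $\S_f$ does not exist, hence no $\sf Y$-tree in $\ww(\bG)$ represents the boundary of the hypothetical hull. Since every irreducible codimension-one stratum of $\{A_f=0\}$ arises by erasing such a $\sf Y$-tree as in the proof of Proposition \ref{prop: non-empty vanishing locus for sugar-free hulls}, the non-existence of this cycle, combined with the factoriality of $\mathcal{O}(\FM(\La,T))$ from Proposition \ref{prop:UFD}, would force $\{A_f=0\}=\emptyset$, closing the contrapositive.
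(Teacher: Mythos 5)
Your forward direction is the same as the paper's and is fine: a complete chain reaction yields a sugar-free hull, and Proposition~\ref{prop: non-empty vanishing locus for sugar-free hulls} produces a vanishing point, so $A_f$ cannot be a unit.

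Your converse has a genuine gap. Both of your two routes rely on a step that is neither in the paper nor (you yourself admit) closed in your write-up. The flag-scanning route is missing exactly the contradiction mechanism, as you note. The ``cleaner'' weave-theoretic route depends on the claim that \emph{every} irreducible codimension-one component of $\{A_f=0\}$ arises by erasing a $\sf Y$-tree as in the proof of Proposition~\ref{prop: non-empty vanishing locus for sugar-free hulls}. That proposition gives only an \emph{existence} statement (non-emptiness of the vanishing locus under a hypothesis), not a structural decomposition of $\{A_f=0\}$ into $\sf Y$-tree strata, so you cannot invoke it to conclude the vanishing locus is empty when no $\sf Y$-tree exists. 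Factoriality does not rescue this: factoriality controls how a non-trivial vanishing locus decomposes into irreducibles, not whether the vanishing locus is non-trivial to begin with.

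The paper's argument is much shorter and avoids all of this. An incomplete chain reaction means some wall reaches the unbounded region, i.e.\ the selection process selects an unbounded face $g$. Iterating Proposition~\ref{prop:relative position scanning} along the chain, $A_f(p)=0$ would force $A_g(p)=0$. But $A_g$ is \emph{always} a unit: because $g$ is unbounded, any relative $1$-chain $\eta_g$ associated to it projects to $0$ under $H_1(L\setminus T,\La\setminus T)\to H_1(L,\La)$, so by Corollary~\ref{cor: merodromies of the same relative cycle only differ by units} the merodromy $A_g$ differs from $1$ only by a Laurent monomial in the invertible $A_t$'s. This contradiction shows $\{A_f=0\}=\varnothing$, i.e.\ $A_f$ is a unit. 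The missing ingredient in your proposal is precisely this use of the unbounded face to identify a merodromy that is tautologically invertible, which short-circuits the need for any flag degeneration analysis at the boundary of $\bG$ or any structural claim about the vanishing locus.
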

\begin{proof} Indeed, if the lollipop chain reaction initiated from $f$ is complete, then Proposition \ref{prop: non-empty vanishing locus for sugar-free hulls} implies that $\{A_f=0\}$ is non-empty and hence $A_f$ cannot a unit. For the converse implication, note that the lollipop chain reaction initiated from $f$ being incomplete implies that during a certain lollipop reaction in the chain, the selection process runs out of faces to select. This is equivalent to saying that the selection process selects an unbounded face $g$ of the GP-graph. Now, if there exists a point $p\in \FM(\La,T)$ with $A_f(p)=0$, then by Proposition \ref{prop:relative position scanning}, $A_g(p)=0$ as well. Nevertheless, this is impossible because any relative 1-chain  $\eta_g$ associated with $g$ must map to the identity under the projection map $H_1(L\setminus T, \Lambda\setminus T)\rightarrow H_1(L,\Lambda)$, and hence $A_g$ is a unit by Corollary \ref{cor: merodromies of the same relative cycle only differ by units}. Therefore $\{A_f=0\}$ is empty and $A_f$ is a unit.
\end{proof}

Finally, let us discuss the ratios of microlocal merodromies that appear when two faces share a sugar-free hull . Recall that the dual basis $\fB^\vee$ of $H_1(L, \Lambda)$ is constructed by starting with the set $\fS(\bG)$ of initial absolute cycles, which is a linearly independent subset of $H_1(L)$ and is in bijection with the sugar-free hulls of the GP-graph $\bG$. Each element of $\fS(\bG)$ is a linear combination of the naive absolute cycles $\gamma_f$, which are in bijection with the faces of $\bG$. Then, we complete $\fS(\bG)$ to a basis $\fB$ of $H_1(L)$ via a replacement process from bottom to top along the Hasse diagram $\mathcal{H}$ of the sugar-free hulls with respect to inclusion. On the dual side, this replacement process is performed from the top down along the Hasse diagram $\mathcal{H}$, replacing the naive relative cycles $\eta_f$ one-by-one and thus obtaining a basis $\fB^\vee$ of $H_1(L,\Lambda)$ dual to $\fB$. By choosing a representative $A_f$ for each naive relative cycle $\eta_f$, we construct a microlocal merodromy function $A_i$ for each $i\in \fB^\vee$.

\noindent As there can be multiple faces sharing the same sugar-free hull, some faces (naive absolute cycles) are set aside during the replacement process. Suppose $\bS_f=\bS_g$ for two different faces $f,g\sse\bG$ and suppose that we set aside $\gamma_f$, while selecting $\gamma_g$. Then, on the dual side, the set-aside naive absolute cycle $\gamma_f$ will correspond to the relative cycle $\eta_f-\eta_g$, which in turn gives rise to the microlocal merodromy function $A_f/A_g$.

\begin{prop}\label{prop:A_f/A_g is a unit} Let $\bG$ be a GP-graph and $f,g\sse\bG$ two faces with equal sugar-free hulls. Then, the microlocal merodromy $A_f/A_g$ corresponding to a set-aside naive absolute cycle $\gamma_f$ is a unit. 
\end{prop}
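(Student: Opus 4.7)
The plan is to show that $A_f$ and $A_g$ generate the same principal ideal in the factorial coordinate ring $\mathcal{O}(\fM(\La(\bG),T))$, from which it follows immediately that $A_f/A_g$ is a unit.

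First I would establish set-theoretic equality of the vanishing loci. Since $\bG$ is complete, the second bullet of Definition \ref{def:completeGPgraph} applied with $g \in \S_g = \S_f$ yields $\{A_f = 0\} \subseteq \{A_g = 0\}$, and exchanging the roles of $f$ and $g$ (using $f \in \S_f = \S_g$) gives the reverse inclusion. Hence the two vanishing loci coincide as closed subsets of $\fM(\La(\bG),T)$.

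Next I would analyze the codimension-one irreducible components of this common vanishing locus. By Lemma \ref{lem: Y representability} and the erasing procedure of Lemma \ref{lem: erasing a Y tree}, each codimension-one stratum on which a naive face merodromy vanishes is governed by a non-free weave obtained from $\ww(\bG)$ by erasing a short $\sf I$-cycle dual to a sugar-free hull $\S$; denote the closure of that stratum by $D_\S$. The argument in the proof of Proposition \ref{prop: non-empty vanishing locus for sugar-free hulls}, together with the Bruhat comparison of Proposition \ref{prop:relative position scanning}, shows that a face merodromy $A_h$ vanishes along $D_\S$ exactly when $h \in \S$. Computing the leading-order behaviour near a generic point of $D_\S$ via the crossing-value expression of Proposition \ref{prop: A_f are regular functions}, one checks that the order of vanishing of $A_h$ along $D_\S$ equals the intersection pairing $\langle \pi(\eta_h), \dd \S\rangle$, which is $1$ if $h\in \S$ and $0$ otherwise.

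Combining these inputs, for every codimension-one component $D_\S$ one finds
\[
\op{ord}_{D_\S}(A_f)=[f\in\S]=[\S_f\subseteq\S]=[\S_g\subseteq\S]=[g\in\S]=\op{ord}_{D_\S}(A_g),
\]
where the middle equality uses the hypothesis $\S_f=\S_g$. Hence $A_f$ and $A_g$ define the same principal Cartier divisor on $\fM(\La(\bG),T)$, and factoriality of $\mathcal{O}(\fM(\La(\bG),T))$ forces their ratio to be a global unit.

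The main obstacle in this program is the precise identification of codimension-one components of $\{A_f=0\}$ with $\sf Y$-tree erasures in $\ww(\bG)$, together with the explicit local computation of $\op{ord}_{D_\S}(A_h)$. The geometric picture is clear, namely that erasing a $\sf Y$-tree trades one torus factor of $\fM(\ww(\bG),T)$ for a codimension-one stratum parametrized by the non-free weave, but turning this intuition into a clean local-ring computation requires unpacking the flag-theoretic data on both sides of the erased $\sf Y$-tree and tracking how the crossing-value formula for $A_h$ degenerates to first order near a generic point of $D_\S$.
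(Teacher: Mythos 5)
Your opening step --- establishing $\{A_f=0\}=\{A_g=0\}$ from the second bullet of Definition~\ref{def:completeGPgraph} --- is clean and matches the paper in substance, though the paper phrases it via lollipop chain reactions and Proposition~\ref{prop:relative position scanning} rather than invoking the completeness axiom directly. After that, you diverge: the paper concludes in one line that equality of vanishing loci together with factoriality of $\mathcal{O}(\FM(\Lambda,T))$ forces $A_f$ and $A_g$ to be associates, and stops there. You are right to be uneasy with that step as stated; in a UFD two elements with identical vanishing loci need not be associates (compare $x$ and $x^2$ in $\C[x]$), and what one really needs is that $A_f$ and $A_g$ vanish to the same order along each irreducible component of the common locus. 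Your plan is aimed at supplying exactly this missing multiplicity data, and if carried through it would yield a tighter argument and also clarify what the paper's short proof is implicitly relying on.

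That said, your proposal has a genuine gap, which you yourself flag at the end. Neither the claim that every codimension-one component of $\{A_f=0\}$ is the closure of an erasure stratum $D_{\S}$, nor the local identity $\op{ord}_{D_{\S}}(A_h)=\langle\pi(\eta_h),\dd\S\rangle$, is actually established. Proposition~\ref{prop: non-empty vanishing locus for sugar-free hulls} only shows that the erasure strata sit \emph{inside} the vanishing locus of $A_f$; it does not show their closures exhaust its codimension-one part, and a priori the vanishing locus could have further components not of this form. The order-of-vanishing formula is a natural toric-looking analogue, but proving it would require degenerating the crossing-value expression near a generic point of $D_{\S}$ and extracting a first-order term, work you explicitly defer as the ``main obstacle.'' Until both steps are in hand, the argument does not close. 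As written, your attempt is a more scrupulous outline than the paper's own one-paragraph proof, but it is still an outline, and the content you would need to fill it in is nontrivial.
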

\begin{proof} Since the faces $f$ and $g$ have the same sugar-free hull, the lollipop chain reaction initiated from one of them must contain the other. Therefore, by Proposition \ref{prop:relative position scanning}, $A_f(p)=0$ if and only if $A_g(p)=0$ for any $p\in\FM(\La,T)$. Since $\mathcal{O}(\FM(\Lambda, T))$ is a UFD, this implies that $A_f$ and $A_g$ are associates of each other, and thus $A_f/A_g$ is a unit.
\end{proof}

\subsubsection{Rank of exchange matrix and mutation formulae for Lagrangian surgeries} Recall the notation $U_0=\FM(\ww, T)\sse\FM(\Lambda, T)$ for the open toric chart associated to the (Lagrangian filling for the) weave $\ww=\ww(\bG)$. Let us denote the naive microlocal monodromies as $\{A_f\}$, where $f$ runs through the faces of $\bG$, and the microlocal monodromies associated with marked points as $\{A_t\}_{t\in T}$. The microlocal merodromies associated with the dual basis $\fB^\vee$ will be denoted by $\{A_i\}$, where $i\in[1,b_1(L)]$. By construction,
\[
\mathcal{O}(U_0)=\mathbb{C}\left[A_{f_1}^{\pm 1},\ldots,A_{f_{b_1(L)}}^{\pm 1} A_t^{\pm 1}\right]=\mathbb{C}\left[A_1^{\pm 1},\ldots,A_{b_1(L)}^{\pm 1},A_t^{\pm 1}\right],
\]
where the variable $t$ runs through the set of marked points $T$.
Let us also fix $\tilde{\fB}$ to be a completion of the basis $\fB\subset H_1(L)$ to a basis of $H_1(L, T)$. Then, by possibly adding relative 1-chains $\{\xi_t\}_{t\in T}$, we can modify elements of $\fB^\vee$ so that $\fB^\vee\sqcup\{\xi_t\}_{t\in T}$ becomes a dual basis of $\tilde{\fB}$. In this modification, each microlocal merodromy $A_i$ is multiplied by some Laurent monomial in the $A_t$'s. To ease notation, we rename the microlocal merodromies $A_i$'s to include these Laurent monomials as well. Since $\tilde{\fB}$ and $\tilde{\fB}^\vee$ are dual bases, there is a natural bijection between them, and we will use them interchangeably as index sets for microlocal monodromies and microlocal merodromies.

Now, the intersection form  $\{\cdot,\cdot\}$ on $H_1(L)$ can be extended to a skew-symmetric form on $H_1(L,T)$ by imposing a half integer value for intersections at $T$.
\[
\begin{tikzpicture}[baseline=10,scale=0.3]
\draw (-2,2) arc (-180:0:2 and 0.5);
\draw (-1,0) -- (0,1.5);
\draw (1,0) -- (0,1.5);
\node at (0,1.5) [] {$\ast$};
\end{tikzpicture}\quad =\quad \frac{1}{2}\quad \begin{tikzpicture}[baseline=10,scale=0.3]
\draw [dashed] (-2,2) arc (-180:0:2 and 0.5);
\draw (-1,0) -- (0,1.5);
\draw (0,1.5) -- (0.5,2.25);
\draw (1,0) -- (0,1.5);
\draw (0,1.5) -- (-0.5,2.25);
\node at (0,1.5) [] {$\ast$};
\end{tikzpicture}
\]
With respect to the basis $\tilde{\fB}$, the intersection form on $H_1(L,T)$ is then encoded by a $\tilde{\fB}\times \tilde{\fB}$ skew-symmetric matrix $\epsilon$, where
\[
\epsilon_{ij}=\{\gamma_i,\gamma_j\}
\]
for any pair $\gamma_i,\gamma_j\in \tilde{\fB}$.
For any absolute 1-cycle $\gamma$ in $H_1(L)$, Subsection \ref{ssec:clusterXvars} constructs the microlocal monodromy function $\psi_\gamma$ on $\M_1(\ww)$. After the correction by sign curves, we have $X_i:=\psi_{\gamma_i}$, and the collection $\{X_i\}_{i\in \fB}$ are our candidates for the initial  cluster $\mathcal{X}$-variables. 

Let $p:\FM(\Lambda,T)\lr\M_1(\Lambda)$ be the forgetful map. Then, by restricting to the respective tori supported on the initial weave $\ww$, we also obtain $p:\FM(\ww, T)\rightarrow \M_1(\ww)$.

\begin{prop}\label{prop: p map pull-back} Consider the forgetful map $p:\FM(\ww, T)\rightarrow \M_1(\ww)$. For an initial absolute cycle $\gamma_i\in \fS(\bG)$,
\[
p^*(X_i)=\prod_{j\in \tilde{\fB}} A_j^{\epsilon_{ij}}.
\]
\end{prop}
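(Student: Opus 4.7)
The plan is to interpret both sides of the identity as characters of the algebraic torus $\FM(\ww,T)$ and match them at the level of character lattices. The moduli $\M_1(\ww)$ is an algebraic torus whose character lattice is $H_1(L)$ via the sign-corrected microlocal monodromy $\gamma \mapsto X_\gamma$. Similarly, $\FM(\ww,T)$ is an algebraic torus whose character lattice is naturally identified with $H_1(L\setminus T, \La\setminus T)$ via $\eta \mapsto A_\eta$, using the basis $\tilde{\fB}^\vee$ dual to $\tilde{\fB}$ under the Poincar\'e pairing. Under these identifications, the pullback $p^*$ induces a map of character lattices $H_1(L) \to H_1(L\setminus T, \La\setminus T)$, and the identity to prove is equivalent to showing that this map sends $\gamma_i$ to $\sum_j \epsilon_{ij} \eta_j$.

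To compute $p^*(\gamma_i)$ in the dual basis $\tilde{\fB}^\vee$, I would perform an explicit crossing-value calculation. By Proposition \ref{prop:Ytreebounds}, $\gamma_i$ can be represented by a short $\sf I$-cycle in an equivalent weave, near which the microlocal monodromy is the cross-ratio $\langle a, b, c, d \rangle$ of four adjacent flags. After introducing decorations from a framing in $\FM(\ww,T)$, Lemma \ref{lem: local crossing value} expresses this cross-ratio as a product of crossing values along small transverse relative paths, each of which contributes a factor $A_j^{\pm 1}$. The exponent with which a given $A_j$ appears counts the signed intersection of $\gamma_i$ with a representative of $\eta_j$, which, by duality of $\tilde{\fB}$ and $\tilde{\fB}^\vee$ under the Poincar\'e pairing, coincides with the intersection number $\{\gamma_i, \gamma_j\} = \epsilon_{ij}$. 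Equivalently, one can verify abstractly that the coefficient of $\eta_j$ in $p^*(\gamma_i)$ is $\langle \gamma_j, p^*(\gamma_i)\rangle = \epsilon_{ij}$ by translating the Poincar\'e pairing against elements of $H_1(L) \subset H_1(L,T)$ into the intersection form $\{\cdot,\cdot\}$.

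The main technical obstacle is careful sign tracking. The $-1$ factor built into $X_i = -\psi_{\gamma_i}$ must be matched by the signs arising from the compatible set of sign curves of Subsection \ref{ssec:fixingsigns}, and the half-integer contributions at the marked points $T$ in the skew-symmetric form $\{\cdot,\cdot\}$ on $H_1(L,T)$ must correspond to the Laurent monomials in $\{A_t\}_{t\in T}$ that were absorbed into the $A_j$ during the refinement of $\tilde{\fB}^\vee$ to a dual basis of $\tilde{\fB}$. These adjustments are routine but require a careful diagrammatic analysis at every trivalent weave vertex crossed by $\gamma_i$ and every marked point lying on the boundary of a representative of $\eta_j$. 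Once these sign conventions are pinned down, the crossing-value product reads off as $\prod_j A_j^{\epsilon_{ij}}$ exactly, completing the proof.
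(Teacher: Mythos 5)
Your secondary ``equivalently'' argument is on target and matches the paper's proof: the statement reduces to the lattice identity $p^*(\gamma_i)=\sum_j\epsilon_{ij}\eta_j$ in $H_1(L\setminus T,\La\setminus T)$, which the paper verifies by a purely homological duality argument, pairing both sides against lifts $\theta\in H_1(L,T)$ of elements of $H_1(L,\La)$ and invoking non-degeneracy of the classical Poincar\'e--Lefschetz pairing $H_1(L)\otimes H_1(L,\La)\to\Z$. Note that this route is needed to finesse the fact that a basis element $\gamma_j\in\tilde{\fB}$ need not lie in $H_1(L)$, so that $\langle\gamma_j,\iota'(\gamma_i)\rangle$ does not tautologically coincide with the intersection form $\{\gamma_i,\gamma_j\}$; your phrasing ``translating the Poincar\'e pairing against elements of $H_1(L)\subset H_1(L,T)$'' only covers the case $\gamma_j\in H_1(L)$.

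Your primary crossing-value thread, however, contains a wrong step. The claim that ``the exponent with which a given $A_j$ appears counts the signed intersection of $\gamma_i$ with a representative of $\eta_j$,'' which you then equate with $\epsilon_{ij}$, is incorrect: by definition of the dual bases the pairing $\langle\gamma_i,\eta_j\rangle$ is $\delta_{ij}$, not $\epsilon_{ij}$. The exponent of $A_j$ is governed by $\langle\gamma_j,p^*(\gamma_i)\rangle$ — a different pairing, with $\gamma_j$ (not $\eta_j$) in the second slot — and the crossing-value expansion does not display this as a direct intersection count. In fact the paper deliberately uses no crossing-value or sign-curve bookkeeping for this proposition; the short-$\sf I$-cycle reduction and the local crossing-value lemma are reserved for the adjacent mutation formula (Proposition~\ref{prop:mutation1}). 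The ``routine'' sign analysis you anticipate is absent from the paper's proof precisely because the lattice argument sidesteps it: all signs are already encoded once and for all in the definitions of $X_i$ and $A_j$ via the compatible sign curves, and no further tracking is required to prove the character-lattice identity.
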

\begin{proof} Let us denote the relative 1-chain dual to $\gamma_i\in \tilde{\fB}$ by $\eta_i$. It suffices to prove that 
\[
\gamma_i=\sum_j \epsilon_{ij}\eta_j
\]
under the inclusion map $H_1(L)\cong H_1(L\setminus T)\hookrightarrow H_1(L\setminus T, \Lambda\setminus T)$. Note that since we have at least one marked point per link component in $\Lambda$, we can lift elements from $H_1(L,\Lambda)$ to $H_1(L,T)$. Now, for any element $\theta=\sum_{k\in \tilde{\fB}}c_k\gamma_k\in H_1(L,T)$ that is a lift of a relative 1-cycle in $H_1(L, \Lambda)$, we have
\[
\inprod{\sum_j\epsilon_{ij}\eta_j}{\theta}=\inprod{\sum_j\epsilon_{ij}\eta_j}{\sum_kc_k\gamma_k}=\sum_j \epsilon_{ik}c_k=\inprod{\gamma}{\theta}.
\]
Since the intersection form is non-degenerate on the tensor product $H_1(L)\otimes H_1(L, \Lambda)$, it indeed follows that $\gamma=\sum_j\epsilon_{ij}\eta_j$.
\end{proof}

\begin{cor}\label{cor: full-ranked} The rectangular exchange matrix $\epsilon|_{\fS(\bG)\times\tilde{\fB}}$ is full-ranked.
\end{cor}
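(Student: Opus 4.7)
The plan is to show that the $|\fS(\bG)|$ rows of the matrix $\epsilon|_{\fS(\bG)\times\tilde{\fB}}$ are linearly independent over $\mathbb{Q}$. Since $\tilde{\fB}$ is a basis of $H_1(L,T)$, the row indexed by $\gamma_i \in \fS(\bG)$ records, in these coordinates, the linear functional $L_i := \{\gamma_i, \cdot\}\colon H_1(L,T) \to \mathbb{Z}$. Hence the task reduces to proving the linear independence of the family $\{L_i\}_{i \in \fS(\bG)}$ in $\mathrm{Hom}_{\mathbb{Z}}(H_1(L,T),\mathbb{Q})$.

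Suppose $\sum_i c_i L_i = 0$ with $c_i \in \mathbb{Q}$, and set $\gamma := \sum_i c_i \gamma_i \in H_1(L) \otimes \mathbb{Q}$. Then $\{\gamma, \eta\} = 0$ for every $\eta \in H_1(L,T)$. Since $\gamma$ is an absolute class, it admits a representative disjoint from the finite subset $T \sse \Lambda$, so no half-integer contributions occur and $\{\gamma,\cdot\}$ on $H_1(L,T)$ factors through the projection $H_1(L,T) \to H_1(L,\Lambda)$ as the usual intersection pairing. Because each component of $\Lambda$ carries at least one marked point, the long exact sequence of the triple $(L,\Lambda,T)$ gives $H_0(\Lambda,T) = 0$, so $H_1(L,T) \twoheadrightarrow H_1(L,\Lambda)$ is surjective; therefore $\{\gamma,\cdot\}$ vanishes identically on $H_1(L,\Lambda)$. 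By Poincar\'e-Lefschetz duality for the compact oriented surface-with-boundary $(L,\Lambda)$, the intersection pairing $H_1(L) \otimes H_1(L,\Lambda) \to \mathbb{Z}$ is perfect, so $\gamma = 0$ in $H_1(L) \otimes \mathbb{Q}$. The linear independence of $\fS(\bG) \sse H_1(L)$, established in Proposition \ref{prop:replacement construction}, then forces all $c_i = 0$.

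The main obstacle is the bookkeeping at the marked points: the extended pairing on $H_1(L,T)$ is only $\tfrac{1}{2}\mathbb{Z}$-valued in general, and the argument hinges on the observation that an absolute class admits a transverse representative disjoint from $T$, which purges the half-integer contributions and identifies $\{\gamma,\cdot\}|_{H_1(L,T)}$ with the pullback of the Poincar\'e-Lefschetz pairing along $H_1(L,T) \twoheadrightarrow H_1(L,\Lambda)$. As a cross-check, Proposition \ref{prop: p map pull-back} offers an equivalent derivation: the forgetful torus map $p\colon \FM(\ww,T) \to \M_1(\ww)$ is surjective, hence $p^*$ is injective on regular functions, while the microlocal monodromies $\{X_i\}_{i \in \fS(\bG)}$ are algebraically independent on $\M_1(\ww) \cong (\mathbb{C}^\times)^{b_1(L)}$ by linear independence of $\fS(\bG)$ in $H_1(L)$. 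Consequently the Laurent monomials $p^*(X_i) = \prod_j A_j^{\epsilon_{ij}}$ must likewise be algebraically independent, which forces their exponent vectors, and thus the rows of $\epsilon|_{\fS(\bG)\times\tilde{\fB}}$, to be linearly independent.
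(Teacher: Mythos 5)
Your proposal is correct, and both of your arguments are sound. The paper's own proof is a one-liner: it reduces to showing $\epsilon|_{\fB\times\tilde{\fB}}$ is full-ranked and cites the surjectivity of $p:\FM(\ww,T)\to\M_1(\ww)$, which --- once expanded via Proposition \ref{prop: p map pull-back} and the identification of $\M_1(\ww)$ with the character torus of $H_1(L)$ --- is precisely the reasoning in your cross-check paragraph. Your primary argument unwinds that one-liner to the level of homology lattices: surjectivity of $p$ becomes surjectivity of $H_1(L,T)\to H_1(L,\La)$ (from $H_0(\La,T)=0$, since $T$ meets every component of $\La$), injectivity of $p^*$ becomes perfection of the Poincar\'e--Lefschetz pairing $H_1(L)\otimes H_1(L,\La)\to\Z$, and the intermediate step --- that $\{\gamma,\cdot\}$ factors through $H_1(L,\La)$ when $\gamma$ is an absolute class, because $\gamma$ can be pushed into the interior of $L$ and hence off $T$ --- is exactly the homological content that Proposition \ref{prop: p map pull-back} packages. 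The benefit of your primary route is that it is self-contained in the topology of the filling surface and avoids the moduli-theoretic layer entirely; the trade-off is that it re-derives what Proposition \ref{prop: p map pull-back} already supplies. Either route closes via the linear independence of $\fS(\bG)$ from Proposition \ref{prop:replacement construction}.
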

\begin{proof} Since $\epsilon|_{\fS(\bG)\times \tilde{\fB}}$ is a submatrix of $\epsilon|_{\fB\times \tilde{\fB}}$, it suffices to prove that $\epsilon|_{\fB\times \tilde{\fB}}$ is full-ranked. The latter follows from the surjectivity of the map $p:\FM(\ww, T)\lr \M_1(\ww)$.
\end{proof}

Let us now focus towards on the effect that a Lagrangian surgery, in the form of weave mutation, has on microlocal merodromies. For that we need to understand how relative cycles change under such an operation, as follows. Let $\gamma_k$ be an initial absolute cycle which, by Lemma \ref{lem: Y representability}, we represent $\gamma_k$ as a $\sf Y$-tree on the initial weave $\ww=\ww(\bG)$. By Proposition \ref{prop:Ytreebounds}, there exists a weave equivalence that isotopes $\gamma_k$ to a short $\sf I$-cycle. In this weave equivalence, the dual basis element $\eta_k$ of $\gamma_k$ must also be isotoped to a curve that cuts through this short $\sf I$-cycle in the middle. Thus, near the short $\sf I$-cycle $\gamma_k$, the local model is the one depicted in Figure \ref{fig:mutation of relative cycles} (left). Note that each of the four weave lines extending out of this local picture may be part of multiple initial absolute cycles. However, we may assume without loss of generality that all other initial relative cycles are outside this local picture.
\begin{figure}[H]
\centering
\begin{tikzpicture}[scale=0.6]
\draw [line width = 8, lime] (-1,0)-- (1,0);
\draw (-1,0) -- (1,0);
\draw (-1,0) -- (-1.5,1.5);
\draw (-1,0) -- (-1.5,-1.5);
\draw (1,0) -- (1.5,1.5);
\draw (1,0) -- (1.5,-1.5);
\draw [dashed, orange] (0,-1.5) -- (0,1.5);
\node at (0,0) [above right] {$1$};
\node at (0.5,0) [below] {$\gamma_k$};
\node at (0,-1.5) [below] {$\eta_k$};
\end{tikzpicture}
\quad \quad \quad \quad 
\begin{tikzpicture}[scale=0.6]
\draw [line width = 8, lime] (0,-1)-- (0,1);
\draw (0,-1) -- (0,1);
\draw (0,-1) -- (-1.5,-1.5);
\draw (0,-1) -- (1.5,-1.5);
\draw (0,1) -- (-1.5,1.5);
\draw (0,1) -- (1.5,1.5);
\draw [dashed, orange] (0,-1.5) -- (-1,0) -- (1,0) -- (0,1.5);
\node at (0,0) [above right] {$1$};
\node at (0,-0.5) [right] {$\gamma'_k$};
\node at (-0.5,-0.9) [left] {$-1$};
\node at (0.5,0.9) [right] {$-1$};
\node at (0,-1.5) [below] {$\eta'$};
\end{tikzpicture}
\caption{Mutation of initial relative cycles.}
\label{fig:mutation of relative cycles}
\end{figure}
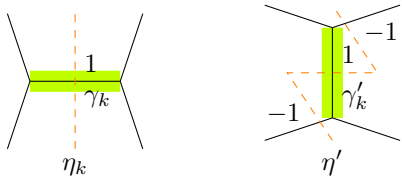

\noindent By performing a weave mutation at $\gamma_k$, we obtain a new weave $\ww_k$, which is mostly identical to $\ww$ except the local picture is replaced by Figure \ref{fig:mutation of relative cycles}. Note that the initial absolute cycle $\gamma_k$ in $\ww$ is replaced by the new absolute cycle $\gamma'_k$ in $\ww_k$.  

In the local model in Figure \ref{fig:mutation of relative cycles}, we have also drawn a relative 1-chain $\eta'$, which connects to the rest of $-\eta_k$ outside of the local picture. Nevertheless, this relative 1-chain $\eta'$ is not the correct replacement for $\eta_k$ after the surgery, because in addition to having intersection $1$ with $\gamma'_k$, the new relative cycle $\eta'_k$ also needs to have trivial intersection with all other absolute cycles in $\fB$. At this stage, $\eta'$ could possibly have non-trivial intersections with absolute cycles that come into the local picture from the northeast and the southeast. Thus, the correct replacement for the relative cycle $\eta_k$ after the weave mutation is the linear combination
\begin{equation}\label{eq:mutaion formula for relative cycles}
\eta'_k=\eta'+\sum_{j\in \tilde{\fB}}[-\epsilon_{kj}]_+\eta_j.
\end{equation}

This explains how to keep track of relative cycles after a weave mutation, and thus a Lagrangian surgery in our context. Note that the moduli space $\FM(\ww_k, T)$ also defines an open toric chart $\FM(\ww_k, T)\sse\FM(\Lambda,T)$, as $\ww_k$ defines an embedded exact Lagrangian filling as well. Let us denote this chart, where we have performed a Lagrangian surgery at the $k$th disk of the $\mathbb{L}$-compressing system, by $U_k\sse\FM(\Lambda,T)$, and denote the microlocal merodromy associated with the relative cycle $\eta'_k$ by $A'_k$.  In order to understand the change of the microlocal merodromies under surgery, we have the following result.

\begin{prop}\label{prop:mutation1} At any point $u\in U_0\cap U_k$, 
\[
A'_k=A_k^{-1}(1+p^*(X_k))\prod_{j\in \tilde{\fB}}A_j^{[-\epsilon_{kj}]_+},
\]
where $p:U_0\longrightarrow \M_1(\ww)$ is the forgetful map restricted to $U_0\cap U_k$.
\end{prop}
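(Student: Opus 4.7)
\smallskip

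\noindent\textit{Proof proposal.} The plan is to reduce the computation to the local picture of Figure \ref{fig:mutation of relative cycles} by a sequence of weave equivalences, and then to explicitly evaluate all three sides of the identity in terms of crossing values on the four flags adjacent to the short $\sf I$-cycle. By Lemma \ref{lem: Y representability} and Proposition \ref{prop:Ytreebounds}, we may first apply a weave equivalence that turns the $\sf Y$-tree representative of $\gamma_k$ into a short $\sf I$-cycle $s_i$-edge sitting inside a single Type 1 column. Since weave equivalences induce isomorphisms of the flag moduli that preserve microlocal merodromies associated to relative cycles, the right-hand and left-hand sides of the claimed identity are simultaneously invariant under such a reduction. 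After this reduction, the only weave data involved in the mutation is contained in the quadrilateral of four regions around the $\sf I$-cycle; everything outside this disc is identical in $\ww$ and in the mutated weave $\ww_k$, so both $\eta_k$ and the auxiliary cycle $\eta'$ of Figure \ref{fig:mutation of relative cycles} can be chosen to coincide with a fixed arc outside of this local disc.

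Next I would fix decorations $\alpha,\beta,\gamma,\delta$ on the four flags $a,b,c,d\subset V=\SF^{i+1}/\SF^{i-1}$ corresponding to the four regions (south, east, north, west) around the $\sf I$-cycle, consistent with the fixed external decorations determined by the framing data at $T$. In this local chart, the cluster $\mathcal{X}$-variable $X_k$ is (up to the sign fixed by the compatible set of sign curves in Subsection \ref{ssec:fixingsigns}) the cross-ratio $\langle a,b,c,d\rangle$ from Subsection \ref{ssec:clusterXvars}. Using Lemma \ref{lem: local crossing value}, I would then write $A_k = A_{\eta_k}$ as a product of two crossing values corresponding to the two hops across the $s_i$-edge performed by $\eta_k$, each of which is a ratio of the form $(\alpha\wedge\beta)/(\beta_{i+1})$ in the appropriate decorations. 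Dually, in the mutated weave $\ww_k$, the arc $\eta'$ no longer crosses the $s_i$-edge but instead traverses the two $s_i$-edges emanating from the two new hexagonal weave vertices introduced by the mutation. Applying Lemma \ref{lem: local crossing value} again, one obtains $A_{\eta'}$ as a product of crossing values involving the same four decorations but combined via the opposite line $c$ instead of the original $a$.

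The algebraic core is then to verify that
\[
A_{\eta'}\cdot A_k \;=\; 1 + p^*(X_k),
\]
which is the two-term Pl\"ucker identity in the $2$-dimensional quotient $V$: writing each $a\wedge b$, $b\wedge c$, $c\wedge d$, $d\wedge a$ in terms of a basis of $V$, one recognizes $A_{\eta'}A_k$ as exactly $1-\langle a,b,c,d\rangle$, and the sign conventions from the compatible set of sign curves $C$ convert this to $1+p^*(X_k)$ as in Definition \ref{defn:compatible sign curves}. Once this local identity is established, I would account for the difference between $\eta'$ and the actual dual basis element $\eta'_k$ given by the mutation formula \eqref{eq:mutaion formula for relative cycles}: multiplying by the microlocal merodromies of the correction terms $\sum_j[-\epsilon_{kj}]_+\eta_j$ contributes precisely the monomial $\prod_j A_j^{[-\epsilon_{kj}]_+}$, giving the desired formula.

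The main obstacle will be bookkeeping the signs introduced by the compatible set of sign curves of Subsection \ref{ssec:fixingsigns} under the mutation, since the mutation changes the local configuration of trivalent weave vertices and hence the coherent system $C$ must be adjusted. I plan to handle this by verifying, using the local weave moves of Figure \ref{fig:sign_curves_under_weave_mutations} and the proposition asserting existence of a compatible set of sign curves on the mutated weave, that the corrections to $A_{\eta'}$ and to $p^*(X_k)$ induced by the sign-curve change cancel, so that the identity $A_{\eta'}A_k=1+p^*(X_k)$ survives with the correct signs. The remaining bookkeeping---ensuring that external decorations and marked-point data cancel between $A'_k$ and the product $A_k^{-1}\prod_j A_j^{[-\epsilon_{kj}]_+}$---follows from Corollary \ref{cor: merodromies of the same relative cycle only differ by units}, which guarantees that different representatives of the same class in $H_1(L,\La)$ differ only by monomials in the unit variables $A_t$, $t\in T$, and these are absorbed into the definition of $A_j$ after the relabeling performed just before the proposition.
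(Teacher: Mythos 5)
Your proposal follows essentially the same approach as the paper: reduce to the local picture by weave equivalence, set up the four flags/lines around the short $\sf I$-cycle, compute $A_k$ and $A_{\eta'}$ via Lemma \ref{lem: local crossing value} as crossing values, observe that the product $A_k A_{\eta'}$ equals $1 + p^*(X_k)$ by the two-term Pl\"ucker identity on wedges of vectors in the $2$-dimensional quotient, and then absorb the correction terms from Equation \eqref{eq:mutaion formula for relative cycles} to produce the monomial $\prod_j A_j^{[-\epsilon_{kj}]_+}$. The paper's proof is a more streamlined version of yours (working with explicit $\det(v_i\wedge v_j)$'s and without re-visiting the sign-curve bookkeeping, which is already settled in Subsection \ref{ssec:fixingsigns} and Figure \ref{fig:sign_curves_under_weave_mutations}); your remark that $A_{\eta'}A_k$ is ``$1-\langle a,b,c,d\rangle$'' would produce a spurious sign if taken at face value, since the cross-ratio $\langle a,b,c,d\rangle$ already carries the $-1$ from Subsection \ref{ssec:clusterXvars} and so directly equals $p^*(X_k)$, giving $1+p^*(X_k)$ with no further correction needed, but since you explicitly defer the sign to the convention, the overall argument is sound.
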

\begin{proof} It suffices to prove that $A_{\eta'}=A_k^{-1}(1+p^*(X_k))$. Since $\gamma_k$ is a short $\sf I$-cycle, we can assume that the four neighboring flags are four lines $l_e$, $l_s$, $l_w$, and $l_n$ in $\mathbb{C}^2$. Let $v_i$ be a non-zero vector in each line $l_i$ and let $\det$ denote the dual of the non-zero 2-form associated with $\mathbb{C}^2$. Following the crossing-value formula, we obtain
$$
A_k=\det(v_s\wedge v_n),\qquad 
A_{\eta'}=\frac{\det(v_e\wedge v_w)}{\det(v_n\wedge v_e)\det(v_s\wedge v_w)}.
$$
Therefore the product can be computed as
\begin{align*}
A_kA_{\eta'}=&\frac{\det(v_s\wedge v_n)\det(v_e\wedge v_w)}{\det(v_n\wedge v_e)\det(v_s\wedge v_w)}=\frac{\det(v_n\wedge v_e)\det(v_s\wedge v_w)+\det(v_n\wedge v_w)\det(v_e\wedge v_s)}{\det(v_n\wedge v_e)\det(v_s\wedge v_w)}\\
=&1+p^*(X_k). \qedhere
\end{align*}
\end{proof}

\begin{remark} Instead of the the relative 1-chain $\eta'$ depicted in Figure \ref{fig:mutation of relative cycles} (right), we could have chosen $\eta'$ to be have support the other zig-zag $\begin{tikzpicture}[baseline=-5,scale=0.6]\draw [dashed, orange] (0,0.5) -- (-0.3,0) -- (0.3,0) -- (0,-0.5);
\end{tikzpicture}$ with appropriate intersection numbers. In this other choice, Equation \eqref{eq:mutaion formula for relative cycles} would need to be modified to $\eta'_k=\eta'+\sum_{j \in \tilde{\fB}}[\epsilon_{kj}]_+\eta_j$ and the equation in Proposition \ref{prop:mutation1} would be modified to $A'_k=A_k^{-1}(1+p^*(X_k^{-1}))\prod_{j\in \tilde{\fB}}A_j^{[\epsilon_{kj}]_+}$. These compatible changes of signs $\epsilon_{kj}\to -\epsilon_{kj}$ define the chiral dual cluster structure on the same variety, which is the cluster structure defined by the opposite quiver. This chiral dual is discussed in \cite[Section 1.2]{FockGoncharov_ensemble}. Since the quiver is given by the intersection pairing between absolute cycles, the choice in Figure \ref{fig:mutation of relative cycles} is, in a sense, naturally dictated by the chosen orientation on the filling $L(\ww)$.\hfill$\Box$
\end{remark}

Proposition \ref{prop:mutation1} and Proposition \ref{prop: p map pull-back}, yield the desired cluster $\mathcal{A}$-mutation formula for the microlocal merodromies under Lagrangian surgeries on the set of initial $\mathbb{L}$-compressing disks:

\begin{cor}\label{cor: cluster mutation formula} Let $\bG$ be a GP-graph, $\{\eta_1,\ldots,\eta_s\}$ the set of naive relative cycles and $\{A_i\}$ the associated set of naive microlocal merodromies. Consider the microlocal merodromy $A_k'$ along the relative 1-chain $\eta_k'$ obtained from $\eta_k$ by weave mutation at the dual 1-cycle $\gamma_k$. Then

$$\displaystyle A'_k=\dfrac{\prod_{j\in \tilde{\fB} }A_j^{[\epsilon_{kj}]_+} +\prod_{j\in \tilde{\fB} }A_j^{[-\epsilon_{kj}]_+}}{A_k}.$$
\end{cor}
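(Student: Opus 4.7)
The plan is to derive the stated mutation formula as a direct algebraic consequence of the two preceding propositions, so no further geometric input is needed beyond what has already been assembled. First I would substitute the expression for the pull-back $p^*(X_k)$ obtained in Proposition \ref{prop: p map pull-back} into the formula for $A'_k$ from Proposition \ref{prop:mutation1}. Concretely, starting from
\[
A'_k=A_k^{-1}\bigl(1+p^*(X_k)\bigr)\prod_{j\in \tilde{\fB}}A_j^{[-\epsilon_{kj}]_+}
\]
and using $p^*(X_k)=\prod_{j\in\tilde{\fB}}A_j^{\epsilon_{kj}}$, I would distribute the product to rewrite $A'_k$ as
\[
A'_k=A_k^{-1}\Biggl(\prod_{j\in\tilde{\fB}}A_j^{[-\epsilon_{kj}]_+}+\prod_{j\in\tilde{\fB}}A_j^{\epsilon_{kj}+[-\epsilon_{kj}]_+}\Biggr).
\]

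The only thing left to check is the elementary positive-part identity $\epsilon_{kj}+[-\epsilon_{kj}]_+=[\epsilon_{kj}]_+$, which holds termwise: if $\epsilon_{kj}\geq 0$ then $[-\epsilon_{kj}]_+=0$ and both sides equal $\epsilon_{kj}$, while if $\epsilon_{kj}<0$ then $[-\epsilon_{kj}]_+=-\epsilon_{kj}$ and both sides equal $0$. Applying this exponent-by-exponent to the second monomial yields the desired formula
\[
A'_k=\frac{\displaystyle\prod_{j\in\tilde{\fB}}A_j^{[\epsilon_{kj}]_+}+\prod_{j\in\tilde{\fB}}A_j^{[-\epsilon_{kj}]_+}}{A_k},
\]
valid on the intersection of charts $U_0\cap U_k$, where both sides are defined as regular functions.

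There is essentially no obstacle here: the non-trivial content of the corollary lives in Proposition \ref{prop:mutation1}, whose proof rests on the Plücker-type identity $\det(v_s\wedge v_n)\det(v_e\wedge v_w)=\det(v_n\wedge v_e)\det(v_s\wedge v_w)+\det(v_n\wedge v_w)\det(v_e\wedge v_s)$, and in Proposition \ref{prop: p map pull-back}, which comes from non-degeneracy of the intersection pairing. The remaining step is purely formal, so the corollary follows by substitution and the exponent identity above.
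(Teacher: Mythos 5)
Your argument is correct and is exactly the derivation the paper has in mind: the paper's own "proof" of Corollary \ref{cor: cluster mutation formula} is the one-sentence observation that it follows from Proposition \ref{prop:mutation1} and Proposition \ref{prop: p map pull-back}, and your substitution of $p^*(X_k)=\prod_j A_j^{\epsilon_{kj}}$ into $A'_k=A_k^{-1}(1+p^*(X_k))\prod_j A_j^{[-\epsilon_{kj}]_+}$ together with the identity $\epsilon_{kj}+[-\epsilon_{kj}]_+=[\epsilon_{kj}]_+$ is precisely the omitted algebra.
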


\subsubsection{Regularity of initial microlocal merodromies}
By Proposition \ref{prop: A_f are regular functions}, the naive microlocal merodromies $A_f$ are regular functions on the moduli space $\FM(\La,T)$. However, since the adjusted microlocal merodromies $\{A_i\}_{i\in \tilde{\fB}}$, corresponding to the initial basis are ratios of the naive microlocal merodromies, the initial merodromies $\{A_i\}$ are only rational functions a priori. Our next goal is to prove that for all $i\in \fB$, $A_i$ are actually global regular functions, and that they are either irreducible if $i\in \fS(\bG)$, or units otherwise. Let us start with the following lemma:

\begin{lemma}\label{lem: factorization} Let $U_0\sse\FM(\La,T)$ be the initial open toric chart and $f$ a unit in $\mathcal{O}(U_0)$, resp. $\mathcal{O}(U_k)$. Suppose that  $f=gh$ in $\mathcal{O}(\FM(\Lambda,T))$ for some $g,h\in\mathcal{O}(\FM(\Lambda,T))$. Then $g$ and $h$ are also units in $\mathcal{O}(U_0)$, resp. $\mathcal{O}(U_k)$.
\end{lemma}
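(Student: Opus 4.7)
The plan is to reduce the statement to the elementary fact that in any commutative ring a factor of a unit is again a unit. The main step is simply to transfer the factorization $f = gh$, which lives in the global coordinate ring $\mathcal{O}(\FM(\La,T))$, down to the coordinate ring $\mathcal{O}(U_0)$ (respectively $\mathcal{O}(U_k)$) via the restriction-of-functions homomorphism, and then observe that the images of $g$ and $h$ there must be units because their product is.

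Concretely, I would first observe that since $U_0\sse\FM(\La,T)$ is an open subscheme, restriction of regular functions defines a ring homomorphism $\rho_0:\mathcal{O}(\FM(\La,T))\lr\mathcal{O}(U_0)$. Applying $\rho_0$ to the global equality $f=gh$ yields $\rho_0(f)=\rho_0(g)\cdot\rho_0(h)$ in $\mathcal{O}(U_0)$. By hypothesis, $\rho_0(f)$ is a unit in $\mathcal{O}(U_0)$; let $u\in\mathcal{O}(U_0)$ be its inverse. Then $\rho_0(g)\cdot(\rho_0(h)u)=1$ and symmetrically $\rho_0(h)\cdot(\rho_0(g)u)=1$, so both $\rho_0(g)$ and $\rho_0(h)$ are units in $\mathcal{O}(U_0)$, which is what was to be shown. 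The once-mutated case is handled identically, replacing $U_0$ and $\rho_0$ by $U_k$ and the analogous restriction homomorphism $\rho_k:\mathcal{O}(\FM(\La,T))\lr\mathcal{O}(U_k)$, which is defined because $U_k$ is likewise open in $\FM(\La,T)$.

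I do not foresee any serious obstacle: the statement is purely formal, resting only on the fact that restriction to an open subvariety is a ring homomorphism and that in any commutative ring every divisor of a unit is itself a unit. As a sanity check one can note that $\mathcal{O}(U_0)$ is the Laurent polynomial ring $\C[A_i^{\pm 1},A_t^{\pm 1}]$, whose units are precisely the nonzero Laurent monomials, so the conclusion is equivalent to the standard observation that if a product of two Laurent polynomials is a monomial then each factor is a monomial; this more explicit description is not needed for the argument itself, but it clarifies why the lemma will be useful in the subsequent codimension-2 argument for transferring irreducibility statements between $U_0$ and $U_k$.
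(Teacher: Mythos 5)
Your proof is correct and is essentially the paper's own argument: restrict the global factorization $f=gh$ to the open chart and observe that a divisor of a unit is a unit. The paper phrases the final step in terms of the Laurent polynomial description of $\mathcal{O}(U_0)$ (a factor of a Laurent monomial is a Laurent monomial), while you invoke the general ring-theoretic fact directly, but this is only a cosmetic difference.
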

\begin{proof} Indeed, if $f=gh$ in $\mathcal{O}(\FM(\Lambda,T))$, then $f=gh$ in $\mathcal{O}(U_0)$ as well, and if $f$ is a Laurent monomial in $\mathcal{O}(U_0)$, then each of $g$ and $h$ must be a Laurent monomial, too. The proof for the case where $f$ is a unit in $\mathcal{O}(U_k)$ is analogous.
\end{proof}

\noindent We first show that the initial merodromies are irreducible assuming they are regular functions:

\begin{lemma}\label{lem: regular implies irreducible} Let $\bG$ be a GP-graph, $\gamma_k\in \fS(\bG)$ an initial absolute cycle and consider $A_k:\FM(\La,T)\dashrightarrow\C$ an associated microlocal merodromy. Suppose that $A_k$ is a regular function, i.e., an element of $\mathcal{O}(\FM(\Lambda,T))$. Then $A_k$ is irreducible.
\end{lemma}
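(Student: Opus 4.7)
The approach is to use that $\mathcal{O}(\FM(\La,T))$ is a UFD (by completeness of $\bG$), together with the two toric charts $U_0=\FM(\ww,T)$ and $U_k=\FM(\ww_k,T)$ associated to the initial Lagrangian filling and its surgery along the $\mathbb{L}$-compressing disk at $\gamma_k$, and the cluster $\mathcal{A}$-mutation formula (Corollary \ref{cor: cluster mutation formula}), to constrain any factorization $A_k=gh$ inside $\mathcal{O}(\FM(\La,T))$.

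First I would restrict to $U_0$: since $A_k$ is a coordinate of the Laurent polynomial ring $\mathcal{O}(U_0)$, it is a unit there, so by Lemma \ref{lem: factorization} both $g|_{U_0}$ and $h|_{U_0}$ are units of $\mathcal{O}(U_0)$, hence Laurent monomials
\[g|_{U_0}=c_g\prod_{i\in\tilde{\fB}}A_i^{a_i}\prod_{t\in T}A_t^{a_t},\qquad h|_{U_0}=c_h\prod_{i\in\tilde{\fB}}A_i^{b_i}\prod_{t\in T}A_t^{b_t},\]
with $a_i+b_i=\delta_{ik}$, $a_t+b_t=0$, and $c_gc_h=1$.

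Second, I would pass to the once-mutated chart $U_k$, where the mutation formula gives $A_k|_{U_k}=u\cdot(1+X_k)$ with $u$ a unit in $\mathcal{O}(U_k)$ and $X_k=\prod_{j\neq k}A_j^{\epsilon_{kj}}$. The crucial step is to establish that $1+X_k$ is irreducible in the Laurent ring $\mathcal{O}(U_k)$, which reduces to primitivity of the exponent vector $(\epsilon_{kj})_{j\neq k}$. This primitivity is the geometric heart of the argument: since $\gamma_k$ is an embedded simple closed curve bounding an $\mathbb{L}$-compressing Lagrangian disk, it represents a primitive non-separating class in $H_1(L)\hookrightarrow H_1(L,T)$, and the intersection functional $\langle\gamma_k,\cdot\rangle$ evaluated on the basis $\tilde{\fB}$ should then be primitive. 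Together these give that $A_k|_{U_k}$ is, up to a unit, an irreducible element of $\mathcal{O}(U_k)$, so one of $g|_{U_k},h|_{U_k}$, say $h|_{U_k}$, must be a unit in $\mathcal{O}(U_k)$.

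Matching the two chart expressions of $h$ via the substitution $A_k=u(1+X_k)$ and using irreducibility of $1+X_k$ forces $b_k=0$, so $h|_{U_0}=c_h\prod_{i\neq k}A_i^{b_i}\prod_t A_t^{b_t}$. Finally, combining global regularity of both $g$ and $h$ with Proposition \ref{prop: non-empty vanishing locus for sugar-free hulls} (non-emptiness of $V(A_i)$ for mutable $i$) and Proposition \ref{prop:A_f/A_g is a unit} (distinct vanishing loci for distinct sugar-free hulls), a divisor-order analysis forces $b_i=0$ for every mutable $i\neq k$: any non-zero $b_i$ would produce a pole of $g$ or $h$ along a component of $V(A_i)$ not contained in $V(A_k)$, contradicting regularity. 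What remains of $h$ is then a Laurent monomial in the frozen merodromies and the $A_t$, all of which are units in $\mathcal{O}(\FM(\La,T))$ by Proposition \ref{prop: unit iff chain reaction completes}, Proposition \ref{prop:A_f/A_g is a unit}, and Corollary \ref{cor:prod A_t=1}. So $h$ is a unit and $A_k$ is irreducible.

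The hard part will be the primitivity of the exponent vector $(\epsilon_{kj})$, which is the crucial translation of the geometric input (embeddedness of the $\mathbb{L}$-compressing disk for $\gamma_k$) into the algebraic irreducibility of $1+X_k$ in $\mathcal{O}(U_k)$; the final divisor analysis step may additionally need careful bookkeeping across the other once-mutated charts $U_i$ to guarantee that each remaining mutable-index exponent is really forced to vanish rather than being cancelled by some matching vanishing of $A_k$ along shared divisor components.
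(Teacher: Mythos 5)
Your proposal takes a genuinely different route from the paper, and it has two gaps; the second one you flag yourself. Let me compare and explain why the paper's route cleanly avoids your ``hard part.''

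\textbf{Where you diverge.} You mutate at $k$ itself and then argue via the explicit expression $A_k|_{U_k}=u(1+X_k)$. This is fundamentally forced on you because, after the surgery at $\gamma_k$, the function $A_k$ is \emph{not} a unit of $\mathcal{O}(U_k)$ (it vanishes where $X_k=-1$), so Lemma~\ref{lem: factorization} cannot be invoked to conclude that both $g|_{U_k}$ and $h|_{U_k}$ are Laurent monomials. You correctly sidestep this by instead analyzing the factorization of $1+X_k$ directly — but now you must prove that $1+X_k$ is irreducible in the Laurent ring $\mathcal{O}(U_k)$, i.e.\ that the exponent vector $(\epsilon_{kj})_{j}$ is primitive. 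The paper never touches the chart $U_k$. Instead it observes, from $m_k+n_k=1$, that WLOG $m_k>0$ and hence $n_k\le 0$; since $h$ is a non-unit, some \emph{other} $j\ne k$ has $A_j$ non-unit and $n_j>0$. It then mutates at that $j$. In $\mathcal{O}(U_j)$ the function $A_k$ \emph{is} still a coordinate, hence a unit, so Lemma~\ref{lem: factorization} applies unobstructed: both $g$ and $h$ must again be Laurent monomials there. The contradiction is then immediate: substituting $A_j=(M_1+M_2)/A'_j$ into the $U_0$-expression of whichever of $g,h$ carries a positive power of $A_j$ produces a factor $(M_1+M_2)^{p_j}$ with $p_j>0$, which is visibly not a Laurent monomial. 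No primitivity, no irreducibility of a binomial, no divisor bookkeeping.

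\textbf{The gaps in your route.} First, the primitivity of $(\epsilon_{kj})_j$: you assert this follows because $\gamma_k$ is a primitive class, but primitivity of a class alone does not make the pairing $\{\gamma_k,\cdot\}:H_1(L,T)\to\mathbb{Z}$ surjective (a primitive vector in $\mathbb{Z}^2$ paired against a form $\bigl(\begin{smallmatrix}0&2\\-2&0\end{smallmatrix}\bigr)$ has image $2\mathbb{Z}$). What actually makes it work is the topological input that $\gamma_k$ is an \emph{embedded non-separating simple closed curve} on the surface $L$: then there exists an arc $\alpha$ with endpoints on $\partial L$ meeting $\gamma_k$ transversally once, $[\alpha]\in H_1(L,\partial L)$, $\gamma_k\cdot\alpha=1$, and $H_1(L,T)\twoheadrightarrow H_1(L,\partial L)$ lets you lift. (Non-separating holds because $[\gamma_k]\ne 0$ in $H_1(L)$, as $\fS(\bG)$ is linearly independent.) So the claim is true, but the justification you gave does not establish it; you would need to supply this topological argument, and you correctly flag this as ``the hard part.'' Second, your concluding divisor analysis is not yet sound as stated: to conclude $b_i=0$ for mutable $i\ne k$ you lean on distinctness of the vanishing loci $V(A_i)$, but at the point in the paper where Lemma~\ref{lem: regular implies irreducible} is invoked, irreducibility (and regularity!) of the other $A_i$ is precisely what is being established by induction — so you would be at risk of circularity, and the proposition you cite (Proposition~\ref{prop:A_f/A_g is a unit}) concerns faces with \emph{equal} sugar-free hulls, not distinct ones. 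The paper's argument needs none of this because after the mutation at $j$ the contradiction is purely formal.

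In short: the route is different, and the paper's choice of mutating at an auxiliary $j\ne k$ (instead of at $k$) is exactly what lets it apply Lemma~\ref{lem: factorization} in the mutated chart and bypass the irreducibility of $1+X_k$ altogether.
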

\begin{proof} 
Suppose $A_k=gh$ in $\mathcal{O}(\FM(\Lambda,T))$ with neither $g$ nor $h$ being a unit. Then Lemma \ref{lem: factorization} implies that $g$ and $h$ must be Laurent monomials , and hence can be expressed as $\prod_{i\in \tilde{\fB}} A_i^{m_i}$ and $\prod_{i\in \tilde{\fB}} A_i^{n_i}$, respectively, up to a multiple of units in $\mathcal{O}(\FM(\Lambda,T))$. Since $A_k=\prod_j A_i^{m_i+n_i}$, then at least one of $m_k$ and $n_k$ must be positive. Without loss of generality, let us assume that $m_k>0$. Then, since $h$ is not a unit, there must be some $j\neq k$ such that $A_j$ is not a unit and $n_j>0$. If $A_j$ is not a unit, then by Proposition \ref{prop: unit iff chain reaction completes}, $j$ must correspond to an initial absolute cycle $\gamma_j$. By mutating along the initial absolute cycle $\gamma_j$, we obtain a new weave $\ww_j$. By Lemma \ref{lem: factorization}, $h$ is also a Laurent monomial in the new chart $\mathcal{O}(U_j)$ associated to $\ww_j$ and hence we can write $g$ and $h$ as
\[
g=A'^{p_j}_j\prod_{i\neq j} A_i^{p_i} \quad \quad \text{and} \quad \quad h=A'^{q_j}_j\prod_{i\neq j} A_i^{q_i}.
\]
Note that since $A_k=gh$, we must have $p_j+q_j=0$. If $p_j=q_j=0$, then we have a contradiction because $\prod_{i\neq j}A_i^{q_i}=h=\prod_i A_i^{n_i}$ with $n_j>0$. That said, if $p_j$ and $q_j$ are non-zero, then one of them must be positive; suppose $p_j>0$. By Corollary \ref{cor: cluster mutation formula}, we have $A'_j=M_1+M_2$ where $M_1$ and $M_2$ are two algebraically independent Laurent monomials in $\{A_i\}_{i\in \tilde{\fB}}$, up to units. It then follows that
\[
g=(M_1+M_2)^{p_j}\prod_{i\neq j} A_i^{p_i},
\]
which shows that $g$ is not a Laurent monomial in $\{A_i\}_{i\in \tilde{\fB}}$. This is again a contradiction, and therefore $A_k$ must be an irreducible element in $\mathcal{O}(\FM(\Lambda,T))$.
\end{proof}

\noindent We are ready to conclude regularity, and thus irreducibility, of initial merodromies:

\begin{prop} Let $\bG$ be a GP-graph, $\gamma_k\in \fS(\bG)$ an initial absolute cycle and $A_k$ an associated microlocal merodromy. Then $A_k$ is a regular function and an irreducible element in $\mathcal{O}(\FM(\Lambda,T))$.  
\end{prop}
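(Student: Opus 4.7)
The plan is to reduce the regularity of $A_k$ to a divisibility statement inside the UFD $\mathcal{O}(\FM(\Lambda,T))$ (whose factoriality is guaranteed by the completeness hypothesis on $\bG$, Subsection \ref{ssec:delta-complete}), and then settle that divisibility via the combinatorics of sugar-free hulls. Once regularity is proved, irreducibility of $A_k$ follows immediately from Lemma \ref{lem: regular implies irreducible}.

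First, I will write $A_k$ as an explicit Laurent monomial $A_k = \prod_g A_g^{e_{kg}}$ in the naive merodromies $A_g$ indexed by faces $g\sse\bG$, where the integer exponents $e_{kg}$ come from dualizing the replacement process of Proposition \ref{prop:replacement construction} along the Hasse diagram of sugar-free hulls (as illustrated in Example \ref{ex:replacement}). Separating positive from negative exponents, write $A_k = P_k/Q_k$ with $P_k = \prod_{e_{kg}>0}A_g^{e_{kg}}$ and $Q_k = \prod_{e_{kg}<0}A_g^{-e_{kg}}$. Since each $A_g$ is a regular function by Proposition \ref{prop: A_f are regular functions}, both $P_k$ and $Q_k$ lie in $\mathcal{O}(\FM(\Lambda,T))$. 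Because $\mathcal{O}(\FM(\Lambda,T))$ is a UFD, regularity of $A_k$ is then equivalent to the divisibility $Q_k \mid P_k$.

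Second, I will establish this divisibility by upgrading the set-theoretic vanishing-locus inclusions of Subsection \ref{ssec:relative_position_flags} to divisor-theoretic ones. The key inputs are: (i) Proposition \ref{prop:A_f/A_g is a unit}, which says that if $\mathbb{S}_f = \mathbb{S}_g$ then $A_f/A_g$ is a unit, so the prime factorization of $A_g$ depends (up to units) only on $\mathbb{S}_g$; and (ii) Proposition \ref{prop: non-empty vanishing locus for sugar-free hulls}, which produces, for each sugar-free hull $\mathbb{S}$, a non-empty codimension-one stratum in $\FM(\Lambda,T)$ arising from the non-free weave $\ww_{\widehat{\gamma(\mathbb{S})}}$ obtained by erasing the initial $\sf Y$-tree $\gamma(\mathbb{S})$ via Lemma \ref{lem: erasing a Y tree}. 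Together with Proposition \ref{prop:relative position scanning} (which gives $\{A_h = 0\}\sse\{A_g = 0\}$ whenever $\mathbb{S}_g\sse\mathbb{S}_h$), these facts let me factor, up to units,
\[
A_g \;=\; \prod_{\mathbb{S}\,:\, g\in\mathbb{S}} \pi_{\mathbb{S}}^{\,m_{\mathbb{S}}},
\]
where $\pi_{\mathbb{S}}$ is the prime defining the irreducible divisor $D_{\mathbb{S}}$ and the multiplicities $m_{\mathbb{S}}$ depend only on $\mathbb{S}$. Computing the net exponent of each $\pi_{\mathbb{S}}$ in $A_k = P_k/Q_k$ then reduces, via Poincar\'e duality and the identity $\gamma_k = \partial\mathbb{S}_f$, to the intersection pairing $\langle \gamma_k, \sum_{g\in\mathbb{S}}\eta_g\rangle$. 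The Hasse-diagram combinatorics imply this integer is non-negative for every sugar-free hull $\mathbb{S}$, yielding $Q_k \mid P_k$ and hence the regularity of $A_k$.

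The main obstacle I anticipate is the middle step: rigorously identifying the codimension-one irreducible components of $\FM(\Lambda,T)\setminus U_0$ with the sugar-free hulls of $\bG$, and in particular verifying that the multiplicities $m_{\mathbb{S}}$ really depend only on $\mathbb{S}$ and not on the choice of face within it. This will require a direct analysis of the non-free weave strata $\cM_1(\ww_{\widehat{\gamma}})$ using the flag description of Subsection \ref{sssec:flag_description} and the Bruhat-order control of Lemma \ref{lem: vanishing due to bruhat order}, together with an irreducibility check for each stratum. Once this divisor-theoretic description of the naive merodromies is in hand, the divisibility $Q_k\mid P_k$ and hence both regularity and irreducibility of $A_k$ follow as outlined.
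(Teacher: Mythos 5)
Your high-level strategy (reduce irreducibility to regularity via Lemma~\ref{lem: regular implies irreducible}, express $A_k$ as a Laurent monomial in the naive merodromies $A_g$, and invoke factoriality of $\mathcal{O}(\FM(\Lambda,T))$) is in the right spirit, but you pursue a genuinely different route than the paper, and your route has a gap that the paper's proof is specifically structured to avoid. You propose to first establish a \emph{divisorial} decomposition $A_g = \prod_{\S : g\in\S}\pi_\S^{m_\S}$ with each $\pi_\S$ a prime cutting out an irreducible divisor $D_\S$, and then compute the net exponent of each $\pi_\S$ in $A_k$. The difficulty is that nothing you cite establishes that the strata $D_\S$ are irreducible. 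Proposition~\ref{prop: non-empty vanishing locus for sugar-free hulls} only gives non-emptiness, Proposition~\ref{prop:A_f/A_g is a unit} gives associateness of $A_f$ and $A_g$ when $\S_f=\S_g$ (which does handle the "multiplicities depend only on $\S$" worry you raise, at least at the level of faces sharing a hull), and Lemma~\ref{lem: vanishing due to bruhat order} controls Bruhat positions but not irreducibility of a stratum. In fact, irreducibility of the divisors $D_\S$ is a \emph{consequence} of this very proposition, deduced at the start of the codimension-two argument, so taking it as an input here risks circularity and, in any case, would require a substantial independent irreducibility analysis of the non-free weave strata $\cM_1(\ww_{\hat\gamma})$ that is never carried out.

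The paper sidesteps this entirely with a top-down induction along the Hasse diagram $\mathcal{H}$ of sugar-free hulls. At the maximal vertices $A_k=A_{f_k}$ is regular directly from Proposition~\ref{prop: A_f are regular functions}; inductively, one assumes $A_i$ is regular (hence irreducible by Lemma~\ref{lem: regular implies irreducible}) for all $i>k$ in $\mathcal{H}$, observes the telescoping identity $A_{f_i}=\prod_{j\geq i}A_j$ coming from the replacement process, and then uses only the set-theoretic vanishing-locus inclusions $\{A_i=0\}\subset\{A_{f_i}=0\}\subset\{A_{f_k}=0\}$ (the latter from Proposition~\ref{prop:relative position scanning} / completeness) together with factoriality to conclude $A_i\mid A_{f_k}$ and hence $A_k=A_{f_k}\prod_{i>k}A_i^{-1}$ is regular. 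Crucially, at each stage the only irreducibility needed is of the \emph{functions} $A_i$ one level up — supplied by the inductive hypothesis plus Lemma~\ref{lem: regular implies irreducible} — never irreducibility of the strata $D_\S$ as varieties. So the paper buys a much shorter argument, and gets irreducibility of the $D_\S$ afterwards for free, whereas your reformulation needs it as input. One small additional caveat: the pairing you write, $\langle\gamma_k,\sum_{g\in\S}\eta_g\rangle$, does not compute the net exponent of $\pi_\S$; the relevant quantity is $\langle\partial\S,\eta_k\rangle$, which by construction of $\fB$ and $\fB^\vee$ equals $\delta_{\S,\S_k}$ (so you get exactly $0$ or $1$, not merely non-negativity) — worth correcting if you pursue this direction.
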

\begin{proof} By Lemma \ref{lem: regular implies irreducible}, it suffices to prove that $A_k$ is a regular function. We proceed by induction from top down along the Hasse diagram $\mathcal{H}$; recall that vertices of $\mathcal{H}$ are sugar-free hulls and hence they are naturally indexed by the set of initial absolute cycles $\fS(\bG)$. For the base case, suppose $k$ is a maximal vertex in the Hasse diagram. Then $A_k=A_f$ for some naive relative cycle $\eta_f$. Then Proposition \ref{prop: A_f are regular functions} implies that $A_k=A_f$ is a regular function, as required. Inductively, suppose for all $i>k$ in the Hasse diagram $\mathcal{H}$, $A_i$ is a regular function on $\FM(\La,T)$. By Lemma \ref{lem: regular implies irreducible}, $A_i$ are irreducible elements in $\mathcal{O}(\FM(\Lambda,T))$ as well. Let $f_i$ be the face selected for each vertex $i$ of $\mathcal{H}$. Then, for each vertex $i$ of $\mathcal{H}$, we have
\[
A_{f_i}=\prod_{j\geq i}A_j.
\]
In particular, if $i>k$, then the above is the unique factorization of the naive microlocal merodromies $A_{f_i}$ in $\mathcal{O}(\FM(\Lambda,T))$, up to mutliple of units. This also implies that the irreducible elements $\{A_i\}_{i>k}$ are not associates of each other because $A_{f_i}$ are not units by Corollary \ref{cor: merodromies of the same relative cycle only differ by units}.

In addition to the above, if $i>k$, then $f_i$ is contained in the sugar-free hull $\S_{f_i}=\S_i$. Proposition \ref{prop:relative position scanning} implies $\{A_{f_i}=0\}\subset \{A_{f_k}=0\}$, but we obtain the inclusion $\{A_i=0\}\subset \{A_{f_i}=0\}$ as well because $A_i$ is an irreducible factor of $A_{f_i}$. Therefore $\{A_i=0\}\subset \{A_{f_k}=0\}$, which is equivalent to $A_{f_k}$ being divisible by $A_i$ for all $i>k$. Since $A_i$ are distinct irreducible elements of $\mathcal{O}(\FM(\Lambda, T))$, it follows that the quotient
\[
A_k=A_{f_k}\prod_{i>k}A_i^{-1}
\]
is also a regular function on $\FM(\Lambda,T)$ as well. The induction is now complete.
\end{proof}

\subsubsection{Conclusion of the argument} We finalize the proof of the covering of $\mathcal{O}(\FM(\Lambda,T))$ by the initial and adjacent charts, up to codimension 2. For each initial absolute cycle $\gamma_k\in \fS(\bG)$, we denote the vanishing locus of the associatied microlocal merodromy by $D_k:=\{A_k=0\}\sse\FM(\Lambda,T)$. Since $A_k$ is an irreducible element of $\mathcal{O}(\FM(\Lambda,T))$, $D_k$ is irreducible as a codimension 1 subvariety in $\FM(\Lambda,T)$.

\begin{prop}\label{prop:intersection_open} Let $\bG$ be a GP-graph, $\gamma_k\in \fS(\bG)$ initial absolute cycle, $U_k\sse\FM(\Lambda,T)$ the open torus chart associated to the Lagrangian surgery of $L(\ww(\bG))$ at $\gamma_k$, and $D_k\sse\FM(\Lambda,T)$ the vanishing locus of its associated microlocal merodromy. Then the intersection $U_k\cap D_k\sse\FM(\Lambda,T)$ is a non-empty open subset of the vanishing locus $D_k$.
\end{prop}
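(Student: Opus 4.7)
The plan is to use the cluster mutation formula from Corollary \ref{cor: cluster mutation formula} to express the regular function $A_k$ as a Laurent expression on the open torus chart $U_k = \FM(\ww_k, T)$, and then realize zeros of $A_k$ on $U_k$ by solving a single monomial equation. The chart $U_k$ carries toric coordinates given by the mutated merodromy $A'_k$ together with the unchanged $\{A_j\}_{j \in \tilde{\fB}\setminus\{k\}}$, all of which are non-vanishing there. By factoriality of $\mathcal{O}(\FM(\Lambda,T))$, which is part of the definition of a complete GP-graph, the moduli $\FM(\Lambda, T)$ is irreducible, so the open subset $U_0 \cap U_k$ is non-empty and dense in both $U_0$ and $U_k$.

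First, on $U_0\cap U_k$ I would apply Corollary \ref{cor: cluster mutation formula} to obtain the identity
\[
A_k \cdot A'_k \;=\; M_1 + M_2,\qquad M_1 := \prod_{j\neq k} A_j^{[\epsilon_{kj}]_+},\;\; M_2 := \prod_{j\neq k} A_j^{[-\epsilon_{kj}]_+}.
\]
Since $A'_k$ is a non-vanishing coordinate on $U_k$ and each $A_j$ with $j\neq k$ is a regular function, the rational expression $(M_1 + M_2)/A'_k$ is regular on $U_k$ and agrees with the regular function $A_k|_{U_k}$ on the open dense subset $U_0\cap U_k$. By the identity theorem on the irreducible variety $U_k$, this equality then extends to all of $U_k$, giving an explicit description of $D_k\cap U_k$ as the vanishing locus of $M_1+M_2$ inside $U_k$.

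Next, I would observe that a point $p \in U_k$ lies in $D_k$ exactly when $(M_1/M_2)(p) = -1$, where the Laurent monomial $M_1/M_2 = \prod_{j\neq k} A_j^{\epsilon_{kj}}$ is a character of the complex torus $U_k$. Corollary \ref{cor: full-ranked} guarantees that the $k$-th row of $\epsilon$ is non-zero, so this character is non-trivial and surjects onto $\mathbb{C}^\times$. Its fiber over $-1$ is therefore a non-empty codimension-one subtorus of $U_k$, which is precisely $U_k\cap D_k$. This shows non-emptiness, and openness of $U_k\cap D_k$ in $D_k$ is automatic from $U_k$ being open in $\FM(\Lambda, T)$.

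The main technical obstacle is justifying the extension of the mutation identity from the dense open subset $U_0\cap U_k$ to the whole chart $U_k$; this relies crucially on irreducibility of $\FM(\Lambda,T)$, which is ensured by the factoriality clause in the completeness hypothesis on $\bG$, together with the fact that the toric coordinates $A'_k$ and $\{A_j\}_{j\neq k}$ of $U_k$ are nowhere-vanishing there so that no spurious poles obstruct regularity.
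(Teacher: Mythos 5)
Your argument is correct, and it follows a genuinely different route from the paper's. The paper proves non-emptiness of $U_k\cap D_k$ geometrically: it applies the $\sf Y$-tree erasing procedure (Lemma~\ref{lem: erasing a Y tree}) to produce a non-free weave $\ww'$ by deleting the short $\sf I$-cycle for $\gamma_k$, observes that $\FM(\ww',T)\subset D_k$, and then exhibits an explicit point of $\FM(\ww',T)\cap U_k$ by cutting the weave along a Type~1 column, choosing flag configurations on the two pieces $\ww_1,\ww_2$ separately, and gluing them so that the pair of flags along the cut has the generic (rather than fully degenerate) relative position. Your approach is instead algebraic: on $U_k$ one has the Laurent coordinates $A'_k$ and $\{A_j\}_{j\neq k}$, the mutation identity $A_kA'_k=M_1+M_2$ from Corollary~\ref{cor: cluster mutation formula} extends from the dense open $U_0\cap U_k$ to all of $U_k$, and then $D_k\cap U_k=\{M_1+M_2=0\}=\{M_1/M_2=-1\}$. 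Since $M_1/M_2=\prod_{j\neq k}A_j^{\epsilon_{kj}}$ is a non-trivial character of the torus $U_k$ (the $k$-th row of $\epsilon$ is non-zero by Corollary~\ref{cor: full-ranked} and $\epsilon_{kk}=0$), its fiber over $-1$ is a non-empty coset of a codimension-one subtorus.

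The algebraic route is arguably cleaner and more conceptual: it avoids any explicit manipulation of weaves or flag gluing and reduces the question to elementary torus geometry. Its cost is that it leans on three inputs that the geometric argument does not explicitly invoke: irreducibility of $\FM(\Lambda,T)$ (used to get $U_0\cap U_k\neq\emptyset$; note that $U_0\cap U_k$ non-empty also follows more directly since, inside $U_0$, it is $\{M_1+M_2\neq 0\}$, the complement of a proper closed subset of the torus), the full-rank statement of Corollary~\ref{cor: full-ranked}, and the global regularity of $A_k$. All three are available in the paper's development by this point, though they sit on the completeness hypothesis for $\bG$ — the proposition's statement omits ``complete'' but the paper's own proof does so as well via its reference to Proposition~\ref{prop: non-empty vanishing locus for sugar-free hulls}, so this is a shared implicit hypothesis rather than a gap in your argument. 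One small terminological nitpick: the fiber of the character over $-1$ is a \emph{coset} of a codimension-one subtorus rather than a subtorus itself, but this does not affect the non-emptiness conclusion.
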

\begin{proof} It suffices to prove that $U_k\cap D_k$ is non-empty. Similar to the proof of Proposition \ref{prop: non-empty vanishing locus for sugar-free hulls}, we apply Proposition \ref{prop:Ytreebounds} to move $\gamma_k$ to a short $\sf I$-cycle near the end of the original $\sf Y$-tree, so that it lies inside some Type 1 weave column. By deleting this short $\sf I$-cycle, we obtain a weave $\ww'$ whose moduli space $\FM(\ww',T)$ is a subset of $D_k$. It suffices to show that $ \FM(\ww',T)\cap U_k\neq \emptyset$, but this clear: for instance, in the case where the weave looks like the one on the left below,
\[
\begin{tikzpicture}[scale=0.7]
\draw [blue] (0,0) -- (4,0);
\draw [red] (0,1) -- (4,1);
\draw [blue] (0,1.75) -- (1,1.75) -- (1.25,2) -- (1,2.25) -- (0,2.25);
\draw [blue] (2.5,3.5) -- (2.5,2) -- (4,2);
\node at (-1,1) [] {$\ww_1$};
\node at (5,1) [] {$\ww_2$};
\node at (1,-0.5) [] {$\SL_0$};
\node at (1,0.5) [] {$\SL_1$};
\node at (1,1.25) [] {$\SL_2$};
\node at (0.5,2) [] {$\SL_3$};
\node at (3.5,-0.5) [] {$\SR_0$};
\node at (3.5,0.5) [] {$\SR_1$};
\node at (3.5,1.5) [] {$\SR_2$};
\node at (3.5,2.5) [] {$\SR_3$};
\end{tikzpicture}
\quad \quad \quad \quad \quad \quad \quad
\begin{tikzpicture}[scale=0.7]
\draw [blue] (0,0) -- (4,0);
\draw [red] (0,1) -- (4,1);
\draw [blue] (0,1.75) -- (4,1.75);
\draw [blue] (0,2.25) -- (2.5,2.25) -- (2.5,4);
\draw [blue] (2,1.75) -- (2,2.25);
\node at (1,-0.5) [] {$\SL_0$};
\node at (1,0.5) [] {$\SL_1$};
\node at (1,1.25) [] {$\SL_2$};
\node at (0.5,2) [] {$\SL_3$};
\node at (3.5,-0.5) [] {$\SR_0$};
\node at (3.5,0.5) [] {$\SR_1$};
\node at (3.5,1.25) [] {$\SR_2$};
\node at (3.5,2) [] {$\SR_3$};
\node at (2.5,-1) [] {$\ww_v$};
\node at (1.5,3) [] {$\SL_2$};
\end{tikzpicture}
\]
we first fix a point in $\M_1(\ww_1)$ and, then based on the flags $\SL_0,\SL_1,\SL_2, \SL_3$, we choose a point in $\M_1(\ww_2)$ with flags $\SR_0=\SL_0$, $\SR_1=\SL_1$, $\SR_2=\SL_2$, but $\SR_3\neq \SL_3$, and then glue them together. This gives a point in $\M_1(\ww')$ which is also in $\M_1(\ww_k)$, where $\ww_k$ is the mutated weave, which is also shown on the right above).
\end{proof}

\noindent At this stage, the covering property, up to codimension 2, readily follows:

\begin{prop}\label{prop: codim 2} Let $\bG$ be a GP-graph, $\gamma_k\in \fS(\bG)$ the initial absolute cycles and $U_k\sse\FM(\Lambda,T)$ the open torus charts associated to the Lagrangian surgery of $L(\ww(\bG))$ at each $\gamma_k$, where $U_0$ is the initial chart associated to $L(\ww(\bG))$. Then
$$\codim\left(U_0\cup \bigcup_{k\in \fS(\bG)}U_k\right)\geq 2,$$
i.e.~ the union of $U_0$ and all the adjacent charts $U_k$ covers $\FM(\Lambda,T)$ up to codimension 2.
\end{prop}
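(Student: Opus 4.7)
The plan is to show that the complement of $U_0$ in $\FM(\Lambda,T)$ is supported on the divisors $D_k = \{A_k=0\}$ for $k\in\fS(\bG)$, and then to use Proposition \ref{prop:intersection_open} to remove, for each such $k$, a codimension-$2$ locus by including the adjacent chart $U_k$. First, I would use the explicit torus description $\mathcal{O}(U_0) \cong \mathbb{C}[A_i^{\pm 1}, A_t^{\pm 1}]_{i\in\tilde{\fB},\, t\in T}$ to conclude that the set-theoretic complement $\FM(\Lambda,T)\setminus U_0$ is contained in the union of the hypersurfaces $\{A_i=0\}$ for $i\in\tilde{\fB}$ and $\{A_t=0\}$ for $t\in T$. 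The $A_t$ are units in $\mathcal{O}(\FM(\Lambda,T))$ by Corollary \ref{cor:prod A_t=1}, and the frozen $A_i$'s with $i\in\tilde{\fB}\setminus\fS(\bG)$ are units too: those coming from naive absolute cycles with incomplete lollipop chain reaction are units by Proposition \ref{prop: unit iff chain reaction completes}, and those coming from set-aside naive cycles sharing a sugar-free hull with a selected cycle are units by Proposition \ref{prop:A_f/A_g is a unit}. Hence the only hypersurfaces that actually contribute are the $D_k$ for $k\in\fS(\bG)$, and we obtain the inclusion
\[
\FM(\Lambda,T)\setminus U_0 \;\subseteq\; \bigcup_{k\in\fS(\bG)} D_k.
\]

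Next, I would use the fact that each $A_k$ with $k\in\fS(\bG)$ is an irreducible element of the UFD $\mathcal{O}(\FM(\Lambda,T))$ (established right before the proposition) to conclude that each $D_k$ is irreducible of pure codimension $1$. Proposition \ref{prop:intersection_open} then provides a non-empty open subset $U_k\cap D_k \subseteq D_k$. Since $D_k$ is irreducible, the complement $D_k\setminus U_k$ is a proper closed subvariety of $D_k$, hence of codimension at least $1$ in $D_k$ and therefore of codimension at least $2$ in $\FM(\Lambda,T)$. Combining this with the previous inclusion yields
\[
\FM(\Lambda,T)\setminus \Bigl(U_0 \cup \bigcup_{k\in\fS(\bG)} U_k\Bigr) \;\subseteq\; \bigcup_{k\in\fS(\bG)} (D_k\setminus U_k),
\]
a finite union of codimension-$\geq 2$ subsets, which gives the claim.

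The main technical obstacle, which has in fact already been resolved in the preceding subsections, is the first step: verifying that no codimension-$1$ stratum of $\FM(\Lambda,T)$ is hidden outside the hypersurfaces $\{A_k=0\}_{k\in\fS(\bG)}$. This requires (i) the factoriality of $\mathcal{O}(\FM(\Lambda,T))$ from Proposition \ref{prop:UFD}, which is why completeness of $\bG$ was imposed, (ii) the systematic identification of the frozen microlocal merodromies as units, which rests on the relative-position scanning of Proposition \ref{prop:relative position scanning} and Corollary \ref{cor: merodromies of the same relative cycle only differ by units}, and (iii) the irreducibility of the mutable $A_k$, proved via the Geiss--Leclerc--Schr\"oer style argument of Lemma \ref{lem: regular implies irreducible}. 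Once these ingredients are in hand, Proposition \ref{prop: codim 2} is a short formal consequence, and it feeds directly into \cite[Corollary 1.9]{BFZ05} to identify $\mathcal{O}(\FM(\Lambda,T))$ with the upper cluster algebra associated to the initial seed built from $\ww(\bG)$ and its $\mathbb{L}$-compressing system.
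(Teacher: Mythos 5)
Your proposal is correct and follows essentially the same route as the paper's proof: rewrite the complement of the union as $\bigcup_j\bigl(D_j\cap U_j^c\bigr)$ and invoke Proposition \ref{prop:intersection_open} together with the irreducibility of each $D_j$ to drop the codimension below $1$. The only difference is that you spell out the preliminary identification $U_0^c=\bigcup_{k\in\fS(\bG)}D_k$ (via the torus description of $\mathcal{O}(U_0)$ and the fact that all frozen merodromies are units), which the paper's proof treats as understood from the preceding subsection; this is a welcome elaboration rather than a deviation.
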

\begin{proof} By Proposition \ref{prop:intersection_open}, the intersection $U_j\cap D_j$ is open in $D_j$, for all $j$, and thus we have $\codim (D_j\cap U_j^c)\geq 2$ for each $j$. Thus $\codim\left(U_0\cup \bigcup_kU_k\right)\geq 2$ by the inclusions:
\[
    \left(U_0\cup \bigcup_kU_k\right)^c=U_0^c\cap \bigcap_kU_k^c
    =\left(\bigcup_j D_j\right)\cap \bigcap_k U_k^c
    =\bigcup_j\left(D_j\cap \bigcap_k U_k^c\right)
    \subset  \bigcup_j\left(D_j\cap U_j^c\right). \qedhere
\]
\end{proof}


Theorem \ref{thm:main} and Corollary \ref{cor:Xstructure} are now concluded as follows:

\begin{thm}\label{thm: upper cluster algebra} Let $\bG$ be a complete GP-graph. Then the coordinate ring of regular functions  $\mathcal{O}(\FM(\Lambda(\bG),T))$ has the structure of an upper cluster algebra.

\end{thm}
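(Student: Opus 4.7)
The plan is to assemble the symplectic-geometric ingredients developed throughout Section~\ref{sec:cluster} into the hypotheses of the standard algebraic criterion \cite[Corollary~1.9]{BFZ05}, which identifies the upper cluster algebra with the intersection of Laurent polynomial rings associated with an initial seed and each of its one-step mutations. Concretely, I would fix the cluster data as follows: the ambient lattice is $H_1(L,T)$ with $L=L(\ww(\bG))$, the distinguished basis is $\tilde{\fB}$ (which extends $\fB \supset \fS(\bG)$), the mutable indices are the initial absolute cycles in $\fS(\bG)$, the frozen indices are the remaining elements of $\tilde{\fB}$, the exchange matrix is $\epsilon$ (which is full rank on the mutable part by Corollary~\ref{cor: full-ranked}), and the initial cluster is $\{A_i\}_{i\in \tilde{\fB}}$ where $A_i$ is the microlocal merodromy along the dual relative cycle.

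Next I would verify each hypothesis of \cite[Corollary~1.9]{BFZ05}. Algebraic independence of $\{A_i\}_{i\in \tilde{\fB}}$ inside the fraction field $\operatorname{Frac}\mathcal{O}(\FM(\Lambda,T))$ follows from the fact that these are, up to units, the coordinates on the initial open torus chart $U_0 = \FM(\ww,T)$. Irreducibility of each mutable $A_k$, $k\in \fS(\bG)$, and the fact that these are pairwise non-associate, is established in the regularity results preceding the theorem. The frozen variables $A_i$ for $i\notin \fS(\bG)$ are units in $\mathcal{O}(\FM(\Lambda,T))$ by Proposition~\ref{prop: unit iff chain reaction completes} (and, for the marked-point cycles, Corollary~\ref{cor:prod A_t=1}). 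The mutation formula itself, with all exponents encoded by the exchange matrix, is precisely Corollary~\ref{cor: cluster mutation formula}, and it produces a new regular function $A'_k$ since the right-hand side is a polynomial in the $A_i$'s. Finally one checks that $A_k$ and $A'_k$ are coprime in the UFD $\mathcal{O}(\FM(\Lambda,T))$, which follows from their expression on $U_0$ together with Lemma~\ref{lem: factorization} and the irreducibility of $A_k$.

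The geometric input of this argument is then Proposition~\ref{prop: codim 2}, which gives the codimension-$2$ covering
\[
\FM(\Lambda,T) \setminus \Bigl( U_0 \cup \bigcup_{k\in\fS(\bG)} U_k \Bigr) \subset \FM(\Lambda,T)
\]
of codimension at least two. Because $\mathcal{O}(\FM(\Lambda,T))$ is a UFD (in particular normal) by the completeness hypothesis, Hartogs's theorem lets me identify
\[
\mathcal{O}(\FM(\Lambda,T)) \;=\; \mathcal{O}(U_0) \,\cap\, \bigcap_{k\in\fS(\bG)} \mathcal{O}(U_k)
\]
as subrings of the fraction field, where $\mathcal{O}(U_0)=\mathbb{C}[A_i^{\pm 1}\mid i\in\tilde{\fB}]$ and $\mathcal{O}(U_k)$ is obtained from $\mathcal{O}(U_0)$ by replacing $A_k^{\pm 1}$ with $A'^{\pm 1}_k$, since $U_k = \FM(\ww_k,T)$ is the open torus chart associated to the Lagrangian filling $\ww_k$ obtained by weave mutation (equivalently, Lagrangian disk surgery) at $\gamma_k$. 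By \cite[Corollary~1.9]{BFZ05}, this intersection is exactly the upper cluster algebra $\mathcal{U}$ of the initial seed $(Q(\tilde{\fB}),\{A_i\}_{i\in\tilde{\fB}})$, yielding the claimed identification.

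The main obstacle I anticipate is making sure that the BFZ criterion is applied under its precise hypotheses: one must verify that the pair $(A_k,A'_k)$ forms a coprime pair in the UFD $\mathcal{O}(\FM(\Lambda,T))$ for every mutable $k$, and that the once-mutated chart $U_k$ is genuinely an algebraic torus whose coordinate ring matches the localization required by \cite{BFZ05} (rather than just a Hamiltonian-isotopic copy of $U_0$). The first point is subtle because coprimality has to hold globally on $\FM(\Lambda,T)$, not merely on $U_0$; it is here that the irreducibility proved via Lemma~\ref{lem: regular implies irreducible}, together with the fact that $A_k$ and $A'_k$ have disjoint vanishing loci on $U_0 \cup U_k$, does the essential work. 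Once that step is nailed down, the remaining verifications are bookkeeping with the data already assembled in Section~\ref{sec:cluster}.
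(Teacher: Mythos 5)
Your proposal is correct and follows essentially the same route as the paper's proof: use Proposition \ref{prop: codim 2} and normality (via the UFD hypothesis) to identify $\mathcal{O}(\FM(\Lambda,T))$ with $\mathcal{O}\bigl(U_0\cup\bigcup_k U_k\bigr)$, use Corollary \ref{cor: cluster mutation formula} to recognize this as the Berenstein--Fomin--Zelevinsky upper bound for the seed, and then invoke \cite[Corollary~1.9]{BFZ05} together with the full-rank statement of Corollary \ref{cor: full-ranked}. Your additional discussion of coprimality and irreducibility makes explicit the hypotheses that the paper leaves implicit (they are guaranteed by the full-rank condition and the irreducibility results preceding the theorem), but the skeleton of the argument is identical.
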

\begin{proof} Consider the open subset $U_0\cup\bigcup_{k\in \fS(\bG)}U_k\subset \FM(\Lambda,T)$. Proposition \ref{prop: codim 2} shows the equality of coordinate rings $\mathcal{O}(\FM(\Lambda,T))=\mathcal{O}(U_0\cup\bigcup_{k\in \fS(\bG)}U_k)$. Corollary \ref{cor: cluster mutation formula} implies that $\mathcal{O}(U_0\cup\bigcup_{k\in \fS(\bG)}U_k)$ is an upper bound of a cluster algebra. In addition, since $T$ has at least one marked point per link component, Corollary \ref{cor: full-ranked} shows that the rectangular exchange matrix $\epsilon|_{\fS(\bG)\times \tilde{\fB}}$ is full-ranked. Then \cite[Corollary 1.9]{BFZ05} implies that this upper bound coincides with its upper cluster algebra and therefore we conclude that $\mathcal{O}(\FM(\Lambda,T))$ is an upper cluster algebra.
\end{proof}

\begin{cor} Let $\bG$ be a complete GP-graph. Then $\mathcal{O}(\M_1(\Lambda(\bG)))$ has the structure of a cluster Poisson algebra. 
\end{cor}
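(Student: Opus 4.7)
The plan is to deduce the cluster Poisson structure on $\mathcal{O}(\mathcal{M}_1(\Lambda(\bG)))$ from the cluster $\mathcal{A}$-structure on $\mathcal{O}(\fM(\Lambda(\bG),T))$ established in Theorem \ref{thm: upper cluster algebra}, by invoking the cluster ensemble formalism of Fock--Goncharov. Concretely, first I would take the initial seed $(Q(\fB),\{A_i\}_{i\in\tilde{\fB}})$ on $\fM(\Lambda(\bG),T)$ and consider the candidate initial $\mathcal{X}$-seed $(Q(\fS(\bG)),\{X_\gamma\}_{\gamma\in\fS(\bG)})$ on $\mathcal{M}_1(\Lambda(\bG))$, where $X_\gamma$ is the (sign-corrected) microlocal monodromy of Subsection \ref{ssec:clusterXvars}. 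The initial quiver for the $\mathcal{X}$-side is the restriction of the intersection quiver to the mutable sublattice, obtained from the $\mathcal{A}$-side by forgetting the frozen vertices.

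Next, I would verify that the forgetful map $p:\fM(\Lambda(\bG),T)\rightarrow\mathcal{M}_1(\Lambda(\bG))$ intertwines the two candidate structures in the form required by Fock--Goncharov's ensemble construction: this is precisely the content of Proposition \ref{prop: p map pull-back}, which states that $p^*(X_i)=\prod_{j\in\tilde{\fB}} A_j^{\epsilon_{ij}}$ for every initial absolute cycle $\gamma_i\in\fS(\bG)$. Combined with Corollary \ref{cor: full-ranked}, which gives full rank of the rectangular exchange matrix $\epsilon|_{\fS(\bG)\times\tilde{\fB}}$, the map $p$ realizes $\mathcal{M}_1(\Lambda(\bG))$ as the cluster $\mathcal{X}$-quotient of $\fM(\Lambda(\bG),T)$, at least on the initial chart $\cM_1(\ww(\bG))$. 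The surjectivity of $p$ restricted to each open torus chart and the non-vanishing of frozen merodromies (Corollary \ref{cor:prod A_t=1} and Proposition \ref{prop: unit iff chain reaction completes}) ensure the quotient is well-defined.

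Then I would propagate the structure under mutation. A weave mutation at an initial $\sf Y$-tree $\gamma_k\in\fS(\bG)$ simultaneously produces the mutated chart $U_k\subseteq \fM(\Lambda,T)$ and a corresponding mutated open torus chart in $\mathcal{M}_1(\Lambda)$. By Corollary \ref{cor: cluster mutation formula}, the merodromies satisfy the cluster $\mathcal{A}$-exchange relation, and by the compatibility $p^*(X_i)=\prod_j A_j^{\epsilon_{ij}}$ (which transforms covariantly under mutation), the microlocal monodromies $X_\gamma$ automatically obey the cluster $\mathcal{X}$-exchange relation $X_k'=X_k^{-1}$, $X_j'=X_j(1+X_k^{-\sgn(\epsilon_{jk})})^{-\epsilon_{jk}}$. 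The codimension-2 covering result of Proposition \ref{prop: codim 2} descends along $p$ to a codimension-2 covering of $\mathcal{M}_1(\Lambda)$ by the initial and once-mutated $\mathcal{X}$-charts, so by Hartogs $\mathcal{O}(\mathcal{M}_1(\Lambda))$ coincides with the intersection of Laurent rings of the $\mathcal{X}$-seeds.

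The main obstacle is the step of descending the cluster structure through the forgetful map $p$, since $p$ is not a torus quotient globally but only seed-by-seed: one must check that the compatibility $p^*(X_i)=\prod A_j^{\epsilon_{ij}}$ persists after an arbitrary sequence of mutations and that the resulting Poisson brackets on $\mathcal{O}(\mathcal{M}_1(\Lambda))$ agree on overlaps of charts. This is handled formally by observing that cluster $\mathcal{X}$-mutation is defined precisely so that the monomial map $p^*:X_i\mapsto \prod A_j^{\epsilon_{ij}}$ commutes with $\mathcal{A}$- and $\mathcal{X}$-mutations (Fock--Goncharov \cite{FockGoncharovII}), and that the log-canonical Poisson bracket $\{X_i,X_j\}=\epsilon_{ij}X_iX_j$ is invariant in form under these mutations. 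Combining this invariance with Theorem \ref{thm: upper cluster algebra} yields the desired quasi-cluster Poisson structure on $\mathcal{O}(\mathcal{M}_1(\Lambda(\bG)))$, and the dependence on the completion $\fB$ of $\fS(\bG)$ only affects the frozen part, producing a quasi-cluster equivalence class, as stated in Corollary \ref{cor:Xstructure}.
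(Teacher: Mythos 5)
Your overall route---deduce the $\mathcal{X}$-structure from the already-established $\mathcal{A}$-structure via the forgetful map $p$ and the cluster ensemble formalism---is the right one, and your identification of the relevant supporting facts (Proposition \ref{prop: p map pull-back}, Corollary \ref{cor: full-ranked}, surjectivity of $p$) matches the paper's. However, your proposal has a genuine gap precisely at the step where you would need to identify $\mathcal{O}(\cM_1(\Lambda))$ with the cluster Poisson algebra.

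You claim that descending the codimension-2 covering of Proposition \ref{prop: codim 2} to $\cM_1(\Lambda)$ and applying Hartogs' theorem gives that $\mathcal{O}(\cM_1(\Lambda))$ ``coincides with the intersection of Laurent rings of the $\mathcal{X}$-seeds.'' But Hartogs only gives you the coordinate ring of the union of the initial chart and the once-mutated charts, i.e.\ the $\mathcal{X}$-analog of the ``upper bound'' of Berenstein--Fomin--Zelevinsky. In the $\mathcal{A}$-case, Theorem \ref{thm: upper cluster algebra} crucially invokes \cite[Corollary~1.9]{BFZ05} to promote this upper bound to the full upper cluster algebra (the intersection over \emph{all} seeds in the mutation class), and that result has \emph{no known $\mathcal{X}$-analog}. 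The introduction of the paper flags this explicitly: the existence of the $\mathcal{X}$-structure cannot be proven on its own because BFZ applies only to cluster $\mathcal{A}$-structures. So an argument that runs the codimension-2/Hartogs machinery directly on $\cM_1(\Lambda)$, as yours does, bottoms out at an unproven equality of Laurent intersections. The formal observation in your last paragraph that $p^*$ intertwines $\mathcal{A}$- and $\mathcal{X}$-mutations is correct but does not address this; that observation is about compatibility of brackets, not about equality with the intersection of all Laurent rings.

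The paper's actual proof sidesteps the missing $\mathcal{X}$-analog of BFZ by a pure transfer argument: it forms a commutative square of the two geometric moduli $\FM(\Lambda,T),\cM_1(\Lambda)$ and the two abstract cluster varieties $\mathcal{A},\mathcal{X}$, observes that both horizontal comparison maps are birational (because each pair shares the torus chart associated with the initial weave), and then uses the algebraic criterion that a rational function on $\mathcal{X}$ is regular iff its pullback under the cluster-theoretic $p:\mathcal{A}\to\mathcal{X}$ is regular on $\mathcal{A}$ (citing \cite[Lemma~A.1]{ShenWeng_cyclic_sieving}), combined with the analogous statement for the geometric forgetful map $p:\FM(\Lambda,T)\to\cM_1(\Lambda)$. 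The algebra isomorphism $\mathcal{O}(\FM(\Lambda,T))\cong\mathcal{O}(\mathcal{A})$ from Theorem \ref{thm: upper cluster algebra} is then \emph{pushed down} through these regularity equivalences to yield $\mathcal{O}(\cM_1(\Lambda))\cong\mathcal{O}(\mathcal{X})$. This is the key maneuver your argument is missing: you do not re-run the covering argument downstairs, but instead transport the already-proven $\mathcal{A}$-level identification through a diagram of pullbacks. You should replace the ``descend the codim-2 covering and apply Hartogs'' paragraph with this transfer argument.
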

\begin{proof} Let us temporarily denote the cluster $\mathcal{A}$-variety defined by the $\tilde{\fB}\times \tilde{\fB}$ exchange matrix $\epsilon$ by $\mathcal{A}$ and denote the cluster $\mathcal{X}$-variety associated with the submatrix $\epsilon|_{B\times B}$ by $\mathcal{X}$. Since $\epsilon|_{B\times \tilde{B}}$ is full-ranked, which follows from the surjectivity of $p:\FM(\Lambda,T)\rightarrow \M_1(\Lambda)$, the cluster theoretical map $p:\mathcal{A}\rightarrow \mathcal{X}$ is also surjective. Both $\mathcal{O}(\mathcal{A})$ and $\mathcal{O}(\mathcal{X})$ are intersections of Laurent polynomial rings, and thus a rational function $f$ on $\mathcal{X}$ is regular if and only if $p^*(f)$ is regular on $\mathcal{A}$, see \cite[Lemma A.1]{ShenWeng_cyclic_sieving}. That said, given that $p:\FM(\Lambda,T)\rightarrow \M_1(\Lambda)$ is surjective, a rational function on $\M_1(\Lambda)$ is regular if and only if $p^*(g)$ is regular on $\FM(\Lambda,T)$. Now consider the following commutative diagram
\[
\xymatrix{\FM(\Lambda,T) \ar@{-->}[r]_\cong^\alpha \ar[d]_p & \mathcal{A} \ar[d]^p \\
\M_1(\Lambda) \ar@{-->}[r]_\chi & \mathcal{X}.}
\]
Both horizontal maps are birational because $\FM(\Lambda,T)$ (resp. $\M_1(\Lambda)$) and $\mathcal{A}$ (resp. $\mathcal{X}$) share an open torus chart $\FM(\ww,T)$ (resp. $\M_1(\ww)$). In addition, Theorem \ref{thm: upper cluster algebra} implies that the top map induces an isomorphism between $\mathcal{O}(\FM(\Lambda,T))$ and $\mathcal{O}(\mathcal{A})$. Now, given a regular function $f\in \mathcal{O}(\mathcal{X})$, the pull-back $\chi^*(f)$ is a rational function on $\M_1(\Lambda)$ by birationality; but since $p^*\circ \chi^*(f)=\alpha^*\circ p^*(f)$ is regular on $\FM(\Lambda,T)$, it follows that $\chi^*(f)$ is regular on $\M_1(\Lambda)$. Conversely, if we are given a regular function $f\in \mathcal{O}(\M_1(\Lambda))$, we know that $(\chi^{-1})^*(f)$ is a rational function on $\mathcal{X}$ by birationality; but since $p^*\circ (\chi^{-1})^*(f)=(\alpha^{-1})^*\circ p^*(f)$ is regular on $\mathcal{A}$, it follows that $(\chi^{-1})^*(f)$ is regular on $\mathcal{X}$ as well. Therefore we conclude that $\chi$ induces an algebra isomorphism between $\chi^*:\mathcal{O}(\mathcal{X})\rightarrow \mathcal{O}(\M_1(\Lambda))$, and hence $\mathcal{O}(\M_1(\Lambda))$ is a cluster Poisson algebra.
\end{proof}

\section{Cluster DT Transformations for Shuffle Graphs}\label{sec:DT}

The cluster Donaldson-Thomas (DT) transformation is a cluster variety automorphism that manifests the Donaldson-Thomas invariants of a 3d Calabi-Yau category associated with the cluster ensemble \cite{KontsevichSoibelman_MotivicDT,KelDT,GoncharovLinhui_DT}. 
In this section we prove Corollary \ref{cor:DT}, i.e.~we focus on the cluster varieties $\cM_1(\Lambda)$ associated with shuffle graphs and in particular show that their cluster DT transformation is the composition of a Legendrian isotopy and a contactomorphism of $(\mathbb{R}^3,\xi_\st)$.

\subsection{Initial Quivers of Shuffle Graphs} Let us first prove features of the initial quivers associated with shuffle graphs. From now onward, we assume without loss of generality that shuffle graphs have all vertical edges with a black vertex on top.

\begin{prop} Let $\bG$ be a shuffle graph and $f\sse \bG$ a face. Then the sugar-free hull $\S_f$ can have a staircase pattern on at most one of its sides.
\end{prop}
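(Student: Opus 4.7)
First, I would reduce to the case where every vertical edge of $\bG$ has a white vertex at the bottom and a black vertex at the top (using the symmetry of the shuffle-graph definition). In particular, white lollipops then lie at the left ends of horizontal lines and black lollipops at the right ends, and every internal trivalent white (resp.\ black) vertex of $\bG$ is the bottom (resp.\ top) endpoint of an internal vertical edge.

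Second, I would invoke Lemma \ref{lemma:staircase} to decompose $\partial \S_f$ into staircases of the four types pictured in Figure \ref{fig:staircase}. A direct inspection shows that the first and third staircase types require internal vertical segments whose lower vertex is black and upper vertex is white, which cannot occur in our shuffle graph. Consequently every multi-step staircase along $\partial \S_f$ is of type two or type four; both are ``up-left'' staircases, and viewed as portions of $\partial \S_f$ they sit on opposite diagonal sides of $\S_f$ (type two on the lower-right boundary with $\S_f$ extending to the upper-left, and type four on the upper-left boundary with $\S_f$ extending to the lower-right).

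To conclude I would show that $\S_f$ cannot simultaneously carry a nontrivial type-two staircase and a nontrivial type-four staircase. The key geometric observation is that in a shuffle graph every horizontal line $l_i^\sigma$ is centered at the origin with length $2\sigma(i)M$, so the horizontal extents of the lines bounding $\S_f$ are controlled by the single permutation $\sigma$. A nontrivial type-two staircase forces the relevant left (or right) extents of consecutive horizontal lines intersected by $\S_f$ to move monotonically in one horizontal direction as $y$ increases, while a nontrivial type-four staircase forces the analogous extents to move monotonically in the opposite direction. By the simple-connectivity of $\S_f$ (Corollary \ref{cor:sugar-free hull simply-connected}) these two monotonicity conditions must both be enforced on the same connected vertical range, producing the required contradiction.

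The main obstacle I anticipate is making the monotonicity statement precise when the two hypothetical staircases involve internal vertical edges rather than lollipops, and when their vertical ranges overlap only partially. A cleaner route I would attempt, based on the characterization of $\S_f$ as the outcome of a lollipop chain reaction in a shuffle graph (Proposition \ref{prop:lollipop chain reaction for shuffles}), is to track the scanning wall of each individual lollipop reaction in the chain and observe that each wall motion can only introduce steps on a single diagonal side of the evolving region: a white-lollipop reaction scans rightward and can only contribute to one diagonal side, and a black-lollipop reaction scans leftward and can only contribute to the other. A case analysis on whether each wall shrinks or expands, and on whether it terminates at a lollipop or at a sugar-free corner, should then finish the argument.
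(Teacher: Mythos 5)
The reduction to a single convention and the observation that only staircase types two and four can occur in a shuffle graph (both ``up-left'' staircases, since types one and three would require a vertical edge with black below white) are correct and do capture part of what the paper is using. However, the core monotonicity claim in your route (A) does not hold, and this is not a technical obstacle but a substantive error: since both type-two and type-four staircases are up-left, \emph{both} the left boundary of $\S_f$ and the right boundary of $\S_f$ move leftward (or stay put) as $y$ increases. There is no ``opposite direction'' here, and no contradiction arises from tracking one-sided extents alone. Relatedly, the left (or right) extents $-\sigma(i)M$ of the lines crossing $\S_f$ are not monotone in $i$ at all: at a step height the line must reach past the step vertex (so $\sigma(i)M$ is forced large on that side), while at an intervening non-step height the line must \emph{not} reach that boundary (otherwise the two vertical edges stacked at the same $x$-position would create a degree-$4$ vertex), so $\sigma$ is forced small there. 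The extents therefore oscillate rather than drift monotonically.

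The ingredient missing from your argument, which is what the paper's terse statement is really invoking, is the second half of that last observation together with the symmetry $l_i = [-\sigma(i)M,\sigma(i)M]\times\{i\}$. Concretely: let the left step sit on $l_a$ with the boundary jumping from $x_0$ to $x_1<x_0$, and let the right step sit on $l_b$ ($b\neq a$, say $b<a$) with the boundary jumping from $x_3$ to $x_2<x_3$. Because the left step vertex at $(x_1,a)$ is trivalent, $\sigma(a)M>-x_1$; because $l_a$ must not touch the (step-free at height $a$) right boundary $x_2$, $\sigma(a)M<x_2$. Symmetrically, $\sigma(b)M>x_3$ and $\sigma(b)M<-x_0$. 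Hence $x_1+x_2>0$ and $x_0+x_3<0$, contradicting $x_1<x_0$, $x_2<x_3$. This is the actual content of ``the horizontal lines violate Definition~\ref{def:shufflegraph}''; your proposal never articulates the ``must not reach the opposite boundary'' constraint, and without it there is no contradiction to be had. (One also needs to say a word about the degenerate case where a left step and a right step sit on the \emph{same} line, where the hull's minimality enters; neither your sketch nor a naive reading of the monotonicity picture addresses this.) Your route (B) via the lollipop chain reaction is not circular and is a legitimate alternative strategy, but as written it is only a plan, and the key claim that each wall motion ``contributes to a single diagonal side'' is not established (the wall-expansion cases in the proof of Proposition~\ref{prop:lollipop chain reaction for shuffles} show this needs care).
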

\begin{proof} If a sugar-free hull has staircase patterns on more than one of its sides, then somewhere in this sugar-free hull we must have two opposing staircases that look like 
\[
\begin{tikzpicture}[scale=0.7,baseline=10]\draw (0,0) -- (0,0.7) -- (-0.3,0.7) -- (-0.3,1); \draw (1,0) -- (1,0.3) -- (0.7,0.3) -- (0.7,1);\draw [fill=black] (0,0.7) circle [radius=0.1];\draw [fill=white] (-0.3,0.7) circle [radius=0.1];\draw [fill=black] (0,0.7) circle [radius=0.1];\draw [fill=white] (-0.3,0.7) circle [radius=0.1];\draw [fill=black] (1,0.3) circle [radius=0.1];\draw [fill=white] (0.7,0.3) circle [radius=0.1];\end{tikzpicture} \quad \quad \quad \quad \text{or} \quad \quad \quad \quad \begin{tikzpicture}[scale=0.7,baseline=10]\draw (0,0) -- (0,0.3) -- (-0.3,0.3) -- (-0.3,1); \draw (1,0) -- (1,0.7) -- (0.7,0.7) -- (0.7,1);\draw [fill=black] (0,0.3) circle [radius=0.1];\draw [fill=white] (-0.3,0.3) circle [radius=0.1];\draw [fill=black] (0,0.3) circle [radius=0.1];\draw [fill=white] (-0.3,0.3) circle [radius=0.1];\draw [fill=black] (1,0.7) circle [radius=0.1];\draw [fill=white] (0.7,0.7) circle [radius=0.1];\end{tikzpicture}.
\]
In either case, the horizontal lines containing the horizontal edges above violate Definition \ref{def:shufflegraph}. 
\end{proof}

\begin{cor}\label{cor:shape of SF hulls} Let $\bG$ be a shuffle graph. Then all its sugar-free hulls must be in one of the following three shapes:
\[
\begin{tikzpicture}[scale=0.6]
\draw (0,0) -- (2,0);
\draw(0,0.5) -- (-0.3,0.5);
\draw(-1,2) -- (2,2);
\vertbar[](0,0,0.5);
\vertbar[](-0.3,0.5,1);
\vertbar[](-1,1.5,2);
\vertbar[](2,0,2);
\node at (-0.6,1.25) [] {$\ddots$};
\end{tikzpicture} \quad \quad \quad \quad \quad \quad
\begin{tikzpicture}[scale=0.6]
\draw (0,0) -- (3,0);
\draw (0,2) -- (3,2);
\vertbar[](0,0,2);
\vertbar[](3,0,2);
\end{tikzpicture}\quad \quad \quad \quad \quad \quad
\begin{tikzpicture}[scale=0.6]
\draw (0,0) -- (-3,0);
\draw(0,0.5) -- (-0.3,0.5);
\draw(-1,2) -- (-3,2);
\vertbar[](0,0,0.5);
\vertbar[](-0.3,0.5,1);
\vertbar[](-1,1.5,2);
\vertbar[](-3,0,2);
\node at (-0.6,1.25) [] {$\ddots$};
\end{tikzpicture}
\]
\end{cor}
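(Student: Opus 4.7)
The plan is to combine the preceding proposition with the boundary decomposition from Subsection \ref{ssec:hulls}. First, I would invoke Lemma \ref{lemma:staircase} to write the boundary $\partial \S_f$ of any (nonempty) sugar-free hull as a concatenation of the four staircase building blocks shown in Figure \ref{fig:staircase}. By Corollary \ref{cor:sugar-free hull simply-connected}, $\S_f$ is simply-connected, so $\partial \S_f$ is a single closed PL curve built out of these blocks.

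Next, I would restrict the list of admissible blocks by using the shuffle-graph convention that every vertical edge has a black vertex on top. A black lollipop cannot appear to the left of a white lollipop, and a white lollipop cannot appear to the right of a black lollipop (Definition \ref{def:shufflegraph}); inspection of the four staircase types in Figure \ref{fig:staircase} against the color conventions of the vertical bars in Lemma \ref{lemma:staircase} shows that only the two ``upward'' staircase patterns (leaning to the upper-left and to the upper-right) are realizable on $\partial \S_f$, while the two ``downward'' ones are excluded.

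Then I would apply the preceding proposition: $\S_f$ can have a staircase on at most one of its sides. Together with the previous step this means $\partial \S_f$ contains at most one nontrivial staircase block, and the remaining three sides of $\partial \S_f$ must be straight horizontal or vertical segments joining consecutive building blocks. A routine case split on ``no staircase / staircase on the upper-left / staircase on the upper-right'' matches exactly the three shapes drawn in the statement of the corollary.

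The argument is essentially bookkeeping, so I do not expect a genuine obstacle; the only care needed is to check, block by block in Figure \ref{fig:staircase}, that the color constraints imposed by the shuffle convention rule out the two downward staircases, and that a straight vertical side really arises when no staircase is present on that side (equivalently, the corresponding corners are $90^\circ$ rather than $270^\circ$, as determined by Definition \ref{def:sugarfree}).
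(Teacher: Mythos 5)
Your overall approach — decompose $\partial\S_f$ via Lemma~\ref{lemma:staircase}, rule out two of the four staircase types by the shuffle convention, and invoke the preceding proposition to get at most one nontrivial staircase — is the right strategy, and it is essentially what the paper has in mind (the paper leaves the corollary without an explicit proof, treating it as an immediate consequence of the proposition above it). Your conclusion is correct. However, the description in your third step contains an error worth fixing.

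You claim that the shuffle convention singles out ``the two `upward' staircase patterns (leaning to the upper-left and to the upper-right).'' This is not the right dichotomy. With the convention that every vertical edge of $\bG$ is of the $\begin{tikzpicture}[baseline=5]\vertbar[](0,0,0.5);\end{tikzpicture}$ type (white on bottom, black on top), the vertical bars in the boundary of $\S_f$ all have white at the lower endpoint and black at the upper one. Inspecting the four staircase building blocks of Lemma~\ref{lemma:staircase}, the first and third types (those sloping up-and-to-the-right) have black at the bottom of each vertical bar and are therefore excluded; the second and fourth types survive. But \emph{both} surviving types slope up-and-to-the-left; they differ not in the slope direction but in which side of the staircase the region $R$ sits: in one it lies to the upper-right of the staircase (so the staircase is the \emph{left} boundary of $\S_f$ and the hull widens toward the top, giving the first picture in the corollary), and in the other it lies to the lower-left (so the staircase is the \emph{right} boundary and the hull narrows toward the top, giving the third picture). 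The rectangular shape is the case in which both vertical sides are degenerate staircases. With this correction, the rest of your case split goes through as you describe.
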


\begin{prop}\label{prop: no arrow between sugar-free hulls and their subsets} Let $\bG$ be a shuffle graph. If $\S_f$ and $\S_g$ are two sugar-free hulls and $\S_f\subset \S_g$, then there is no arrow between their corresponding quiver vertices $Q(\bG)$, i.e., $\inprod{\partial \S_f}{\partial \S_g}=0$.
\end{prop}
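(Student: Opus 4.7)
The plan is to compute $\inprod{\partial \S_f}{\partial \S_g}$ via the naive quiver $Q_0(\bG)$, using Lemma \ref{lemma:skew-symmetric pairing and quiver}. Expanding $\partial \S_f = \sum_{a\in\S_f}\gamma_a$ and $\partial \S_g = \sum_{b\in\S_g}\gamma_b$ as linear combinations of naive absolute cycles, the antisymmetry of the exchange matrix $\epsilon$ together with the containment $\S_f\subseteq \S_g$ reduces the intersection to
\[
\inprod{\partial \S_f}{\partial \S_g} \;=\; \sum_{a\in\S_f,\,b\in\S_g\setminus\S_f}\epsilon_{ab},
\]
which has the geometric interpretation as a signed count of bipartite edges of $\bG$ lying along the portion of $\partial \S_f$ interior to $\S_g$, with sign dictated by the arrow direction in $Q_0(\bG)$. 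Bipartite edges on $\partial \S_f \cap \partial \S_g$ do not enter the sum, since their outside-neighbor face lies outside $\S_g$ entirely.

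Next I would invoke Corollary \ref{cor:shape of SF hulls} to restrict the shapes of $\S_f$ and $\S_g$ to a pure rectangle, a left-staircase, or a right-staircase, and pin down the $Q_0(\bG)$ arrow directions using the shuffle convention: every vertical bipartite edge has black-on-top, hence induces an arrow from the left face to the right face in $Q_0(\bG)$, while horizontal bipartite edges orient via the ``90$^\circ$ clockwise from white-to-black'' rule. The crux of the argument is then a pairwise cancellation of the signed contributions along $\partial \S_f$. Internal trivalent vertices on the top (respectively bottom) boundary of a sugar-free hull in a shuffle graph are all white (respectively black) -- a consequence of the shuffle convention combined with sugar-freeness -- which forces internal horizontal bipartite edges on the top/bottom either to be non-bipartite (same-colored endpoints) or to come in pairs whose arrow directions cancel. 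Staircase portions of $\partial \S_f$ also contribute zero, since each staircase step consists of a short vertical bipartite edge (contribution $-1$, arrow pointing into $\S_f$) followed by an adjacent horizontal bipartite edge (contribution $+1$, arrow pointing out of $\S_f$), and these cancel in pairs.

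The main obstacle is to show that the remaining contributions, coming from the straight vertical sides of $\partial \S_f$, are excluded from the sum by the containment $\S_f \subset \S_g$. The key structural observation is that in a shuffle graph, if $\S_f \subsetneq \S_g$ are both sugar-free hulls, then any straight vertical side of $\S_f$ must coincide with the corresponding side of $\partial \S_g$: extending $\S_g$ past such a straight side would require absorbing additional lollipops that are not present in the shuffle layout, or would violate sugar-freeness at the adjacent corners because of the rigid positioning of lollipops in a shuffle graph. Hence the outside-neighbor faces across those straight sides lie outside $\S_g$, and these edges do not contribute. Combining this exclusion with the cancellations identified above yields $\inprod{\partial \S_f}{\partial \S_g} = 0$, once one verifies the structural observation case-by-case across the handful of essentially distinct combinations of shapes for $\S_f$ and $\S_g$ permitted by Corollary \ref{cor:shape of SF hulls}.
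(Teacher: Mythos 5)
Your reduction of $\inprod{\partial\S_f}{\partial\S_g}$ to a signed count of bipartite edges along $\partial\S_f\setminus\partial\S_g$, together with the appeal to Corollary~\ref{cor:shape of SF hulls}, is correct and is exactly the setup the paper uses. However, your ``key structural observation'' --- that every straight vertical side of $\S_f$ must coincide with a side of $\partial\S_g$ --- is false in general. The shuffle graph of Figure~\ref{fig:branching quiver} already provides a counterexample: there $\S_{f_2}=\{f_2\}$ is a rectangle strictly contained in $\S_{f_1}=\{f_1,f_2\}$, and the left vertical bar of $f_2$ (the bar separating $f_1$ from $f_2$) is an \emph{interior} edge of $\S_{f_1}$, not part of $\partial\S_{f_1}$. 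That bar genuinely contributes to the signed count, and the cancellation actually occurs between this vertical contribution and the adjacent bipartite horizontal segment of $\partial f_2$ that also has $f_1$ on its other side --- a mixed vertical/horizontal pair, rather than the tidy within-staircase and within-horizontal cancellations you propose. More broadly, your pairwise cancellations tacitly assume that the outside face of both edges in a purported cancelling pair lies in $\S_g\setminus\S_f$, which need not hold, so the bookkeeping of which edges actually enter the sum is not under control.

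The paper's argument avoids this bookkeeping entirely. From Corollary~\ref{cor:shape of SF hulls}, $\partial\S_f\cap\partial\S_g$ is a single consecutive arc, and the paper argues that the two transition vertices where $\partial\S_f$ leaves and rejoins $\partial\S_g$ must carry the same color: if they carried opposite colors, the arc $\partial\S_f\setminus\partial\S_g$ would cut $\S_g$ into two smaller sugar-free regions, contradicting that $\S_g$ is a sugar-free hull. Walking along $\partial\S_f\setminus\partial\S_g$ from one transition vertex to the other, each bipartite edge is traversed either white-to-black or black-to-white, contributing $+1$ or $-1$, and these net to zero precisely because the endpoints have the same color. This handles vertical, horizontal, and staircase edges uniformly and does not require deciding in advance which boundary pieces contribute. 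You would do better to prove the same-color statement about the transition vertices, rather than try to exclude or pair off the contributions case by case.
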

\begin{proof} If $\S_f$ and $\S_g$ do not share boundaries, then $\inprod{\partial \S_f}{\partial \S_g}=0$. If $(\partial \S_f)\cap (\partial \S_g)\neq \emptyset$, then based on their possible shapes listed in Corollary \ref{cor:shape of SF hulls}, we see that $(\partial \S_f)\cap (\partial \S_g)$ must be the union of a consecutive sequence of edges. By going over all possibilities of having opposite colors at the two end points, we deduce that each possibility will always cut $\S_g$ into smaller sugar-free regions, making $\S_g$ no longer a sugar-free hull. Thus, the two end points of this union must be of the same color. Note that the pairing $\inprod{\partial \S_f}{\partial \S_g}$ can be computed by summing over contributions from the bipartite edges in $(\partial \S_f)\setminus(\partial \S_g)$: since the two end points of $(\partial \S_f)\setminus(\partial \S_g)$ are the same as the two end points of $(\partial \S_f)\cap (\partial \S_g)$, we can conclude that the contributions from the bipartite edges must cancel each other out, leaving $\inprod{\partial \S_f}{\partial \S_g}=0$ as a result.
\end{proof}

\noindent By Definition \ref{def:shufflegraph}, a shuffle graph $\bG$ with $n$ horizontal lines is equipped with a permutation $\sigma\in S_n$. Based on the permutation $\sigma$, we decompose the vertex set of the initial quiver $Q(\bG)$ as follows. 

\begin{definition} For each integer $m$ with $1\leq m<n$, we define $\sigma^{-1}[m,n]$ to be the preimage of the $(n-m+1)$-element set $[m,n]$. We order elements in $\sigma^{-1}[m,n]$ according to the ordinary linear order on natural numbers. We say that $(i,j)$ form a \emph{level} if $i<j$ in $\sigma^{-1}[m,n]$ for some $m$ and there is no $k\in \sigma^{-1}(m,n)$ such that $i<k<j$. We say a quiver vertex is on level $(i,j)$ if its corresponding sugar-free hull is sandwiched between the $i$th and the $j$th horizontal lines.
\end{definition}

\noindent If $\sigma^{-1}(m)=k$ and there exists $i<j$ in $\sigma^{-1}[m+1,n]$ such that $i<k<j$, then there can be sugar-free hulls on level $(i,j)$ containing sugar-free hulls on levels $(i,k)$ and $(k,j)$. It is possible to visualize this phenomenon as a branching on the quiver $Q(\bG)$: the \emph{main branch} contains sugar-free hulls on levels $(i,k)$ and $(k,j)$ and the \emph{side branch} contains sugar-free hulls on level $(i,j)$. See Figure \ref{fig:multiple branches} (right) and \ref{fig:branching quiver}. Note that such a branching may happen multiple times, with the side branch of the former branching becoming the main branch of the next. Figure \ref{fig:branching quiver} illustrates this for a shuffle graph associated with the permutation $\sigma=[4 1 2 3]$, with two branchings on each side.
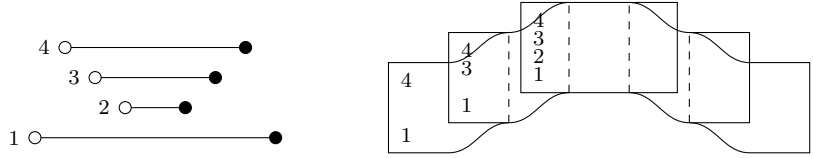
\begin{figure}[H]
    \centering
    \begin{tikzpicture}[scale=0.8]
    \node at (-0.1,0) [left] {\footnotesize{$1$}};
    \node at (0.4,1.5) [left] {\footnotesize{$4$}};
    \node at (0.9,1) [left] {\footnotesize{$3$}};
    \node at (1.4,0.5) [left] {\footnotesize{$2$}};
    \horline[](0,4,0);
    \horline[](1.5,2.5,0.5);
    \horline[](1,3,1);
    \horline[](0.5,3.5,1.5);
    \end{tikzpicture}\quad \quad \quad \quad
    \begin{tikzpicture}[scale=0.8]
    \draw (0.2,0) rectangle (2.8,1.5);
    \draw (1,1.5) to [out=180,in=0] (0,1) -- (-1,1) -- (-1,-0.5) -- (0,-0.5) to [out=0,in=180] (1,0);
    \draw (2,1.5) to [out=0,in=180] (3,1) -- (4,1) -- (4,-0.5) -- (3,-0.5) to [out=180,in=0] (2,0);
    \draw (0,1) to [out=180,in=0] (-1,0.5) -- (-2,0.5) --(-2,-1) -- (-1,-1) to [out=0,in=180] (0,-0.5);
    \draw (3,1) to [out=0,in=180] (4,0.5) -- (5,0.5) -- (5,-1) -- (4,-1) to [out=180,in=0] (3,-0.5);
    \draw [dashed] (1,0) -- (1,1.5);
    \draw [dashed] (2,0) -- (2,1.5);
    \draw [dashed] (0,-0.5) -- (0,1);
    \draw [dashed] (3,-0.5) -- (3,1);
    \foreach \i in {1,...,4}
    {
    \node at (0.5,0.3*\i) [] {\footnotesize{$\i$}};
    }
    \foreach \i in {1,3,4}
    {
    \node at (-0.7,0.3*\i-0.5) [] {\footnotesize{$\i$}};
    }
    \foreach \i in {1,4}
    {
    \node at (-1.7,0.3*\i-1) [] {\footnotesize{$\i$}};
    }
    \end{tikzpicture}
    \caption{The left picture shows the relative lengths of horizontal lines in a shuffle graph $\bG$ associated with $\sigma$. The right picture describes the branching of the quiver $Q(\bG)$; the collection of numbers on each branching records the horizontal lines that define the levels for quiver vertices on that branch.}
    \label{fig:multiple branches}
\end{figure}

\begin{figure}[H]
    \centering
    \begin{tikzpicture}[scale=0.9]
    \horline[](0,5.5,0);
    \horline[](-0.2,5.7,2);
    \horline[](1,4.5,1);
    \vertbar[](0.5,0,2);
    \vertbar[](5,0,2);
    \foreach \i in {0,3,4}
    {
    \vertbar[](1.5+\i*0.5,0,1);
    \vertbar[](4-\i*0.5,1,2);
    }
    \node at (1,1.5) [] {$1$};
    \node at (2.25,0.5) [] {$2$};
    \node at (2.25,1.5) [] {$3$};
    \node at (3.25,1.5) [] {$4$};
    \node at (3.25,0.5) [] {$5$};
    \node at (4.5,0.5) [] {$6$};
    \end{tikzpicture}\quad \quad \quad \quad 
    \begin{tikzpicture}
    \draw (-0.5,0) rectangle (5.5,2);
    \draw (1.5,2) to [out=180,in=60] (0.5,1.6) -- (0.5,-0.4) to [out=60,in=180] (1.7,0);
    \draw [lightgray] (1.5,2) -- (1.5,1) -- (1.7,1) -- (1.7,0);
    \draw (3.3,2) to [out=0,in=120] (4.5,1.6) -- (4.5,-0.4) to [out=120,in=0] (3.5,0);
    \draw [lightgray] (3.3,2) -- (3.3,1) -- (3.5,1) -- (3.5,0);
    \node (2) at (0,0.5) [] {$2$};
    \node (3) at (2,1.5) [] {$3$};
    \node (5) at (3,0.5) [] {$5$};
    \node (4) at (5,1.5) [] {$4$};
    \node (1) [yslant=0.5, anchor=south] at (1,1) [] {$1+2$};
    \node (6) [yslant=-0.5, anchor=south] at (4,0) [] {$4+6$};
    \draw [blue, decoration={markings, mark=at position 0.5 with {\arrow{>}}}, postaction={decorate}] (3) -- (4);
    \draw [blue, decoration={markings, mark=at position 0.6 with {\arrow{>}}}, postaction={decorate}] (4) -- (2);
    \draw [blue, decoration={markings, mark=at position 0.4 with {\arrow{>}}}, postaction={decorate}] (2) -- (5);
    \draw [red, decoration={markings, mark=at position 0.6 with {\arrow{>}}}, postaction={decorate}] (1) -- (3);
    \draw [red, decoration={markings, mark=at position 0.6 with {\arrow{>}}}, postaction={decorate}] (5) -- (6);
    \draw [red, decoration={markings, mark=at position 0.5 with {\arrow{>}}}, postaction={decorate}] (4) to [out=170,in=20] (1);
    \draw [red, decoration={markings, mark=at position 0.5 with {\arrow{>}}}, postaction={decorate}] (6) to [out=190,in=-10] (2);
    \draw [red, decoration={markings, mark=at position 0.6 with {\arrow{>}}}, postaction={decorate}] (6) to [out=150,in=-10] (1);
    \draw [red, decoration={markings, mark=at position 0.5 with {\arrow{>}}}, postaction={decorate}] (3) to [out=-10,in=120] (6);
    \draw [red, decoration={markings, mark=at position 0.5 with {\arrow{>}}}, postaction={decorate}] (1) to [out=-60,in=160] (5);
    \end{tikzpicture}
    \caption{Example of the branching phenomenon of the quiver of a shuffle graph. The blue arrows lie on the main branch (the plabic fence part). The red arrows go between the side branches and the main branch}
    \label{fig:branching quiver}
\end{figure}
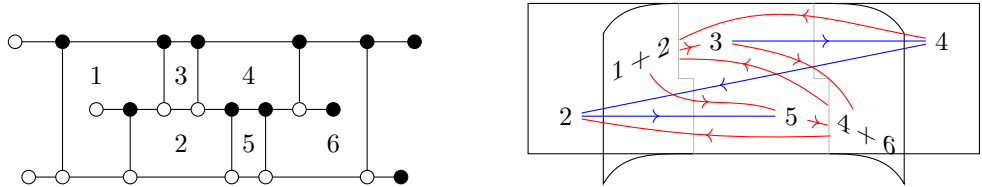

\subsection{Reflection Moves} In order to geometrically construct the DT-transformations for general shuffle graphs, we now generalize the left and right reflection moves introduced in \cite{ShenWeng}. 

Consider a Type 2 weave column with an outgoing weave line $s_i$ on one side (top or bottom). By using weave equivalences, we can extend this outgoing weave line inward, penetrating through the weave column and forming a trivalent weave vertex on the other side with color $s_{n-i}$. If in addition, either of the two horizontal weave lines incident to the new trivalent weave vertex happens to be outgoing as well, then we can homotope the weave locally so that the local picture becomes a Type 2 weave column with an outgoing weave line $s_{n-i}$ on the other side.

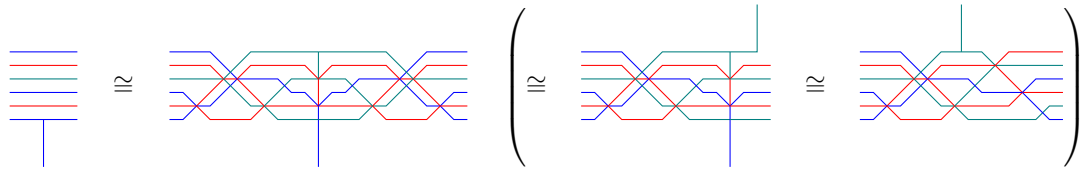
\begin{figure}[H]
    \centering
    \begin{tikzpicture}[baseline=15,scale=0.9]
    \foreach \i in {1,3,6}
    {
        \draw [blue] (0,\i*0.2) -- (1,\i*0.2);
    }
    \foreach \i in {2,5}
    {
        \draw [red] (0,\i*0.2) -- (1,\i*0.2);
    }
    \draw [teal] (0,0.8) -- (1,0.8);
    \draw [blue] (0.5,0.2) -- (0.5,-0.5);
    \end{tikzpicture} \quad $\cong$ \quad 
    \begin{tikzpicture}[baseline=15,scale=0.9]
    \draw [blue] (0,0.2) -- (0.2,0.2) -- (0.4,0.4) -- (0.2,0.6) -- (0,0.6);
    \draw [red] (0,0.4) -- (0.4,0.4);
    \draw [red] (0.4,0.4) -- (0.8,0.8);
    \draw [teal] (0,0.8) -- (0.8,0.8);
    \draw [red] (0,1) -- (0.6,1) -- (0.8,0.8);
    \draw [blue] (0.4,0.4) -- (0.6,0.4) -- (1,0.8);
    \draw [teal] (0.8,0.8) -- (1.2,0.4) -- (1.4,0.4);
    \draw [red] (0.4,0.4) -- (0.6,0.2) -- (1.2,0.2) -- (1.4,0.4);
    \draw [red] (0.8,0.8) -- (1,0.8) -- (1.4,0.4);
    \draw [blue] (0,1.2) -- (0.6,1.2) -- (1,0.8);
    \draw [teal] (1.4,0.4) -- (1.6,0.2) -- (2.2,0.2);
    \draw [red] (1.4,0.4) -- (2.2,0.4);
    \draw [teal] (1.4,0.4) -- (1.8,0.8) -- (2.2,0.8);
    \draw [blue] (1,0.8) -- (1.6,0.8) -- (1.8,0.6) -- (2,0.6) -- (2.2,0.4);
    \draw [teal] (0.8,0.8) -- (1.2,1.2) -- (2.2,1.2);
    \draw [red] (1,0.8) -- (1.2,1) -- (2,1) -- (2.2,0.8);
    \draw [blue] (2.2,-0.5) -- (2.2,0.4);
    \draw [red] (2.2,0.4) -- (2.2,0.8);
    \draw [teal] (2.2,0.8) -- (2.2,1.2);
    \draw [blue] (4.4,0.2) -- (4.2,0.2) -- (4,0.4) -- (4.2,0.6) -- (4.4,0.6);
    \draw [red] (4.4,0.4) -- (4,0.4);
    \draw [red] (4,0.4) -- (3.6,0.8);
    \draw [teal] (4.4,0.8) -- (3.6,0.8);
    \draw [red] (4.4,1) -- (3.8,1) -- (3.6,0.8);
    \draw [blue] (4,0.4) -- (3.8,0.4) -- (3.4,0.8);
    \draw [teal] (3.6,0.8) -- (3.2,0.4) -- (3,0.4);
    \draw [red] (4,0.4) -- (3.8,0.2) -- (3.2,0.2) -- (3,0.4);
    \draw [red] (3.6,0.8) -- (3.4,0.8) -- (3,0.4);
    \draw [blue] (4.4,1.2) -- (3.8,1.2) -- (3.4,0.8);
    \draw [teal] (3,0.4) -- (2.8,0.2) -- (2.2,0.2);
    \draw [red] (3,0.4) -- (2.2,0.4);
    \draw [teal] (3,0.4) -- (2.6,0.8) -- (2.2,0.8);
    \draw [blue] (3.4,0.8) -- (2.8,0.8) -- (2.6,0.6) -- (2.4,0.6) -- (2.2,0.4);
    \draw [teal] (3.6,0.8) -- (3.2,1.2) -- (2.2,1.2);
    \draw [red] (3.4,0.8) -- (3.2,1) -- (2.4,1) -- (2.2,0.8);
    \end{tikzpicture} \quad $\left(\cong \quad 
    \begin{tikzpicture}[baseline=15,scale=0.9]
    \draw [blue] (0,0.2) -- (0.2,0.2) -- (0.4,0.4) -- (0.2,0.6) -- (0,0.6);
    \draw [red] (0,0.4) -- (0.4,0.4);
    \draw [red] (0.4,0.4) -- (0.8,0.8);
    \draw [teal] (0,0.8) -- (0.8,0.8);
    \draw [red] (0,1) -- (0.6,1) -- (0.8,0.8);
    \draw [blue] (0.4,0.4) -- (0.6,0.4) -- (1,0.8);
    \draw [teal] (0.8,0.8) -- (1.2,0.4) -- (1.4,0.4);
    \draw [red] (0.4,0.4) -- (0.6,0.2) -- (1.2,0.2) -- (1.4,0.4);
    \draw [red] (0.8,0.8) -- (1,0.8) -- (1.4,0.4);
    \draw [blue] (0,1.2) -- (0.6,1.2) -- (1,0.8);
    \draw [teal] (1.4,0.4) -- (1.6,0.2) -- (2.2,0.2);
    \draw [red] (1.4,0.4) -- (2.2,0.4);
    \draw [teal] (1.4,0.4) -- (1.8,0.8) -- (2.2,0.8);
    \draw [blue] (1,0.8) -- (1.6,0.8) -- (1.8,0.6) -- (2,0.6) -- (2.2,0.4);
    \draw [teal] (0.8,0.8) -- (1.2,1.2) -- (2.2,1.2);
    \draw [red] (1,0.8) -- (1.2,1) -- (2,1) -- (2.2,0.8);
    \draw [blue] (2.2,-0.5) -- (2.2,0.4);
    \draw [red] (2.2,0.4) -- (2.2,0.8);
    \draw [teal] (2.2,0.8) -- (2.2,1.2);
    \draw [teal] (2.2,1.2) -- (2.6,1.2) -- (2.6,1.9);
    \draw [red] (2.2,0.8) -- (2.4,1) -- (2.8,1);
    \draw [teal] (2.2,0.8) -- (2.8,0.8);
    \draw [blue] (2.2,0.4) -- (2.4,0.6) -- (2.8,0.6);
    \draw [red] (2.2,0.4) -- (2.8,0.4);
    \draw [teal] (2.2,0.2) -- (2.8,0.2);
    \end{tikzpicture} \quad \cong \quad 
    \begin{tikzpicture}[baseline=15,scale=0.9]
    \draw [blue] (0,0.2) -- (0.2,0.2) -- (0.4,0.4) -- (0.2,0.6) -- (0,0.6);
    \draw [red] (0,0.4) -- (0.4,0.4);
    \draw [red] (0.4,0.4) -- (0.8,0.8);
    \draw [teal] (0,0.8) -- (0.8,0.8);
    \draw [red] (0,1) -- (0.6,1) -- (0.8,0.8);
    \draw [blue] (0.4,0.4) -- (0.6,0.4) -- (1,0.8);
    \draw [teal] (0.8,0.8) -- (1.2,0.4) -- (1.4,0.4);
    \draw [red] (0.4,0.4) -- (0.6,0.2) -- (1.2,0.2) -- (1.4,0.4);
    \draw [red] (0.8,0.8) -- (1,0.8) -- (1.4,0.4);
    \draw [blue] (0,1.2) -- (0.6,1.2) -- (1,0.8);
    \draw [teal] (1.4,0.4) -- (1.6,0.2) -- (2.6,0.2) -- (2.8,0.4) -- (3,0.4);
    \draw [red] (1.4,0.4) -- (2.2,0.4) -- (2.4,0.6);
    \draw [teal] (1.4,0.4) -- (2,1) -- (3,1);
    \draw [teal] (0.8,0.8) -- (1.2,1.2) -- (1.8,1.2) -- (2,1);
    \draw [red] (1,0.8) -- (1.2,1) -- (2,1) -- (2.4,0.6);
    \draw [blue] (1,0.8) -- (1.6,0.8) -- (1.8,0.6) -- (2.4,0.6) -- (2.8,0.2) -- (3,0.2);
    \draw [red] (2.4,0.6) -- (3,0.6);
    \draw [blue] (2.4,0.6) -- (2.6,0.8) -- (3,0.8);
    \draw [red] (2,1) -- (2.2,1.2) -- (3,1.2);
    \draw [teal] (1.5,1.2) -- (1.5,1.9);
    \end{tikzpicture}
    \right)$
    \caption{The first two pictures are an example of the local penetration move. If there is an additional weave equivalence that turns the $2$nd picture into the $3$rd one with an outgoing weave line on the top, then we can use it to turn the $3$rd one back to a Type 2 weave column again.}
    \label{fig:weave penetration}
\end{figure}

These weave equivalences are more general than reflection moves for rainbow closures in \textit{loc. cit.}. In our upcoming construction of cluster DT transformations, we will apply this weave equivalence to lollipops. For example, suppose $b$ is a black lollipop on the $i$th horizontal line in a grid plabic graph $\bG$. Take the first vertical edge $e'$ with a black vertex on the $i$th horizontal line as we search from right to left starting from $b$. Suppose the other vertex of $e'$ lies on the $j$th horizontal line whose right end point lies to the right of $b$, and suppose there are no more vertical edges (of either pattern) between the $i$th and $j$th horizontal lines to the right of $e'$. Then the reflection move can be used to turn $e'$ into its opposite pattern; a side-effect is that this move would also switch the portions of the $i$th and $j$th horizontal lines on the right side of $e'$, resulting in a possibly non-planar bicolor graph. Below is an example of such a move done on a plabic graph with three horizontal lines. A similar move can be applied to a white lollipop $w$, and the vertical edge is found by scanning rightward from $w$.

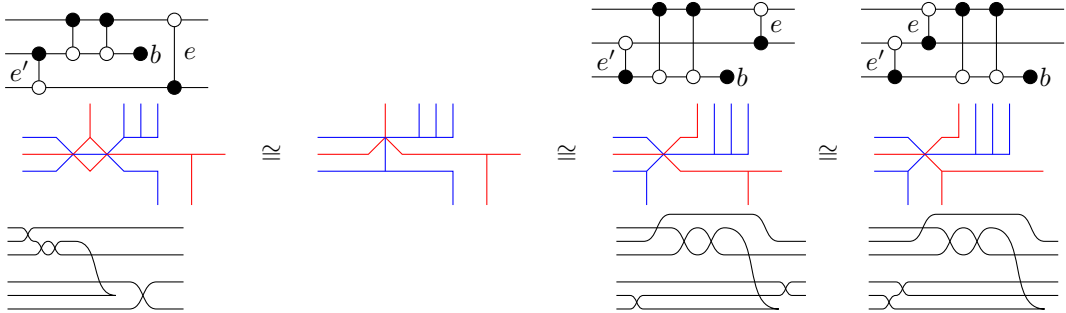
\begin{figure}[H]
    \centering
    \begin{tikzpicture}[scale=0.9]
    \draw (0,0) -- (3,0);
    \draw (0,0.5) -- (2,0.5);
    \draw [fill=black] (2,0.5) circle[radius=0.1];
    \draw (0,1) -- (3,1);
    \vertbar[](0.5,0,0.5);
    \vertbar[](1,0.5,1);
    \vertbar[](1.5,0.5,1);
    \vertbar[](2.5,1,0);
    \node at (2.5,0.5) [right] {$e$};
    \node at (0.5,0.25) [left] {$e'$};
    \node at (2,0.5) [right] {$b$};
    \end{tikzpicture}
    \quad \quad  \quad \quad  \quad \quad  \quad \quad  \quad \quad  \quad \quad  \quad \quad
    \begin{tikzpicture}[scale=0.9]
    \draw (0,0.5) -- (3,0.5);
    \draw (0,0) -- (2,0);
    \draw [fill=black] (2,0) circle[radius=0.1];
    \draw (0,1) -- (3,1);
    \vertbar[](0.5,0.5,0);
    \vertbar[](1,0,1);
    \vertbar[](1.5,0,1);
    \vertbar[](2.5,1,0.5);
    \node at (2.5,0.75) [right] {$e$};
    \node at (0.5,0.25) [left] {$e'$};
    \node at (2,0) [right] {$b$};
    \end{tikzpicture} \quad \quad  
    \begin{tikzpicture}[scale=0.9]
    \draw (0,0.5) -- (3,0.5);
    \draw (0,0) -- (2.5,0);
    \draw [fill=black] (2.5,0) circle[radius=0.1];
    \draw (0,1) -- (3,1);
    \vertbar[](0.5,0.5,0);
    \vertbar[](2,0,1);
    \vertbar[](1.5,0,1);
    \vertbar[](1,1,0.5);
    \node at (0.5,0.25) [left] {$e'$};
    \node at (2.5,0) [right] {$b$};
    \node at (1,0.75) [left] {$e$};
    \end{tikzpicture} \\ 
    \smallskip
    \begin{tikzpicture}[baseline=10,scale=0.9]
    \draw[blue](-0.5,0.25) --(0,0.25) -- (0.25,0.5) -- (0,0.75)-- (-0.5,0.75);
    \draw [red] (-0.5,0.5) -- (0.25,0.5);
    \draw [red] (0.25,0.5) -- (0.5,0.25) -- (0.75,0.5) -- (0.5,0.75) -- cycle;
    \draw [red] (0.5,0.75) -- (0.5,1.25);
    \draw [blue] (0.25,0.5) -- (0.75,0.5);
    \draw [blue] (0.75,0.5) -- (1,0.25) -- (1.5,0.25) -- (1.5,-0.25);
    \draw [blue] (0.75,0.5) -- (1,0.75) -- (1.5,0.75) -- (1.5,1.2 5);
    \draw [blue] (1,0.75) -- (1,1.25);
    \draw [blue] (1.25,0.75) -- (1.25,1.25);
    \draw [red] (0.75,0.5) -- (2.5,0.5);
    \draw [red] (2,0.5) -- (2,-0.25);
    \end{tikzpicture} \quad $\cong$  \quad 
    \begin{tikzpicture}[baseline=10,scale=0.9]
    \draw [red] (-0.5,0.5) -- (0.25,0.5) -- (0.5,0.75);
    \draw [red] (0.5,0.75) -- (0.5,1.25);
    \draw [blue] (-0.5,0.25) -- (1,0.25) -- (1.5,0.25) -- (1.5,-0.25);
    \draw [blue] (-0.5,0.75)  -- (1,0.75) -- (1.5,0.75) -- (1.5,1.2 5);
    \draw [blue] (1,0.75) -- (1,1.25);
    \draw [blue] (1.25,0.75) -- (1.25,1.25);
    \draw [red] (0.5,0.75) -- (0.75,0.5) -- (2.5,0.5);
    \draw [red] (2,0.5) -- (2,-0.25);
    \draw [blue] (0.5,0.75) -- (0.5,0.25);
    \end{tikzpicture} \quad $\cong$ \quad 
    \begin{tikzpicture}[baseline=10,scale=0.9]
    \draw[blue](-0.5,0.25) --(0,0.25) -- (0.25,0.5) -- (0,0.75)-- (-0.5,0.75);
    \draw [red] (-0.5,0.5) -- (0.25,0.5);
    \draw [red] (0.25,0.5) -- (0.5,0.75) -- (0.75,0.75) -- (0.75,1.25);
    \draw [red] (0.25,0.5) -- (0.5,0.25) -- (2,0.25);
    \draw [red] (1.5,0.25) -- (1.5,-0.25);
    \draw [blue] (0.25,0.5) -- (1.5,0.5) -- (1.5,1.25);
    \draw [blue] (1,0.5) -- (1,1.25);
    \draw [blue] (1.25,0.5) -- (1.25,1.25);
    \draw [blue] (0,0.25) -- (0,-0.25);
    \end{tikzpicture} \quad $\cong$ \quad
    \begin{tikzpicture}[baseline=10,scale=0.9]
    \draw[blue](-0.5,0.25) --(0,0.25) -- (0.25,0.5) -- (0,0.75)-- (-0.5,0.75);
    \draw [red] (-0.5,0.5) -- (0.25,0.5);
    \draw [red] (0.25,0.5) -- (0.5,0.75) -- (0.75,0.75) -- (0.75,1.25);
    \draw [red] (0.25,0.5) -- (0.5,0.25) -- (2,0.25);
    \draw [red] (0.5,0.25) -- (0.5,-0.25);
    \draw [blue] (0.25,0.5) -- (1.5,0.5) -- (1.5,1.25);
    \draw [blue] (1,0.5) -- (1,1.25);
    \draw [blue] (1.25,0.5) -- (1.25,1.25);
    \draw [blue] (0,0.25) -- (0,-0.25);
    \end{tikzpicture} \\
    \smallskip
    \begin{tikzpicture}[baseline=0,scale=0.9]
    \draw (0,1.2) -- (0.2,1.2) to [out=0,in=180] (0.4,1.4)-- (2.6,1.4);
    \draw (0,1) -- (0.4,1) to [out=0,in=180] (0.6,1.2) to [out=0,in=180] (0.8,1) -- (2.6,1);
    \draw (0,1.4) -- (0.2,1.4) to [out=0,in=180] (0.4,1.2) to [out=0,in=180] (0.6,1) to [out=0,in=180] (0.8,1.2) -- (1,1.2) to [out=0,in=180] (1.6,0.4) -- (0,0.4);
    \draw (0,0.2) -- (1.8,0.2) to [out=0,in=180] (2.2,0.6) -- (2.6,0.6);
    \draw (0,0.6) -- (1.8,0.6) to [out=0,in=180] (2.2,0.2) -- (2.6,0.2);
    \end{tikzpicture}
    \quad \quad  \quad \quad  \quad \quad  \quad \quad  \quad \quad  \quad \quad   \quad \quad  \quad \quad
    \begin{tikzpicture} [baseline=0,scale=0.9]
    \draw (-0.4,1.2) -- (0,1.2)  to [out=0,in=180] (0.4,1.6)-- (1.6,1.6) to [out=0,in=180] (2,1.2) -- (2.4,1.2);
    \draw (-0.4,1) -- (0.4,1) to [out=0,in=180] (0.8,1.4) to [out=0,in=180] (1.2,1) -- (2.4,1);
    \draw (-0.4,1.4) --  (0.4,1.4) to [out=0,in=180] (0.8,1) to [out=0,in=180] (1.2,1.4) to [out=0,in=180] (2,0.2) -- (0,0.2) to [out=180,in=0] (-0.2,0.4) -- (-0.4,0.4);
    \draw (-0.4,0.2) -- (-0.2,0.2) to [out=0,in=180] (0,0.4) -- (2,0.4)  to [out=0,in=180] (2.2,0.6) -- (2.4,0.6);
    \draw (-0.4,0.6) -- (2,0.6) to [out=0,in=180] (2.2,0.4) -- (2.4,0.4);
    \end{tikzpicture} \quad \quad 
    \begin{tikzpicture} [baseline=0,scale=0.9]
    \draw (-0.6,1.2)  -- (0,1.2) to [out=0,in=180] (0.4,1.6)-- (1.6,1.6) to [out=0,in=180] (2,1.2) -- (2.2,1.2);
    \draw (-0.6,1) -- (0.4,1) to [out=0,in=180] (0.8,1.4) to [out=0,in=180] (1.2,1) -- (2.2,1);
    \draw (-0.6,1.4) --  (0.4,1.4) to [out=0,in=180] (0.8,1) to [out=0,in=180] (1.2,1.4) to [out=0,in=180] (2,0.2) -- (-0.2,0.2) to [out=180,in=0] (-0.4,0.4) -- (-0.6,0.4);
    \draw (-0.6,0.2) -- (-0.4,0.2) to [out=0,in=180] (-0.2,0.4)to [out=0,in=180] (0,0.6) -- (2.2,0.6);
    \draw (-0.6,0.6) -- (-0.2,0.6) to [out=0,in=180] (0,0.4) -- (2.2,0.4);
    \end{tikzpicture}
    \caption{Example of a reflection move near a black lollipop.}
    \label{fig:right_reflection_move}
\end{figure}

Since such a move can potentially destroy planarity to the right of edge $e'$, sugar-free hulls no longer make sense there. However, if the part of the quiver corresponding to the region on the right of edge $e'$ does not get involved in the current iterative step, then this does not affect the construction of the cluster DT transformations. Moreover, the reflection move can enable us to pass a vertical edge through an obstructing lollipop. In Figure \ref{fig:right_reflection_move}, the lollipop $b$ is preventing the vertical edge $e$ from moving to the left in the left picture; after the reflection move, we can now move $e$ through the lollipop $b$; even better, the edge $e$ is now contained within a subgraph that is a plabic fence, for which we know a recursive procedure to construct the cluster DT transformation \cite{ShenWeng}.

In the front projection, the reflection move is a Legendrian RII move that pulls out a cusp, which is a Legendrian isotopy. Since the reflection move is a weave equivalence, the quiver does not change under such a move. Note that if $e'$ is the only the vertical edge present in each of the bicolor graphs in Figure \ref{fig:right_reflection_move}, then this reflection move recovers the right reflection move in \cite{ShenWeng}. (A similar picture can be drawn for left reflection moves.)

By realizing the reflection moves as Legendrian RII moves on the front projection, we can define a Legendrian isotopy on Legendrian links associated with shuffle graphs. By construction, between the top region and the bottom region of the initial weave associated with a shuffle graph, the outgoing weave lines inside one of them is always just a half twist. In the front projection, this region can be untangled into a collection of parallel horizontal lines. Thus, we can clockwise rotate every crossing in the other region one-by-one to this region using just reflection moves (and homotopy) on the front projection. We call this Legendrian isotopy the \emph{half K\'{a}lm\'{a}n loop} $K^{1/2}$. It is not a Legendrian loop and $K^{1/2}$ does not automatically give rise to an automorphism on $\cM_1(\Lambda)$: it only gives rise to an isomorphism $K^{1/2}:\cM_1(\Lambda)\rightarrow \cM_1(\Lambda')$, where $\Lambda'$ is the image of $\La$ under the Legendrian isotopy $K^{1/2}$. In order to make this into an automorphism, we need the involution $t$ induced from the strict contactomorphism $t:(x,y,z)\mapsto (-x,y,-z)$ on $\mathbb{R}^3$. By Proposition \ref{def: working def for M1}, we see that this strict contactomorphism reverse all maps in the quiver representation, which implies that we need to dualize all vector spaces and take transpositions of all the maps. Note that this coincides with the definition of the transposition map $t:\cM_1(\Lambda')\rightarrow \cM_1(\Lambda)$ in \cite{ShenWeng}. All parallel transportation maps are now dualized as well but the microlocal monodromies and microlocal merodromies remain unchanged and therefore $t$ preserves the cluster structure and is a cluster isomorphism. We denote $\DT:=t\circ K^{1/2}=K^{1/2}\circ t$ as our candidate for the cluster Donaldson-Thomas transformation for shuffle graphs.

\subsection{Edge Migration in a Plabic Fence} Besides the reflection moves, we also need to move vertical edges through regions that locally look like plabic fences, and cluster mutations are needed for this process. In this subsection, we will discuss these moves and prove some basic result about the color change of quiver vertices (green vs. red). We begin with a quick review of the meaning of vertex colors, green and red, in a quiver. Fix an initial quiver $Q$ with no frozen vertices. We construct a framed quiver $\tilde{Q}$ from $Q$ by adding a frozen vertex $i'$ for every vertex $i$ of $Q$, together with a single arrow pointing from $i$ to $i'$. Note that by construction, the exchange matrix of $\tilde{Q}$ is $\tilde{\epsilon}=\begin{pmatrix} \epsilon & \id \\ -\id & 0 \end{pmatrix}$ where $\epsilon$ is the exchange matrix of $Q$. 

For any mutation sequence $\mu_\mathbf{i}$ on the quiver $Q$, we can apply the same mutation sequence $\mu_\mathbf{i}$ to $\tilde{Q}$ and get a new quiver $\tilde{Q}':=\mu_\mathbf{i}(Q)$. Note that the unfrozen part of $\tilde{Q}'$ is identical to $Q':=\mu_\mathbf{i}(Q)$. A remarkable property of $\tilde{Q}'$ is that for any unfrozen vertex $i$, $\tilde{\epsilon}'_{ij'}$ is either non-negative or non-positive for all framing frozen vertices $j'$: this is known as the \emph{sign coherence} phenomenon of $c$-vectors in cluster theory \cite{DWZ, GHKK}. We say a vertex $i$ in $Q'$ is \emph{green} if $\tilde{\epsilon}'_{ij'}$ is non-negative for all framing frozen vertices $j'$ and a vertex $i$ in $Q'$ is \emph{red} if $\tilde{\epsilon}'_{ij'}$ is non-positive for all framing frozen vertices $j'$. For a given initial quiver $Q$ (all of whose vertices are green), if a mutation sequence $\mu_\mathbf{i}$ turns every quiver vertex red, then we say $\mu_\mathbf{i}$ is a \emph{reddening sequence}. If additionally a reddening sequence $\mu_\mathbf{i}$ only mutates at green vertices, then we say that $\mu_\mathbf{i}$ is a \emph{maximal green sequence}. For any fixed initial seed, the cluster Donaldson-Thomas transformation can be captured combinatorially by a reddening sequence \cite{KelDT,GoncharovLinhui_DT}. Thus, it is important to keep track of color change of quiver vertices as we perform cluster mutations.

Let us now consider a plabic fence $\bG$. By construction, the initial quiver $Q=Q(\bG)$ is a planar quiver with one unfrozen vertex for each face in $\bG$, and the arrows in $Q$ are drawn in a way such that they form a clockwise cycle around a neighboring group of white vertices and form a counterclockwise cycle around a neighboring group of black vertices. If we have two adjacent vertical edges of opposite patterns on the same level, we can exchange them by doing a mutation at the quiver vertex corresponding to the face they bound: this is just the square move. If we have two adjacent vertical edges of opposite patterns not on the same level, then we can slide them through each other without doing any mutation on the quiver. See Figure \ref{fig:sq_move_and_sliding}.
\begin{figure}[H]
    \begin{tikzpicture}[baseline=10,scale=0.8]
    \draw (0,0) -- (2,0);
    \draw (0,1) -- (2,1);
    \vertbar[](0.5,0,1);
    \vertbar[](1.5,1,0);
    \end{tikzpicture} \quad $\leftrightarrow$ \quad
    \begin{tikzpicture}[baseline=10,scale=0.8]
    \draw (0,0) -- (2,0);
    \draw (0,1) -- (2,1);
    \vertbar[](0.5,1,0);
    \vertbar[](1.5,0,1);
    \end{tikzpicture} \quad \quad \quad 
    \begin{tikzpicture} [baseline=10,scale=0.8]
    \foreach \i in {0,1,2}
    {
    \draw (0,\i*0.5) -- (1.5,\i*0.5);
    }
    \vertbar[](0.5,0,0.5);
    \vertbar[](1,1,0.5);
    \end{tikzpicture}\quad $\leftrightarrow$ \quad
    \begin{tikzpicture} [baseline=10,scale=0.8]
    \foreach \i in {0,1,2}
    {
    \draw (0,\i*0.5) -- (1.5,\i*0.5);
    }
    \vertbar[](1,0,0.5);
    \vertbar[](0.5,1,0.5);
    \end{tikzpicture}
    \quad \quad \quad 
    \begin{tikzpicture} [baseline=10,scale=0.8]
    \foreach \i in {0,1,2}
    {
    \draw (0,\i*0.5) -- (1.5,\i*0.5);
    }
    \vertbar[](0.5,0.5,0);
    \vertbar[](1,0.5,1);
    \end{tikzpicture}\quad $\leftrightarrow$ \quad
    \begin{tikzpicture} [baseline=10,scale=0.8]
    \foreach \i in {0,1,2}
    {
    \draw (0,\i*0.5) -- (1.5,\i*0.5);
    }
    \vertbar[](1,0.5,0);
    \vertbar[](0.5,0.5,1);
    \end{tikzpicture}
    \caption{Left: the square move in a plabic fence. Middle and right: sliding vertical edges of opposite patterns on different levels through each other.}
    \label{fig:sq_move_and_sliding}
\end{figure}
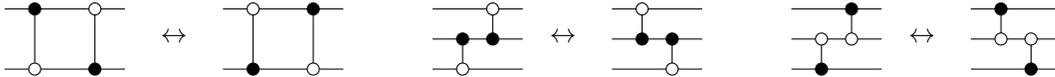

On a Legendrian weave, the sliding of edges corresponds to a weave equivalence, whereas the square move can be described by a weave mutation along a long $\sf I$-cycle, which can be locally described by the following movie.
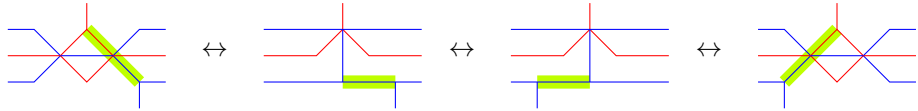
\begin{figure}[H]
    \centering
    \begin{tikzpicture}[baseline=10,scale=0.7]
    \draw [line width = 5, lime] (1.5,1) -- (2.5,0);
    \draw [blue] (0,0) -- (0.5,0) -- (1,0.5) -- (0.5,1) -- (0,1);
    \draw [red] (0,0.5) -- (1,0.5);
    \draw [red] (1,0.5) -- (1.5,1) -- (2,0.5) -- (1.5,0) -- cycle;
    \draw [red] (1.5,1.5) -- (1.5,1);
    \draw [blue] (1,0.5) -- (2,0.5);
    \draw [blue] (3,0) -- (2.5,0) -- (2,0.5) -- (2.5,1) -- (3,1);
    \draw [blue] (2.5,-0.5) -- (2.5,0);
    \draw [red] (2,0.5) -- (3,0.5);
    \end{tikzpicture} \quad $\leftrightarrow$ \quad
    \begin{tikzpicture}[baseline=10,scale=0.7]
    \draw [line width = 5, lime] (1.5,0) -- (2.5,0);
    \draw [blue] (0,0) -- (3,0);
    \draw [red] (0,0.5) -- (1,0.5) -- (1.5,1) -- (2,0.5) -- (3,0.5);
    \draw [red] (1.5,1.5) -- (1.5,1);
    \draw [blue] (1.5,1) -- (1.5,0);
    \draw [blue] (0,1) -- (3,1);
    \draw [blue] (2.5,-0.5) -- (2.5,0);
    \end{tikzpicture}\quad $\leftrightarrow$ \quad
    \begin{tikzpicture}[baseline=10,scale=0.7]
    \draw [line width = 5, lime] (1.5,0) -- (0.5,0);
    \draw [blue] (0,0) -- (3,0);
    \draw [red] (0,0.5) -- (1,0.5) -- (1.5,1) -- (2,0.5) -- (3,0.5);
    \draw [red] (1.5,1.5) -- (1.5,1);
    \draw [blue] (1.5,1) -- (1.5,0);
    \draw [blue] (0,1) -- (3,1);
    \draw [blue] (0.5,-0.5) -- (0.5,0);
    \end{tikzpicture}\quad $\leftrightarrow$ \quad
    \begin{tikzpicture}[baseline=10,scale=0.7]
    \draw [line width = 5, lime] (1.5,1) -- (0.5,0);
    \draw [blue] (0,0) -- (0.5,0) -- (1,0.5) -- (0.5,1) -- (0,1);
    \draw [red] (0,0.5) -- (1,0.5);
    \draw [red] (1,0.5) -- (1.5,1) -- (2,0.5) -- (1.5,0) -- cycle;
    \draw [red] (1.5,1.5) -- (1.5,1);
    \draw [blue] (1,0.5) -- (2,0.5);
    \draw [blue] (3,0) -- (2.5,0) -- (2,0.5) -- (2.5,1) -- (3,1);
    \draw [red] (2,0.5) -- (3,0.5);
    \draw [blue] (0.5,-0.5) -- (0.5,0);
    \end{tikzpicture}
    \caption{Square move in terms of Legendrian weaves: the first and last moves are weave equivalences; the middle move is a weave mutation.}
\end{figure}

A maximal green sequence on $Q(\bG)$ can be constructed recursively as follows:
\begin{itemize}
    \item[-] Take the right most vertical edge $e$ of the $\begin{tikzpicture}[baseline=5]\vertbar[](0,0,0.5);\end{tikzpicture}$ pattern and change it to the opposite pattern.
    \item[-] Move this newly changed vertical edge $e$ to the left, passing all remaining $\begin{tikzpicture}[baseline=5]\vertbar[](0,0,0.5);\end{tikzpicture}$ edges.
\end{itemize}
This iterative process terminates when we run out of vertical edges with a black vertex on top. Note that all mutations occur in this maximal green sequence come from square moves; this will not be the case for general shuffle graphs. Lastly, here is a result that we will need in the next subsection:

\begin{lemma}[{\cite[Proposition 4.6]{ShenWeng}}]\label{lem:moving edges} Each square move turns the mutating vertex from green to red and turns the vertex directly to the right of the mutating vertex (if such a vertex exists) from red back to green. As a result, at the end of each iteration of moving a vertical edge $e$ to the left, the left most quiver vertex on the level of $e$ turns red, while the color of every other vertex remain the same.
\end{lemma}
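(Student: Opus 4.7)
\medskip

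\noindent\emph{Proof proposal.} The plan is to establish the single-mutation color rule by a direct $c$-vector computation on the framed quiver $\widetilde{Q}(\bG)$, and then to deduce the cumulative statement by induction on the steps of the edge-moving iteration.

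First I would set up the local quiver picture. In a plabic fence, a vertex $v$ that admits a square move corresponds to a $4$-valent face whose four neighbors in $Q(\bG)$ sit cyclically around it, with arrows governed by the ``clockwise around white / counterclockwise around black'' rule. Let $w$ be the quiver vertex immediately to the right of $v$ on the same level. The relevant data to track is the frozen part of $\tilde\epsilon$, i.e.\ the $c$-vectors $c_i$, which by Derksen–Weyman–Zelevinsky's sign-coherence theorem are always either totally non-negative or totally non-positive; the sign determines the color.

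Next I would verify the single-step color rule. Under $\mu_v$ one has the standard mutation formulas $c_v \mapsto -c_v$ and $c_i \mapsto c_i + \mathrm{sgn}(c_v)[\mathrm{sgn}(c_v)\epsilon_{iv}]_+ c_v$. The first formula immediately flips $v$ from green to red. For the right-neighbor $w$, I would argue inductively that at the moment $\mu_v$ is applied, $w$ has just been mutated at the previous step of the iteration (so $c_w$ is non-positive and $w$ is red), and that the local arrow configuration at that moment — after all the preceding sliding and square moves on the level — gives a single arrow $v \to w$ of the sign needed for the formula to convert $c_w$ from non-positive to non-negative, thus flipping $w$ from red back to green. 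For the base case, $w$ may not exist (when $v$ is already the rightmost vertex on its level at the start of the iteration), in which case there is nothing to check beyond the $v$-flip.

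Finally, the cumulative statement follows by telescoping the single-step rule along the iteration. Listing the vertices on the level of $e$ from right to left as $v_1, v_2, \dots, v_k$, the iteration mutates at them in order. The first mutation $\mu_{v_1}$ turns $v_1$ red (no right neighbor to worry about). For $j \geq 2$, the mutation $\mu_{v_j}$ turns $v_j$ red and $v_{j-1}$ back to green. Vertices not on the level are unaffected because the intervening sliding moves do not change the quiver and each square move only alters the $c$-vectors of $v$ and its four neighbors, with a net zero effect on the two neighbors above and below after the subsequent square moves on those adjacent levels (or, if the iteration has not yet reached them, they are not touched at all). After the final mutation $\mu_{v_k}$, the only vertex on the level whose color has changed is the leftmost vertex $v_k$, which is now red.

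The main obstacle is the careful bookkeeping in the inductive step: one must show that at the exact instant $\mu_{v_j}$ is applied, the arrow between $v_j$ and $v_{j-1}$ has the orientation predicted by the mutation rule, and that the intermediate ``sliding'' moves (which are weave equivalences and do not mutate the quiver, but do relabel which face is which) preserve this local arrow structure. This boils down to a finite case check on the local pattern of vertical edges on the levels immediately above and below that of $e$, using the fact that a sliding move of two vertical edges of opposite patterns on different levels does not alter the arrows between $v_j$ and $v_{j-1}$.
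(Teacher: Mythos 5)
This lemma is imported directly from \cite[Proposition 4.6]{ShenWeng}; the present paper offers no proof of its own. Your approach --- tracking $c$-vectors in the framed quiver $\widetilde{Q}$ and invoking sign-coherence --- is the right machinery, and the first half of the single-step claim is immediate: mutating a green vertex $v$ sends $c_v\mapsto -c_v$, turning $v$ red.

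The genuine gap is in the second half, where you claim the right neighbor $w$ flips red-to-green because ``the local arrow configuration\dots gives a single arrow $v\to w$ of the sign needed for the formula to convert $c_w$ from non-positive to non-negative.'' Arrow orientation alone does not determine this. Sign-coherence tells you $c_w' = c_w + [\mathrm{sgn}(c_v)\,\epsilon_{wv}]_+\,c_v$ is either entirely non-negative or entirely non-positive, but says nothing about which side it lands on: whether a non-positive $c_w$ crosses to the non-negative side when a non-negative multiple of a non-negative $c_v$ is added depends on the actual entries of the two vectors, not just on the sign of the arrow. To close the induction one must establish and propagate a concrete structural invariant on the $c$-vectors themselves along the mutation sequence (in the plabic-fence case they remain signed $\{0,1\}$-vectors of a specific interval form, which is exactly enough to guarantee the flip with a single arrow of weight $1$), and this --- the actual computational content of \cite[Proposition 4.6]{ShenWeng} --- is absent from your proposal. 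The ``finite case check'' you defer at the end concerns only arrow orientation and therefore does not supply it. A minor slip as well: the mutation formula you quote carries a spurious leading $\mathrm{sgn}(c_v)$; the correct rule is $c_i' = c_i + [\mathrm{sgn}(c_v)\,\epsilon_{iv}]_+\,c_v$, though since the mutating vertex here is always green the discrepancy happens not to bite.
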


\subsection{DT Transformations for Shuffle Graphs} Let us construct the cluster DT transformations for shuffle graphs. By Condition (1) of Definition \ref{def:shufflegraph}, if a vertical edge can be placed between the $i$th and $j$th horizontal lines with $|i-j|>1$, then there must be disjoint two continuous regions we can place vertical edges between them, with one on the left and the other one on the right. Let us call them the \emph{left region} and the \emph{right region}, respectively. If $|i-j|=1$, then there is only one continuous region where we can place vertical edges between the two horizontal lines. The main strategy is to go through all vertical edges of $\bG$ one by one from right to left. For each vertical edge $e$ we do one of the following depending, on its location in $\bG$.

\smallskip

 \noindent\textbf{(I) If $e$ lies on level $(i,i+1)$:}
 \begin{enumerate}
     \item[(I.1)] Apply a reflection move to change the pattern of $e$ to $\begin{tikzpicture}[baseline=5]\vertbar[](0,0.5,0);\end{tikzpicture}$.
     \item[(I.2)] Move $e$ to the left through all $\begin{tikzpicture}[baseline=5]\vertbar[](0,0,0.5);\end{tikzpicture}$ edges incident to the $i$th or $(i+1)$st horizontal lines. Note that a cluster mutation occurs whenever we exchange $e$ and a $\begin{tikzpicture}[baseline=5]\vertbar[](0,0,0.5);\end{tikzpicture}$ edge at the same horizontal level.
 \end{enumerate}

 \noindent\textbf{(II) If $e$ lies in the right region of level $(i,j)$ with $j-i>1$:}
\begin{enumerate}
    \item[(II.1)] Apply a reflection move to change the pattern of $e$ to $\begin{tikzpicture}[baseline=5]\vertbar[](0,0.5,0);\end{tikzpicture}$.
    \item[(II.2)] Move $e$ all the way into the left region between the $i$th and the $j$th horizontal lines, and through all $\begin{tikzpicture}[baseline=5]\vertbar[](0,0,0.5);\end{tikzpicture}$ edges incident to the $i$th or $j$th horizontal lines.
\end{enumerate}

\begin{remark}\label{rem:mutation needed} There is the following subtlety in step (II.2). Since $j-i>1$, when moving $e$ to the left, we will encounter $j-i-1$ many black lollipops. Each time when we encounter a black lollipop, we will try to apply a move similar to Figure \ref{fig:right_reflection_move}. If such a move can be applied, then we will get another vertical edge of the same pattern as $e$, and we need to move this newly changed edge along with $e$ as a group to the left. Moreover, this newly changed edge itself may encounter a black lollipop, too, and consequently introduce another vertical edge into the moving group. This moving group eventually will reach the other side, and we need a way to recover $e$ back as a vertical edge on level $(i,j)$. This can be done by a move mirror to that of Figure \ref{fig:right_reflection_move}:

\begin{figure}[H]
    \centering
    \begin{tikzpicture}[baseline=10]
    \draw (0.5,0) -- (2.5,0);
    \draw (0.5,1) -- (2.5,1);
    \draw (1,0.5) -- (2.5,0.5);
    \draw [fill=white] (1,0.5) circle [radius=0.1];
    \vertbar[](1.5,0.5,0);
    \vertbar[](2,1,0.5);
    \node at (2,0.75) [right] {$e$};
    \node at (1.5,0.25) [right] {$e'$};
    \end{tikzpicture} \quad $\rightsquigarrow$ \quad 
    \begin{tikzpicture}[baseline=10]
    \draw (0.5,0.5) -- (2.5,0.5);
    \draw (0.5,1) -- (2.5,1);
    \draw (1,0) -- (2.5,0);
    \draw [fill=white] (1,0) circle [radius=0.1];
    \vertbar[](1.5,0,0.5);
    \vertbar[](2,1,0.5);
    \node at (2,0.75) [right] {$e$};
    \node at (1.5,0.25) [right] {$e'$};
    \end{tikzpicture}\quad $\rightsquigarrow$ \quad 
    \begin{tikzpicture}[baseline=10]
    \draw (0.5,0.5) -- (2.5,0.5);
    \draw (0.5,1) -- (2.5,1);
    \draw (1.5,0) -- (2.5,0);
    \draw [fill=white] (1.5,0) circle [radius=0.1];
    \vertbar[](2,0,0.5);
    \vertbar[](1,1,0.5);
    \node at (1,0.75) [left] {$e$};
    \node at (2,0.25) [right] {$e'$};
    \end{tikzpicture} \quad $\rightsquigarrow$ \quad 
    \begin{tikzpicture}[baseline=10]
    \draw (0.5,0) -- (2.5,0);
    \draw (0.5,1) -- (2.5,1);
    \draw (1.5,0.5) -- (2.5,0.5);
    \draw [fill=white] (1.5,0.5) circle [radius=0.1];
    \vertbar[](2,0.5,0);
    \vertbar[](1,1,0);
    \node at (1,0.5) [left] {$e$};
    \node at (2,0.25) [right] {$e'$};
    \end{tikzpicture} 
    \caption{Two reflection moves to cover the vertical edge $e$.}
    \label{fig:left reflection move}
\end{figure}
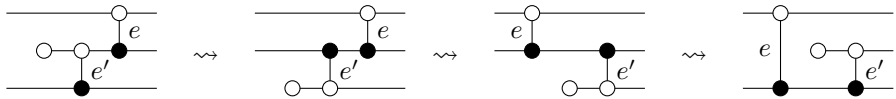
Note that all moves in Figure \ref{fig:left reflection move} correspond to weave equivalences and hence no cluster mutations occur. After the vertical edge $e$ is recovered, we can send the auxiliary vertical edge $e'$ back to where it was before, and then do another reflection move to restore the pattern of $e'$. Note that upon restoring the location and the pattern of $e'$, the non-planarity caused by the earlier reflection move (right picture of Figure \ref{fig:right_reflection_move}) will be cancelled, and we return to a grid plabic graph after the iteration.
\end{remark}

\begin{remark}\label{rem:no mutation needed} There is also a possibility that although there is a black lollipop $b$ in the way, no vertical edge $e'$ is found and hence it is not possible to perform the move in Figure \ref{fig:right_reflection_move}. We claim that in this case we can directly move $e$ through the obstructing horizontal line directly without the need of any weave (cluster) mutations. This follows from the fact that if no vertical edge $e'$ is present, then the incoming weave line corresponding to the gap where $e'$ should have been does not need to be tangled in the weave, and hence we can perform a weave equivalence to move the vertical edge $e$ through. See Figure \ref{fig:going through a horizontal line without mutations}.
\begin{figure}[H]
    \centering
    \begin{tikzpicture}[scale=0.9]
    \draw (0.5,0) -- (3.5,0);
    \draw (0.5,1) -- (3.5,1);
    \horline[](1,2.5,0.5);
    \vertbar[](1.5,0.5,1);
    \vertbar[](2,0.5,1);
    \vertbar[](3,1,0);
    \end{tikzpicture}  \quad \quad \quad \quad \quad \quad \quad \quad  \quad \quad \quad \quad  \quad \quad 
    \begin{tikzpicture}[scale=0.9]
    \draw (0,0) -- (3,0);
    \draw (0,1) -- (3,1);
    \horline[](1,2.5,0.5);
    \vertbar[](1.5,0.5,1);
    \vertbar[](2,0.5,1);
    \vertbar[](0.5,1,0);
    \end{tikzpicture}\\ 
    \begin{tikzpicture}[baseline=5,scale=0.9]
    \draw [blue] (0,0.25) -- (1,0.25);
    \draw [red] (0.5,-0.5) -- (0.5,0) -- (0.75,0) -- (1,0.25) -- (0.75,0.5) -- (0.5,0.5) -- (0.5,1);
    \draw [blue] (2,-0.5) -- (2,0) -- (1.25,0) -- (1,0.25) -- (1.25,0.5) -- (2,0.5) -- (2,1);
    \draw [blue] (1.5,0.5) -- (1.5,1);
    \draw [blue] (1.75,0.5) -- (1.75,1);
    \draw [red] (1,0.25) -- (3,0.25);
    \draw [red] (2.5,0.25) -- (2.5,-0.5);
    \end{tikzpicture} \quad $\cong$ \quad 
    \begin{tikzpicture}[baseline=2,scale=0.9]
    \draw [blue] (0,0.125) -- (0.625,0.125) -- (0.75,0.25)-- (1,0.25);
    \draw [blue] (0.625,0.125) -- (0.75,0) -- (1,0);
    \draw [red] (0.5,-0.75) -- (0.5,-0.25) -- (0.75,-0.25) -- (1,0) to [out=30,in=-30] (1,0.25) -- (0.75,0.5) -- (0.5,0.5) -- (0.5,1);
    \draw [blue] (2,-0.75) -- (2,-0.25) -- (1.25,-0.25) -- (1,0) to [out=150,in=-150] (1,0.25) -- (1.25,0.5) -- (2,0.5) -- (2,1);
    \draw [blue] (1.5,0.5) -- (1.5,1);
    \draw [blue] (1.75,0.5) -- (1.75,1);
    \draw [red] (1,0.25) -- (3,0.25);
    \draw [red] (1,0) -- (2.5,0) -- (2.5,-0.75);
    \end{tikzpicture} \quad $\cong$ \quad 
    \begin{tikzpicture}[baseline=5,scale=0.9]
    \draw [blue] (-0.5,0.25) -- (1,0.25);
    \draw [blue] (0,0.25) -- (0,-0.5);
    \draw [red] (0.5,-0.5) -- (0.5,0) -- (0.75,0) -- (1,0.25) -- (0.75,0.5) -- (0.5,0.5) -- (0.5,1);
    \draw [blue] (2,-0.5) -- (2,0) -- (1.25,0) -- (1,0.25) -- (1.25,0.5) -- (2,0.5) -- (2,1);
    \draw [blue] (1.5,0.5) -- (1.5,1);
    \draw [blue] (1.75,0.5) -- (1.75,1);
    \draw [red] (1,0.25) -- (2.5,0.25);
    \end{tikzpicture}
    \caption{Example of a special case where the edge $e'$ is absent}
    \label{fig:going through a horizontal line without mutations}
\end{figure}
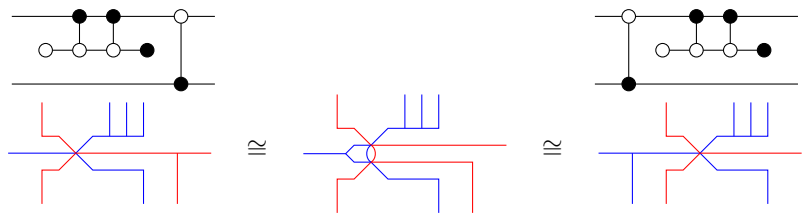
\end{remark}

\noindent\textbf{(III) If $e$ lies in the left region between the $i$th and the $j$th horizontal lines with $|i-j|>1$:}
\begin{enumerate}
    \item[(III.1)] Move $e$ all the way to the right region between the $i$th and the $j$th horizontal lines so that it becomes the right most vertical edge in the plabic graph.
    \item[(III.2)] Apply a reflection move to change the pattern of $e$ to $\begin{tikzpicture}[baseline=5]\vertbar[](0,0.5,0);\end{tikzpicture}$.
\end{enumerate}

Note that in this case, we need to move the vertical edge $e$ to the right before changing its pattern. But since we are going through vertical edges in $\bG$ one by one from right to left, by the time we get to $e$, all vertical edges to its right must be of the $\begin{tikzpicture}[baseline=5]\vertbar[](0,0.5,0);\end{tikzpicture}$ pattern already. Thus, moving $e$, which is of the $\begin{tikzpicture}[baseline=5]\vertbar[](0,0,0.5);\end{tikzpicture}$ pattern, to the right through $\begin{tikzpicture}[baseline=5]\vertbar[](0,0.5,0);\end{tikzpicture}$ edges, is completely mirror to step (II.1) before. Below are pictures depicting the reflection moves we need to perform during this process.

\begin{figure}[H]
    \centering
    \begin{tikzpicture}[baseline=10]
    \draw (0,0) -- (3,0);
    \draw (0,1) -- (3,1);
    \vertbar[](0.5,0,1);
    \draw (1,0.5) -- (3,0.5);
    \draw [fill=white] (1,0.5) circle [radius=0.1];
    \vertbar[](1.5,1,0.5);
    \vertbar[](2,1,0.5);
    \vertbar[](2.5,0.5,0);
    \node at (0.5,0.5) [left] {$e$};
    \node at (2.5,0.25) [right] {$e'$};
    \end{tikzpicture}\quad $\rightsquigarrow$\quad
    \begin{tikzpicture}[baseline=10]
    \draw (0,0.5) -- (3,0.5);
    \draw (0,1) -- (3,1);
    \vertbar[](0.5,0.5,1);
    \draw (1,0) -- (3,0);
    \draw [fill=white] (1,0) circle [radius=0.1];
    \vertbar[](1.5,1,0);
    \vertbar[](2,1,0);
    \vertbar[](2.5,0,0.5);
    \node at (0.5,0.75) [left] {$e$};
    \node at (2.5,0.25) [right] {$e'$};
    \end{tikzpicture}\quad $\rightsquigarrow$\quad
    \begin{tikzpicture}[baseline=10]
    \draw (0,0.5) -- (3,0.5);
    \draw (0,1) -- (3,1);
    \vertbar[](2,0.5,1);
    \draw (0.5,0) -- (3,0);
    \draw [fill=white] (0.5,0) circle [radius=0.1];
    \vertbar[](1,1,0);
    \vertbar[](1.5,1,0);
    \vertbar[](2.5,0,0.5);
    \node at (2,0.75) [right] {$e$};
    \node at (2.5,0.25) [right] {$e'$};
    \end{tikzpicture}\\
    \quad \\
    \quad \\
    \begin{tikzpicture}[baseline=10]
    \draw (0,0) -- (2,0);
    \draw (0,1) -- (2,1);
    \draw (0,0.5) -- (1.5,0.5);
    \draw [fill=black] (1.5,0.5) circle [radius=0.1];
    \vertbar[](0.5,0.5,1);
    \vertbar[](1,0,0.5);
    \node at (0.5,0.75) [left] {$e$};
    \node at (1,0.25) [left] {$e'$};
    \end{tikzpicture}\quad $\rightsquigarrow$\quad
    \begin{tikzpicture}[baseline=10]
    \draw (0,0.5) -- (2,0.5);
    \draw (0,1) -- (2,1);
    \draw (0,0) -- (1.5,0);
    \draw [fill=black] (1.5,0) circle [radius=0.1];
    \vertbar[](0.5,0.5,1);
    \vertbar[](1,0.5,0);
    \node at (0.5,0.75) [left] {$e$};
    \node at (1,0.25) [left] {$e'$};
    \end{tikzpicture}\quad $\rightsquigarrow$\quad
    \begin{tikzpicture}[baseline=10]
    \draw (0,0.5) -- (2,0.5);
    \draw (0,1) -- (2,1);
    \draw (0,0) -- (1,0);
    \draw [fill=black] (1,0) circle [radius=0.1];
    \vertbar[](1.5,0.5,1);
    \vertbar[](0.5,0.5,0);
    \node at (1.5,0.75) [left] {$e$};
    \node at (0.5,0.25) [left] {$e'$};
    \end{tikzpicture}\quad $\rightsquigarrow$\quad
    \begin{tikzpicture}[baseline=10]
    \draw (0,0) -- (2,0);
    \draw (0,1) -- (2,1);
    \draw (0,0.5) -- (1,0.5);
    \draw [fill=black] (1,0.5) circle [radius=0.1];
    \vertbar[](1.5,0,1);
    \vertbar[](0.5,0,0.5);
    \node at (1.5,0.5) [right] {$e$};
    \node at (0.5,0.25) [left] {$e'$};
    \end{tikzpicture}
    \caption{Reflection moves in the (III.1) step: the top row is an example of how to shrink $e$ to a shorter vertical edge, and the bottom row is an example of how to recover $e$ after moving through a horizontal line.}
    \label{fig:pushing vertical edge to the right}
\end{figure}
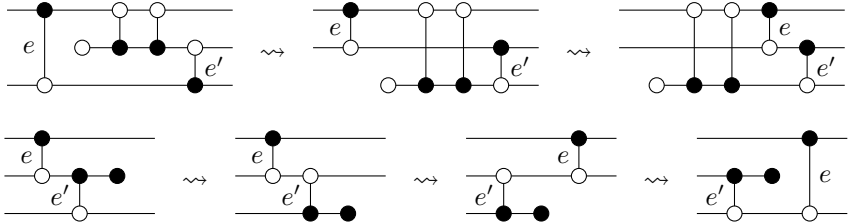

We can now conclude the following result:

\begin{thm}\label{thm:DT} Let $\bG$ be a shuffle graph. Then $\DT=t\circ K^{1/2}$ is the cluster Donaldson-Thomas transformation on $\cM_1(\Lambda)$.
\end{thm}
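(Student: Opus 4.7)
The plan is to prove Theorem \ref{thm:DT} by constructing an explicit mutation sequence $\mu_\mathbf{i}$ that both realizes the geometric transformation $t\circ K^{1/2}$ and is reddening; by the characterization of cluster DT transformations in terms of reddening sequences (see \cite{KelDT, GoncharovLinhui_DT}), this will identify $\DT$ as the cluster DT transformation. First, I would formalize the algorithm described before the theorem: scanning the vertical edges of $\bG$ from right to left, each edge $e$ is processed by one of cases (I), (II), (III), producing a concatenation of reflection moves (which are weave equivalences and thus induce no cluster mutation) and square-move-type weave mutations along the long $\sf I$-cycles coming from the faces swept by $e$. Each such weave mutation corresponds, via Corollary \ref{cor: cluster mutation formula}, to a cluster $\mathcal{A}$-mutation at a specific vertex of the initial quiver $Q(\bG)$, so the algorithm produces an explicit mutation sequence $\mu_\mathbf{i}$.

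Next, I would verify that at the level of Legendrian fronts, the net effect of this algorithm is the half Kálmán loop $K^{1/2}$. Reflection moves are realized as Legendrian RII moves pulling out cusps (as discussed before Lemma \ref{lem:moving edges}), while edge migration through a plabic-fence subregion corresponds to sliding crossings in the braid picture; together these two operations rotate every crossing from the ``bottom region'' into the ``top region'' clockwise, which is exactly the definition of $K^{1/2}$ given in the excerpt. Composing with the transposition involution $t$, which intertwines $\cM_1(\Lambda')$ and $\cM_1(\Lambda)$ and preserves microlocal (mero)dromies, promotes $K^{1/2}$ to an automorphism of $\cM_1(\Lambda)$, matching the cluster transformation induced by $\mu_\mathbf{i}$.

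The hardest part will be showing that $\mu_\mathbf{i}$ is a reddening sequence, because Lemma \ref{lem:moving edges} only handles the pure plabic-fence case, while shuffle graphs in general have the branching structure depicted in Figures \ref{fig:multiple branches} and \ref{fig:branching quiver}. I would proceed by induction on the vertical edges processed, in the right-to-left order, and use Proposition \ref{prop: no arrow between sugar-free hulls and their subsets} to control how mutations on the ``main branch'' affect side branches: because there are no arrows between a sugar-free hull and a strictly larger one in $Q(\bG)$, mutations performed inside one branch only alter $c$-vectors in an adjacent branch through the frozen-looking edges crossing between them, and these edges can be analyzed by the same square-move bookkeeping as in \cite[Proposition 4.6]{ShenWeng}. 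The inductive step then mirrors Lemma \ref{lem:moving edges}: after step (I) or (II) completes on an edge $e$, the leftmost quiver vertex on the level of $e$ becomes red and the colors on all other levels (including side branches that $e$ passed through via Remark \ref{rem:mutation needed}) remain undisturbed, because the extra mutations introduced by the auxiliary edge $e'$ cancel in pairs once $e'$ is restored. Case (III) is dual and reddens analogously after step (III.1). Once every vertical edge has been processed, every vertex of $Q(\bG)$ will have been reddened exactly once.

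Finally, I would conclude by invoking the uniqueness of the cluster DT transformation: any cluster automorphism coming from a reddening sequence coincides, on the $\mathcal{X}$-variety, with the cluster DT transformation when it exists, and preservation of the cluster structure by $t$ and by the Legendrian isotopy $K^{1/2}$ (which, as explained in Subsection \ref{ssec:moduli_Legendrian}, acts by Hamiltonian-invariance of microlocal merodromies) upgrades this to an equality of regular automorphisms of $\cM_1(\Lambda)$. The cluster duality statement in Corollary \ref{cor:DT} then follows from Theorem \ref{thm: upper cluster algebra} together with the general criterion of \cite{GoncharovLinhui_DT} that existence of a cluster DT transformation implies the cluster duality conjecture for the ensemble.
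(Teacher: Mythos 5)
Your proposal follows essentially the same route as the paper: reduce to showing $K^{1/2}$ is a reddening sequence (since $t$ is a cluster isomorphism), then track $c$-vector colors inductively over the right-to-left scan of vertical edges, using Lemma \ref{lem:moving edges} for the plabic-fence portions and observing that the spurious mutations introduced by the auxiliary edge $e'$ are undone when $e'$ is restored. One minor deviation: you invoke Proposition \ref{prop: no arrow between sugar-free hulls and their subsets} to isolate the effect of mutations on side branches, whereas the paper's proof instead observes that the reflection moves of Figure \ref{fig:right_reflection_move} (which are weave equivalences, hence mutation-free) temporarily \emph{relocate} the relevant vertex from the side branch into the main branch, so that Lemma \ref{lem:moving edges} applies directly; that proposition is used earlier in the paper to describe the branching structure of $Q(\bG)$ but is not itself load-bearing in the proof of the theorem. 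Also worth noting explicitly, as the paper does, that the resulting sequence mutates at red vertices when restoring $e'$, so it is a reddening sequence but not a maximal green one.
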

\begin{proof} Since $t$ is a cluster isomorphism, it suffices to prove that $K^{1/2}$ gives rise to a reddening sequence. The vertices of the quiver $Q(\bG)$ are grouped into regions (Figures \ref{fig:branching quiver} and \ref{fig:multiple branches}): we claim that after each iterative step (I) and step (II) of moving an edge $e$ on level $(i,j)$, the left most green vertex of level $(i,j)$ turns red, and after each iterative step (III) of moving an edge $e$ on level $(i,j)$, the right most green vertex of level $(i,j)$ turns red. Indeed, the case (I) follows from Lemma \ref{lem:moving edges} directly; so it remains to consider (II) and (III).

Let us consider (II) first. In the process of moving a $\begin{tikzpicture}[baseline=5]\vertbar[](0,0.5,0);\end{tikzpicture}$ edge $e$ on level $(i,j)$ to the left, before we encounter any lollipop, the quiver vertices would change according to Lemma \ref{lem:moving edges}, turning from green to red when a square move occurs and then turning back to green in the next square move. Let us now consider what happens when we encounter a black lollipop at the $k$th horizontal line (with $i<k<j$). If we are in the situation of Remark \ref{rem:no mutation needed}, then we can directly jump through the whole $k$th horizontal line without any cluster mutations, and Lemma \ref{lem:moving edges} will continue to take care of the rest. It thus remains to consider what happens when we need to do moves according to Remark \ref{rem:mutation needed}. Note that since the moves in Figure \ref{fig:right_reflection_move} do not induce any cluster mutations, there is no change to the quiver itself. However, the way we branch the quiver is different: the sugar-free hull to the left of edge $e$ was non-rectangular before the moves in Figure \ref{fig:right_reflection_move} but it becomes rectangular after the moves; thus, the corresponding quiver vertex was on the side branch before the moves and relocates itself to the main branch after the moves.

\noindent After this quiver vertex is relocated to the main branch (which is a quiver of a plabic fence), we can make use of Lemma \ref{lem:moving edges} again. Note that we need to move $e$ as well as the auxiliary edge $e'$ together to the left as a group, and in that process, there is still a possibility of introducing more edges to that left-moving group. Nevertheless, by induction it is enough to consider what happens when the two-member group $e'$ and $e$ reach the left white lollipop of the $k$th horizontal line. By Lemma \ref{lem:moving edges} we know that both the quiver vertex $v$ to the right of $e$ and the quiver vertex $v'$ to the right of $e'$ have turned red. Next, under the moves in Figure \ref{fig:left reflection move}, we restore $e$ to a vertical edge on level $(i,j)$ without any cluster mutations. Finally, we need to send $e'$ back to where it was before, and this process reverses the mutations we did on the level of $e'$: the quiver vertices on that level will turn from green to red and then back to green again one-by-one\footnote{Note that we are mutating at red quiver vertices in this process; thus, we are not claiming that the whole mutation sequence is maximal green. On the other hand, such moves are not needed in the case of plabic fences, which is why we can obtain a maximal green sequence.}; in the end all quiver vertices on the same level as $e'$ are restored back to green. The iterative step can now continue further to the left on level $(i,j)$, and the color change will again follow Lemma \ref{lem:moving edges}.

The case (III) is essentially (II) in reverse. Suppose we are moving a $\begin{tikzpicture}[baseline=5]\vertbar[](0,0,0.5);\end{tikzpicture}$ edge $e$ on level $(i,j)$, and suppose the first white lollipop it encounters is on the $k$th horizontal line. Let $v$ be the quiver vertex to the right of $e$, which is the right-most green on level $(i,j)$ at the beginning of the iterative step. Under the moves in the top row of Figure \ref{fig:pushing vertical edge to the right}, we move $v$ into level $(k,j)$, and obtain another vertical edge $e'$ of the $\begin{tikzpicture}[baseline=5]\vertbar[](0,0.5,0);\end{tikzpicture}$ pattern. Note that the vertex $v'$ to the right of $e'$ is red at this moment. Now we need to move the vertical edges $e$ and $e'$ to the right: first $e'$, then $e$. For each square move on level $(i,k)$ (the level of $e'$), the mutating quiver vertex changes from red to green\footnote{These mutations are also not green.} and stays green afterward. On the other hand, for each square move on level $(k,j)$ (the level of $e$), the mutating quiver vertex turns from green to red and the one in the next mutation (if exists) turns from red to green. As a result, when $e$ and $e'$ gets to the right end of the $k$th horizontal line, all quiver vertices on the level of $e$ are red and all quiver vertices on the level of $e'$ are green. After the second row of moves in Figure \ref{fig:pushing vertical edge to the right} (which do not involve mutations), we need to send $e'$ back to where it was, and that will make the quiver vertices on level of $e'$ undergo another the reverse sequence of color change again, restoring all of them back to red. Of course, if there are move vertices further to the right of $e$, we need to continue moving $e$ rightward, which will make the remaining quiver vertices to the right on level $(i,j)$ turn from red to green and then back to red again one-by-one. In the end, we see that precisely the quiver vertex $v$ turns red after this iterative step.

In conclusion, since all quiver vertices in $Q(\bG)$ are either inside the middle region, to the left of a vertical edge in the right region, or to the right of a vertical edge in the left region, we see that after all vertical edges in $\bG$ change from $\begin{tikzpicture}[baseline=5]\vertbar[](0,0,0.5);\end{tikzpicture}$ to $\begin{tikzpicture}[baseline=5]\vertbar[](0,0.5,0);\end{tikzpicture}$, all quiver vertices would turn red. Therefore $K^{1/2}$ is indeed a reddening sequence.
\end{proof}

The statement in Corollary \ref{cor:DT} about the cluster duality conjecture now follows from \cite{GHKK}, as our quivers are full-ranked and a DT-transformation exists. Finally, we remark that  the same argument used in \cite{GSW2} to distinguish infinitely many Lagrangian fillings also works for any shuffle graph whose quiver is mutation equivalent to an acyclic quiver of infinite type. Indeed, the DT-transformation will be of infinite order and so will be its square,   the Legendrian K\'alm\'an loop.

\begin{example} Consider following shuffle graph $\bG$, with its quiver depicted and $\La(\bG)$ at its right: 
\[
\begin{tikzpicture}[scale=0.7]
\horline[](1,6.5,0);
\horline[](1,6.5,2);
\horline[](2,5.5,1);
\foreach \i in {1.5,6}
{
\vertbar[](\i,0,2);
}
\foreach \i in {2.5,4,4.5}
{
\vertbar[](\i,0,1);
}
\foreach \i in {3,3.5,5}
{
\vertbar[](\i,1,2);
}
\node at (2,1.5) [] {\footnotesize{$1$}};
\node at (3.25,0.5) [] {\footnotesize{$2$}};
\node at (3.25,1.5) [] {\footnotesize{$3$}};
\node at (4.25,1.5) [] {\footnotesize{$4$}};
\node at (4.25,0.5) [] {\footnotesize{$5$}};
\node at (5.5,0.5) [] {\footnotesize{$6$}};
\end{tikzpicture}  \quad
\begin{tikzpicture}[scale=0.9]
\node[teal] (3) at (2,0.5) [] {\footnotesize{$1+2$}};
\node[teal] (4) at (3,-0.2) [] {\footnotesize{$2$}};
\node[teal] (5) at (3,1.2) [] {\footnotesize{$3$}};
\node[teal] (6) at (4,1.2) [] {\footnotesize{$4$}};
\node[teal] (7) at (4,-0.2) [] {\footnotesize{$5$}};
\node[teal] (8) at (5,0.5) [] {\footnotesize{$4+6$}};
\draw[decoration={markings, mark=at position 0.5 with {\arrow{>}}}, postaction={decorate}] (3) -- (5);
\draw[decoration={markings, mark=at position 0.5 with {\arrow{>}}}, postaction={decorate}] (3) -- (7);
\draw[decoration={markings, mark=at position 0.5 with {\arrow{>}}}, postaction={decorate}] (4) -- (7);
\draw[decoration={markings, mark=at position 0.5 with {\arrow{>}}}, postaction={decorate}] (5) -- (6);
\draw[decoration={markings, mark=at position 0.7 with {\arrow{>}}}, postaction={decorate}] (5) -- (8);
\draw[decoration={markings, mark=at position 0.7 with {\arrow{>}}}, postaction={decorate}] (6) -- (4);
\draw[decoration={markings, mark=at position 0.5 with {\arrow{>}}}, postaction={decorate}] (6) -- (3);
\draw[decoration={markings, mark=at position 0.5 with {\arrow{>}}}, postaction={decorate}] (7) -- (8);
\draw[decoration={markings, mark=at position 0.5 with {\arrow{>}}}, postaction={decorate}] (8) -- (4);
\draw[decoration={markings, mark=at position 0.7 with {\arrow{>}}}, postaction={decorate}] (8) -- (3);
\end{tikzpicture}   \quad
\begin{tikzpicture}
\draw (1.2,1.2) -- (0.4,1.2) -- (1,0) -- (4.6,0) -- (4,1.2) -- (3.6,1.2);
\draw (1.2,0.8) -- (1,0.8) -- (1.2,0.4) -- (4,0.4) -- (3.8,0.8) -- (3.6,0.8);
\foreach \i in {1.2,3.4}
{
\crossing(\i,0.8)(\i+0.2,1.2);
}
\foreach \i in {2,2.2,2.8}
{
\crossing(\i,0.8)(\i+0.2,1);
}
\foreach \i in {1.8,2.4,2.6}
{
\crossing(\i,1)(\i+0.2,1.2);
}
\draw (1.4,1.2) -- (1.8,1.2);
\draw (1.4,0.8) -- (2,0.8);
\draw (2,1.2) -- (2.4,1.2);
\draw (2.4,0.8) -- (2.8,0.8);
\draw (2.8,1.2) -- (3.4,1.2);
\draw (3,0.8) -- (3.4,0.8);
\draw (1.8,1) -- (1.6,1) -- (2,0.2) -- (3.6,0.2) -- (3.2,1) -- (3,1);
\end{tikzpicture}
\]
is mutation equivalent to an acyclic quiver of infinite type, e.g.~consider the mutation sequence $\mu_5\circ \mu_{4+6}\circ \mu_2\circ \mu_4\circ \mu_3\circ \mu_{4+6}$. Thus $\Lambda(\bG)$, which is a max-tb representative in the smooth knot type $10_{161}$, admits infinitely many non-Hamiltonian isotopic embedded exact fillings. Note that the smooth knot type $10_{161}$ is not a rainbow closure of a positive braid. The reddening sequence realizing DT is 
\begin{align*}
&(\mu_2\circ \mu_5\circ \mu_4\circ \mu_3\circ \mu_5\circ \mu_2)\circ ()\circ ()\circ (\mu_4)\circ (\mu_5)\circ (\mu_2\circ \mu_5)\circ (\mu_{4+6}\circ \mu_4)\\
&\circ(\mu_5\circ \mu_2\circ \mu_{1+2}\circ  \mu_3\circ \mu_{4+6}\circ \mu_2\circ \mu_5),
\end{align*}
where we have grouped the mutations of each iterative step inside a pair of parentheses, and empty parenthesis means no mutations at that step. The first mutation is $\mu_5$, then $\mu_2$, then $\mu_{4+6}$ and so on until the last three mutations are $\mu_4$, $\mu_5$ and lastly $\mu_2$.
\end{example}




\bibliographystyle{alpha}
\bibliography{CW}

\newcommand{\etalchar}[1]{$^{#1}$}
\def\cprime{$'$}
\begin{thebibliography}{GHKK18}

\bibitem[All21]{Allegretti}
Dylan G.~L. Allegretti.
\newblock Stability conditions, cluster varieties, and {R}iemann-{H}ilbert problems from surfaces.
\newblock {\em Adv. Math.}, 380:Paper No. 107610, 62, 2021.

\bibitem[Aur07]{Auroux_Wallcrossing2}
Denis Auroux.
\newblock Mirror symmetry and {$T$}-duality in the complement of an anticanonical divisor.
\newblock {\em J. G\"{o}kova Geom. Topol. GGT}, 1:51--91, 2007.

\bibitem[Aur09]{Auroux_Wallcrossing}
Denis Auroux.
\newblock Special {L}agrangian fibrations, wall-crossing, and mirror symmetry.
\newblock volume~13 of {\em Surv. Differ. Geom.}, pages 1--47. Int. Press, Somerville, MA, 2009.

\bibitem[BB05]{BjornerBrenti}
Anders Bj\"{o}rner and Francesco Brenti.
\newblock {\em Combinatorics of {C}oxeter groups}.
\newblock GTM 231. Springer, 2005.

\bibitem[BFZ05]{BFZ05}
Arkady Berenstein, Sergey Fomin, and Andrei Zelevinsky.
\newblock Cluster algebras. {III}. {U}pper bounds and double {B}ruhat cells.
\newblock {\em Duke Math. J.}, 126(1):1--52, 2005.

\bibitem[Boa14a]{Boalch1}
P.~P. Boalch.
\newblock Geometry and braiding of {S}tokes data; fission and wild character varieties.
\newblock {\em Ann. of Math. (2)}, 179(1):301--365, 2014.

\bibitem[Boa14b]{Boalch2}
Philip Boalch.
\newblock Poisson varieties from {R}iemann surfaces.
\newblock {\em Indag. Math. (N.S.)}, 25(5):872--900, 2014.

\bibitem[Cas22]{CasalsLagSkel}
Roger Casals.
\newblock Lagrangian skeleta and plane curve singularities.
\newblock {\em J. Fixed Point Theory Appl.}, 24(2):Paper No. 34, 43, 2022.

\bibitem[CG22]{CasalsHonghao}
Roger Casals and Honghao Gao.
\newblock Infinitely many {L}agrangian fillings.
\newblock {\em Ann.~ Math.}, pages 207--249, 2022.

\bibitem[CGG{\etalchar{+}}22]{CGGLSS}
Roger Casals, Eugene Gorsky, Mikhail Gorsky, Ian Le, Linhui Shen, and Jose Simental.
\newblock Cluster structures on braid varieties.
\newblock arXiv:2207.11607, 2022.

\bibitem[CGGS20]{CGGS}
Roger Casals, Eugene Gorsky, Mikhail Gorsky, and Jos\'e Simental.
\newblock Algebraic weaves and braid varieties.
\newblock arXiv:2012.06931, 2020.

\bibitem[CGGS21]{CGGS2}
Roger Casals, Eugene Gorsky, Mikhail Gorsky, and Jos\'e Simental.
\newblock Positroid links and braid varieties.
\newblock arXiv:2105.13948, 2021.

\bibitem[CN22]{CasalsNg}
Roger Casals and Lenhard Ng.
\newblock Braid loops with infinite monodromy on the {L}egendrian contact {DGA}.
\newblock {\em J. Topol.}, 15(4):1927--2016, 2022.

\bibitem[CZ22]{CasalsZaslow}
Roger Casals and Eric Zaslow.
\newblock Legendrian weaves: {$N$}-graph calculus, flag moduli and applications.
\newblock {\em Geom. Topol.}, 26(8):3589--3745, 2022.

\bibitem[DWZ10]{DWZ}
Harm Derksen, Jerzy Weyman, and Andrei Zelevinsky.
\newblock Quivers with potentials and their representations {II}: applications to cluster algebras.
\newblock {\em J. Amer. Math. Soc.}, 23(3):749--790, 2010.

\bibitem[EHK16]{EHK}
Tobias Ekholm, Ko~Honda, and Tam\'{a}s K\'{a}lm\'{a}n.
\newblock Legendrian knots and exact {L}agrangian cobordisms.
\newblock {\em J. Eur. Math. Soc. (JEMS)}, 18(11):2627--2689, 2016.

\bibitem[ENV13]{EtnyreNgVertesi13}
John~B. Etnyre, Lenhard~L. Ng, and Vera V{\'e}rtesi.
\newblock Legendrian and transverse twist knots.
\newblock {\em J. Eur. Math. Soc. (JEMS)}, 15(3):969--995, 2013.

\bibitem[FG06a]{FockGoncharovII}
V.~V. Fock and A.~B. Goncharov.
\newblock Cluster x-varieties, amalgamation, and {P}oisson-{L}ie groups.
\newblock In {\em Algebraic geometry and number theory}, volume 253 of {\em Progr. Math.}, pages 27--68. Birkh\"{a}user Boston, 2006.

\bibitem[FG06b]{FockGoncharov_ModuliLocSys}
Vladimir Fock and Alexander Goncharov.
\newblock Moduli spaces of local systems and higher {T}eichm\"{u}ller theory.
\newblock {\em Publ. Math. Inst. Hautes \'{E}tudes Sci.}, (103):1--211, 2006.

\bibitem[FG09]{FockGoncharov_ensemble}
Vladimir~V. Fock and Alexander~B. Goncharov.
\newblock Cluster ensembles, quantization and the dilogarithm.
\newblock {\em Ann. Sci. \'{E}c. Norm. Sup\'{e}r. (4)}, 42(6):865--930, 2009.

\bibitem[FPST17]{FPST}
Sergey Fomin, Pavlo Pylyavskyy, Eugenii Shustin, and Dylan Thurston.
\newblock Morsifications and mutations.
\newblock preprint, 2017.

\bibitem[FST08]{FST08}
Sergey Fomin, Michael Shapiro, and Dylan Thurston.
\newblock Cluster algebras and triangulated surfaces. {I}. {C}luster complexes.
\newblock {\em Acta Math.}, 201(1):83--146, 2008.

\bibitem[FZ99]{FominZelevinsky_DoubleBruhat}
Sergey Fomin and Andrei Zelevinsky.
\newblock Double {B}ruhat cells and total positivity.
\newblock {\em J. Amer. Math. Soc.}, 12(2):335--380, 1999.

\bibitem[FZ02]{FominZelevinsky_ClusterI}
Sergey Fomin and Andrei Zelevinsky.
\newblock Cluster algebras {I}.
\newblock {\em J. Amer. Math. Soc.}, 15(2):497--529, 2002.

\bibitem[FZ03]{FominZelevinsky_ClusterII}
Sergey Fomin and Andrei Zelevinsky.
\newblock Cluster algebras {II}.
\newblock {\em Invent. Math.}, 154(1):63--121, 2003.

\bibitem[GHK15]{GHK15}
Mark Gross, Paul Hacking, and Sean Keel.
\newblock Birational geometry of cluster algebras.
\newblock {\em Algebr. Geom.}, 2015.

\bibitem[GHKK18]{GHKK}
Mark Gross, Paul Hacking, Sean Keel, and Maxim Kontsevich.
\newblock Canonical bases for cluster algebras.
\newblock {\em J. Amer. Math. Soc.}, 31(2):497--608, 2018.

\bibitem[GK13]{GonKen}
Alexander~B. Goncharov and Richard Kenyon.
\newblock Dimers and cluster integrable systems.
\newblock {\em Ann. Sci. \'{E}c. Norm. Sup\'{e}r. (4)}, 46(5):747--813, 2013.

\bibitem[GK21]{GoncharovKontsevich21_Noncommutative}
Alexander Goncharov and Maxim Kontsevich.
\newblock {Spectral description of non-commutative local systems on surfaces and non-commutative cluster varieties}.
\newblock {\em ArXiv e-prints}, 2021.

\bibitem[GKS12]{Sheaves1}
St\'{e}phane Guillermou, Masaki Kashiwara, and Pierre Schapira.
\newblock Sheaf quantization of {H}amiltonian isotopies and applications to nondisplaceability problems.
\newblock {\em Duke Math. J.}, 161(2):201--245, 2012.

\bibitem[GL19]{LamGalashin_Positroids}
Pavel Galashin and Thomas Lam.
\newblock Positroid varieties and cluster algebras.
\newblock Ann. Sci. Ec. Norm. Super., 2019.

\bibitem[GMN10]{GMN_Wallcrossing}
Davide Gaiotto, Gregory~W. Moore, and Andrew Neitzke.
\newblock Four-dimensional wall-crossing via three-dimensional field theory.
\newblock {\em Comm. Math. Phys.}, 299(1):163--224, 2010.

\bibitem[GMN13]{GMN_SpecNet13}
Davide Gaiotto, Gregory~W. Moore, and Andrew Neitzke.
\newblock Spectral networks.
\newblock {\em Ann. Henri Poincar\'{e}}, 14(7):1643--1731, 2013.

\bibitem[GS14]{Sheaves2}
St\'{e}phane Guillermou and Pierre Schapira.
\newblock Microlocal theory of sheaves and {T}amarkin's non displaceability theorem.
\newblock volume~15 of {\em Lect. Notes Unione Mat. Ital.}, pages 43--85. Springer, 2014.

\bibitem[GS18]{GoncharovLinhui_DT}
Alexander Goncharov and Linhui Shen.
\newblock Donaldson-{T}homas transformations of moduli spaces of {G}-local systems.
\newblock {\em Adv. Math.}, 327:225--348, 2018.

\bibitem[GSV05]{GSV05}
Michael Gekhtman, Michael Shapiro, and Alek Vainshtein.
\newblock Cluster algebras and {W}eil-{P}etersson forms.
\newblock {\em Duke Math. J.}, 127(2):291--311, 2005.

\bibitem[GSV10]{GSV_Book}
Michael Gekhtman, Michael Shapiro, and Alek Vainshtein.
\newblock {\em Cluster algebras and {P}oisson geometry}, volume 167 of {\em Mathematical Surveys and Monographs}.
\newblock American Mathematical Society, Providence, RI, 2010.

\bibitem[GSW20a]{GSW}
Honghao Gao, Linhui Shen, and Daping Weng.
\newblock Augmentations, fillings, clusters.
\newblock arXiv:2008.10793, 2020.

\bibitem[GSW20b]{GSW2}
Honghao Gao, Linhui Shen, and Daping Weng.
\newblock Positive braid links with infinitely many fillings.
\newblock arXiv:2009.00499, 2020.

\bibitem[Gui19]{Guillermou19_SheafSummary}
Stephane Guillermou.
\newblock {Sheaves and symplectic geometry of cotangent bundles}.
\newblock {\em ArXiv e-prints}, 2019.

\bibitem[Har77]{Hartshorne77_AGBook}
Robin Hartshorne.
\newblock {\em Algebraic geometry}.
\newblock Graduate Texts in Mathematics, No. 52. Springer-Verlag, New York-Heidelberg, 1977.

\bibitem[HK18]{HK18}
Paul Hacking and Sean Keel.
\newblock Mirror symmetry and cluster algebras.
\newblock In {\em Proceedings of the {I}{C}{M} 2018. {V}ol. {II}. {I}nvited lectures}, pages 671--697. World Sci. Publ., 2018.

\bibitem[HR15]{HenryRutherford15}
Michael~B. Henry and Dan Rutherford.
\newblock Ruling polynomials and augmentations over finite fields.
\newblock {\em J. Topol.}, 8(1):1--37, 2015.

\bibitem[IN14]{IN1}
Kohei Iwaki and Tomoki Nakanishi.
\newblock Exact {WKB} analysis and cluster algebras.
\newblock {\em J. Phys. A}, 2014.

\bibitem[IN16]{IN2}
Kohei Iwaki and Tomoki Nakanishi.
\newblock Exact {WKB} analysis and cluster algebras {II}: {S}imple poles, orbifold points, and generalized cluster algebras.
\newblock {\em Int. Math. Res. Not. IMRN}, (14):4375--4417, 2016.

\bibitem[JT17]{JinTreumann}
Xin Jin and David Treumann.
\newblock Brane structures in microlocal sheaf theory.
\newblock arXiv:1704.04291, 2017.

\bibitem[Kel17]{KelDT}
Bernhard Keller.
\newblock Quiver mutation and combinatorial {DT}-invariants.
\newblock {\em Discrete Mathematics and Theoretical Computer Science}, 2017.

\bibitem[KS85]{KashiwaraSchapira}
Masaki Kashiwara and Pierre Schapira.
\newblock Microlocal study of sheaves.
\newblock {\em Ast\'{e}risque}, (128):235, 1985.
\newblock Corrections to this article can be found in Ast\'{e}risque No. 130, p. 209.

\bibitem[KS90]{KashiwaraSchapira_Book}
Masaki Kashiwara and Pierre Schapira.
\newblock {\em Sheaves on manifolds}, volume 292 of {\em Grundlehren der Mathematischen Wissenschaften}.
\newblock Springer-Verlag, Berlin, 1990.
\newblock With a chapter in French by Christian Houzel.

\bibitem[KS10]{KontsevichSoibelman_MotivicDT}
Maxim Kontsevich and Yan Soibelman.
\newblock Motivic {D}onaldson-{T}homas invariants: summary of results.
\newblock In {\em Mirror symmetry and tropical geometry}, volume 527 of {\em Contemp. Math.}, pages 55--89. Amer. Math. Soc., Providence, RI, 2010.

\bibitem[LS22]{LamSpeyer22}
Thomas Lam and David~E. Speyer.
\newblock Cohomology of cluster varieties, {I}: {L}ocally acyclic case.
\newblock {\em Algebra Number Theory}, 16(1):179--230, 2022.

\bibitem[Mul13]{Muller13}
Greg Muller.
\newblock Locally acyclic cluster algebras.
\newblock {\em Adv. Math.}, 233:207--247, 2013.

\bibitem[Mul14]{Muller14}
Greg Muller.
\newblock {$\mathscr{A}=\mathscr{U}$} for locally acyclic cluster algebras.
\newblock {\em SIGMA}, 10:Paper 094, 8, 2014.

\bibitem[Nad16]{Nadler16}
David Nadler.
\newblock Wrapped microlocal sheaves on pairs of pants.
\newblock arXiv:1604.00114, 2016.

\bibitem[Nei14]{GMN_Cluster}
Andrew Neitzke.
\newblock Cluster-like coordinates in supersymmetric quantum field theory.
\newblock {\em Proc. Natl. Acad. Sci. USA}, 111(27):9717--9724, 2014.

\bibitem[NRS{\etalchar{+}}20]{NRSSZ}
Lenhard Ng, Dan Rutherford, Vivek Shende, Steven Sivek, and Eric Zaslow.
\newblock {Augmentations are Sheaves}.
\newblock {\em Geom. Topol.}, 24(5):2149--2286, 2020.

\bibitem[NRSS17]{NRSS17}
Lenhard Ng, Dan Rutherford, Vivek Shende, and Steven Sivek.
\newblock The cardinality of the augmentation category of a {L}egendrian link.
\newblock {\em Math. Res. Lett.}, 24(6):1845--1874, 2017.

\bibitem[Pol91]{Polterovich_Surgery}
L.~Polterovich.
\newblock The surgery of {L}agrange submanifolds.
\newblock {\em Geom. Funct. Anal.}, 1(2):198--210, 1991.

\bibitem[Pos06]{Postnikov}
Alexander Postnikov.
\newblock Total positivity, grassmannians, and networks.
\newblock preprint, 2006.

\bibitem[PT20]{PascaleffTonkonog20}
James Pascaleff and Dmitry Tonkonog.
\newblock The wall-crossing formula and {L}agrangian mutations.
\newblock {\em Adv. Math.}, 361:106850, 67, 2020.

\bibitem[Sch18]{Schnurer18}
Olaf~M. Schn\"{u}rer.
\newblock Six operations on dg enhancements of derived categories of sheaves.
\newblock {\em Selecta Math. (N.S.)}, 24(3):1805--1911, 2018.

\bibitem[SSBW19]{SSBW}
K.~Serhiyenko, M.~Sherman-Bennett, and L.~Williams.
\newblock Cluster structures in {S}chubert varieties in the {G}rassmannian.
\newblock {\em Proc. Lond. Math. Soc. (3)}, 119(6):1694--1744, 2019.

\bibitem[STW16]{STW}
Vivek Shende, David Treumann, and Harold Williams.
\newblock {On the combinatorics of exact Lagrangian surfaces}.
\newblock 2016.

\bibitem[STWZ19]{STWZ}
Vivek Shende, David Treumann, Harold Williams, and Eric Zaslow.
\newblock Cluster varieties from {L}egendrian knots.
\newblock {\em Duke Math. J.}, 168(15):2801--2871, 2019.

\bibitem[STZ17]{STZ_ConstrSheaves}
Vivek Shende, David Treumann, and Eric Zaslow.
\newblock Legendrian knots and constructible sheaves.
\newblock {\em Invent. Math.}, 207(3):1031--1133, 2017.

\bibitem[SW20]{ShenWeng_cyclic_sieving}
Linhui Shen and Daping Weng.
\newblock Cyclic sieving and cluster duality for {G}rassmannian.
\newblock {\em SIGMA, Special Issue on Cluster Algebras}, 16, 2020.

\bibitem[SW21]{ShenWeng}
Linhui Shen and Daping Weng.
\newblock Cluster structures on double bott–samelson cells.
\newblock {\em Forum of Mathematics, Sigma}, 9:e66, 2021.

\bibitem[TV07]{ToenVaquie07}
Bertrand To\"{e}n and Michel Vaqui\'{e}.
\newblock Moduli of objects in dg-categories.
\newblock {\em Ann.~Sci. \'{E}NS}, 40:387--444, 2007.

\bibitem[Yau17]{MLYau17}
Mei-Lin Yau.
\newblock Surgery and isotopy of {L}agrangian surfaces.
\newblock In {\em Proceedings of the {V}{I} {I}nternational {C}ongress of {C}hinese {M}athematicians. {V}ol. {II}}, Adv. Lect. Math. (ALM), pages 143--162. Int. Press, 2017.

\end{thebibliography}

\end{document}